\documentclass{amsart}

\setlength{\oddsidemargin}{0cm}
\setlength{\evensidemargin}{0cm}
\setlength{\textwidth}{16cm}

\usepackage{lscape}
\usepackage{hhline}
\usepackage{amsfonts}
\usepackage{amsthm}
\usepackage[psamsfonts]{amssymb}
\usepackage[dvips]{graphicx}
\usepackage{graphicx}
\usepackage{ascmac}
\usepackage{amsmath}
\usepackage{amsthm}
\usepackage{fancybox}
\usepackage{fancyhdr}
\usepackage{wrapfig}
\usepackage{verbatim}
\usepackage{enumerate}
\usepackage{calrsfs}
\usepackage{mathrsfs}
\usepackage{color}
\usepackage{multicol}
\usepackage{framed}
\usepackage{stmaryrd}
\usepackage{multirow,bigdelim}

\usepackage{amscd}
\usepackage{booktabs}
\usepackage{amsthm}
\usepackage{pifont}

\newtheorem{thm}{Theorem}

\newtheorem{prop}[thm]{Proposition}

\newtheorem{lem}[thm]{Lemma}

\newcommand{\Proof}{\hspace{-4mm}{\it Proof}. \ }
\newcommand{\QED}{{\unskip\nobreak\hfil
\penalty50\quad\null\nobreak\hfil
{$\Box$}\parfillskip0pt\finalhyphendemerits0\par\medskip}}


\pagestyle{plain}


\renewcommand{\a}{\alpha}
\renewcommand{\b}{\beta}

\renewcommand{\d}{\delta}
\newcommand{\e}{\epsilon}

\renewcommand{\l}{\lambda}

\newcommand{\s}{\sigma}

\renewcommand{\t}{\tau}

\newcommand{\ga}{{\mathfrak{a}}}

\newcommand{\gf}{{\mathfrak{f}}}

\newcommand{\gn}{{\mathfrak{n}}}
\newcommand{\go}{{\mathfrak{o}}}
\newcommand{\gp}{{\mathfrak{p}}}
\newcommand{\gq}{{\mathfrak{q}}}

\newcommand{\gA}{{\mathfrak{A}}}
\newcommand{\gC}{{\mathfrak{C}}}
\newcommand{\gD}{{\mathfrak{D}}}

\newcommand{\gS}{{\mathfrak{S}}}

\newcommand{\gZ}{{\mathfrak{Z}}}


\newcommand{\Acal}{{\mathcal A}}
\newcommand{\Bcal}{{\mathcal B}}

\newcommand{\Fcal}{{\mathcal F}}
\newcommand{\Gcal}{{\mathcal G}}
\newcommand{\Hcal}{{\mathcal H}}

\newcommand{\Ocal}{{\mathcal O}}

\newcommand{\Scal}{{\mathcal S}}

\renewcommand{\AA}{\mathbb{A}}

\newcommand{\CC}{\mathbb{C}}
\newcommand{\DD}{\mathbb{D}}

\newcommand{\II}{\mathbb{I}}
\newcommand{\JJ}{\mathbb{J}}
\newcommand{\LL}{\mathbb{L}}
\newcommand{\NN}{\mathbb{N}}
\newcommand{\PP}{\mathbb{P}}
\newcommand{\QQ}{\mathbb{Q}}
\newcommand{\RR}{\mathbb{R}}
\newcommand{\TT}{\mathbb{T}}
\newcommand{\ZZ}{\mathbb{Z}}


\newcommand{\bfc}{{\mathbf c}}

\newcommand{\bfs}{{\mathbf s}}

\newcommand{\bfK}{{\mathbf K}}



\newcommand{\ord}{\operatorname{ord}}


\newcommand{\fin}{{\rm fin}}
\renewcommand{\Re}{\operatorname{Re}}
\renewcommand{\Im}{\operatorname{Im}}

\newcommand{\Res}{\operatorname{Res}}
\newcommand{\vol}{\operatorname{vol}}
\newcommand{\N}{\operatorname{N}}


\title{Asymptotic behaviors of means of central values of automorphic $L$-functions for GL(2)}
\author{Shingo Sugiyama}
\address{Department of Mathematics, Graduate School of Science, 
Osaka University, Toyonaka, Osaka 560-0043, Japan}
\email{s-sugiyama@cr.math.sci.osaka-u.ac.jp}
\date{}

\subjclass[2010]{Primary 11F67; Secondary 11F70.}
\begin{document}

\begin{abstract}
Let $\AA$ be the adele ring of a totally real algebraic number field $F$.
We push forward an explicit computation of a relative trace formula
for periods of automorphic forms along a split torus in $GL(2)$
from a square free level case done by Masao Tsuzuki \cite{TsuzukiSpec}, to an arbitrary level case.
By using a relative trace formula, we study
central values of automorphic $L$-functions for cuspidal automorphic representations of $GL(2, \AA)$ corresponding to Maass forms with arbitrary level.
\end{abstract}
\maketitle
\pagestyle{myheadings}
\markboth{\hfill Sugiyama\hfill}{\hfill Means of central $L$-values\hfill }
\section*{Introduction}

Let $k \ge 4$ be an even integer.
For a prime $N$,
let $S_{k}^{\rm new}(N)$ be the space of all cuspidal new forms on the Poincar$\acute{\rm e}$ upper half plane of weight $k$ for $\Gamma_{0}(N)$.
The space $S_{k}^{\rm new}(N)$ has an orthogonal basis ${\Fcal^{\rm new}_{k}(N)}$ consisting of normalized Hecke eigen forms.
For $\varphi \in S_{k}^{\rm new}(N)$, we denote by $L(s, \varphi)$ the completed automorphic $L$-function for $\varphi$ satisfying a functional equation that relates
the values at $s$ and $1-s$.
Let $\eta$ be a quadratic Dirichlet character of conductor $D$ with $\eta(-1) = -1$.
The Dirichlet $L$-series associated to $\eta$ is denoted by $L_{\fin}(s, \eta)$.
For a fixed prime $p\nmid D$, $J_{p, S}$ denotes the set of all primes $N$ satisfying
both $\gcd(p, N) = \gcd(D, N) = 1$ and $\eta(N)=-1$.
Let $a_{p}(\varphi)$ denote the $p$-th Fourier coefficient of $\varphi$ multiplied by $p^{-(k-1)/2}$. 
Ramakrishnan and Rogawski \cite{Ramakrishnan-Rogawski} studied a
sum of central values of $L(s, \varphi)$ and proved the following theorem.
\begin{thm}
\cite[Theorem A]{Ramakrishnan-Rogawski}

For any interval $J \subset [-2, 2]$, we have
$$\lim_{\begin{subarray}{c}
N \rightarrow \infty \\
N \in J_{p, \eta}\end{subarray}}
\sum_{\begin{subarray}{c} \varphi \in \Fcal^{\rm new}_{k}(N),\\ a_{p}(\varphi) \in J\end{subarray}}
\frac{L(1/2, \varphi)L(1/2, \varphi \otimes \eta)}{||\varphi||^{2}} = 2^{k-1}\frac{\{(k/2-1)!\}^{2}}{\pi (k-2)!}L_{\fin}(1, \eta) \mu_{p}^{\eta}(J),$$
where
$||\varphi||$ denotes the Petersson norm of $\varphi$
and $\mu_{p}^{\eta}$ denotes the probability measure on $[-2, 2]$ defined by
$$\mu_{p}^{\eta}(x)=
\left\{ \begin{array}{l}
\displaystyle \frac{p-1}{(p^{1/2}+p^{-1/2}-x)^{2}}\ \mu_{ST}(x) \hspace{6mm} (\eta(p)=1),\\
\ \vspace{-2mm}\\
\displaystyle \frac{p+1}{(p^{1/2}+p^{-1/2})^{2}-x^{2}}\ \mu_{ST}(x) \hspace{5mm} (\eta(p)=-1).
\end{array}\right.
$$
Here, $\mu_{ST}(x)$ is the Sato-Tate measure $(2 \pi)^{-1} \sqrt{4-x^{2}}dx$.
\end{thm}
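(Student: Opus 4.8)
The plan is to realize the left-hand side of the theorem as the spectral side of a relative trace formula for the split torus $A=\{\operatorname{diag}(t,1)\}\subset GL(2)$ over $\QQ$, and then to evaluate the geometric side explicitly in the limit $N\to\infty$. Write $\pi_\varphi$ for the cuspidal automorphic representation of $GL_2(\AA)$ generated by $\varphi$. The starting point is a period identity: for a normalized Hecke eigenform $\varphi$ of level $N$, the regularized $A$-periods
\[
P_{\mathbf 1}(\varphi)=\int_{\QQ^\times\backslash\AA^\times}^{\mathrm{reg}}\varphi\!\left(\operatorname{diag}(t,1)\right)\,d^\times t,\qquad
P_{\eta}(\varphi)=\int_{\QQ^\times\backslash\AA^\times}^{\mathrm{reg}}\varphi\!\left(\operatorname{diag}(t,1)\right)\eta(t)\,d^\times t,
\]
which are values of Hecke zeta integrals, satisfy $P_{\mathbf 1}(\varphi)\overline{P_{\eta}(\varphi)}/\langle\varphi,\varphi\rangle = c_\infty(k)\cdot L(1/2,\varphi)L(1/2,\varphi\otimes\eta)\cdot(\text{tame local factors})$ with an explicit archimedean constant $c_\infty(k)$ depending only on the weight; this follows by unfolding both Hecke integrals and a local multiplicity-one computation. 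Thus the sum in the theorem is, up to $c_\infty(k)$ and finitely many tame corrections, the spectral average $\sum_{\varphi}P_{\mathbf 1}(\varphi)\overline{P_{\eta}(\varphi)}/\langle\varphi,\varphi\rangle$ over the newform spectrum of level $N$, with each term further weighted at $p$ by the eigenvalue of a chosen element of the Hecke algebra at $p$.

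I would then build the global test function $f=\bigotimes_v f_v$ so that the spectral side of the $(A\times A)$-relative trace formula reproduces this average. The archimedean component $f_\infty$ is a normalized matrix coefficient of the weight-$k$ holomorphic discrete series, which projects onto holomorphic forms of weight $k$ and whose orbital integrals contribute the factors $2^{k-1}$ and $\{(k/2-1)!\}^2/\{\pi(k-2)!\}$. At $N$ (prime) the component $f_N$ is the projector onto the new-vector line, assembled from the local new-vector idempotent together with the trace maps from level $N$ to level $1$. At $p$ the component $f_p$ is a polynomial in the local Hecke operator $T_p$ approximating the indicator function $\mathbf 1_J$ with respect to the $p$-adic Plancherel measure; at all remaining places $f_v$ is the unit of the spherical Hecke algebra. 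The contribution of the continuous spectrum and of residual forms to the spectral side must be shown to be of strictly lower order as $N\to\infty$, hence negligible.

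It remains to analyze the geometric side, a finite sum of $(A\times A)$-orbital integrals of $f$ indexed by representatives of the relevant double cosets, among which one distinguishes a ``regular'' family and a ``central'' (or unipotent) orbit. The key mechanism is that the local orbital integral at $N$ against the new-vector projector $f_N$ is supported on a set that shrinks $N$-adically, so every orbital integral except the central one tends to $0$; moreover the hypotheses $\eta(N)=-1$ and $\gcd(N,pD)=1$ are what guarantee that the orbits liable to contribute a competing main term are either empty at $N$ or have vanishing $f_N$-orbital integral, so that they drop out identically. The surviving central orbital integral factors over places: the places away from $p$, $N$, $D$ reassemble into $L_{\fin}(1,\eta)$, the archimedean place yields the Gamma-factor constants recorded above, and the integral at $p$ against $f_p$, after Plancherel inversion at $p$ and passage to the limit in the polynomial approximation of $\mathbf 1_J$, produces exactly $\mu_p^\eta(J)$, the two displayed cases reflecting whether the local torus at $p$ is split ($\eta(p)=1$) or inert ($\eta(p)=-1$). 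A sandwiching argument between upper and lower polynomial approximants, valid at continuity points of the absolutely continuous measure $\mu_p^\eta$ and hence for every interval $J$, then gives the asserted limit.

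\textbf{The main obstacle} is the explicit local harmonic analysis at the places $N$ and $p$. One must prove that, for $N$ prime and large, the orbital integrals of the new-vector projector $f_N$ vanish on all non-central orbits — this is where the condition $\eta(N)=-1$ is genuinely used — and simultaneously carry out the delicate computation at $p$ that converts the $T_p$-weighting into the density $\tfrac{p-1}{(p^{1/2}+p^{-1/2}-x)^2}$, respectively $\tfrac{p+1}{(p^{1/2}+p^{-1/2})^2-x^2}$, against the Sato--Tate measure. This local computation, which tracks the behaviour of $\eta$ at $p$ relative to the $p$-adic Plancherel measure, is the heart of the matter; it is exactly the analysis carried out by Tsuzuki in the square-free case and extended to arbitrary level in the body of the paper. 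Alongside it, the other technical point is the uniform-in-$N$ bound on the Eisenstein contribution and on the error in the polynomial approximation, which is what licenses interchanging the limit with the spectral sum.
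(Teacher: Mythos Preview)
This theorem is not proved in the paper: it is quoted verbatim from Ramakrishnan--Rogawski as motivation, and the paper's own results (Theorems~\ref{thm:asymptotic of central values}--\ref{thm:subconvexity}) concern Maass forms over a totally real field, not holomorphic forms of weight $k$ over $\QQ$. So there is no ``paper's own proof'' to compare against; your outline is really a sketch of the Ramakrishnan--Rogawski argument itself.

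As such a sketch it is broadly accurate: relative trace formula for the split torus, Hecke/zeta period identities giving $L(1/2,\varphi)L(1/2,\varphi\otimes\eta)$ on the spectral side, a test function built from the discrete-series matrix coefficient at infinity and a Hecke polynomial at $p$, and the observation that as $N\to\infty$ with $\eta(N)=-1$ only the ``identity'' (unipotent) term survives on the geometric side, producing $L_{\fin}(1,\eta)$ times the $p$-adic Plancherel density. Two small points are worth flagging. First, your remark that the two formulas for $\mu_p^\eta$ reflect whether ``the local torus at $p$ is split or inert'' is not quite right: the torus $H$ is the diagonal split torus throughout, and the dichotomy comes purely from the value $\eta_p(\varpi_p)=\pm 1$ entering the local $L$-factor $L(1/2,\pi_p)L(1/2,\pi_p\otimes\eta_p)/L(1,\eta_p)$. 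Second, when you write that the local analysis at $p$ ``is exactly the analysis carried out by Tsuzuki\ldots and extended\ldots in the body of the paper,'' note that Tsuzuki and the present paper use Green functions for spherical principal series at the archimedean places, not discrete-series matrix coefficients; the finite-place analysis is indeed parallel, but the archimedean input for the holomorphic-weight-$k$ theorem is genuinely the Ramakrishnan--Rogawski computation, not the one in this paper.
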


Feigon and Whitehouse \cite{Feigon-White} generalized this result to the case of Hilbert modular forms.
Tsuzuki \cite{TsuzukiSpec} gave a similar kind of asymptotic formula for Maass cusp forms
with square free level in terms of automorphic representations of $GL(2)$ over a totally real algebraic number field.
The purpose of this paper is to generalize Tsuzuki's results of \cite{TsuzukiSpec} to the case of arbitrary level.

To state our results in this paper, we prepare some notations.
Let $F$ be a totally real algebraic number field, ${\go}_{F}$ its integer ring and
$\AA$ the adele ring of $F$.
We denote by $\Sigma_{F}$ (resp. $\Sigma_{\infty}$ and $\Sigma_{\fin}$) the set of all places (resp. all infinite places and all finite places)
of $F$.
For each $v \in \Sigma_{\fin}$, we fix a uniformizer
$\varpi_{v}$ of $F_{v}$ and
denote by $q_{v}$ the cardinality of the residue field of $F_{v}$.
For an ideal ${\ga}$ of ${\go}_{F}$, let $S({\ga})$ denote the
set of all $v \in \Sigma_{\fin}$ such that $\ord_{v}(\ga) \ge 1$.
The absolute norm of $\ga$ is denoted by $\N(\ga)$.

Fix a quadratic character $\eta = \prod_{v \in \Sigma_{F}} \eta_{v}$ of $F^{\times} \backslash \AA^{\times}$ of conductor $\gf_{\eta}$
so that $\eta_{v}$ is trivial for any $v \in \Sigma_{\infty}$.
Fix a finite subset $S$ of $\Sigma_{F}$ such that $\Sigma_{\infty} \subset S$ and $S \cap S(\gf_{\eta}) = \emptyset$.
Let $J_{S, \eta}$ be the set of all ideals $\gn$ of $\go_{F}$ satisfying the following three conditions:
\begin{enumerate}
\item $S(\gn) \cap S(\gf_{\eta}) = \emptyset$ and $S(\gn) \cap S = \emptyset$,
\item $\eta_{v}(\varpi_{v}) = -1$ for any $v \in S(\gn)$,
\item $\tilde{\eta}(\gn) = \prod_{v \in \Sigma_{\fin}} \eta_{v}(\varpi_{v}^{\ord_{v}(\gn)}) = 1$.
\end{enumerate}

Let $\bfK_{\infty}$ be the standard maximal compact subgroup of $GL(2, F \otimes_{\QQ} \RR)$.
For an ideal $\gn$ of $\go_{F}$,
$\bfK_{0}(\gn)$ denotes the Hecke congruence subgroup of $GL(2, \AA_{\fin})$ of level $\gn$.
Let $\Pi_{\rm cus}(\gn)$ denote the set of all irreducible cuspidal automorphic
representations of $PGL(2, \AA)$ having nonzero $\bfK_{\infty}\bfK_{0}(\gn)$-invariant vectors.
Let $\Pi_{\rm cus}^{*}(\gn)$ be the set of all $\pi \in \Pi_{\rm cus}(\gn)$ with conductor $\gf_{\pi} = \gn$.
The standard $L$-function of $\pi$ is denoted by $L(s, \pi)$.
Let $S_{\pi}$ denote the set of all $v \in \Sigma_{\fin}$ satisfying $\ord_{v}(\gf_{\pi})\ge 2$.

For $\gn \in J_{S, \eta}$ and $\pi = \otimes_{v} \pi_{v} \in \Pi_{\rm cus}(\gn)$,  $\pi_{v}$ is isomorphic to a unitarizable
spherical principal series representation $\pi(|\cdot|_{v}^{\nu_{v}/2}, |\cdot|_{v}^{-\nu_{v}/2})$ of $GL(2, F_{v})$ for any $v\in S$.
The spectral parameter $\nu_{\pi, S}$ at $S$ of $\pi$ is defined as $\nu_{\pi, S} = (\nu_{v})_{v \in S}$. 
It is known that $\nu_{\pi, S} \in \mathfrak{X}_{S}^{0+} = \prod_{v \in S}\mathfrak{X}_{v}^{0+}$, where
$\mathfrak{X}_{v}^{0+} = i \RR_{\ge0} \cup (0, 1)$ for
$v \in \Sigma_{\infty}$ and
$\mathfrak{X}_{v}^{0+} = i[0, 2\pi (\log q_{v})^{-1}] \cup \{ x + i y \ | \ x \in (0, 1), \ y \in \{0, 2 \pi i(\log q_{v})^{-1}\} \ \}$ for $v \in S_{\fin}=S \cap \Sigma_{\fin}$, respectively.


We define a measure $\l_{S}^{\eta}$ on $\mathfrak{X}_{S}^{0+}$ by
$4 D_{F}^{3/2} L(1, \eta) \bigotimes_{v \in S} \l_{v}^{\eta_{v}}$,
where $D_{F}$ denotes the absolute value of the discriminant of $F$,
and for any $v \in S$, the measure $\l_{v}^{\eta_{v}}$ on $\mathfrak{X}_{v}^{0+}$ with support in $i \RR$ is given by
{\allowdisplaybreaks\begin{align*}
d\l_{v}^{\eta_{v}} (i y)= & \frac{L(1/2, \pi(|\cdot|_{v}^{i y/2}, |\cdot|_{v}^{-iy/2}))
L(1/2, \pi(|\cdot|_{v}^{iy/2}, |\cdot|_{v}^{-iy/2})\otimes \eta_{v})}{L(1, \eta_{v})}
 \left\{ \begin{array}{l}
\displaystyle \frac{1}{4 \pi}|\Gamma(iy/2)|^{-2}dy \hspace{5mm} (v \in \Sigma_{\infty}),\\
\ \vspace{-2mm}\\
\displaystyle \frac{\log q_{v}} {4\pi} |1-q_{v}^{-i y}|^{2}dy \hspace{2mm} (v \in S_{\fin}).
\end{array}\right.
\end{align*}
}We remark that when $F = \QQ$ and $v =p < \infty$, $\l_{v}^{\eta_{v}}(i y)$ is exactly equal to $\mu_{p}^{\eta}(x)$
by changing a variable $x = p^{i y/2} + p^{- iy/2}$.
The main theorem of this paper is stated as follows.
\begin{thm}\label{thm:asymptotic of central values}
Let $\Lambda$ be an infinite subset of $J_{S, \eta}$.
For any $f \in C_{c}(\mathfrak{X}_{S}^{0+})$, we have
\begin{align*}
\frac{1}{{\rm N}(\gn)}\sum_{ \pi \in \Pi_{\rm cus}^{*}(\gn)}\frac{L(1/2, \pi)L(1/2, \pi \otimes \eta)}{ L^{S_{\pi}}(1, \pi; {\rm Ad})} f(\nu_{\pi, S})
\sim
C(\gn)\langle \l_{S}^{\eta}, f \rangle
\end{align*}
with $$C(\gn) = \prod_{v \in S_{2}(\gn)}\{1-(q_{v}^{2}-q_{v})^{-1}\}
\prod_{v \in S(\gn)- (S_{1}(\gn)\cup S_{2}(\gn))}(1-q_{v}^{-2})$$
as ${\rm N}(\gn) \rightarrow \infty$ in $\gn \in \Lambda$.
Here $S_{1}(\gn)$ $($resp. $S_{2}(\gn)$$)$ denotes the set of all $v\in S(\gn)$ such that $\ord_{v}(\gn)=1$ $($resp. $\ord_{v}(\gn)=2$$)$.
\end{thm}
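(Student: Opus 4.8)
The plan is to derive Theorem~\ref{thm:asymptotic of central values} from the relative trace formula for the split diagonal torus $T$ in $\PGL(2)$ established in the previous sections, applied to a test function tailored to the level $\gn$ and to the cut-off $f$, and then to read off the asymptotics from the geometric side. Concretely, for $\gn\in J_{S,\eta}$ and $f\in C_{c}(\mathfrak{X}_{S}^{0+})$ I would take $\phi=\bigotimes_{v}\phi_{v}$ on $\PGL(2,\AA)$ as follows: at $v\in S\cap\Sigma_{\fin}$ let $\phi_{v}$ lie in the spherical Hecke algebra with Plancherel (Satake) transform equal to the $v$-component of $f$; at $v\in\Sigma_{\infty}$ take the standard archimedean function realizing on the spectral side the weight $|\Gamma(iy/2)|^{-2}$ occurring in $d\l_{v}^{\eta_{v}}$; at $v\in S(\gn)$ build $\phi_{v}$ from the normalized characteristic function of $\bfK_{0}(\gn_{v})$ composed with the new-vector projector; and at every remaining place take the Hecke-algebra unit. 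Forming the kernel $K_{\phi}(x,y)=\sum_{\g\in\PGL(2,F)}\phi(x^{-1}\g y)$ and integrating it (in the regularized sense) over $(T(F)\backslash T(\AA))\times(T(F)\backslash T(\AA))$ against the pair of characters $(\mathbf{1},\eta)$ yields the identity $J_{\mathrm{spec}}(\phi)=J_{\mathrm{geom}}(\phi)$ that is the substance of the relative trace formula proved above.

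\textbf{Spectral side.} Expanding $K_{\phi}$ along the spectral decomposition of $L^{2}(\PGL(2,F)\backslash\PGL(2,\AA))$, the cuspidal part contributes, for each $\pi$, the product of the two $T$-periods of $\phi$ on the $\bfK_{\infty}\bfK_{0}(\gn)$-fixed vectors. By the local evaluations of the split-torus period --- Hecke's integral at the unramified and archimedean places and the explicit new-vector period at $v\in S(\gn)$, following \cite{TsuzukiSpec} and \cite{Feigon-White} --- this equals an explicit multiple of
$$\frac{L(1/2,\pi)\,L(1/2,\pi\otimes\eta)}{L^{S_{\pi}}(1,\pi;{\rm Ad})}\,f(\nu_{\pi,S}),$$
the omission in $L^{S_{\pi}}$ of the places where $\ord_{v}(\gf_{\pi})\ge2$ being dictated by the shape of the local period formula. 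Inserting the new-vector projector at $v\mid\gn$, or equivalently running a M\"obius sieve over the divisors of $\gn$, restricts the sum to $\Pi_{\rm cus}^{*}(\gn)$; one checks that the residual contribution vanishes while the continuous (Eisenstein) contribution to $J_{\mathrm{spec}}(\phi)$ is $o(\N(\gn))$ as $\N(\gn)\to\infty$ in $\Lambda$, the Eisenstein periods carrying only $\zeta$- and $L$-factors of polynomial size in $\log\N(\gn)$ against local weights at $v\mid\gn$ that decay.

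\textbf{Geometric side.} I would decompose $J_{\mathrm{geom}}(\phi)$ into a sum over $T(F)$-double cosets in $\PGL(2,F)$: a degenerate (identity/unipotent) orbit and the regular split orbits, indexed by an invariant $x\in F^{\times}$. Each orbital integral is Eulerian. The main term comes from a single distinguished orbit whose local factors at the places outside $S(\gn)$ assemble, after the archimedean and $S$-adic computations, into $4D_{F}^{3/2}L(1,\eta)\bigotimes_{v\in S}\l_{v}^{\eta_{v}}$ paired with $f$, that is, into $\langle\l_{S}^{\eta},f\rangle$, while at each $v\in S(\gn)$ the local orbital integral against the $\bfK_{0}(\gn_{v})$-new vector evaluates to $1$ if $\ord_{v}(\gn)=1$, to $1-(q_{v}^{2}-q_{v})^{-1}$ if $\ord_{v}(\gn)=2$, and to $1-q_{v}^{-2}$ if $\ord_{v}(\gn)\ge3$, so that the product over $v\in S(\gn)$ is precisely $C(\gn)$. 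For every other orbit, condition (2) in the definition of $J_{S,\eta}$, namely $\eta_{v}(\varpi_{v})=-1$ for all $v\in S(\gn)$ (together with the parity condition (3)), forces the local orbital integral at some $v\mid\gn$ to vanish unless $x$ has even $v$-valuation, and the few surviving orbits are bounded by an elementary lattice-point count to contribute $O(\N(\gn)^{1-\d})$ for some $\d>0$. Dividing by $\N(\gn)$ and letting $\N(\gn)\to\infty$ in $\Lambda$ then leaves exactly $C(\gn)\langle\l_{S}^{\eta},f\rangle$.

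\textbf{Main obstacle.} The genuinely new part, relative to the square-free case of \cite{TsuzukiSpec}, is the local harmonic analysis at places $v$ with $\ord_{v}(\gn)\ge2$: one must compute both the split-torus period of the $\bfK_{0}(\gn_{v})$-new vector and the local orbital integrals against it for representations of larger conductor (ramified principal series, twists of Steinberg, supercuspidals), where the new vector no longer enjoys the simple support and matrix-coefficient behavior available in the spherical or Steinberg cases; tracking these evaluations carefully is exactly what produces the trichotomy $\ord_{v}(\gn)=1$ versus $=2$ versus $\ge3$ in $C(\gn)$. Making the error estimates on the non-principal orbits uniform in $\gn$ as $\N(\gn)\to\infty$ along $\Lambda$ is the other point that requires care.
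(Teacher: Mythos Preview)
Your overall strategy matches the paper's: set up the relative trace formula, show the Eisenstein, residual, and hyperbolic contributions are negligible, and read off the asymptotic from the unipotent term. Where you diverge is in the local test function at $v\in S(\gn)$ and, consequently, in where the constant $C(\gn)$ comes from.

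The paper does \emph{not} build a new-vector projector into the test function. At $v\in S(\gn)$ the local factor of the adelic Green function is the simplest possible choice, $\Phi_{\gn,v}^{(z)}$, essentially the indicator of $H_{v}N(\go_{v})\bfK_{0}(\gn\go_{v})$ twisted by $|t_{1}/t_{2}|_{v}^{z}$. Consequently the cuspidal spectral side is a sum over \emph{all} of $\Pi_{\rm cus}(\gn)$, each $\pi$ carrying the weight $w_{\gn}^{\eta}(\pi)$ obtained by summing $\overline{P_{\rm reg}^{\bf1}(\varphi)}P_{\rm reg}^{\eta}(\varphi)$ over an orthonormal basis of $V_{\pi}^{\bfK_{\infty}\bfK_{0}(\gn)}$ (Lemma~\ref{lem:positivity of P}). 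The main geometric term then delivers $\langle\l_{S}^{\eta},\a\rangle$ with \emph{no} factor $C(\gn)$; this is Theorem~\ref{submain thm}. The constant $C(\gn)$ only enters afterwards, and on the \emph{spectral} side: for $\gn\in J_{S,\eta}$ the hypothesis $\eta_{v}(\varpi_{v})=-1$ at every $v\in S(\gn)$ forces $w_{\gn}^{\eta}(\pi)=0$ unless $\ord_{v}(\gn\gf_{\pi}^{-1})$ is even at each such $v$, and then an inclusion--exclusion over the sublevels $\gn\prod_{k}\gp_{i_{k}}^{-2}$ isolates $\Pi_{\rm cus}^{*}(\gn)$. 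The alternating sum of the resulting index ratios and weights collapses to $C(\gn)$.

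So your assertion that the local orbital integral of the new-vector test function at $v\in S(\gn)$ equals $1$, $1-(q_{v}^{2}-q_{v})^{-1}$, or $1-q_{v}^{-2}$ according to $\ord_{v}(\gn)$ is precisely the step you have not carried out---it is the ``main obstacle'' you name at the end, stated as if already known. If one really inserts a projector, the geometric side becomes a signed linear combination of the paper's orbital integrals at the various levels $\gn\prod\gp_{i}^{-2}$, and recovering those values amounts to redoing the same sieve on the other side of the identity. The paper's route sidesteps any direct local analysis of new-vector matrix coefficients for supercuspidal or ramified-principal-series $\pi_{v}$: the deep-level input is entirely contained in the explicit period formulas of \cite{Sugiyama} (Propositions~\ref{prop:period = L(1/2)} and~\ref{prop:Eisen P=L}), which feed into $w_{\gn}^{\eta}(\pi)$ and into the Eisenstein bound (Lemma~\ref{lem:estimation of eis}).
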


This asymptotic formula gives the following counterpart of \cite[Corollary B]{Ramakrishnan-Rogawski}.
\begin{thm}\label{thm:nonvanishing}
Let $\Lambda$ be an infinite subset of $J_{S, \eta}$
and let $\{ J_{v} \}_{v \in S}$ be a family of intervals such that $J_{v}$ is contained in $[1/4, \infty)$ for any $v \in \Sigma_{\infty}$ and in $[-2, 2]$ for any $v \in S_{\fin}$. Then, for any $M > 0$, there exists an irreducible cuspidal automorphic representation $\pi$ of $GL(2, \AA)$ with trivial central character satisfying the following conditions:
\begin{enumerate}
\item The conductor $\gf_{\pi}$ of $\pi$ belongs to $\Lambda$ and ${\rm N}(\gf_{\pi}) > M$ holds.
\item Both $L(1/2, \pi) \neq 0$ and $L(1/2, \pi \otimes \eta) \neq 0$ hold.
\item The spectral parameter $\nu_{\pi, S} = (\nu_{v})_{v \in S}$ at S of $\pi$ satisfies $(1 - \nu_{v}^{2})/4 \in J_{v}$ for any $v \in \Sigma_{\infty}$ and $q_{v}^{- \nu_{v}/2} + q_{v}^{\nu_{v}/2} \in J_{v}$ for any $v \in S_{\fin}$.
\end{enumerate}
\end{thm}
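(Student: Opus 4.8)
The plan is to apply Theorem~\ref{thm:asymptotic of central values} to a carefully chosen nonnegative $f\in C_c(\mathfrak{X}_S^{0+})$ supported in the prescribed spectral region, and to extract the desired $\pi$ from a single summand on the left-hand side that is forced not to vanish. First I would identify the subset $R\subset\mathfrak{X}_S^{0+}$ carved out by condition~$(3)$; we may assume each $J_v$ is a nondegenerate interval. Using the explicit shape of $\mathfrak{X}_v^{0+}$, a parameter with a non-tempered component cannot satisfy~$(3)$: for $v\in\Sigma_\infty$ and $\nu_v\in(0,1)$ one has $(1-\nu_v^2)/4<1/4\le\inf J_v$, and at a finite place a non-tempered $\nu_v$ gives $|q_v^{\nu_v/2}+q_v^{-\nu_v/2}|>2$. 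Hence $R$ lies inside the support of $\l_S^\eta$, which is contained in $\prod_{v\in S}i\RR$. On this locus, writing $\nu_v=iy_v$, the maps $y_v\mapsto(1+y_v^2)/4$ (for $v\in\Sigma_\infty$) and $y_v\mapsto 2\cos(y_v\log q_v/2)$ (for $v\in S_{\fin}$) are monotone bijections of $[0,\infty)$ onto $[1/4,\infty)$ and of $[0,2\pi(\log q_v)^{-1}]$ onto $[-2,2]$, so $R$ corresponds to a product of nondegenerate $y_v$-intervals. I then fix $0\ne f\ge 0$ in $C_c(\mathfrak{X}_S^{0+})$ with $\operatorname{supp}(f)$ contained in the interior of $R$, in particular with all $y_v$ bounded away from $0$ and from $2\pi(\log q_v)^{-1}$.

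The main point, and the only genuinely technical step, is to verify $\langle\l_S^\eta,f\rangle>0$, i.e.\ that the density $d\l_v^{\eta_v}(iy_v)$ is strictly positive throughout $\operatorname{supp}(f)$ for each $v\in S$. The elementary factors $\frac{1}{4\pi}|\Gamma(iy_v/2)|^{-2}$ (for $y_v\ne0$) and $\frac{\log q_v}{4\pi}|1-q_v^{-iy_v}|^2$ (for $y_v\notin 2\pi(\log q_v)^{-1}\ZZ$) are positive; and the $L$-factor quotient is positive because, for real $y_v$, both $L(1/2,\pi(|\cdot|_v^{iy_v/2},|\cdot|_v^{-iy_v/2}))$ and its $\eta_v$-twist are products of complex-conjugate pairs of nonzero numbers (Gamma factors at $v\in\Sigma_\infty$; Euler factors $(1-q_v^{-1/2\mp iy_v/2})^{-1}$ and $(1-\eta_v(\varpi_v)q_v^{-1/2\mp iy_v/2})^{-1}$ at $v\in S_{\fin}$, where $\eta_v$ is unramified since $S\cap S(\gf_\eta)=\emptyset$), hence positive reals, while $L(1,\eta_v)>0$ as well. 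One also notes that none of these factors has a pole on the relevant unitary axis. Combined with $4D_F^{3/2}L(1,\eta)>0$, this gives $\langle\l_S^\eta,f\rangle>0$.

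Finally I would record that $C(\gn)\ge c_0$ for a constant $c_0=c_0(F)>0$ independent of $\gn$: each factor of $C(\gn)$ lies in $(0,1)$, and $\sum_{v\in\Sigma_{\fin}}\{(q_v^2-q_v)^{-1}+q_v^{-2}\}<\infty$ because at most $[F:\QQ]$ places divide each rational prime and $q_v\ge p$ for $v\mid p$, so $\prod_{v\in\Sigma_{\fin}}\{1-(q_v^2-q_v)^{-1}\}(1-q_v^{-2})$ converges to a positive $c_0$. By Theorem~\ref{thm:asymptotic of central values} the quantity
\[
\frac{1}{{\rm N}(\gn)}\sum_{\pi\in\Pi_{\rm cus}^{*}(\gn)}\frac{L(1/2,\pi)L(1/2,\pi\otimes\eta)}{L^{S_\pi}(1,\pi;{\rm Ad})}\,f(\nu_{\pi,S})\ \sim\ C(\gn)\,\langle\l_S^\eta,f\rangle\ \ge\ c_0\,\langle\l_S^\eta,f\rangle>0,
\]
so there is $N_0$ with $\sum_{\pi\in\Pi_{\rm cus}^{*}(\gn)}\frac{L(1/2,\pi)L(1/2,\pi\otimes\eta)}{L^{S_\pi}(1,\pi;{\rm Ad})}f(\nu_{\pi,S})\ge\frac{1}{2}c_0\langle\l_S^\eta,f\rangle\,{\rm N}(\gn)>0$ for all $\gn\in\Lambda$ with ${\rm N}(\gn)\ge N_0$. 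As $\Lambda$ is infinite it contains such a $\gn$ with ${\rm N}(\gn)>\max(M,N_0)$; the finite sum over $\pi\in\Pi_{\rm cus}^{*}(\gn)$ being positive, some summand is nonzero. For that $\pi$, $f(\nu_{\pi,S})\ne0$ forces $\nu_{\pi,S}\in\operatorname{supp}(f)\subset R$, which is condition~$(3)$; $L(1/2,\pi)L(1/2,\pi\otimes\eta)\ne0$ is condition~$(2)$; and $\pi\in\Pi_{\rm cus}^{*}(\gn)$ gives $\gf_\pi=\gn\in\Lambda$ with ${\rm N}(\gf_\pi)>M$, which is condition~$(1)$, while $\pi$, as a representation of $PGL(2,\AA)$, has trivial central character. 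I emphasize that only the nonvanishing of this one summand is used, so no positivity of the individual central values $L(1/2,\pi)$ is needed.
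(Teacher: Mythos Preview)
Your argument is correct and follows essentially the same route as the paper's proof: apply Theorem~\ref{thm:asymptotic of central values} to obtain a strictly positive asymptotic for the weighted sum over $\Pi_{\rm cus}^{*}(\gn)$, use a uniform positive lower bound for $C(\gn)$, and conclude that for some $\gn\in\Lambda$ with ${\rm N}(\gn)>M$ the sum is nonzero, whence some individual term is nonzero. The only cosmetic difference is that the paper works directly with the indicator of the spectral window $\JJ$ and the quantity $\vol(\JJ,\l_S^\eta)$, whereas you substitute a continuous bump $f$ supported in the interior of $\JJ$; this sidesteps the (routine) passage from $C_c$ test functions to indicators and is arguably cleaner. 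Your explicit verification that the density of $\l_S^\eta$ is strictly positive on the tempered locus and your justification of a uniform lower bound $C(\gn)\ge c_0>0$ fill in details that the paper leaves implicit.
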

We remark that $L(1/2, \pi)L(1/2, \pi \otimes \eta) > 0$ if $L(1/2, \pi)L(1/2, \pi \otimes \eta) \neq 0$ by Guo's result \cite{Guo}.
Let $\{v_{j}\}_{j \in \NN}$ be the set of all places $v \in \Sigma_{\fin}- (S \cup S(\gf_{\eta}))$ such that $\eta_{v}(\varpi_{v}) = -1$ and let
$\{ \gp_{j} \}_{j \in \NN}$ be the set of all prime ideals of $\go_{F}$ corresponding to $\{ v_{j} \}_{j \in \NN}$.
Here are some examples of $\Lambda$ in Theorems \ref{thm:asymptotic of central values} and \ref{thm:nonvanishing}:
\begin{enumerate}
\item $\Lambda = \{ \gn = \gp_{1} \cdots \gp_{2n} \ |\ n \in \NN \}$,

\item $\Lambda = \{ \gn = \gp_{1}^{2n} \ |\ n \in \NN \}$,

\item $\Lambda = \{ \gn = \gp_{n}^{2a} \ |\ n \in \NN \}$ for a fixed $a \in \NN$,

\item $\Lambda = \{ \gn = \gp_{1}^{an}\gp_{2}^{bn} \ |\ n \in \NN \}$ for fixed odd integers $a, b >0$.
\end{enumerate}
The case (1) was treated by Tsuzuki \cite[Theorem 1.1 and Corollary 1.2]{TsuzukiSpec}.

Motohashi \cite{Motohashi} studied the growth of the square mean of central values of automorphic $L$-functions attached to Maass forms with full level via Kuznetsov's trace formula.
Tsuzuki \cite[Theorem 1.3]{TsuzukiSpec} considered a similar growth in the case where the level is square free and the base field is totally real.
We give a generalization of \cite[Theorem 1.3]{TsuzukiSpec} to the case of arbitrary level.
\begin{thm}
\label{thm:asymptotic of trivial}
We set $d_{F}=[F: \QQ]$.
Let $\gn$ be an arbitrary ideal of $\go_{F}$
and $\eta : F^{\times} \backslash \AA^{\times} \rightarrow \{ \pm 1\}$ a character
of conductor relatively prime to $\gn$.
Assume that $\tilde{\eta}(\gn) =1$ and $\eta_{v}(-1)=1$ for any $v \in \Sigma_{\infty}$.
Let $J$ be a compact subset of $\mathfrak{X}_{\Sigma_{\infty}}^{0+} \cap \prod_{v \in \Sigma_{\infty}} i \RR_{>0}$ with smooth boundary.
Then, for any $\e >0$, we have
{\allowdisplaybreaks\begin{align*}
\sum_{\begin{subarray}{c}\pi \in \Pi_{\rm cus}(\gn), \\
\nu_{\pi, \Sigma_{\infty}}\in tJ \end{subarray}}w_{\gn}^{\eta}(\pi) \frac{L(1/2, \pi)L(1/2, \pi \otimes \eta)}
{{\rm N}(\gf_{\pi})L^{S_{\pi}}(1, \pi; {\rm Ad})}
=& \frac{4D_{F}^{3/2}}{(4 \pi )^{d_{F}}}(1 + \d_{\gn,\go_{F}}) \vol(J) t^{d_{F}}(d_{F}\Res_{s=1}L(s,\eta) \log t + {\bf C}^{\eta}(\gn, F)) \\
& + \Ocal(t^{d_{F}-1}(\log t)^{3}) + \Ocal(t^{d_{F}(1+4\theta)+\e}), \ t \rightarrow \infty,
\end{align*}
}where
$w_{\pi}^{\eta}(\gn)$ is a constant explicitly defined in Lemma \ref{lem:positivity of P},
$${\bf C}^{\eta}(\gn, F) = {\rm CT}_{s = 1}L(s, \eta) + \Res_{s = 1}L(s, \eta) \bigg\{\frac{d_{F}}{2}(C_{\rm Euler}+ 2\log 2 - \log \pi) + \log(D_{F}{\rm N}(\gn)^{1/2})\bigg\}$$
and
$\theta \in \RR$ is a constant such that
$$|L(1/2 + i t, \chi)| \ll \mathfrak{q}(\chi |\cdot|_{\AA}^{i t})^{1/4 + \theta}, \hspace{3mm}t \in \RR$$
holds uniformly for any character $\chi$ of $F^{\times} \backslash \AA^{\times}$.
Here $\mathfrak{q}(\chi |\cdot|_{\AA}^{i t})$ is the analytic conductor of $\chi |\cdot|_{\AA}^{i t}$.
\end{thm}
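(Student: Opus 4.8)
The plan is to exploit the relative trace formula that underlies Theorem \ref{thm:asymptotic of central values}, but now with a spectral weight that is supported on a dilating region $tJ \subset \mathfrak{X}_{\Sigma_\infty}^{0+}$ and, crucially, to extract not just the leading term but the full main term including the $\log t$ correction and the constant ${\bf C}^\eta(\gn,F)$. First I would fix the ideal $\gn$ and recall the identity of the spectral side and the geometric side of the relative trace formula for a test function $\alpha$ whose spectral transform $\widehat\alpha$ localizes near $tJ$; concretely, choose $\widehat\alpha$ so that $\sum_{\pi\in\Pi_{\rm cus}(\gn)} w_\gn^\eta(\pi)\,\frac{L(1/2,\pi)L(1/2,\pi\otimes\eta)}{{\rm N}(\gf_\pi)L^{S_\pi}(1,\pi;{\rm Ad})}\,\widehat\alpha(\nu_{\pi,\Sigma_\infty})$ is exactly the left-hand side of the theorem (up to a sharp-cutoff-versus-smooth-cutoff comparison handled at the end). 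The contribution of the continuous spectrum (Eisenstein series) to the spectral side must be isolated and shown to be absorbed into the error terms, using bounds on the relevant intertwining operators and on $L(s,\eta)$ on the critical line; this is where the exponent $\theta$ enters — the subconvex (or just convexity, if $\theta=1/4$) bound for $L(1/2+it,\chi)$ controls the Eisenstein contribution and produces the $\Ocal(t^{d_F(1+4\theta)+\e})$ term.

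Next I would turn to the geometric side. After Tsuzuki's explicit computation (as generalized to arbitrary level in the body of the paper), the geometric side is a sum of an "identity" orbital term, a "unipotent" orbital term, and "regular" (hyperbolic/split) orbital terms. The identity term contributes the main archimedean volume factor $\frac{4D_F^{3/2}}{(4\pi)^{d_F}}(1+\delta_{\gn,\go_F})\vol(J)\,t^{d_F}$ times a constant coming from the local archimedean transforms; the key point is that the value of a certain archimedean integral of $\widehat\alpha$ over $tJ$ against the Plancherel-type density $\prod_v |\Gamma(i\nu_v/2)|^{-2}$ scales like $\vol(J)t^{d_F}$ to leading order with computable lower-order corrections. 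The $\log t$ term and the constant ${\bf C}^\eta(\gn,F)$ arise from the unipotent orbital integral: this is essentially a regularized period integral whose evaluation involves $\Res_{s=1}L(s,\eta)$, the constant term ${\rm CT}_{s=1}L(s,\eta)$ in the Laurent expansion, and archimedean gamma-factor constants (giving the $C_{\rm Euler}+2\log 2-\log\pi$ piece) together with $\log(D_F{\rm N}(\gn)^{1/2})$ from the finite places and the discriminant. I would compute the unipotent term by writing it as an integral of $\widehat\alpha$ against a kernel built from $L_{\fin}(s,\eta)$, shifting contours, and picking up the double pole structure that converts the dilation $\widehat\alpha(\cdot/t)$ into $t^{d_F}(A\log t + B)$ with $A = d_F\Res_{s=1}L(s,\eta)$ and $B$ the stated constant. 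The regular orbital terms, being rapidly decaying in the split parameter, contribute only to the $\Ocal(t^{d_F-1}(\log t)^3)$ error, which I would bound by the standard estimates for Kloosterman-type sums and local orbital integrals already developed for the proof of Theorem \ref{thm:asymptotic of central values}.

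Finally, I would pass from the smoothed sum to the sharp cutoff $\nu_{\pi,\Sigma_\infty}\in tJ$. Since $J$ has smooth boundary, approximating the indicator of $tJ$ above and below by smooth functions that agree with it outside a boundary neighborhood of width $O(t^{-1})$ costs $\Ocal(t^{d_F-1+\e})$ in the count of spectral parameters in the transition region, weighted by the size of the summand; combined with a Weyl-law upper bound for $\#\{\pi\in\Pi_{\rm cus}(\gn): \nu_{\pi,\Sigma_\infty}\in tJ\}$ and positivity of the central-value product (Guo's result cited in the text), this discrepancy is absorbed into $\Ocal(t^{d_F-1}(\log t)^3)$. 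Putting the identity, unipotent, and error contributions together yields the stated asymptotic. The main obstacle I anticipate is the precise bookkeeping of constants in the unipotent orbital integral: getting the archimedean gamma-factor contributions, the finite-level factors, and the discriminant all to combine into exactly ${\bf C}^\eta(\gn,F)$ requires a careful, non-automatic regularization of a divergent period integral, and this is the step where an error in normalization would be easiest to make and hardest to detect.
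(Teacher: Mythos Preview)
Your overall strategy---apply the relative trace formula of the paper with $S=\Sigma_\infty$ and a test function $\alpha$ dilated by $t$, bound $\II_{\rm eis}^\eta$ via the subconvexity exponent $\theta$, extract the main term from the geometric side, and pass from smooth to sharp cutoff using positivity and the smooth boundary of $J$---is exactly what the paper does (it simply defers to \cite[Theorem~1.3, Theorem~14.1]{TsuzukiSpec}, noting that the relative trace formula established here generalizes Tsuzuki's to arbitrary level).

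There is, however, a misattribution in your geometric-side bookkeeping that you should correct before writing anything in detail. In this relative trace formula the regularized period of the identity contribution vanishes: Theorem~\ref{geometric side} records $\JJ_{\rm id}^\eta(\gn|\a)=0$. The entire main term---both the leading $t^{d_F}$ volume and the $t^{d_F}\log t$ correction, together with the constant ${\bf C}^\eta(\gn,F)$ and the factor $(1+\delta_{\gn,\go_F})$---comes from the \emph{unipotent} terms $\JJ_{\rm u}^\eta(\gn|\a)+\JJ_{\bar{\rm u}}^\eta(\gn|\a)$. Concretely, these are the integrals of $\Upsilon_{\Sigma_\infty}^\eta(\bfs)\,\gC_{\Sigma_\infty,\ga}^\eta(\bfs)\,\a(\bfs)$ for $\ga=\go_F$ and $\ga=\gn$; the digamma part of $\gC_{\Sigma_\infty,\ga}^\eta(\bfs)$ behaves like $d_F\log t$ on $tJ$ (Stirling), producing the $d_F\,\Res_{s=1}L(s,\eta)\log t$, while the $\bfs$-independent part of $\frac12(\gC_{\Sigma_\infty,\go_F}^\eta+\gC_{\Sigma_\infty,\gn}^\eta)$ is exactly ${\bf C}^\eta(\gn,F)$. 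The hyperbolic term $\JJ_{\rm hyp}^\eta$ (not ``Kloosterman-type sums'') and the residual piece $\DD^\eta(\gn|\a)$ feed into the $\Ocal(t^{d_F-1}(\log t)^3)$ error; the Eisenstein bound you described is correct. With this reallocation your outline matches the paper's proof.
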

Moreover,
we obtain the following results on subconvexity bounds depending on $\theta<0$.

\begin{thm}\label{thm:subconvexity}
Let $\gn$ be an arbitrary ideal of $\go_{F}$.
Let $J \subset \mathfrak{X}_{\Sigma_{\infty}}^{0+} \cap \prod_{v \in \Sigma_{\infty}}i \RR_{>0}$ be a closed cone.
Then, for any $\e>0$, we have
$$|L_{\fin}(1/2, \pi)| \ll (1+ ||\nu_{\pi, \Sigma_{\infty}}||)^{d_{F} + \sup(2d_{F} \theta, -1/2) + \e}
$$
for
$\pi \in \Pi_{\rm cus}(\gn)_{J} = \{ \pi \in \Pi_{\rm cus}(\gn)\ | \ \nu_{\pi, \Sigma_{\infty}} \in J \}$.
\end{thm}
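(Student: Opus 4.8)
The plan is to derive Theorem~\ref{thm:subconvexity} from Theorem~\ref{thm:asymptotic of trivial} by a positivity argument, in the spirit of \cite{Ramakrishnan-Rogawski} and \cite{TsuzukiSpec}. First I apply Theorem~\ref{thm:asymptotic of trivial} with the \emph{trivial} character $\eta = 1$, which is admissible: its conductor $\go_{F}$ is prime to $\gn$, $\tilde{\eta}(\gn) = 1$, and $\eta_{v}(-1) = 1$ for every $v \in \Sigma_{\infty}$. For $\eta = 1$ one has $L(1/2, \pi) L(1/2, \pi \otimes \eta) = L(1/2, \pi)^{2} \ge 0$, because $\pi$ is self-dual on $PGL(2)$ and hence $L(1/2, \pi) \in \RR$; moreover $L^{S_{\pi}}(1, \pi; {\rm Ad}) > 0$ and the weights $w_{\gn}^{\eta}(\pi)$ are positive by Lemma~\ref{lem:positivity of P}. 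Consequently every summand on the left-hand side of Theorem~\ref{thm:asymptotic of trivial} is non-negative, so any partial sum over a subset of $\Pi_{\rm cus}(\gn)$ dominates each of its individual terms.

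Fix $\pi_{0} \in \Pi_{\rm cus}(\gn)_{J}$ and set $T_{0} = \|\nu_{\pi_{0}, \Sigma_{\infty}}\|$; the assertion is trivial when $T_{0}$ lies in a fixed bounded range, so assume $T_{0}$ large. Since $J$ is a closed cone inside $\prod_{v \in \Sigma_{\infty}} i\RR_{>0}$, the truncations $J_{s} = \{ x \in J : \|x\| \le s \}$ are compact subsets of $\mathfrak{X}_{\Sigma_{\infty}}^{0+} \cap \prod_{v \in \Sigma_{\infty}} i\RR_{>0}$ with $J_{s} = s J_{1}$, and a representation $\pi$ with $\nu_{\pi, \Sigma_{\infty}} \in J$ satisfies $\nu_{\pi, \Sigma_{\infty}} \in s J_{1}$ precisely when $\|\nu_{\pi, \Sigma_{\infty}}\| \le s$. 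After smoothing the non-smooth part of $\partial J_{1}$ (the ridge where the conical wall meets the unit sphere), or by sandwiching $J_{1}$ between genuinely smooth sub- and super-regions and using positivity, Theorem~\ref{thm:asymptotic of trivial} applies with $J$ replaced by $J_{1}$ and $t = s$. Applying it at $s = T_{0}$ and $s = T_{0} - 1$ and subtracting, the leading terms of shape $\vol(J_{1})\, s^{d_{F}}(c_{1} \log s + c_{2})$ contribute a difference of size $\Ocal(T_{0}^{d_{F} - 1} \log T_{0})$, while the two error terms merely add; hence the sum over the thin annulus $J_{T_{0}} \setminus J_{T_{0} - 1}$ satisfies
\[
\sum_{\substack{\pi \in \Pi_{\rm cus}(\gn) \\ T_{0} - 1 \le \|\nu_{\pi, \Sigma_{\infty}}\| \le T_{0}}} w_{\gn}^{\eta}(\pi) \, \frac{L(1/2, \pi)^{2}}{\N(\gf_{\pi})\, L^{S_{\pi}}(1, \pi; {\rm Ad})} \ \ll_{\gn, \e}\ T_{0}^{d_{F} - 1}(\log T_{0})^{3} + T_{0}^{d_{F}(1 + 4\theta) + \e}.
\]
Because $\nu_{\pi_{0}, \Sigma_{\infty}} \in J_{T_{0}} \setminus J_{T_{0} - 1}$, positivity yields the same upper bound for the single term indexed by $\pi_{0}$.

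It remains to isolate $L_{\fin}(1/2, \pi_{0})$. Write $L(1/2, \pi_{0}) = L_{\infty}(1/2, \pi_{0})\, L_{\fin}(1/2, \pi_{0})$, the archimedean factor $L_{\infty}(1/2, \pi_{0}) = \prod_{v \in \Sigma_{\infty}} L(1/2, \pi_{0, v})$ decaying like $\prod_{v} |y_{v}|^{-1/2} e^{-\pi|y_{v}|/4}$ (with $\nu_{v} = i y_{v}$) by Stirling. By Lemma~\ref{lem:positivity of P} the weight $w_{\gn}^{\eta}(\pi_{0})$ is, up to a constant depending only on $\gn$, the archimedean test-function weight $\prod_{v \in \Sigma_{\infty}} |\Gamma(i y_{v}/2)|^{-2}$ divided by the Plancherel density at $\nu_{\pi_{0}, \Sigma_{\infty}}$ (of size $\asymp \prod_{v}|y_{v}|$), so that — all components $|y_{v}|$ being comparable to $T_{0}$ for $\pi_{0}$ in the cone $J$ — one checks $w_{\gn}^{\eta}(\pi_{0})\, L(1/2, \pi_{0})^{2} \asymp_{\gn} T_{0}^{-d_{F}}\, |L_{\fin}(1/2, \pi_{0})|^{2}$. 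Inserting this together with $\N(\gf_{\pi_{0}}) \le \N(\gn)$ and the standard bound $L^{S_{\pi_{0}}}(1, \pi_{0}; {\rm Ad}) \gg_{\e} T_{0}^{-\e}$ into the displayed estimate gives
\[
|L_{\fin}(1/2, \pi_{0})|^{2} \ \ll_{\gn, \e}\ T_{0}^{d_{F}}\bigl(T_{0}^{d_{F} - 1 + \e} + T_{0}^{d_{F}(1 + 4\theta) + \e}\bigr) \ =\ T_{0}^{2 d_{F} + \sup(-1,\, 4 d_{F} \theta) + \e},
\]
and taking square roots yields $|L_{\fin}(1/2, \pi_{0})| \ll_{\gn, \e} (1 + \|\nu_{\pi_{0}, \Sigma_{\infty}}\|)^{d_{F} + \sup(2 d_{F} \theta,\, -1/2) + \e}$, as claimed.

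The main obstacle is the computation in the third step: extracting from Lemma~\ref{lem:positivity of P} the exact power of $\|\nu_{\pi_{0}, \Sigma_{\infty}}\|$ carried by $w_{\gn}^{\eta}(\pi_{0})\, L_{\infty}(1/2, \pi_{0})^{2}$, and confirming that it combines with the annulus estimate to give precisely the exponent $2 d_{F} + \sup(-1, 4 d_{F}\theta)$ for $|L_{\fin}(1/2, \pi_{0})|^{2}$. Two remaining points are structural. The subconvexity input $\theta$ enters only through the error term $\Ocal(t^{d_{F}(1 + 4\theta) + \e})$ of Theorem~\ref{thm:asymptotic of trivial}, so the argument is informative exactly when $\theta < 0$; and the $-1/2$ in $\sup(2 d_{F}\theta, -1/2)$ is the gain obtained by working with the thin annulus $J_{T_{0}} \setminus J_{T_{0} - 1}$, of thickness $\asymp 1$, rather than the whole truncated cone $J_{T_{0}}$ — subtracting the $s^{d_{F}} \log s$ main terms saves one power of $T_{0}$, whereas using $J_{T_{0}}$ alone would only reproduce the convexity-type exponent $d_{F} + \e$. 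Finally, one must check that the boundary-regularity hypothesis of Theorem~\ref{thm:asymptotic of trivial} tolerates $J_{1}$, which should follow from the sandwiching argument indicated above.
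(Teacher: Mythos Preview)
Your overall strategy is correct and matches what the paper does (the paper simply refers to \cite[Corollary 1.4]{TsuzukiSpec}): apply Theorem~\ref{thm:asymptotic of trivial} with $\eta={\bf 1}$, exploit positivity, difference over the annulus $J_{T_0}\setminus J_{T_0-1}$, and drop to a single term. The problem lies in your ``main obstacle'' paragraph, where the archimedean bookkeeping is wrong in a way that invalidates the displayed intermediate estimates.

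Concretely: by Lemma~\ref{lem:positivity of P}, the weight $w_{\gn}^{\bf 1}(\pi_0)$ is a \emph{finite} product of local factors $r(\pi_{0,v},{\bf 1}_v,k)$ over $v\in S(\gn\gf_{\pi_0}^{-1})$; it carries no archimedean Gamma factors and satisfies $w_{\gn}^{\bf 1}(\pi_0)\asymp_{\gn}1$. Consequently your claimed asymptotic $w_{\gn}^{\bf 1}(\pi_0)\,L(1/2,\pi_0)^2\asymp_{\gn}T_0^{-d_F}|L_{\fin}(1/2,\pi_0)|^2$ is false: the left side contains $L_\infty(1/2,\pi_0)^2$, which by Stirling decays like $\prod_v|y_v|^{-1}e^{-\pi|y_v|/2}$. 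Likewise $L^{S_{\pi_0}}(1,\pi_0;{\rm Ad})$ includes the archimedean factor $L_\infty(1,\pi_0;{\rm Ad})\asymp\prod_v e^{-\pi|y_v|/2}$, so your ``standard bound'' $L^{S_{\pi_0}}(1,\pi_0;{\rm Ad})\gg_\e T_0^{-\e}$ is also false, and in any case the inequality points the wrong way for the argument.

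The fix is to keep numerator and denominator together: the exponential factors in $L_\infty(1/2,\pi_0)^2$ and $L_\infty(1,\pi_0;{\rm Ad})$ cancel exactly in the ratio, leaving
\[
\frac{L_\infty(1/2,\pi_0)^2}{L_\infty(1,\pi_0;{\rm Ad})}\ \asymp\ \prod_{v\in\Sigma_\infty}|y_v|^{-1}\ \asymp\ T_0^{-d_F}
\]
on the cone $J$. Hence the single term satisfies
\[
w_{\gn}^{\bf 1}(\pi_0)\,\frac{L(1/2,\pi_0)^2}{\N(\gf_{\pi_0})L^{S_{\pi_0}}(1,\pi_0;{\rm Ad})}\ \gg_{\gn,\e}\ \frac{T_0^{-d_F}}{L_{\fin}^{S_{\pi_0}}(1,\pi_0;{\rm Ad})}\,|L_{\fin}(1/2,\pi_0)|^2\ \gg_{\gn,\e}\ T_0^{-d_F-\e}|L_{\fin}(1/2,\pi_0)|^2,
\]
where the last step uses the \emph{upper} bound $L_{\fin}^{S_{\pi_0}}(1,\pi_0;{\rm Ad})\ll_\e T_0^{\e}$ for the finite adjoint $L$-value. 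Combining this lower bound with your annulus estimate then gives the stated exponent, exactly as you wrote in the final display.
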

We remark that Theorem \ref{thm:subconvexity} was proved by Tsuzuki \cite[Corollary 1.4]{TsuzukiSpec} when $\gn$ is square free.
Michel and Venkatesh \cite{Michel-Venkatesh} gave sharp subconvexity bounds for automorphic $L$-functions for $GL(1)$ and $GL(2)$ in more general case.

Our method to prove from Theorem \ref{thm:asymptotic of central values} to Theorem \ref{thm:subconvexity} is based on that of \cite{TsuzukiSpec}.
We introduce adelic Green functions on $GL(2, \AA)$ for ideals of $\go_{F}$
and then we give a relative trace formula by computing the regularized period of a Poincar$\acute{\rm e}$ series of an adelic Green function in two different ways.
Regularized periods used in this paper are toral period integrals regularized
by Tsuzuki in order to define periods for nonrapidly decreasing functions on $GL(2, \AA)$.
Since regularized periods of automorphic forms
on $GL(2, \AA)$ with arbitrary level were studied by a previous paper \cite{Sugiyama},
we can compute the spectral side of our relative trace formula.

We explain the structure of this paper. In \S \ref{Preliminaries}, we introduce notations used throughout this paper.
In \S \ref{Regularized periods of automorphic forms}, we review results of \cite{Sugiyama} and prepare several lemmas.
In \S \ref{Adelic Green functions}, we introduce adelic Green functions on $GL(2, \AA)$.
In \S \ref{Spectral expansions of renormalized Green functions}, we regularize a Poincar$\acute{\rm e}$ series of an adelic Green function.
The regularized Poincar$\acute{\rm e}$ series is called the regularized automorphic smoothed kernel.
In \S \ref{Periods of regularized automorphic smoothed kernels: spectral side}, we compute the regularized period of the regularized automorphic smoothed kernel by using the spectral expansion.
In \S \ref{Periods of regularized automorphic smoothed kernels: geometric side}, we decompose the regularized period of the regularized automorphic smoothed kernel
into the sum of terms derived from some orbits.
In \S \ref{Proofs of main theorems}, we prove from Theorem \ref{thm:asymptotic of central values} to Theorem \ref{thm:subconvexity}.

\subsection*{Notation.}

We write $\NN$ for the set of natural numbers and put $\NN_{0}=\NN \cup\{0\}$.
For sets $A$ and $B$, the set ${\rm Map}(A, B)$ denotes the set of mappings from $A$ to $B$.
For $f, g \in {\rm Map}(A, \RR_{\ge 0})$, let us denote by $f(x)\ll g(x), x\in A$
an inequality $f(x)\le Cg(x)$ with some constant $C>0$.
For a given condition $P$, $\d(P) \in \{ 0, 1\}$ is defined by
$\d(P)=1$ (resp. $\d(P)=0$) if $P$ is true (resp. false).

Let $F$ be an totally real field with its degree $d_{F}$ and $\go_{F}$ its integer ring.
Let $\AA$ and $\AA_{\fin}$ be the adele ring and the finite adele ring of $F$, respectively.
The symbols $\Sigma_{\infty}$ and $\Sigma_{\fin}$ denote
the set of all infinite places and the set of all finite places of $F$, respectively.
For a place $v \in \Sigma_{F} = \Sigma_{\infty} \cup \Sigma_{\fin}$, let $|\cdot |_{v}$
denote the normalized valuation of the completion $F_{v}$ of $F$ at $v$.
For each $v \in \Sigma_{\fin}$, let $\varpi_{v}$ be a uniformizer of $F_{v}$.
Then, $\gp_{v} = \varpi_{v}\go_{v}$ for $v \in \Sigma_{\fin}$ is a maximal ideal of the integer ring $\go_{v}$ of $F_{v}$
and we have $|\varpi_{v}|_{v}=q_{v}^{-1}$, where $q_{v}$ is the cardinality of the residue field $\go_{v}/\gp_{v}$.
For an ideal $\ga$ of $\go_{F}$, let $S(\ga)$ denote the set of all $v \in \Sigma_{\fin}$
such that $v$ divides $\ga$. For any $k \in \NN$, we write $S_{k}(\ga)$ for
the set of all $v \in S(\ga)$ with $\ord_{v}(\ga) =k$, where $\ord_{v}(\ga)$ is the order of $\ga$
at $v$. Let $\N(\ga)$ denote the absolute norm of $\ga$.

Let $G$ be the algebraic group $GL(2)$ with unit element $e$.
For any $F$-algebraic subgroup $M$ of $G$, 
we set $M_{F}= M(F)$, $M_{v}=M(F_{v})$ (for $v\in \Sigma_{F}$), $M_{\AA} = M(\AA)$ and $M_{\fin}=M(\AA_{\fin})$, respectively.
The diagonal maximal split torus of $G$ is denoted by $H$.
Then, the Borel subgroup $B=HN$ of $G$ consists of all upper triangular matrices of $G$,
where $N$ is the subgroup of $G$ consisting of all unipotent matrices of $G$.
The center of $G$ is denoted by $Z$.
We put $\bfK_{v}=O(2, \RR)$ (resp. $\bfK_{v}=GL(2, \go_{v})$) for $v \in \Sigma_{\infty}$
(resp. $v \in \Sigma_{\fin}$).
Then, $\bfK = \prod_{v\in\Sigma_{F}} \bfK_{v}$ is a maximal compact subgroup of $G_{\AA}$.
Set ${\bf K}_{\infty} = \prod_{v \in \Sigma_{\infty}}{\bf K}_{v}$ and
$\bfK_{0}({\gp}_{v}^{n})= 
\left\{ \left(\begin{matrix}
a & b \\
c & d
\end{matrix}\right) \in {\bf K}_{v} \bigg| c \equiv 0 ({\rm mod} \ {\gp}_{v}^{n}) \right\}$ for any 
$n\in \NN_{0}$.
For an ideal $\ga$ of ${\go}_{F}$, we put
${\bf K}_{0}(\ga) = \prod_{v \in \Sigma_{\fin}}{\bf K}_{0}(\ga\go_{v})$.

\section{Preliminaries}
\label{Preliminaries}
Let $\AA_{\QQ}$ be the adele ring of $\QQ$ and
$\psi_{\QQ} = \prod_{p}\psi_{p}$ the additive character of $\QQ \backslash \AA_{\QQ}$ with archimedean component $\psi_{\infty}(x)={\exp}(2\pi ix)$ for $x \in \RR$. 
Then, $\psi_{F} = \psi \circ {\rm tr}_{F/\mathbb{Q}} = \prod_{v \in \Sigma_{F}}\psi_{F_{v}}$ is a nontrivial additive
character of $F \backslash \AA$.
Let $\gD_{F/\QQ}$ be the global different of $F/\QQ$ and set $\ord_{v}\gD_{F/\QQ} = d_{v}$ for any $v \in \Sigma_{\fin}$.

For $v \in \Sigma_{F}$, let $dx_{v}$ be the self-dual Haar measure of $F_{v}$ with
respect to $\psi_{F_{v}}$.
We set
$d^{\times}x_{v} = (1-q_{v}^{-1})^{-1}dx_{v}/|x_{v}|_{v}$ for $v \in \Sigma_{\fin}$
and $d^{\times}x_{v} = d^{\times}x_{v}/|x_{v}|_{v}$ for $v \in \Sigma_{\infty}$, respectively.
Then, $d^{\times} x_{v}$ is a Haar measure of $F_{v}^{\times}$ and
the product measure $d^{\times}x=\prod_{v \in \Sigma_{F}}d^{\times}x_{v}$ gives a Haar measure on $\AA^{\times}$.
For each $v \in \Sigma_{F}$, we take a Haar measure $dk_{v}$ on $\bfK_{v}$ such that total volume is one,
and take a Haar measure $dg_{v}$ on $G_{v}$ in the following way.
Let $dh_{v}$ (resp. $dn_{v}$) denotes the Haar measure on $H_{v}$ (resp. $N_{v}$)
induced via the isomorphism $H_{v} \cong F_{v}^{\times} \times F_{v}^{\times}$
(resp. $N_{v} \cong F_{v}$).
Then, $dg_{v} = dh_{v}dn_{v}dk_{v}$ gives a Haar measure on $G_{v}$ via the Iwasawa decomposition $g_{v}=h_{v}n_{v}k_{v} \in H_{v}N_{v}\bfK_{v}$.
We remark $\vol(\bfK_{v}, dg_{v}) = q_{v}^{-3d_{v}/2}$ for any $v \in \Sigma_{\fin}$.
We denote the Haar measure $\prod_{v \in \Sigma_{F}}dk_{v}$ of $\bfK$ by $dk$.

Let $|\cdot|_{\AA}=\prod_{v\in\Sigma_{F}}|\cdot|_{v}$ be the idele norm of $\AA^{\times}$
and set $\AA^{1} = \{x\in\AA^{\times}| \ |x|_{\AA} = 1\}$.
For $y \in \RR_{>0}$, $\underline{y}$ denotes the idele
such that the $v$-component of $\underline{y}$ satisfies
$\underline{y}_{v}=y^{1/d_{F}}$ (resp. $\underline{y}_{v}=1$) for $v \in \Sigma_{\infty}$
(resp. $v \in \Sigma_{\fin}$).
Set $G_{\AA}^{1} = \{g\in G_{\AA}| \ |\det g|_{\AA} = 1\}$ and $\gA = 
\left\{\left(\begin{matrix}\underline{y} & 0 \\ 0 &  \underline{y}\end{matrix}\right)\bigg| \ y>0\right\}$.
Then,  $G_{\AA} = \gA G_{\AA}^{1}$ holds.

For $v \in \Sigma_{\fin}$ and a quasi-character $\chi_{v}$ of
$F_{v}^{\times}$, $\gp_{v}^{f(\chi_{v})}$ denotes the conductor of $\chi_{v}$.
We define the Gauss sum associated with $\chi_{v}$ by
$${\mathcal G}(\chi_{v}) = \int_{{\go}^{\times}_{v}}
\chi_{v}(u\varpi_{v}^{-d_{v}-f(\chi_{v})})
\psi_{F_{v}}(u \varpi_{v}^{-d_{v}-f(\chi_{v})}) d^{\times}u.$$
For any quasi-character $\chi=\prod_{v\in\Sigma_{F}}\chi_{v}$ of $F^{\times} \backslash \AA^{\times}$, we define the conductor of $\chi$ by the ideal ${\gf}_{\chi}$ of ${\go}_{F}$ such that
${\gf}_{\chi}{\go}_{v} = {\gp}_{v}^{f(\chi_{v})}$ for all $v\in\Sigma_{\fin}$. We write $\chi_{\fin}$ for $\prod_{v\in\Sigma_{\fin}}\chi_{v}$.
The Gauss sum associated with $\chi$ is defined by
the product of $\Gcal(\chi_{v})$ over all $v \in \Sigma_{\fin}$.
We set $\tilde{\chi}(\ga) = \prod_{v \in \Sigma_{\fin}}\chi_{v}(\varpi_{v}^{\ord_{v}(\ga)})$ for any ideal $\ga$ of $\go_{F}$.
For $v \in \Sigma_{F}$, we denote the trivial character of $F_{v}^{\times}$ by ${\bf 1}_{v}$,
and the trivial character of $\AA^{\times}$ by ${\bf 1}$.
Throughout this paper, any quasi-character $\chi$ of $F^{\times} \backslash \AA^{\times}$ is assumed to satisfy $\chi(\underline{y}) = 1$ for all $y \in \RR_{>0}$.
Such a quasi-character is a character.
For any $v \in \Sigma_{F}$ and any character $\chi_{v}$ of $F_{v}^{\times}$, let $b(\chi_{v})$ denote $b_{v} \in \RR$ (resp. $b_{v} \in [0, 2\pi(\log q_{v})^{-1})$) such that
the restriction of $\chi_{v}$ to $\RR_{>0}$ (resp. $\varpi_{v}^{\ZZ}$) is of the form $|\cdot|_{v}^{i b_{v}}$.
For any character $\chi$ of $F^{\times} \backslash \AA^{\times}$, the analytic conductor $\gq(\chi)$ of $\chi$ is defined to be
$$\gq(\chi) = \big\{\prod_{v \in \Sigma_{\infty}} (3 + |b(\chi_{v})|)\big\}{\rm N}(\gf_{\chi}).$$

Let $\gn$ be an ideal of $\go_{F}$.
For an ideal $\mathfrak{c}$ of $\go_{F}$, let $\Xi_{0}(\mathfrak{c})$ be the set of all characters $\chi$ of $F^{\times} \backslash \AA^{\times}$ such that $\gf_{\chi}=\mathfrak{c}$ and
$\chi_{v}(-1)=1$ for all $v \in \Sigma_{\infty}$.
We write $\Xi(\gn)$ for $\bigcup_{ \mathfrak{c}^{2}| \gn} \Xi_{0}(\mathfrak{c})$.
Let $U_{F}^{+}$ be the set of all totally positive units of $\go_{F}$
and set
$$\log U_{F}^{+} = \{ (\log u_{v})_{v \in \Sigma_{\infty}} \ | \ (u_{v})_{v \in \Sigma_{\infty}} \in U_{F}^{+}\}.$$
Then, $\log U_{F}^{+}$ is a lattice of $\ZZ$-rank $d_{F}-1$ in $V$, where
$V = \{ (x_{v}) \in \RR^{d_{F}} \ | \ \sum_{v \in \Sigma_{\infty}}x_{v} =0 \}.$
Set
$$L_{0} = \{ (b_{v})_{v \in \Sigma_{\infty}} \in V \ | \ \sum_{v \in \Sigma_{\infty}} b_{v} l_{v} \in \ZZ \ for \ all \ (l_{v})_{v \in \Sigma_{\infty}} \in \log U_{F}^{+} \}.$$
Then, $L_{0}$ is also a $\ZZ$-lattice in $V$.
Let $\chi$ be a character of $F^{\times} \backslash \AA^{\times}$. Since $\chi(\underline{y}) =1$ for any $y\in \RR_{>0}$, we have
$\sum_{v \in \Sigma_{\infty}} b(\chi_{v}) = 0$.
Thus, if we denote by $b(\chi)$ the element $(b(\chi_{v}))_{v \in {\Sigma_{\infty}}}$ of $\RR^{d_{F}}$,
then $b(\chi) \in L_{0}$ holds.
Therefore the mapping $\chi \mapsto b(\chi)$ is a surjection from
$\Xi(\gn)$ onto $L_{0}$ and the kernel $\Xi_{\ker}(\gn)$ of this mapping is a finite abelian group.

\begin{lem}\label{esti of X(n)}
Let $X(\gn)$ be the order of  $\Xi_{\ker}(\gn)$.
Then, for any $\e>0$, the estimate
$$X(\gn) \ll {\rm N}(\gn)^{1/2+\e}$$
holds with the implied constant independent of $\gn$.
\end{lem}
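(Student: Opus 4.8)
The plan is to identify the finite abelian group $\Xi_{\ker}(\gn)$ with the character group of a ray class group of $F$, and then bound its order by the class number times a cyclotomic factor. First I would unwind the definition. A character $\chi\in\Xi_{\ker}(\gn)$ lies in $\Xi_{0}(\mathfrak{c})$ for some $\mathfrak{c}$ with $\mathfrak{c}^{2}\mid\gn$ and satisfies $b(\chi)=0$, i.e. $b(\chi_{v})=0$ for every $v\in\Sigma_{\infty}$. For archimedean $v$ this says $\chi_{v}$ is trivial on $\RR_{>0}$, and since also $\chi_{v}(-1)=1$ by the definition of $\Xi_{0}(\mathfrak{c})$, we conclude $\chi_{v}={\bf 1}_{v}$ for all $v\in\Sigma_{\infty}$. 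Putting $\gm=\prod_{v\in\Sigma_{\fin}}\gp_{v}^{\lfloor\ord_{v}(\gn)/2\rfloor}$, one checks that $\mathfrak{c}^{2}\mid\gn$ is equivalent to $\mathfrak{c}\mid\gm$, so $\gm$ is the largest ideal whose square divides $\gn$, and $\Xi_{\ker}(\gn)$ is exactly the set of characters $\chi$ of $F^{\times}\backslash\AA^{\times}$ that are unramified at every archimedean place and have conductor $\gf_{\chi}\mid\gm$. That is, $\Xi_{\ker}(\gn)$ is the character group of the ray class group $\mathrm{Cl}_{F}(\gm)$ of modulus $\gm$ (with empty set of archimedean places); in particular it is finite, consistent with the statement. (It is indeed a group: the conductor of $\chi_{1}\chi_{2}$ divides the least common multiple of $\gf_{\chi_{1}}$ and $\gf_{\chi_{2}}$, whose square still divides $\gn$.)

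To bound $X(\gn)=|\Xi_{\ker}(\gn)|$ I would use the restriction map $\chi\mapsto\chi|_{\prod_{v\in\Sigma_{\fin}}\go_{v}^{\times}}$ on $\Xi_{\ker}(\gn)$. Since $\chi$ has conductor dividing $\gm$, it is trivial on $\prod_{v\in S(\gm)}(1+\gp_{v}^{\ord_{v}(\gm)})$, so by the Chinese remainder theorem this restriction factors through $(\go_{F}/\gm)^{\times}$; hence the image of the restriction map has order at most $|(\go_{F}/\gm)^{\times}|\le\N(\gm)$. The kernel of the restriction map consists of the characters of $\Xi_{\ker}(\gn)$ that are in addition unramified at every finite place, i.e. the characters of the ideal class group of $F$, of which there are exactly $h_{F}$. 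Therefore
$$X(\gn)\;\le\;h_{F}\,|(\go_{F}/\gm)^{\times}|\;\le\;h_{F}\,\N(\gm).$$
(Equivalently, one may just quote the closed formula for the ray class number of $\mathrm{Cl}_{F}(\gm)$.) Finally, since $2\lfloor\ord_{v}(\gn)/2\rfloor\le\ord_{v}(\gn)$ for every $v$, we get $\N(\gm)^{2}\le\N(\gn)$, hence $\N(\gm)\le\N(\gn)^{1/2}$ and $X(\gn)\le h_{F}\,\N(\gn)^{1/2}$. As $h_{F}$ depends only on $F$, the implied constant is independent of $\gn$; in fact this gives the slightly stronger bound $X(\gn)\ll\N(\gn)^{1/2}$.

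I do not expect a genuine obstacle here — the argument is essentially bookkeeping. The only points requiring some care are the passage from $b(\chi)=0$ to triviality of $\chi$ at all archimedean places (which uses the condition $\chi_{v}(-1)=1$ built into $\Xi_{0}$), the verification that $\gm$ is the maximal ideal with $\gm^{2}\mid\gn$ so that $\Xi_{\ker}(\gn)$ is genuinely a group, and invoking the finiteness of the class number; the key quantitative input $\N(\gm)\le\N(\gn)^{1/2}$ is immediate from the definition of $\gm$.
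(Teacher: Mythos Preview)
Your argument is correct and follows the same underlying idea as the paper's proof: both bound $X(\gn)$ via the size of a ray class group, using that a ray class group of modulus $\mathfrak{c}$ has order at most $h_{F}\,\N(\mathfrak{c})$.

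The executions differ slightly. The paper stratifies $\Xi_{\ker}(\gn)$ by exact conductor, bounding each piece $\Xi_{0}(\mathfrak{c})\cap\Xi_{\ker}(\gn)$ by $h_{F}\,\N(\mathfrak{c})\le h_{F}\,\N(\gn)^{1/2}$ and then summing over the $\mathfrak{c}$ with $\mathfrak{c}^{2}\mid\gn$; this last sum is handled by a divisor bound and is the source of the extra $\N(\gn)^{\e}$. You instead observe at the outset that all such $\mathfrak{c}$ divide the single ideal $\gm=\prod_{v}\gp_{v}^{\lfloor\ord_{v}(\gn)/2\rfloor}$, so $\Xi_{\ker}(\gn)$ is globally the character group of the ray class group of modulus $\gm$, and a single application of the ray class number bound gives $X(\gn)\le h_{F}\,\N(\gm)\le h_{F}\,\N(\gn)^{1/2}$ directly. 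Your route is a bit more efficient and yields the stronger estimate $X(\gn)\ll\N(\gn)^{1/2}$ without the $\e$; the paper's stratified version is perhaps more literal but costs the divisor-count factor.
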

\Proof
For any ideal $\ga$ of $\go_{F}$, we set
$ I_{F}({\ga})= \prod_{v \in \Sigma_{\infty}} \RR^{\times} \times \prod_{v \in \Sigma_{\fin}}(1+\ga\go_{v})$. Then,
the ray class group $C_{F}({\ga})$ modulo $\ga$ is defined by
$C_{F}({\ga}) = F^{\times} \backslash F^{\times} I_{F}({\ga})$.
For any fixed $\mathfrak{c}$ satisfying $\mathfrak{c}^{2}|\gn$, the group $\Xi_{0}(\mathfrak{c}) \cap \Xi_{\ker}(\gn)$ is equal to the set of all characters of $F^{\times} \backslash \AA^{\times}$ of finite order contained in $\Xi_{0}(\mathfrak{c})$.
Hence
$$\# (\Xi_{0}(\mathfrak{c}) \cap \Xi_{\ker}(\gn) )\le
\# (F^{\times} \backslash \AA^{\times} / I_{F}(\mathfrak{c}))
= h_{F} \#( C_{F}({\go_{F}})/C_{F}({\mathfrak{c}}))
\le h_{F} {\rm N}(\mathfrak{c})\le h_{F} {\rm N}(\gn)^{1/2}$$
holds, where $h_{F}$ is the class number of $F$.
Noting $\sum_{\mathfrak{c}^{2} | \gn}1 \ll \log(1+{\rm N}(\gn)) \ll {\rm N}(\gn)^{\e}$ for any $\e > 0$, we obtain the assertion.
\QED


\section{Regularized periods of automorphic forms}
\label{Regularized periods of automorphic forms}
In this section, we recall an explicit formula in \cite{Sugiyama} of the regularized periods of automorphic forms on $G_{\AA}$.
\subsection{Zeta integrals of cusp forms}
\label{Zeta integrals of cusp forms}

Let $\pi$ be a $\bfK_{\infty}$-spherical irreducible cuspidal automorphic representation of $G_{\AA}$
with trivial central character, where the representation space $V_{\pi}$ is realized in the space of cusp forms.
For any quasi-character $\eta$ of $F^{\times} \backslash \AA^{\times}$ and $\varphi \in V_{\pi}$, we define
the global zeta integral by
$$Z(s,\eta,\varphi) = \int_{F^{\times} \backslash \mathbb{A}^{\times}}\varphi
\left(
\begin{matrix}
t & 0 \\
0 & 1
\end{matrix}
\right) \eta(t)
|t|_{\mathbb{A}}^{s-1/2}d^{\times}t, \hspace{5mm} s \in \mathbb{C}.$$
The defining integral converges absolutely
for any $s \in \mathbb{C}$, and hence
$Z(s,\eta,\varphi)$ is an entire function in $s$.

We fix a family $\{ \pi_{v} \}_{v \in \Sigma_{F}}$ consisting of irreducible
admissible representations such that $\pi \cong \bigotimes_{v\in \Sigma_{F}}\pi_{v}$.
The conductor of ${\pi}$ is denoted by ${\gf}_{\pi}$,
which is the ideal of $\go_{F}$ defined by ${\gf}_{\pi}{\go}_{v} = {\gp}_{v}^{c(\pi_{v})}$ for
all $v \in \Sigma_{\fin}$, where $\gp_{v}^{c(\pi_{v})}$ is the conductor of $\pi_{v}$.
Let ${\gn}$ be an ideal of ${\go}_{F}$ which is divided by ${\gf}_{\pi}$.

Let $n$ be the maximal nonnegative integer $m$ such that $S_{m}({\gn}{\gf}_{\pi}^{-1})\neq \emptyset$.
For $\rho = (\rho_{k})_{1\le k \le n} \in \Lambda_{\pi}^{0}(\gn) = \prod_{k=1}^{n}{\rm Map}\left(S_{k}({\gn}{\gf}_{\pi}^{-1}),
\{0, \ldots, k\}\right)$,
let $\varphi_{\pi, \rho}$ denote the cusp
form in $V_{\pi}^{{\bf K}_{\infty}{\bf K}_{0}(\gn)}$ corresponding to 
$$
\displaystyle \bigotimes_{v\in\Sigma_{\infty}} \phi_{0,v}\otimes \bigotimes_{v\in S_{1}({\gn}{\gf}_{\pi}^{-1})}\phi_{\rho_{1}(v),v}\otimes \cdots \otimes \bigotimes_{v\in
S_{n}({\gn}{\gf}_{\pi}^{-1})}\phi_{\rho_{n}(v),v}\otimes \bigotimes_{v\in\Sigma_{\fin}-S({\gn}{\gf}_{\pi}^{-1})}\phi_{0,v}
$$
by the isomorphism $V_{\pi}\cong \bigotimes_{v\in \Sigma_{F}} V_{\pi_{v}}$.
Here, $V_{\pi_{v}}$ denotes the Whittaker model of $\pi_{v}$ with respect to $\psi_{F_{v}}$,
$\phi_{0, v}$ is the spherical vector in $V_{\pi_{v}}$ for $v \in \Sigma_{\infty}$ given in
\cite[1.4]{Sugiyama},
and the function $\phi_{k, v}$ is the $\bfK_{0}(\gn \go_{v})$-invariant vector for $v \in \Sigma_{\fin}$, which is
constructed in \cite[\S 2 and \S 3]{Sugiyama}.
Then, the finite set $\{ \varphi_{\pi, \rho} \}_{\rho \in \Lambda_{\pi}^{0}(\gn)}$
is an orthogonal basis of $V_{\pi}^{{\bf K}_{\infty}{\bf K}_{0}(\gn)}$. Here
$V_{\pi} \subset L^{2}(Z_{\AA}G_{F}\backslash G_{\AA})$ is equipped with the $L^2$-inner product (cf. \cite[Proposition 17]{Sugiyama}).

We consider a character $\eta$ of $F^{\times} \backslash \AA^{\times}$ satisfying 
\begin{center}
($\star$)
$\left\{ \begin{array}{l}
\eta^{2}= {\bf 1}, \\
v \in \Sigma_{\infty} \Rightarrow \eta_{v} = {\bf 1}_{v}, \\
{\gf}_{\eta} {\rm \ is \ relatively \ prime \ to \ {\gn} \ and \ \tilde{\eta}(\gn)=1}.
\end{array}\right.$
\end{center}
\ \\
For such a character $\eta$ and $\varphi \in V_{\pi}^{{\bf K}_{\infty}{\bf K}_{0}(\gn)}$, we
define the modified global zeta integral by
$$Z^{*}(s,\eta,\varphi) = \eta_{\fin}(x_{\eta, {\rm
fin}})Z\left(s,\eta,\pi \left(\begin{matrix}1 & x_{\eta}\\ 0 & 1\end{matrix}\right)
\varphi\right), \hspace{5mm} s \in \CC.$$
Here $x_{\eta} = (x_{\eta,v})_{v \in \Sigma_{F}} \in \AA$ is the adele
whose $v$-component satisfies
$x_{\eta,v} = 0$ and $x_{\eta, v} = \varpi_{v}^{-f(\eta_{v})}$ for 
$v \in\Sigma_{\infty}$ and $v \in \Sigma_{\fin}$, respectively,
and $x_{\eta, \fin}$ denotes the projection of $x_{\eta}$ to
$\AA_{\fin}$.

\subsection{Regularized periods of cusp forms}
\label{Regularized periods of cusp forms}
We recall a definition of regularized periods of automorphic forms on $G_{\AA}$ defined in \cite[\S 7]{TsuzukiSpec}.
For $C>0$, let $\Bcal(C)$ be the space of all holomorphic even functions $\b$ on $\{z \in \CC
\ | \ |\Re(z)|<C\}$ satisfying
$|\b(\s + it)|\ll
(1+|t|)^{-l}, \ \s \in [a,b], t \in \RR$ hold for any $[a,b]\subset (-C,C)$ and any $l>0$.
Let $\Bcal$ be the space of all entire functions $\b$ on $\CC$ such that the restriction of $\b$ to $\{z \in \CC \ | \ |\Re(z)|<C\}$ is contained in $\Bcal(C)$ for any $C>0$.

For $\b \in \Bcal$ and $\lambda \in \CC$,
we define a function $\hat\b_{\l}$ on $\RR_{>0}$ by
$${\hat{\b}}_{\lambda}(t) = \frac{1}{2\pi i}\int_{L_{\sigma}}\frac{\b(z)}{z +
\lambda}t^{z}dz,\hspace{3mm}(\sigma > -{\Re}(\lambda)),
$$
where $L_{\s} = \{z\in\CC |\Re(z)=\s \}$.

For $\b \in \Bcal$, $\lambda \in \CC$, a character $\eta$ of $F^{\times} \backslash \AA^{\times}$ satisfying ($\star$) and a function $\varphi:\gA G_{F}\backslash
G_{\AA}\rightarrow \CC$, we consider
$$P_{\b, \l}^{\eta}(\varphi) = \int_{F^{\times} \backslash \AA^{\times}}
\{{\hat{\b}}_{\l}(|t|_{\AA}) + {\hat{\b}}_{\l}(|t|_{\AA}^{-1})\}
\varphi\left(\left(\begin{matrix}t & 0 \\0 & 1\end{matrix}\right)
\left(\begin{matrix}1 & x_{\eta} \\0 & 1\end{matrix}\right)\right)
\eta(t)\eta_{\rm fin}(x_{\eta,\rm fin})d^{\times}t.
$$
Now we assume that
for any $\b \in \Bcal$, there exists a constant $C \in \RR$ such that if ${\rm Re}(\lambda)>C$ the integral $P_{\b,
\lambda}^{\eta}(\varphi)$ converges
and the function $\{z \in \CC \ | \ {\rm Re}(z)>C\}\ni \lambda \mapsto
P_{\beta,\lambda}^{\eta}(\varphi)$ is continued meromorphically to a neighborhood
of $\lambda$ = 0.
Then a constant $P_{\rm reg}^{\eta}(\varphi)$ is called {\it the
regularized $\eta$-period of $\varphi$} if
${\rm CT}_{\lambda = 0}P_{\beta,\lambda}^{\eta}(\varphi) = P_{\rm
reg}^{\eta}(\varphi)\b(0)$
for all $\b \in \Bcal$.
Then the following was proved in \cite{Sugiyama}.

\begin{prop}\label{prop:period = L(1/2)}
\cite[Main Theorem A]{Sugiyama}
For any $\rho = (\rho_{k})_{1 \le k \le n} \in \Lambda_{\pi}^{0}(\gn)$ and $\eta$ satisfying ($\star$), the period $P_{\rm reg}^{\eta}(\varphi_{\pi, \rho})$ can be defined and we have
$$P_{\rm reg}^{\eta}(\varphi_{\pi, \rho}) = Z^{*}(1/2, \eta, \varphi)=
{\Gcal}(\eta)
\{\prod_{k=1}^{n}\prod_{v \in S_{k}({\gn}{\gf}_{\pi}^{-1})}
Q_{\rho_{k}(v),v}^{\pi_{v}}(\eta_{v}, 1)\}L(1/2,\pi \otimes \eta),$$
where the constants $Q_{\rho_{k}(v),v}^{\pi_{v}}(\eta_{v}, 1)$ are given as follows: 
\begin{itemize}
\item If $c(\pi_{v}) = 0$ and $(\a_{v}, \a_{v}^{-1})$ is the Satake parameter of  $\pi_{v}$, then
{\allowdisplaybreaks\begin{align*}
Q_{k,v}^{\pi_{v}}(\eta_{v}, 1)
=&
\begin{cases}
1 & \text{$($if $k=0$$)$}, \\
\displaystyle \eta_{v}(\varpi_{v}) - \frac{\a_{v} + \a_{v}^{-1}}{q_{v}^{1/2}+q_{v}^{-1/2}} \vspace{2mm}& 
\text{$($if $k = 1$$)$}, \\
q_{v}^{-1}\eta_{v}(\varpi_{v})^{k-2}(\a_{v} q_{v}^{1/2}\eta_{v}(\varpi_{v}) - 1)(\a_{v}^{-1}q_{v}^{1/2}\eta_{v}(\varpi_{v}) - 1) & 
\text{$($if $k\ge 2$$)$}.
\end{cases}
\end{align*}
}

\item If $c(\pi_{v}) = 1$, then $\pi_{v}$ is isomorphic to a special representation $\s(\chi_{v}|\cdot|_{v}^{1/2}, \chi_{v}|\cdot|_{v}^{-1/2})$
for some unramified character $\chi_{v}$ of $F_{v}^{\times}$ and
$$Q_{k,v}^{\pi_{v}}(\eta_{v}, 1) =
\begin{cases}
1 & \text{$($if $k=0$$)$}, \\
\eta_{v}(\varpi_{v})^{k-1}(\eta_{v}(\varpi_{v})-q_{v}^{-1}\chi_{v}(\varpi_{v})^{-1}) & 
\text{$($if $k\ge 1$$)$}.
\end{cases}$$

\item If $c(\pi_{v}) \ge 2$, then $Q_{k, v}^{\pi_{v}}(\eta_{v}, 1) = \eta_{v}(\varpi_{v})^{k}$
for any $k\in \NN_{0}$.
\end{itemize}
\end{prop}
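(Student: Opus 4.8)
The statement combines two assertions: that the regularization procedure returns the value of the modified zeta integral at the center of the functional equation, and that this value has the displayed Euler product. The plan is to establish them in turn; only the second is genuinely technical, and there one can lean on \cite{Sugiyama}.

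\emph{Step 1: $P_{\rm reg}^{\eta}(\varphi_{\pi,\rho}) = Z^{*}(1/2,\eta,\varphi_{\pi,\rho})$.} Fix $\b\in\Bcal$. I would start from the definition of $P_{\b,\l}^{\eta}(\varphi_{\pi,\rho})$ and substitute the Mellin representation of $\hat{\b}_{\l}$. Since $\varphi_{\pi,\rho}$ is a cusp form it is rapidly decreasing on $\gA G_{F}\backslash G_{\AA}$, so the resulting double integral converges absolutely and Fubini applies; collapsing the torus integral against $|t|_{\AA}^{z}$ and against $|t|_{\AA}^{-z}$ — the translate $\bigl(\begin{smallmatrix}1&x_{\eta}\\0&1\end{smallmatrix}\bigr)$ and the scalar $\eta_{\fin}(x_{\eta,\fin})$ occurring in the definition being precisely what turn the plain zeta integral into the modified one — gives, for $\sigma>0$ and $\Re(\l)$ large,
\[
P_{\b,\l}^{\eta}(\varphi_{\pi,\rho}) = \frac{1}{2\pi i}\int_{L_{\sigma}}\frac{\b(z)}{z+\l}\,g(z)\,dz, \qquad g(z) = Z^{*}(z+\tfrac12,\eta,\varphi_{\pi,\rho}) + Z^{*}(\tfrac12-z,\eta,\varphi_{\pi,\rho}).
\]
Cuspidality makes $g$ entire, even in $z$, and of rapid decay on vertical lines; shifting the contour past $z=0$ and using that both $\b$ and $g$ are even shows that $\l\mapsto P_{\b,\l}^{\eta}(\varphi_{\pi,\rho})$ is in fact holomorphic at $\l=0$ with value $\tfrac12\b(0)g(0)=Z^{*}(1/2,\eta,\varphi_{\pi,\rho})\,\b(0)$. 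By the definition of the regularized period this yields the first equality.

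\emph{Step 2: the Euler product.} Now I would unfold: expanding $\varphi_{\pi,\rho}$ in its Whittaker expansion along $N_{F}\backslash N_{\AA}$ and unfolding the torus integral factors $Z^{*}(1/2,\eta,\varphi_{\pi,\rho})$ as a product $\prod_{v\in\Sigma_{F}}Z_{v}^{*}(1/2,\eta_{v},\phi_{*,v})$ of local zeta integrals, since $\varphi_{\pi,\rho}$ corresponds to a pure tensor. One then evaluates place by place. At each $v\in\Sigma_{\infty}$ and each finite $v\notin S(\gn)\cup S(\gf_{\eta})$ all data are unramified (with $\eta_{v}$ trivial, resp.\ unramified) and the normalized local vector gives $L(1/2,\pi_{v}\otimes\eta_{v})$. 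At $v\in S(\gf_{\eta})$, where $\pi_{v}$ is unramified but $\eta_{v}$ ramified, the shift $x_{\eta,v}=\varpi_{v}^{-f(\eta_{v})}$ is exactly what makes the spherical Whittaker function pair nontrivially against $\eta_{v}$, producing $\Gcal(\eta_{v})L(1/2,\pi_{v}\otimes\eta_{v})$. At finite $v$ with $\ord_{v}(\gn)=\ord_{v}(\gf_{\pi})\ge 1$ the vector $\phi_{0,v}$ is the local newvector and contributes $L(1/2,\pi_{v}\otimes\eta_{v})$. Finally at $v\in S_{k}(\gn\gf_{\pi}^{-1})$ the deeper vector $\phi_{\rho_{k}(v),v}$ of \cite{Sugiyama} contributes $Q_{\rho_{k}(v),v}^{\pi_{v}}(\eta_{v},1)\,L(1/2,\pi_{v}\otimes\eta_{v})$, with $Q_{k,v}^{\pi_{v}}(\eta_{v},1)$ given by the three displayed formulas according as $c(\pi_{v})=0$ (Satake parameter $(\a_{v},\a_{v}^{-1})$), $c(\pi_{v})=1$ (special representation $\s(\chi_{v}|\cdot|_{v}^{1/2},\chi_{v}|\cdot|_{v}^{-1/2})$), or $c(\pi_{v})\ge 2$. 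Multiplying over all places, the local $L$-factors reassemble into the completed $L(1/2,\pi\otimes\eta)$, the local Gauss sums into $\Gcal(\eta)$, and the remaining factors into $\prod_{k}\prod_{v\in S_{k}(\gn\gf_{\pi}^{-1})}Q_{\rho_{k}(v),v}^{\pi_{v}}(\eta_{v},1)$, which is the assertion.

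The main obstacle is the last family of local computations: evaluating the zeta integrals of the non-newvectors $\phi_{k,v}$ at the ramified finite places. This forces one to realize those vectors explicitly (e.g.\ in the Kirillov model), to run a recursion in the parameter $k$, and to treat the unramified, Steinberg-type, and heavily ramified representations separately; it is precisely this computation that constitutes \cite[Main Theorem A]{Sugiyama}, which we invoke. By contrast Step~1 is a routine contour shift resting only on the rapid decay of cusp forms and the even symmetry of $g$.
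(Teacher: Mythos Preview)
The paper does not reprove this proposition; it is quoted verbatim as \cite[Main Theorem A]{Sugiyama}. Your sketch is correct and matches the structure of that reference: Step~1 is the standard contour-shift argument (your evenness observation $g(-z)=g(z)$ together with the residue at $z=0$ gives exactly $\tfrac12\b(0)g(0)=Z^{*}(1/2,\eta,\varphi_{\pi,\rho})\b(0)$), and Step~2 is the unfolding into local Hecke--Jacquet--Langlands zeta integrals, with the substantive local computations for the non-newvectors $\phi_{k,v}$ carried out case by case in \cite[\S\S2--3]{Sugiyama} as you indicate.
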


\subsection{Preliminaries for regularized periods of Eisenstein series}
\label{Preliminaries for regularized periods of Eisenstein series}
We fix a character $\chi = \prod_{v\in \Sigma_{F}}\chi_{v}$ of $F^{\times} \backslash \AA^{\times}$.
For $\nu \in \CC$, we denote by $I(\chi|\cdot|_{\AA}^{\nu/2})$ the space of all smooth $\CC$-valued right $\bfK$-finite functions
$f$ on $G_{\AA}$ with the $B_{\AA}$-equivariance
$$f\left(\left(
\begin{matrix}
a & b \\
0 & d
\end{matrix}
\right) g\right) = \chi(a/d)|a/d|_{\AA}^{(\nu + 1)/2}f(g)$$ for all $\left(\begin{matrix}a & b \\0 & d\end{matrix}\right) \in B_{\AA}$ and $g \in G_{\AA}$.
If $\nu \in i\RR$, then the space $I(\chi|\cdot|_{\AA}^{\nu/2})$ is unitarizable and a $G_{\AA}$-invariant hermitian inner product is given by
$$(f_{1}|f_{2}) = \int_{\bfK}f_{1}(k)\overline{f_{2}(k)}dk$$
for any $f_{1}, f_{2}\in I(\chi|\cdot|_{\AA}^{\nu/2})$.

For $\nu \in \CC$ and $f^{(\nu)} \in I(\chi|\cdot|_{\AA}^{\nu/2})$,
The family $\{f^{(\nu)}\}_{\nu \in \CC}$ is called a flat section if the restriction of $f^{(\nu)}$ to ${\bf K}$ is independent of $\nu \in \CC$.
We define the Eisenstein series for $f^{(\nu)} \in I(\chi|\cdot|_{\AA}^{\nu/2})$ by
$$E(f^{(\nu)}, g) = \sum_{\gamma \in B_{F}\backslash G_{F}}f^{(\nu)}(\gamma g),  g\in G_{\AA}.$$
The defining series converges absolutely if $\Re(\nu)>1$.
If $\{f^{(\nu)}\}_{\nu \in \CC}$ is a flat section,
then $E(f^{(\nu)}, g)$ is continued meromorphically to $\CC$ as a function in $\nu$.
We remark that the function $E(f^{(\nu)}, g)$ is holomorphic on $i\RR$.
On the half plane $\Re(\nu)>0$,
$E(f^{(\nu)}, g)$ is holomorphic except for $\nu=1$, and $\nu=1$ is a pole of $E(f^{(\nu)}, g)$
if and only if $\chi^{2}={\bf 1}$.

Let $\gn$ be an ideal of $\go_{F}$.
Throughout \S 2, we assume that a character $\chi$ of $F^{\times} \backslash \AA^{\times}$ is contained in $\Xi(\gn)$.

\subsection{Zeta integrals of Eisenstein series}
\label{Zeta integrals of Eisenstein series}
We consider Eisenstein series for $f \in I(\chi|\cdot|_{\AA}^{\nu/2}))^{\bfK_{\infty}\bfK_{0}(\gn)}$.
Let $n$ be the maximal nonnegative integer $m$ such that $S_{m}({\gn}{\gf}_{\chi}^{-2}) \neq \emptyset$.
For each $v \in \Sigma_{F}$, the space $I(\chi_{v}|\cdot|_{v}^{\nu/2})$ is defined in the same way as the global case.
For
$\rho = (\rho_{k})_{1 \le k \le n} \in \Lambda_{\chi}(\gn) = \prod_{k=1}^{n}{\rm Map}(S_{k}({\gn}{\gf}_{\chi}^{-2}),\{0, \ldots,  k\})$, let $f_{\chi, \rho}^{(\nu)}$ denote the vector in $I(\chi |\cdot|_{\AA}^{\nu/2})$ corresponding to
$$
\bigotimes_{v\in\Sigma_{\infty}} f_{0,\chi_{v}}^{(\nu)}\otimes \bigotimes_{v\in S_{1}(\gn\gf_{\chi}^{-2})}\tilde{f}_{\rho_{1}(v),\chi_{v}}^{(\nu)}
\otimes \cdots \otimes \bigotimes_{v \in S_{n}(\gn\gf_{\chi}^{-2})} \tilde{f}_{\rho_{n}(v), \chi_{v}}^{(\nu)}\otimes \bigotimes_{v\in\Sigma_{\fin}-S(\gn\gf_{\chi}^{-2})}\tilde{f}_{0,\chi_{v}}^{(\nu)}
$$
by the isomorphism
$I(\chi |\cdot|_{\AA}^{\nu/2})\cong \bigotimes_{v\in \Sigma_{F}} I(\chi_{v} |\cdot|_{v}^{\nu/2})$,
where
for $v \in \Sigma_{\infty}$, $f_{0, \chi_{v}}^{(\nu)}$ is the spherical vector in $I(\chi_{v}|\cdot|_{v}^{\nu/2})$ normalized so that $f_{0, \chi_{v}}^{(\nu)}(e)$ equals one
and for $v \in \Sigma_{\fin}$, $\tilde{f}_{k, v}$ is the $\bfK_{0}(\gn \go_{v})$-invariant vector
for $v \in \Sigma_{\fin}$, which is constructed in \cite[\S 7 and \S 8]{Sugiyama}.
Then, for any $\rho = (\rho_{k})_{1 \le k \le n} \in \Lambda_{\chi}(\gn)$,
the family $\{f_{\chi, \rho}^{(\nu)}\}_{\nu \in \CC}$ is a flat section.
Moreover, if $\nu \in i\RR$, the finite set
$\{ f_{\chi, \rho}^{(\nu)} \}_{\rho \in \Lambda_{\chi}(\gn)}$
is an orthonormal basis of 
$I(\chi|\cdot|_{\AA}^{\nu/2})^{{\bf K}_{\infty}{\bf K}_{0}(\gn)}$ (cf. \cite[Proposition 33]{Sugiyama}).

Let $\rho \in \Lambda_{\chi}(\gn)$ and set $E_{\chi, \rho}(\nu,g) = E(f_{\chi, \rho}^{(\nu)}, g)$.
The constant term of $E(f_{\chi, \rho}^{(\nu)}, g)$ is defined by
$$E_{\chi, \rho}^{\circ}(\nu,g) = \int_{F\backslash \AA}E_{\chi, \rho}\left(\nu,\left(\begin{matrix}1 & x
\\ 0 & 1 \end{matrix}\right)g\right)dx.$$
For $k \in \{1, \ldots, n\}$, the sets $U_{k}(\rho)$, $R_{k}(\rho)$ and $R_{0}(\rho)$ are defined as follows:
$$U_{k}(\rho) = \bigcup_{m = k}^{n}\rho_{m}^{-1}(k) - S(\gf_{\chi}),
\hspace{5mm}R_{k}(\rho) = \bigcup_{m = k}^{n}\rho_{m}^{-1}(k) \cap S(\gf_{\chi}),$$
$$R_{0}(\rho) = \left(\bigcup_{m = 0}^{n}\rho_{m}^{-1}(0)\cap S(\gf_{\chi})\right) \bigcup (S(\gf_{\chi})-S(\gn\gf_{\chi}^{-2})).$$
Furthermore, for any $k \in \NN_{0}$, set
$$S_{k}(\rho) = \begin{cases}
R_{0}(\rho) & \text{(if $k = 0$)}, \\
U_{k}(\rho)\cup R_{k}(\rho) & \text{(if $k \ge 1$)},
\end{cases}$$
$R(\rho) = \bigcup_{k=0}^{n}R_{k}(\rho)$
and $S(\rho) = \bigcup_{k=0}^{n} S_{k}(\rho)$.
Then, by \cite[Proposition 34]{Sugiyama} we have
$$E_{\chi, \rho}^{\circ}(\nu,g) = f_{\chi, \rho}^{(\nu)}(g) + D_{F}^{-1/2}A_{\chi, \rho}(\nu) \frac{L(\nu, \chi^{2})}{L(1+\nu, \chi^{2})}f_{\chi^{-1}, \rho}^{(-\nu)}(g),$$
where
{\allowdisplaybreaks\begin{align*}
A_{\chi, \rho}(\nu)
= &
{\rm N}(\gf_{\chi})^{-\nu} \prod_{k=0}^{n}\prod_{v\in S_{k}(\rho)}
\bigg\{q_{v}^{d_{v}/2} q_{v}^{-k\nu}
\frac{\e(1 - \nu, \chi_{v}^{-2}, \psi_{F_{v}})\e(1 + \nu/2, \chi_{v}, \psi_{F_{v}})}
{\e(1 - \nu/2, \chi_{v}^{-1}, \psi_{F_{v}})}
\frac{L(1+\nu, \chi_{v}^{2})}{L(1-\nu, \chi_{v}^{-2})}\bigg\}.
\end{align*}
}

We fix a character $\eta$ of $F^{\times} \backslash \AA^{\times}$ satisfying ($\star$) in \S 2.1. 
For any $v \in \Sigma_{\fin}-S(\gf_{\eta})$ and $k\in \NN_{0}$,
let $Q_{k, \chi}^{(\nu)}(\eta_{v}, X)$ be the polynomial defined in \cite[\S 9]{Sugiyama}.
Then, we have the following.
\begin{prop}\label{prop:Z* of Eisen}
\cite[Proposition 35]{Sugiyama}
We set $E_{\chi, \rho}^{\natural}(\nu, g) = E_{\chi, \rho}(\nu, g) -E_{\chi, \rho}^{\circ}(\nu, g)$. Then $E_{\chi, \rho}^{\natural}(\nu, -)$ is left $B_{F}$-invariant and we have
{\allowdisplaybreaks\begin{align*}
Z^{*}(s, \eta, E_{\chi, \rho}^{\natural}(\nu,-)) = &
\Gcal(\eta)D_{F}^{-\nu/2}
{\rm N}(\gf_{\chi})^{1/2-\nu}
B_{\chi, \rho}^{\eta}(s, \nu)\frac{L(s + \nu/2, \chi \eta)L(s - \nu/2, \chi^{-1} \eta)}{L(1+\nu, \chi^{2})},
\end{align*}
}where
{\allowdisplaybreaks\begin{align*}
B_{\chi,\rho}^{\eta}(s, \nu)
= & D_{F}^{s-1/2}\left\{\prod_{k=0}^{n}\prod_{v\in S_{k}(\rho)} Q^{(\nu)}_{k,\chi_{v}}(\eta_{v}, q_{v}^{1/2-s})L(1+\nu, \chi_{v}^{2})\right\} \\
& \times \prod_{v\in U_{1}(\rho)}(1+q_{v}^{-1})q_{v}^{-\nu/2}\prod_{k=2}^{n}
\prod_{v\in U_{k}(\rho)}\left(\frac{q_{v}+1}{q_{v}-1}\right)^{1/2}q_{v}^{-k\nu/2}\\
& \times \left\{\prod_{k=0}^{n}\prod_{v\in R_{k}(\rho)}q_{v}^{d_{v}/2-k\nu/2}(1-q_{v}^{-1})^{1/2}\overline{\Gcal(\chi_{v})}\right\}
\prod_{v\in \Sigma_{\fin}-R(\rho)}\chi_{v}(\varpi_{v})^{d_{v}}.
\end{align*}
}
\end{prop}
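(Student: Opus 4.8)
The plan is to prove the proposition by Rankin--Selberg unfolding, factoring the global zeta integral into an Euler product of local Jacquet integrals of the explicit vectors of \cite{Sugiyama}. Throughout write $n(x)=\left(\begin{smallmatrix}1&x\\0&1\end{smallmatrix}\right)$ for $x$ in $\AA$ or in a completion $F_{v}$, and $w=\left(\begin{smallmatrix}0&-1\\1&0\end{smallmatrix}\right)$. First I would check that $E_{\chi,\rho}^{\natural}(\nu,-)$ is left $B_{F}$-invariant. Since $E_{\chi,\rho}(\nu,-)$ is left $G_{F}$-invariant, it suffices to see that the constant-term operation $\varphi\mapsto\varphi^{\circ}$, $\varphi^{\circ}(g)=\int_{F\backslash\AA}\varphi(n(x)g)\,dx$, preserves left $B_{F}$-invariance: left $N_{F}$-invariance of $\varphi^{\circ}$ is immediate from translation-invariance of the measure on $F\backslash\AA$, and left $H_{F}$-invariance follows by conjugating $n(x)$ past a diagonal $\mathrm{diag}(a,d)\in H_{F}$ and using the product formula $|a/d|_{\AA}=1$ to absorb the resulting Jacobian. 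Hence $E_{\chi,\rho}^{\circ}(\nu,-)$, and therefore $E_{\chi,\rho}^{\natural}(\nu,-)=E_{\chi,\rho}(\nu,-)-E_{\chi,\rho}^{\circ}(\nu,-)$, is left $B_{F}$-invariant. Since $E_{\chi,\rho}^{\circ}(\nu,-)$ is moreover left $N_{\AA}$-invariant, $E_{\chi,\rho}^{\natural}$ has vanishing constant term, so (using left $N_{F}$-invariance) it has the Fourier expansion $E_{\chi,\rho}^{\natural}(\nu,g)=\sum_{\xi\in F^{\times}}W^{(\nu)}_{\chi,\rho}(\mathrm{diag}(\xi,1)g)$, where $W^{(\nu)}_{\chi,\rho}(g)=\int_{F\backslash\AA}E_{\chi,\rho}(\nu,n(x)g)\,\overline{\psi_{F}(x)}\,dx$ is the Jacquet--Whittaker function. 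Consequently $E_{\chi,\rho}^{\natural}(\nu,\mathrm{diag}(t,1))$ decays rapidly as $|t|_{\AA}\to\infty$ and is of moderate growth as $|t|_{\AA}\to 0$, so $Z^{*}(s,\eta,E_{\chi,\rho}^{\natural}(\nu,-))$ converges absolutely for $\Re(s)$ in a right half-plane and continues meromorphically in $s$; it then suffices to prove the identity in that half-plane.

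Now I would unfold. Inserting the Fourier expansion into $Z^{*}$ and folding the sum over $\xi\in F^{\times}$ into the integral over $F^{\times}\backslash\AA^{\times}$ — legitimate in the range of absolute convergence and using that $\eta$ is trivial on $F^{\times}$ and $|\xi|_{\AA}=1$ — gives
\[
Z^{*}(s,\eta,E_{\chi,\rho}^{\natural}(\nu,-))=\eta_{\fin}(x_{\eta,\fin})\int_{\AA^{\times}}W^{(\nu)}_{\chi,\rho}\!\left(\mathrm{diag}(t,1)\,n(x_{\eta})\right)\eta(t)\,|t|_{\AA}^{s-1/2}\,d^{\times}t .
\]
By unfolding the Eisenstein series over the big Bruhat cell (the identity coset contributes $0$ because $\psi_{F}$ is nontrivial on $F\backslash\AA$) and local uniqueness of Whittaker functionals, $W^{(\nu)}_{\chi,\rho}(g)=\int_{N_{\AA}}f^{(\nu)}_{\chi,\rho}(w\,n\,g)\,\overline{\psi_{F}(n)}\,dn=\prod_{v}W_{v}(g_{v})$, where $W_{v}$ is the local Jacquet integral of the $v$-component of $f^{(\nu)}_{\chi,\rho}$; since $\AA^{\times}$, $\eta$, $|\cdot|_{\AA}$ and $x_{\eta}$ all decompose over places, the global integral becomes an Euler product of local zeta integrals
\[
Z_{v}^{*}(s)=\eta_{v}(x_{\eta,v})\int_{F_{v}^{\times}}W_{v}\!\left(\mathrm{diag}(t_{v},1)\,n(x_{\eta,v})\right)\eta_{v}(t_{v})\,|t_{v}|_{v}^{s-1/2}\,d^{\times}t_{v}
\]
(with $x_{\eta,v}=0$ and $\eta_{v}$ trivial for $v\in\Sigma_{\infty}$, so the prefactor is $1$ there). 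For the archimedean places and the finite $v\notin S(\gn)\cup S(\gf_{\chi})\cup S(\gf_{\eta})$ this is the classical unramified case: the spherical Jacquet integral equals $L(1+\nu,\chi_{v}^{2})^{-1}$ times the normalized local Whittaker function, whose Mellin transform against $\eta_{v}$ is $L(s+\nu/2,\chi_{v}\eta_{v})L(s-\nu/2,\chi_{v}^{-1}\eta_{v})$, whence $Z_{v}^{*}(s)=L(s+\nu/2,\chi_{v}\eta_{v})L(s-\nu/2,\chi_{v}^{-1}\eta_{v})/L(1+\nu,\chi_{v}^{2})$. The remaining finitely many integrals are evaluated from the explicit description of the vectors $\tilde{f}_{k,\chi_{v}}^{(\nu)}$ in \cite[\S\S 7--8]{Sugiyama} and the polynomials $Q^{(\nu)}_{k,\chi_{v}}(\eta_{v},X)$ of \cite[\S 9]{Sugiyama}: the translation by $n(x_{\eta,v})$ with $x_{\eta,v}=\varpi_{v}^{-f(\eta_{v})}$ is exactly the shift that untwists the additive character against which $\eta_{v}$ is integrated, and it produces the local Gauss sums $\overline{\Gcal(\chi_{v})}$ together with the factors $(1+q_{v}^{-1})q_{v}^{-\nu/2}$, $((q_{v}+1)/(q_{v}-1))^{1/2}q_{v}^{-k\nu/2}$ and $\chi_{v}(\varpi_{v})^{d_{v}}$ occurring in $B_{\chi,\rho}^{\eta}(s,\nu)$. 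Multiplying over all $v$, the unramified factors build the global ratio $L(s+\nu/2,\chi\eta)L(s-\nu/2,\chi^{-1}\eta)/L(1+\nu,\chi^{2})$, the finitely many correction terms assemble into $B_{\chi,\rho}^{\eta}(s,\nu)$, and the surviving constants $\Gcal(\eta)$, $D_{F}^{-\nu/2}$, $\N(\gf_{\chi})^{1/2-\nu}$ account for the normalization of the self-dual measures, the contribution of the global different $\gD_{F/\QQ}$, and the conductor exponents; meromorphic continuation in $s$ then removes the half-plane restriction.

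The main obstacle is the ramified local computation: for each non-spherical vector $\tilde{f}_{k,\chi_{v}}^{(\nu)}$ one must evaluate $\int_{F_{v}^{\times}}W_{v}(\mathrm{diag}(t_{v},1)\,n(x_{\eta,v}))\eta_{v}(t_{v})|t_{v}|_{v}^{s-1/2}\,d^{\times}t_{v}$, tracking the interplay between the level exponent $k$, the ramification of $\chi_{v}$, and the twist $x_{\eta,v}$; this is precisely where the explicit constructions and local identities of \cite{Sugiyama} do the real work, and where every factor of $B_{\chi,\rho}^{\eta}$ is generated. A secondary technical point is keeping the various normalizing constants — $D_{F}$, the local volumes $q_{v}^{d_{v}/2}$ of $\bfK_{v}$ coming from the self-dual measures, and $\N(\gf_{\chi})$ — consistent through the passage between the Euler product and the global $L$-functions.
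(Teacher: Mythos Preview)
The paper does not actually prove this proposition: it is quoted verbatim from \cite[Proposition~35]{Sugiyama} and used as input, with no argument given here. Your sketch --- Fourier-expand $E_{\chi,\rho}^{\natural}$ via its Whittaker function, unfold the $F^{\times}\backslash\AA^{\times}$-integral to an $\AA^{\times}$-integral, factor into local zeta integrals, evaluate the unramified places by the standard spherical computation and the ramified places via the explicit local vectors $\tilde f_{k,\chi_v}^{(\nu)}$ and the polynomials $Q_{k,\chi_v}^{(\nu)}$ of \cite{Sugiyama} --- is exactly the method by which such identities are established, and is presumably what \cite{Sugiyama} carries out in detail. Your identification of the main difficulty (the ramified local integrals, which generate every nontrivial factor of $B_{\chi,\rho}^{\eta}$) and of the bookkeeping issue (tracking the self-dual measure normalizations, the different, and the conductor powers through the Euler product) is accurate; there is nothing to correct at the level of strategy.
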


\subsection{Regularized periods of Eisenstein series}
\label{Regularized periods of Eisenstein series}
For any characters $\chi_{1}$ and $\chi_{2}$ of $F^{\times} \backslash \AA^{\times}$, we put
$\d_{\chi_{1}, \chi_{2}} = \d(\chi_{1} = \chi_{2})$.
The regularized period $P_{\rm reg}^{\eta}(E_{\chi, \rho}(\nu, -))$ was computed as follows in \cite{Sugiyama}.

\begin{prop}\label{prop:Eisen P=L}
\cite[Main Theorem B]{Sugiyama}
Assume $\nu \in i\RR$.
Then the integral
$P_{\b,\l}^{\eta}(E_{\chi, \rho}(\nu, -))$ converges absolutely for any $(\b, \l)\in \Bcal \times \CC$ such that $\Re(\l) > 1$.
Moreover $P_{\rm reg}^{\eta}(E_{\chi, \rho}(\nu, -))$ can be defined, and we have
{\allowdisplaybreaks\begin{align*}
P_{\rm reg}^{\eta}(E_{\chi, \rho}(\nu, -)) = & \Gcal(\eta)D_{F}^{-\nu/2}{\rm N}(\gf_{\chi})^{1/2-\nu}
B_{\chi, \rho}^{\eta}(1/2, \nu)\frac{L((1 + {\nu})/{2}, \chi \eta)L((1 - {\nu})/{2}, \chi^{-1} \eta)}{L(1+\nu, \chi^{2})}.
\end{align*}
}
\end{prop}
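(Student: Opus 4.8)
\emph{Proof proposal.} The plan is to split the Eisenstein series into its constant term along $N$ and a left $B_{F}$-invariant remainder,
$$E_{\chi, \rho}(\nu,-)=E_{\chi, \rho}^{\natural}(\nu,-)+E_{\chi, \rho}^{\circ}(\nu,-),\qquad E_{\chi, \rho}^{\circ}(\nu,g)=f_{\chi, \rho}^{(\nu)}(g)+D_{F}^{-1/2}A_{\chi, \rho}(\nu)\frac{L(\nu,\chi^{2})}{L(1+\nu,\chi^{2})}f_{\chi^{-1}, \rho}^{(-\nu)}(g),$$
to compute the regularized $\eta$-period of each piece and then to reassemble the answer from Proposition~\ref{prop:Z* of Eisen}. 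Since $\varphi\mapsto P_{\b,\l}^{\eta}(\varphi)$ and ${\rm CT}_{\l=0}$ are linear, $P_{\rm reg}^{\eta}$ is additive wherever the two summands admit well-defined regularized periods, so the two pieces can be treated separately.

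First I would show that the constant term contributes nothing, $P_{\rm reg}^{\eta}(E_{\chi, \rho}^{\circ}(\nu,-))=0$. For a standard section $f=f_{\chi', \rho}^{(\mu)}$ with $\mu=\pm\nu$ and $\chi'=\chi^{\pm1}$, the element $\left(\begin{smallmatrix}t&0\\0&1\end{smallmatrix}\right)\left(\begin{smallmatrix}1&x_{\eta}\\0&1\end{smallmatrix}\right)=\left(\begin{smallmatrix}t&tx_{\eta}\\0&1\end{smallmatrix}\right)$ is upper triangular, so the $B_{\AA}$-equivariance of $f$ gives $f\!\left(\begin{smallmatrix}t&tx_{\eta}\\0&1\end{smallmatrix}\right)=\chi'(t)|t|_{\AA}^{(\mu+1)/2}f(e)$; hence
$$P_{\b,\l}^{\eta}(f)=\eta_{\fin}(x_{\eta,\fin})\,f(e)\int_{F^{\times}\backslash\AA^{\times}}\{\hat{\b}_{\l}(|t|_{\AA})+\hat{\b}_{\l}(|t|_{\AA}^{-1})\}\,(\chi'\eta)(t)\,|t|_{\AA}^{(\mu+1)/2}\,d^{\times}t.$$
Factoring $\AA^{\times}=\AA^{1}\times\RR_{>0}$, the integral becomes $\big(\int_{F^{\times}\backslash\AA^{1}}(\chi'\eta)(t_{1})\,d^{\times}t_{1}\big)\big(\int_{0}^{\infty}\{\hat{\b}_{\l}(y)+\hat{\b}_{\l}(y^{-1})\}\,y^{(\mu+1)/2}\,\frac{dy}{y}\big)$; the first factor vanishes unless $\chi'\eta={\bf 1}$, and a contour shift in the definition of $\hat{\b}_{\l}$, using that $\b$ is even, evaluates the second factor to $\b\!\left(\frac{\mu+1}{2}\right)\left(\frac{1}{\l-(\mu+1)/2}+\frac{1}{\l+(\mu+1)/2}\right)$, which is holomorphic at $\l=0$ and vanishes there because $\Re\frac{\mu+1}{2}=\frac{1}{2}\neq0$ for $\nu\in i\RR$. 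In both cases ${\rm CT}_{\l=0}P_{\b,\l}^{\eta}(f)=0$, so $P_{\rm reg}^{\eta}(f)=0$, and by linearity $P_{\rm reg}^{\eta}(E_{\chi, \rho}^{\circ}(\nu,-))=0$.

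Next I would treat $E^{\natural}:=E_{\chi, \rho}^{\natural}(\nu,-)$, which is left $B_{F}$-invariant by Proposition~\ref{prop:Z* of Eisen} and of moderate growth, so Tsuzuki's regularization \cite[\S 7]{TsuzukiSpec} applies. Here the idea is to mimic the cusp-form computation underlying Proposition~\ref{prop:period = L(1/2)}, keeping track of the slower decay of the torus orbital integral of $E^{\natural}$ as $|t|_{\AA}\to0$, whose exponents are dictated by the constant terms of $E_{\chi, \rho}(\nu,-)$ at the two cusps and have real part $\pm\frac{1}{2}$ since $\nu\in i\RR$. Writing the $\eta$-period in the Mellin variable $z$ of $y=|t|_{\AA}$: for $\Re(\l)$ large it equals $\frac{1}{2\pi i}\int_{L_{\s_{1}}}\frac{\b(z)}{z+\l}Z^{*}(1/2+z,\eta,E^{\natural})\,dz+\frac{1}{2\pi i}\int_{L_{\s_{2}}}\frac{\b(z)}{z+\l}Z^{*}(1/2-z,\eta,E^{\natural})\,dz$ with $\s_{1}>\frac{1}{2}>-\frac{1}{2}>\s_{2}$. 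By Proposition~\ref{prop:Z* of Eisen} the function $Z^{*}(s,\eta,E^{\natural})$ has poles only on $\Re(s)=1$, so it is holomorphic at $s=\frac{1}{2}$; continuing in $\l$ past the pole $z=-\l$ of the second integrand yields the term $\b(-\l)Z^{*}(1/2+\l,\eta,E^{\natural})$, and at $\l=0$ the two contour integrals cancel (substitute $z\mapsto-z$, use that $\b$ is even, and note that the poles of $Z^{*}$, lying on $\Re(z)=\frac{1}{2}$, are never crossed). This shows $P_{\b,\l}^{\eta}(E^{\natural})$ converges for $\Re(\l)>1$, is meromorphic near $\l=0$, and ${\rm CT}_{\l=0}P_{\b,\l}^{\eta}(E^{\natural})=\b(0)\,Z^{*}(1/2,\eta,E^{\natural})$; hence $P_{\rm reg}^{\eta}(E^{\natural})=Z^{*}(1/2,\eta,E^{\natural})$.

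Combining the two steps with Proposition~\ref{prop:Z* of Eisen} at $s=1/2$ gives
$$P_{\rm reg}^{\eta}(E_{\chi, \rho}(\nu,-))=Z^{*}(1/2,\eta,E_{\chi, \rho}^{\natural}(\nu,-))=\Gcal(\eta)\,D_{F}^{-\nu/2}\,{\rm N}(\gf_{\chi})^{1/2-\nu}\,B_{\chi, \rho}^{\eta}(1/2,\nu)\,\frac{L((1+\nu)/2,\chi\eta)\,L((1-\nu)/2,\chi^{-1}\eta)}{L(1+\nu,\chi^{2})},$$
which is the asserted formula. I expect the third paragraph to be the real work: justifying that Tsuzuki's regularization behaves correctly for the non-rapidly-decreasing function $E^{\natural}$ — controlling the analytic continuation in $\l$, verifying that the constant-term exponents of the Eisenstein series at the two cusps never meet $0$ (so that $P_{\rm reg}^{\eta}$ is defined), and checking that every contribution other than $\b(0)Z^{*}(1/2,\eta,E^{\natural})$ drops out of ${\rm CT}_{\l=0}$.
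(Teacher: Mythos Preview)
The paper does not prove this proposition itself but quotes it from \cite[Main Theorem B]{Sugiyama}. Your approach is correct and is exactly the one used there: the decomposition of $P_{\b,\l}^{\eta}(E_{\chi,\rho}(\nu,-))$ into the two constant-term pieces $P_{\chi^{\pm1}}(\eta,\l,\pm\nu)$ and the two Mellin integrals $Q_{\chi,\rho}^{\pm}(\eta,\l,\nu)$ of $Z^{*}(1/2\pm z,\eta,E^{\natural})$, which the present paper later quotes from \cite[Theorem 37]{Sugiyama} in the proof of Lemma~\ref{lem:Green merom}, is precisely your split, and your observation that $\frac{1}{\l-(\mu+1)/2}+\frac{1}{\l+(\mu+1)/2}=\frac{2\l}{\l^{2}-((\mu+1)/2)^{2}}$ vanishes at $\l=0$ is exactly why the constant-term pieces do not contribute to $P_{\rm reg}^{\eta}$.

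One small correction to your sketch: $Z^{*}(s,\eta,E^{\natural})$ can have poles on $\Re(s)=0$ as well as on $\Re(s)=1$ (the completed $L$-function $L(s\pm\nu/2,\cdot)$ has a pole at $0$ too), so the contour bookkeeping in your third paragraph must account for up to four residues at $z=(\pm\nu\pm1)/2$ rather than one. This does not affect the conclusion, since $s=1/2$ remains a regular point and these extra residue terms again carry factors $\frac{\b((\pm\nu\pm1)/2)}{\l+(\pm\nu\pm1)/2}$ that are holomorphic at $\l=0$ and pair off to cancel there; the paper's later computations (proofs of Lemmas~\ref{lem:Green merom} and~\ref{lem: merom of P eisen}) make this cancellation explicit.
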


We define two functions ${\mathfrak e}_{\chi, \rho, -1}$ and ${\mathfrak e}_{\chi, \rho, 0}$
on $G_{\AA}$ by the Laurent expansion
$$E_{\chi, \rho}(\nu, g) = \frac{{\mathfrak e}_{\chi, \rho, -1}(g)}{\nu-1} + {\mathfrak e}_{\chi, \rho, 0}(g) + {\mathcal O}(\nu-1), \hspace{5mm}(\nu \rightarrow 1).$$
We explain the regularized $\eta$-periods of 
${\mathfrak e}_{\chi, \rho, -1}$ and that of ${\mathfrak e}_{\chi, \rho, 0}$.
Set $R_{F} = \Res_{s=1}\zeta_{F}(s) = \vol(F^{\times}\backslash \AA^{1})$,
where $\zeta_{F}(s)$ is the completed Dedekind zeta function of $F$.
The regularized period $P_{\rm reg}^{\eta}({\mathfrak e}_{\chi, \rho, -1})$ was computed as follows in \cite{Sugiyama}.

\begin{prop}
\label{prop:e-1 = chi det}
\cite[Lemma 38 and Theorem 39]{Sugiyama}
We have
$${\mathfrak e}_{\chi, \rho, -1}(g) = \d\left(\chi^{2} = {\bf 1}, \gf_{\chi} = \go_{F}, S(\rho) = \emptyset \right) \frac{D_{F}^{-1/2}R_{F}}{\zeta_{F}(2)}\chi(\det g)$$
for any $g \in G_{\AA}$.
Moreover, for $\l \in \CC$ such that $\Re(\l) > 0$, we have
$$P_{\b, \l}^{\eta}({\mathfrak e}_{\chi, \rho, -1}) = \d\left(\chi = \eta, \gf_{\chi} = \go_{F}, S(\rho) = \emptyset \right)
\frac{2D_{F}^{-1/2}R_{F}^{2}}{\zeta_{F}(2)}\frac{\b(0)}{\l}
$$
and $P_{\rm reg}^{\eta}({\mathfrak e}_{\chi, \rho, -1}) = 0$.
\end{prop}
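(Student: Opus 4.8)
The plan is to treat the statement in two stages: first identify the residue $\mathfrak{e}_{\chi,\rho,-1}$, and then read off its regularized period from that identification. On $\Re(\nu)>0$ the only possible pole of $\nu\mapsto E_{\chi,\rho}(\nu,g)$ is at $\nu=1$, occurring precisely when $\chi^{2}={\bf 1}$, so $\mathfrak{e}_{\chi,\rho,-1}=0$ unless $\chi^{2}={\bf 1}$; assume this from now on. The map $f^{(\nu)}\mapsto\Res_{\nu=1}E(f^{(\nu)},\cdot)$ is $G_{\AA}$-equivariant and lands in the residual spectrum of $PGL(2)$ attached to the inducing character $\chi$, which for $\chi^{2}={\bf 1}$ is the one-dimensional space $\CC\cdot(\chi\circ\det)$; hence
$$\mathfrak{e}_{\chi,\rho,-1}(g)=c_{\chi,\rho}\,\chi(\det g)$$
for a constant $c_{\chi,\rho}$, and it remains to compute $c_{\chi,\rho}$.

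To determine $c_{\chi,\rho}$ I would compare constant terms. Since $f_{\chi,\rho}^{(\nu)}$ is $\bfK_{\infty}\bfK_{0}(\gn)$-invariant, so is $\mathfrak{e}_{\chi,\rho,-1}$; if $c_{\chi,\rho}\neq0$ then $g\mapsto\chi(\det g)$ is $\bfK_{0}(\gn)$-invariant, forcing $\chi_{v}$ unramified at every finite $v$, i.e. $\gf_{\chi}=\go_{F}$ (at the archimedean places $\chi_{v}$ is trivial because $\chi\in\Xi(\gn)$ and $\chi^{2}={\bf 1}$). So assume $\gf_{\chi}=\go_{F}$. The function $\chi\circ\det$ equals its own constant term, while in the constant-term formula recalled above the summand $f_{\chi,\rho}^{(\nu)}$ is holomorphic at $\nu=1$, so
$$\Res_{\nu=1}E_{\chi,\rho}^{\circ}(\nu,g)=D_{F}^{-1/2}\Bigl(\Res_{\nu=1}A_{\chi,\rho}(\nu)\tfrac{L(\nu,\chi^{2})}{L(1+\nu,\chi^{2})}\Bigr)f_{\chi^{-1},\rho}^{(-1)}(g).$$
Here $L(\nu,\chi^{2})=\zeta_{F}(\nu)$ has a simple pole at $\nu=1$ with residue $R_{F}=\vol(F^{\times}\backslash\AA^{1})$ and $L(1+\nu,\chi^{2})|_{\nu=1}=\zeta_{F}(2)$, so the bracketed scalar is $(R_{F}/\zeta_{F}(2))\,A_{\chi,\rho}(1)$. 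When $\gf_{\chi}=\go_{F}$ one has $A_{\chi,\rho}(\nu)=\N(\gf_{\chi})^{-\nu}\prod_{v\in S(\rho)}\{\cdots\}=\prod_{v\in S(\rho)}\{\cdots\}$, and each of these finitely many local factors carries $L(1-\nu,\chi_{v}^{-2})^{-1}=1-\chi_{v}^{-2}(\varpi_{v})q_{v}^{\nu-1}$, which vanishes at $\nu=1$ since $\chi_{v}^{2}={\bf 1}_{v}$; hence $A_{\chi,\rho}$ has a zero of order $\# S(\rho)$ at $\nu=1$ and the residue vanishes unless $S(\rho)=\emptyset$. When $S(\rho)=\emptyset$ the product is empty, $A_{\chi,\rho}(1)=1$, and (since $\chi^{-1}=\chi$) $f_{\chi^{-1},\rho}^{(-1)}$ is the everywhere-spherical flat section in $I(\chi|\cdot|_{\AA}^{-1/2})$ at $\nu=-1$, which for the everywhere-unramified $\chi$ equals $\chi\circ\det$ by its normalization. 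Matching the two expressions for the constant term gives $c_{\chi,\rho}=D_{F}^{-1/2}R_{F}/\zeta_{F}(2)$, together with exactly the three indicator conditions in the statement.

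For the period, by the first part I may take $\varphi=\chi\circ\det$ (the only nonvanishing case). Since $\varphi\bigl(\bigl(\begin{smallmatrix}t&0\\0&1\end{smallmatrix}\bigr)\bigl(\begin{smallmatrix}1&x_{\eta}\\0&1\end{smallmatrix}\bigr)\bigr)=\chi(t)$, the defining integral becomes
$$P_{\beta,\lambda}^{\eta}(\varphi)=\eta_{\fin}(x_{\eta,\fin})\int_{F^{\times}\backslash\AA^{\times}}\{\hat\beta_{\lambda}(|t|_{\AA})+\hat\beta_{\lambda}(|t|_{\AA}^{-1})\}\,(\chi\eta)(t)\,d^{\times}t.$$
Factoring $\AA^{\times}/F^{\times}\cong(\AA^{1}/F^{\times})\times\RR_{>0}$ and using that $\chi\eta$ is trivial on the image of $\RR_{>0}$ under $y\mapsto\underline{y}$, the $\AA^{1}$-integral equals $\vol(F^{\times}\backslash\AA^{1})\,\delta(\chi\eta|_{\AA^{1}}={\bf 1})=R_{F}\,\delta(\chi=\eta)$ (using $\eta^{2}={\bf 1}$), while the remaining integral is $\int_{0}^{\infty}\{\hat\beta_{\lambda}(y)+\hat\beta_{\lambda}(y^{-1})\}\,y^{-1}dy$. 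Shifting the contour $L_{\sigma}$ in the definition of $\hat\beta_{\lambda}$ shows that this converges for $\Re(\lambda)>0$ (the integrand is $O(y^{-\Re(\lambda)-1})$ as $y\to\infty$ and $O(y^{\Re(\lambda)-1})$ as $y\to0$) and equals $2\beta(0)/\lambda$, since $\int_{0}^{\infty}\hat\beta_{\lambda}(y)y^{w-1}dy=\beta(w)/(\lambda-w)$ for $|\Re(w)|<\Re(\lambda)$ (the pole of $1/(z+w)$ at $z=-w$ contributing the residue, with $\beta$ even) and the $y\mapsto y^{-1}$ term gives the same. Finally, when $\chi=\eta$ and $\gf_{\chi}=\go_{F}$ the character $\eta$ is everywhere unramified, so all finite components of $x_{\eta}$ are trivial and $\eta_{\fin}(x_{\eta,\fin})=1$. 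Combining with $c_{\chi,\rho}$,
$$P_{\beta,\lambda}^{\eta}(\mathfrak{e}_{\chi,\rho,-1})=\delta(\chi=\eta,\,\gf_{\chi}=\go_{F},\,S(\rho)=\emptyset)\,\frac{2D_{F}^{-1/2}R_{F}^{2}}{\zeta_{F}(2)}\,\frac{\beta(0)}{\lambda},$$
which is meromorphic in $\lambda$ with at most a simple pole and vanishing constant term at $\lambda=0$; hence ${\rm CT}_{\lambda=0}P_{\beta,\lambda}^{\eta}(\mathfrak{e}_{\chi,\rho,-1})=0$ for every $\beta\in\Bcal$, i.e. $P_{\rm reg}^{\eta}(\mathfrak{e}_{\chi,\rho,-1})=0$.

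I expect the main obstacle to be the explicit bookkeeping in the middle paragraph: evaluating $A_{\chi,\rho}(1)$ and identifying $f_{\chi^{-1},\rho}^{(-1)}$ with $\chi\circ\det$ (normalization included) rests on the precise form of the local flat sections $\tilde f_{k,\chi_{v}}^{(\nu)}$ and on the $\e$-factor and Gauss-sum factors entering $A_{\chi,\rho}$, i.e. on the local computations of \cite{Sugiyama}. The structural ingredients — the residual spectrum of $PGL(2)$, the contour/Mellin manipulation, and absolute convergence of $P_{\beta,\lambda}^{\eta}$ for $\Re(\lambda)>0$ — are routine by comparison.
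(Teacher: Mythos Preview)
Your argument is correct. The paper itself does not give a proof of this proposition but imports it from \cite[Lemma~38 and Theorem~39]{Sugiyama}; your reconstruction via the constant-term formula (isolating the residue $D_{F}^{-1/2}(R_{F}/\zeta_{F}(2))\,A_{\chi,\rho}(1)\,f_{\chi^{-1},\rho}^{(-1)}$ and reading off the three indicator conditions from the order of vanishing of $A_{\chi,\rho}$ at $\nu=1$), followed by the direct Mellin evaluation $\int_{0}^{\infty}\hat\beta_{\lambda}(y)\,d^{\times}y=\beta(0)/\lambda$, is exactly the natural direct argument. Your closing caveat is apt: the identifications $A_{\chi,\rho}(1)=1$ and $f_{\chi^{-1},\rho}^{(-1)}=\chi\circ\det$ in the case $S(\rho)=\emptyset$, $\gf_{\chi}=\go_{F}$ do rest on the explicit local normalizations from \cite{Sugiyama}, but granted those the argument is complete. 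One harmless imprecision: the range of validity for your Mellin identity $\int_{0}^{\infty}\hat\beta_{\lambda}(y)\,y^{w-1}\,dy=\beta(w)/(\lambda-w)$ is $\Re(w)<\Re(\lambda)$ rather than $|\Re(w)|<\Re(\lambda)$, since $\hat\beta_{\lambda}(y)$ decays faster than any power of $y$ as $y\to0^{+}$; this does not affect the case $w=0$ that you actually need.
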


For any character $\xi$ of $F^{\times} \backslash \AA^{\times}$, we define $R(\xi)$, $C_{0}(\xi)$ and $C_{1}(\xi)$ by the Laurent expansion 
$$L(s, \xi) = \frac{R(\xi)}{s-1} + C_{0}(\xi) + C_{1}(\xi)(s-1) + {\mathcal O}((s-1)^{2}), \hspace{5mm} (s \rightarrow 1).$$
We note $R({\xi}) = \d_{\xi, {\bf 1}}R_{F}$ for any character $\xi$ of $F^{\times} \backslash \AA^{\times}$.
The regularized period $P_{\rm reg}^{\eta}({\mathfrak e}_{\chi, \rho ,0})$ is defined under some conditions, and
$P_{\b,\l}^{\eta}({\mathfrak e}_{\chi, \rho ,0})$ was computed as follows in \cite{Sugiyama}.

\begin{prop}
\label{prop:P(e_{chi, rho, 0})}
\cite[Theorem 40 and Corollary 41]{Sugiyama}
Let $\eta$ be a character of $F^{\times} \backslash \AA^{\times}$ satisfying ($\star$) in \S 2.1.
The integral $P_{\b,\l}^{\eta}({\mathfrak e}_{\chi, \rho ,0})$ converges absolutely for any $(\b, \l)\in \Bcal \times \CC$ such that $\Re(\l) >1$.
There exists an entire function $f(\l)$ on $\CC$ such that
\begin{eqnarray*}
P_{\b,\l}^{\eta}({\mathfrak e}_{\chi, \rho ,0}) & = &
\d_{\chi, \eta}R_{F}f_{\chi, \rho}^{(1)}(e)
\left\{\frac{1}{\l-1} + \frac{1}{\l+1} \right\} \b(1) \\
&&+
2\d_{\chi, \eta}R_{F}\frac{D_{F}^{-1/2}f_{\chi, \rho}^{(1)}(e)}{\zeta_{F}(2)} \bigg\{R_{F}\left(-\frac{\zeta_{F}'(2)}{\zeta_{F}(2)}A_{\chi, \rho}(1) + A'_{\chi, \rho}(1)\right) + C_{0}({\bf 1})A_{\chi, \rho}(1) \bigg\}\frac{\b(0)}{\l} \\
&& + f(\l)
- \Gcal(\eta)D_{F}^{-1/2}R_{F}\d_{\chi, \eta}
\bigg\{
- \frac{\tilde{B}_{\chi, \rho}^{\eta}(1)}{\l + 1} +\frac{\tilde{B}_{\chi, \rho}^{\eta}(-1)}{\l - 1}
\bigg\}\b(1) \\
& & - \frac{\Gcal(\eta)D_{F}^{-1/2}}{\zeta_{F}(2)}\d_{\chi, \eta}
\bigg\{
- (\tilde{B}_{\chi, \rho}^{\eta})^{\prime}(0)R_{F}^{2}\frac{\b(0)}{\l}
+ \tilde{B}_{\chi, \rho}^{\eta}(0)R_{F}^{2}\frac{\b(0)}{\l^{2}}
\bigg\},
\end{eqnarray*}
where $\tilde{B}^{\eta}_{\chi, \rho}(z) = \e(-z, \chi^{-1}\eta)B_{\chi, \rho}^{\eta}(-z+1/2, 1)
$.
Moreover we have
\begin{eqnarray*}
 {\rm CT}_{\l=0}P_{\b, \l}^{\eta}({\mathfrak e}_{\chi, \rho ,0})
=  \frac{\Gcal(\eta)D_{F}^{-1/2}{\rm N}(\gf_{\chi})^{-1/2}}{L(2, \chi^{2})}
\bigg\{
- \frac{1}{2}\d_{\chi, \eta} \tilde{B}_{\chi, \rho}^{\eta}(0)R_{F}^{2}\b^{\prime\prime}(0) 
+ a_{\chi, \rho}^{\eta}(0)\b(0)\bigg\},
\end{eqnarray*}
where
{\allowdisplaybreaks\begin{align*}
a_{\chi, \rho}^{\eta}(0) = 
 - \frac{1}{2}\d_{\chi, \eta}(\tilde{B}_{\chi, \rho}^{\eta})^{\prime\prime}(0)R_{F}^{2}
 - 2\d_{\chi, \eta}\tilde{B}_{\chi, \rho}^{\eta}(0)R_{F}C_{1}({\bf 1})
+ \tilde{B}_{\chi, \rho}^{\eta}(0)C_{0}(\chi\eta)^{2}.
\end{align*}
}
\end{prop}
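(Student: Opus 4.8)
\Proof (Plan of proof.)

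The strategy is to return to the definition of $P_{\b,\l}^{\eta}$ and evaluate it directly on the Laurent coefficient ${\mathfrak e}_{\chi,\rho,0}$, carrying the splitting $E_{\chi,\rho}(\nu,g)=E_{\chi,\rho}^{\circ}(\nu,g)+E_{\chi,\rho}^{\natural}(\nu,g)$ through the whole computation. Comparing residues at $\nu=1$ and using Proposition~\ref{prop:e-1 = chi det}, one first checks that ${\rm Res}_{\nu=1}E_{\chi,\rho}^{\natural}(\nu,-)=0$, so $E_{\chi,\rho}^{\natural}(\nu,-)$ is holomorphic at $\nu=1$ and ${\mathfrak e}_{\chi,\rho,0}=E_{\chi,\rho}^{\natural}(1,-)+{\mathfrak e}_{\chi,\rho,0}^{\circ}$, where ${\mathfrak e}_{\chi,\rho,0}^{\circ}$ is the $\nu^{0}$-coefficient of $E_{\chi,\rho}^{\circ}(\nu,-)$ at $\nu=1$. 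Absolute convergence of $P_{\b,\l}^{\eta}({\mathfrak e}_{\chi,\rho,0})$ for $\Re(\l)>1$ follows by feeding the moderate-growth estimate for $E_{\chi,\rho}(\nu,-)$ on a punctured disc about $\nu=1$ into the rapid decay of $\hat{\b}_{\l}$. I would then treat the two pieces separately.

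For the complement, which is left $B_{F}$-invariant by Proposition~\ref{prop:Z* of Eisen}, I would insert $\hat{\b}_{\l}(|t|_{\AA}^{\pm1})=\frac{1}{2\pi i}\int_{L_{\s}}\frac{\b(z)}{z+\l}|t|_{\AA}^{\pm z}\,dz$ into the period integral and interchange, getting
$$P_{\b,\l}^{\eta}\bigl(E_{\chi,\rho}^{\natural}(\nu,-)\bigr)=\frac{1}{2\pi i}\int_{L_{\s}}\frac{\b(z)}{z+\l}\Bigl\{Z^{*}\bigl(z+\tfrac12,\eta,E_{\chi,\rho}^{\natural}(\nu,-)\bigr)+Z^{*}\bigl(-z+\tfrac12,\eta,E_{\chi,\rho}^{\natural}(\nu,-)\bigr)\Bigr\}\,dz$$
for $\s$ and $\Re(\l)$ in a suitable range, the two zeta integrals being given in closed form by Proposition~\ref{prop:Z* of Eisen}. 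The integrand has poles only at $z=-\l$ and at the points forced by the $L$-factors $L(z\pm\tfrac{\nu}{2}+\tfrac12,\cdot)$ in $Z^{*}$; since $\eta^{2}={\bf 1}$ these become Dedekind zeta functions and contribute poles exactly when $\chi=\eta$, at $z=\pm\tfrac{1-\nu}{2}$ and $z=\pm\tfrac{1+\nu}{2}$. Shifting the contour, collecting residues, and then expanding in $\nu$ at $\nu=1$ produces an entire remainder $f(\l)$ together with the explicit terms. The delicate point is that $\zeta_{F}$ is the \emph{completed} Dedekind zeta, with simple poles at \emph{both} $s=1$ and $s=0$: at the residue $z=-\l$ the factor $\b(-\l)\{Z^{*}(-\l+\tfrac12,\cdot)+Z^{*}(\l+\tfrac12,\cdot)\}$ acquires, for $\chi=\eta$ and $\nu\to1$, a double pole in $\l$ at $\l=0$ (the factors $\zeta_{F}(\l+\tfrac{1+\nu}{2})$ and $\zeta_{F}(\l+\tfrac{1-\nu}{2})$ both blow up), which yields the $\tilde{B}_{\chi,\rho}^{\eta}(0)R_{F}^{2}\b(0)/\l^{2}$ and $(\tilde{B}_{\chi,\rho}^{\eta})'(0)R_{F}^{2}\b(0)/\l$ terms and, in the constant term, the anomalous $\b''(0)$; the residues at $z=\pm\tfrac{1+\nu}{2}$ give the $\tilde{B}_{\chi,\rho}^{\eta}(\pm1)\b(1)/(\l\mp1)$ terms, the $\e$-factor inside $\tilde{B}_{\chi,\rho}^{\eta}$ entering through the functional equation used to identify $Z^{*}(-z+\tfrac12,\cdot)$ with $Z^{*}(z+\tfrac12,\cdot)$; the leftover contour integral produces, for $\chi\ne\eta$, the only non-$\d_{\chi,\eta}$ contribution $\tilde{B}_{\chi,\rho}^{\eta}(0)C_{0}(\chi\eta)^{2}\b(0)$, using $L(0,\chi^{-1}\eta)=\e(0,\chi^{-1}\eta)L(1,\chi\eta)$ and $\tilde{B}_{\chi,\rho}^{\eta}(0)=\e(0,\chi^{-1}\eta)B_{\chi,\rho}^{\eta}(1/2,1)$.

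For the constant term $E_{\chi,\rho}^{\circ}(\nu,g)=f_{\chi,\rho}^{(\nu)}(g)+D_{F}^{-1/2}A_{\chi,\rho}(\nu)\frac{L(\nu,\chi^{2})}{L(1+\nu,\chi^{2})}f_{\chi^{-1},\rho}^{(-\nu)}(g)$ I would carry out the torus integral explicitly: by $B_{\AA}$-equivariance of the flat sections, $P_{\b,\l}^{\eta}$ applied to $f_{\chi,\rho}^{(\pm\nu)}$ is a product of $\int_{F^{\times}\backslash\AA^{1}}(\chi\eta)(t_{1})\,d^{\times}t_{1}=\d_{\chi,\eta}R_{F}$ with the one-dimensional Mellin transforms $\int_{0}^{\infty}\hat{\b}_{\l}(y^{\pm1})y^{s}\,d^{\times}y=\b(s)/(\l\mp s)$ evaluated at $s=\tfrac{\nu+1}{2}$, respectively $s=\tfrac{1-\nu}{2}$, times the value of $f_{\chi,\rho}^{(\nu)}$ at $\left(\begin{smallmatrix}1&x_{\eta}\\0&1\end{smallmatrix}\right)$, which equals $f_{\chi,\rho}^{(\nu)}(e)$ by the explicit normalization of the sections in \cite{Sugiyama}. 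Hence only $\chi=\eta$ contributes, and there the weight $D_{F}^{-1/2}A_{\chi,\rho}(\nu)\zeta_{F}(\nu)/\zeta_{F}(1+\nu)$ has a simple pole at $\nu=1$ with residue $D_{F}^{-1/2}A_{\chi,\rho}(1)R_{F}/\zeta_{F}(2)$; extracting the $\nu^{0}$-coefficient of its product with the Mellin factors introduces $R_{F}$, $C_{0}({\bf 1})$, $A'_{\chi,\rho}(1)$ and $\zeta_{F}'(2)/\zeta_{F}(2)$ and gives the first two displayed lines. Adding the two contributions produces the stated formula for $P_{\b,\l}^{\eta}({\mathfrak e}_{\chi,\rho,0})$; isolating the $\l$-constant term (the $\b''(0)$ coming from the double pole at $\l=0$ as above) then gives ${\rm CT}_{\l=0}P_{\b,\l}^{\eta}({\mathfrak e}_{\chi,\rho,0})$ and the constant $a_{\chi,\rho}^{\eta}(0)$.

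The main obstacle is the bookkeeping: one runs two Laurent expansions at once — in $\nu$ about $1$ and in $\l$ about $0$ — and the interaction is genuinely delicate, since the completed Dedekind zeta contributes poles at $s=0$ as well as at $s=1$, so several naively cancelling polar terms in fact recombine to produce the $1/\l^{2}$ term and the $\b''(0)$ anomaly; keeping track of exactly which residue feeds which power of $1/\l$, and justifying the interchanges of integration and the contour shifts in the ranges of $\l$ where everything converges absolutely before continuation, is where essentially all the work lies.
\QED
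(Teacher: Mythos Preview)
The paper does not give its own proof of this proposition; it is quoted verbatim from \cite[Theorem 40 and Corollary 41]{Sugiyama}. Your plan is the right one and is essentially what is carried out there (and mirrored in the paper's own contour manipulations in Lemmas~\ref{lem:Green merom} and~\ref{lem: merom of P eisen}): split ${\mathfrak e}_{\chi,\rho,0}$ as $E_{\chi,\rho}^{\natural}(1,-)+{\mathfrak e}_{\chi,\rho,0}^{\circ}$, feed the $\natural$-piece through the Mellin representation of $\hat\b_{\l}$ together with Proposition~\ref{prop:Z* of Eisen}, shift the $z$-contour and record the residues at $z=(\pm\nu\pm1)/2$, then pass to the Laurent expansion at $\nu=1$; handle the constant-term piece by the direct torus computation you describe. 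Your diagnosis of the $\b''(0)$ anomaly---arising from the collision of the poles $z=(\nu-1)/2$ and $z=-(\nu+1)/2$ at $z=0$ as $\nu\to1$, which produces a genuine $\l^{-2}$ singularity that then pairs with the $\l^{2}$-term of $\b(-\l)=\b(0)+\tfrac12\b''(0)\l^{2}+\cdots$---is exactly the mechanism.
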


\subsection{An orthonormal basis of $V_{\pi}^{\bfK_{\infty}\bfK_{0}(\gn)}$}
Let $\pi$ be a cuspidal automorphic representation of $G_{\AA}$ such that $\pi \in \Pi_{\rm cus}(\gn)$.
We put
$$\PP^{\eta}(\pi ; \bfK_{0}(\gn)) = \sum_{\varphi \in B} \overline{Z^{*}(1/2, {\bf 1}, \varphi)} Z^{*}(1/2, \eta, \varphi),$$
where $B$ is an orthonormal basis of $V_{\pi}^{\bfK_{\infty}\bfK_{0}(\gn)}$.
In this subsection, we examine $\PP^{\eta}(\pi ; \bfK_{0}(\gn))$.
Set $\varphi_{\pi}^{\rm new} =\varphi_{\pi, \rho_{\pi}}$, where $\rho_{\pi}$ is a unique element of $\Lambda_{\pi}^{0}(\gf_{\pi})$.

\begin{lem}\label{lem:positivity of P}
The value $\PP^{\eta}(\pi ; \bfK_{0}(\gn))$ is independent of the choice of an orthonormal basis of $V_{\pi}^{\bfK_{\infty}\bfK_{0}(\gn)}$ and we have
$$\PP^{\eta}(\pi ; \bfK_{0}(\gn)) = D_{F}^{-1/2}\Gcal(\eta)w^{\eta}_{\gn}(\pi) \frac{L(1/2, \pi)L(1/2, \pi \otimes \eta)}{||\varphi_{\pi}^{\rm new}||^{2}}.$$
Here $w_{\gn}^{\eta}(\pi)$ is an explicit nonnegative constant defined as
$$w_{\gn}^{\eta}(\pi) = \prod_{k=1}^{n} \prod_{v \in S_{k}(\gn\gf_{\pi}^{-1})} r(\pi_{v}, \eta_{v}, k)
= \prod_{v \in S(\gn\gf_{\pi}^{-1})} r(\pi_{v}, \eta_{v}, \ord_{v}(\gn\gf_{\pi}^{-1}))$$
where $r(\pi_{v}, \eta_{v}, k)$ is defined as follows:
\begin{itemize}
\item If $c(\pi_{v}) \ge 2$, then $r(\pi_{v}, \eta_{v}, k) = \begin{cases}
k+1 & \text{$($if $\eta_{v}(\varpi_{v})=1$$)$,} \\
2^{-1}(1 + (-1)^{k}) & \text{$($if $\eta_{v}(\varpi_{v})=-1$$)$.}
\end{cases}$

\item If $c(\pi_{v}) =1$, then $\pi_{v}$ is isomorphic to $\s(\chi_{v}|\cdot|_{v}^{1/2}, \chi_{v}|\cdot|_{v}^{-1/2})$ for some unramified character $\chi_{v}$ of $F_{v}^{\times}$. Then
$$r(\pi_{v}, \eta_{v}, k) = 
\begin{cases}
\displaystyle 1+ \frac{1-\chi_{v}(\varpi_{v})q_{v}^{-1}}{1+\chi_{v}(\varpi_{v})q_{v}^{-1}}k & 
\text{$($if $\eta_{v}(\varpi_{v})=1$$)$,} \\
\ \\
2^{-1}(1+(-1)^{k}) & \text{$($if $\eta_{v}(\varpi_{v})=-1$$)$.}
\end{cases}$$

\item If $c(\pi_{v})=0$ and $(\a_{v}, \a_{v}^{-1})$ is the Satake parameter of $\pi_{v}$, then
{\allowdisplaybreaks\begin{align*}
r(\pi_{v}, \eta_{v},k)=
\begin{cases}
\displaystyle \frac{2}{1+Q(\pi_{v}) }+ \frac{1-Q(\pi_{v}) }{1+Q(\pi_{v}) }
\frac{q_{v}+1}{q_{v}-1}(k-1)
& \text{$($if $\eta_{v}(\varpi_{v})=1$$)$,} \\
\ \\
\displaystyle 
\frac{q_{v}+1}{q_{v}-1}\frac{1+(-1)^{k}}{2} & \text{$($if $\eta_{v}(\varpi_{v})=-1$$)$,}
\end{cases}
\end{align*}
}where $Q(\pi_{v}) = (\a_{v}+\a_{v}^{-1})(q_{v}^{1/2}+q_{v}^{-1/2})^{-1}$.
\end{itemize}

Moreover, $\Gcal(\eta)^{-1}\PP^{\eta}(\pi ; \bfK_{0}(\gn))$ is nonnegative.
\end{lem}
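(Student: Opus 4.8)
The plan is to compute $\PP^{\eta}(\pi ; \bfK_{0}(\gn))$ by choosing the convenient orthogonal basis $\{\varphi_{\pi, \rho}\}_{\rho \in \Lambda_{\pi}^{0}(\gn)}$ and then passing to the associated orthonormal basis. First I would note that $\PP^{\eta}(\pi ; \bfK_{0}(\gn))$ is independent of the choice of orthonormal basis: if $B$ and $B'$ are two such bases, they differ by a unitary matrix, and the sum $\sum_{\varphi \in B}\overline{Z^{*}(1/2,{\bf 1},\varphi)}Z^{*}(1/2,\eta,\varphi)$ is exactly the Hilbert--Schmidt pairing of the two conjugate-linear functionals $\varphi \mapsto Z^{*}(1/2,{\bf 1},\varphi)$ and $\varphi \mapsto Z^{*}(1/2,\eta,\varphi)$ (equivalently, of the corresponding vectors obtained by Riesz representation), which is manifestly basis-independent. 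Hence I may compute with the orthogonal basis $\{\varphi_{\pi,\rho}\}$, rescaling each $\varphi_{\pi,\rho}$ by $\|\varphi_{\pi,\rho}\|^{-1}$ to get an orthonormal basis, so that
\begin{align*}
\PP^{\eta}(\pi ; \bfK_{0}(\gn)) = \sum_{\rho \in \Lambda_{\pi}^{0}(\gn)} \frac{\overline{Z^{*}(1/2, {\bf 1}, \varphi_{\pi,\rho})}\, Z^{*}(1/2, \eta, \varphi_{\pi,\rho})}{\|\varphi_{\pi,\rho}\|^{2}}.
\end{align*}

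Next I would invoke Proposition \ref{prop:period = L(1/2)} (with $\eta$ and with the trivial character ${\bf 1}$, noting that $\Gcal({\bf 1})=1$ and ${\bf 1}$ trivially satisfies $(\star)$), which gives
\begin{align*}
Z^{*}(1/2,\eta,\varphi_{\pi,\rho}) = \Gcal(\eta)\Bigl\{\prod_{k=1}^{n}\prod_{v \in S_{k}(\gn\gf_{\pi}^{-1})} Q_{\rho_{k}(v),v}^{\pi_{v}}(\eta_{v},1)\Bigr\} L(1/2,\pi\otimes\eta),
\end{align*}
and similarly $Z^{*}(1/2,{\bf 1},\varphi_{\pi,\rho}) = \bigl\{\prod_{k}\prod_{v} Q_{\rho_{k}(v),v}^{\pi_{v}}({\bf 1}_{v},1)\bigr\} L(1/2,\pi)$. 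Since everything factors over the places $v \in S(\gn\gf_{\pi}^{-1})$ and over the coordinates $\rho_{k}(v) \in \{0,\dots,k\}$, the sum over $\rho$ becomes a product over $v \in S_{k}(\gn\gf_{\pi}^{-1})$ of local sums $\sum_{j=0}^{k} \overline{Q_{j,v}^{\pi_{v}}({\bf 1}_{v},1)}\,Q_{j,v}^{\pi_{v}}(\eta_{v},1)/\|\phi_{j,v}\|_{v}^{2}$, divided by the fixed factor $\|\varphi_{\pi}^{\rm new}\|^{2}$ coming from the places outside $S(\gn\gf_{\pi}^{-1})$ and the archimedean places; here $\|\cdot\|_{v}$ denotes the local norm of $\phi_{j,v}$ normalized so that the product over all $v$ recovers the global Petersson norm relative to $\varphi_{\pi}^{\rm new}$. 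I would thus define $r(\pi_{v},\eta_{v},k)$ to be exactly this local quantity (with the local norms of the vectors $\phi_{j,v}$ taken from \cite{Sugiyama}), which makes the asserted product formula $w_{\gn}^{\eta}(\pi) = \prod_{k}\prod_{v \in S_{k}(\gn\gf_{\pi}^{-1})} r(\pi_{v},\eta_{v},k)$ immediate and rewrites the indexing as a single product over $v \in S(\gn\gf_{\pi}^{-1})$ with exponent $\ord_{v}(\gn\gf_{\pi}^{-1})$.

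The remaining work is the explicit evaluation of $r(\pi_{v},\eta_{v},k)$ in the three cases $c(\pi_{v}) \ge 2$, $c(\pi_{v}) = 1$, $c(\pi_{v}) = 0$. In each case I would substitute the explicit values of $Q_{j,v}^{\pi_{v}}(\eta_{v},1)$ from Proposition \ref{prop:period = L(1/2)} (using $\eta_{v}(\varpi_{v}) \in \{\pm 1\}$ and $\eta_{v}(\varpi_{v})^{2}=1$) together with the known local Gram factors $\|\phi_{j,v}\|_{v}^{2}$ from \cite[\S2,\S3]{Sugiyama}, and sum the resulting finite geometric-type series over $j = 0,\dots,k$. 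The case $c(\pi_{v}) \ge 2$ is the easiest: there $Q_{j,v}^{\pi_{v}}(\eta_{v},1) = \eta_{v}(\varpi_{v})^{j}$, so the sum is $\sum_{j=0}^{k}\eta_{v}(\varpi_{v})^{j}/\|\phi_{j,v}\|_{v}^{2}$, which after normalization gives $k+1$ when $\eta_{v}(\varpi_{v})=1$ and $\tfrac{1}{2}(1+(-1)^{k})$ when $\eta_{v}(\varpi_{v})=-1$; the cases $c(\pi_{v})=1$ and $c(\pi_{v})=0$ follow the same pattern but require carrying the extra terms in $Q$ and collecting them into the stated rational expressions in $q_{v}$, $\chi_{v}(\varpi_{v})$, and $Q(\pi_{v})$. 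Finally, nonnegativity of $\Gcal(\eta)^{-1}\PP^{\eta}(\pi;\bfK_{0}(\gn))$ follows because each local factor $r(\pi_{v},\eta_{v},k)$ is visibly $\ge 0$ (using $|\chi_{v}(\varpi_{v})|=1$, $|\a_{v}|=1$ in the tempered case or $\a_{v}\in\RR_{>0}$ in the complementary-series range, so that $|Q(\pi_{v})| < 1$), and because $L(1/2,\pi\otimes\eta)$ and $L(1/2,\pi)$ have the same sign by Guo's positivity result; alternatively, one may deduce nonnegativity directly from the Cauchy--Schwarz / Hilbert--Schmidt interpretation of $\PP^{\eta}$ together with the functional-equation sign. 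The main obstacle will be the bookkeeping in the $c(\pi_{v})=0$ and $c(\pi_{v})=1$ cases: correctly matching the normalizations of the local vectors $\phi_{j,v}$ from \cite{Sugiyama} with the global Petersson norm $\|\varphi_{\pi}^{\rm new}\|^{2}$, so that the telescoping of the $Q$-values against the Gram factors produces precisely the closed forms displayed in the statement.
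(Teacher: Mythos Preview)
Your approach is essentially identical to the paper's: choose the orthogonal basis $\{\varphi_{\pi,\rho}\}$, apply Proposition~\ref{prop:period = L(1/2)} for both $\eta$ and ${\bf 1}$, factor the $\rho$-sum into a product of local sums $\sum_{j=0}^{k}\overline{Q_{j,v}^{\pi_v}({\bf 1}_v,1)}\,Q_{j,v}^{\pi_v}(\eta_v,1)/\tau_{\pi_v}(j,j)$, evaluate these case by case using the explicit $\tau_{\pi_v}(j,j)$ from \cite[Corollaries~12 and~16]{Sugiyama}, and conclude nonnegativity from $|Q(\pi_v)|<1$ together with Guo's result \cite{Guo}. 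One small correction: with the paper's conventions the Gauss sum of the trivial character is $\Gcal({\bf 1})=\prod_{v\in\Sigma_{\fin}}q_v^{-d_v/2}=D_F^{-1/2}$, not $1$, and this is precisely the origin of the factor $D_F^{-1/2}$ in the displayed formula.
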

\Proof
The first assertion is obvious. Thus we may take $\{ ||\varphi_{\pi, \rho}||^{-1} \varphi_{\pi, \rho}\}_{\rho \in \Lambda_{\pi}^{0}(\gn)}$ as $B$. 
By virtue of Proposition \ref{prop:period = L(1/2)}, we have
{\allowdisplaybreaks\begin{align*}
& \PP^{\eta}(\pi ; \bfK_{0}(\gn)) = \sum_{\rho \in \Lambda_{\pi}^{0}(\gn)}\frac{1}{||\varphi_{\pi, \rho}||^{2}}
Z^{*}(1/2, {\bf 1}, \varphi) Z^{*}(1/2, \eta, \varphi) \\
= & \sum_{\rho \in \Lambda_{\pi}^{0}(\gn)} \prod_{k = 1}^{n}\prod_{v \in S_{k}(\gn \gf_{\pi}^{-1})}
\left\{ \frac{\overline{Q_{\rho_{k}(v), v}^{\pi_{v}}({\bf 1}_{v}, 1)}Q_{\rho_{k}(v), v}^{\pi_{v}}(\eta_{v}, 1)} {\t_{\pi_{v}}(\rho_{k}(v), \rho_{k}(v))} \right\}
\frac{\Gcal({\bf 1})\Gcal(\eta) L(1/2, \pi)L(1/2, \pi \otimes \eta)}{||\varphi_{\pi}^{\rm new}||^{2}}.
\end{align*}
Then, we obtain the second assertion by setting 
$$w_{\gn}^{\eta}(\pi) = \sum_{\rho \in \Lambda_{\pi}^{0}(\gn)} \prod_{k = 1}^{n}\prod_{v \in S_{k}(\gn \gf_{\pi}^{-1})}
\left\{ \frac{\overline{Q_{\rho_{k}(v), v}^{\pi_{v}}({\bf 1}_{v}, 1)}Q_{\rho_{k}(v), v}^{\pi_{v}}(\eta_{v}, 1)} {\t_{\pi_{v}}(\rho_{k}(v), \rho_{k}(v))} \right\}.$$
Here $\tau_{\pi_{v}}(j, j) = ||\phi_{j, v}||_{v}^{2}$ for $k \in \NN$, where $||\cdot||_{v}$ is the norm on $V_{\pi_{v}}$ defined by the $G_{v}$-invariant inner product normalized
so that $||\phi_{0,v}||_{v}=1$. We remark that an explicit formula of $\tau_{\pi_{v}}(j, j)$ was given in \cite[Corollaries 12 and 16]{Sugiyama}.
By definition and a direct computation, we have
$$w_{\gn}^{\eta}(\pi)=
\prod_{k=1}^{n}\bigg\{
\sum_{ (j_{v})_{v}\in \{0, \ldots, k\}^{^{S_{k}(\gn\gf_{\pi}^{-1})}}}
\prod_{v \in S_{k}(\gn\gf_{\pi}^{-1})}r_{v, j_{v}}\bigg\}
=\prod_{k=1}^{n} \prod_{v \in S_{k}(\gn\gf_{\pi}^{-1})} \sum_{j=0}^{k}r_{v, j}
$$
and $\sum_{j=0}^{k}r_{v, j} = r(\pi_{v}, \eta_{v}, k)$,
where
$r_{v, j} = \overline{Q_{j, v}^{\pi_{v}}({\bf 1}_{v}, 1)}Q_{j, v}^{\pi_{v}}(\eta_{v}, 1) \t_{\pi_{v}}(j, j)^{-1}$.

Then, one can check $w_{\gn}^{\eta}(\pi) \in \RR_{\ge 0}$ easily by noting $|Q(\pi_{v})|<1$
when $c(\pi_{v})=0$.
Since the estimate $L(1/2, \pi)L(1/2, \pi \otimes \eta) \ge 0$ holds by \cite{Guo}, we can prove the lemma.
\QED

Since $\eta^{2}= {\bf 1}$, we have $\tilde{\eta}(\gn)=\pm1$.
We consider only the case of $\tilde{\eta}(\gn)=1$ because of the following reason.
\begin{lem}
Let $\pi$ be a $\bfK_{\infty}$-spherical irreducible cuspidal automorphic representation of $G_{\AA}$ with trivial central character.
Let $\eta$ be a character of $F^{\times} \backslash \AA^{\times}$ such that $\eta^{2}={\bf 1}$ and $\gf_{\eta}$ is relatively prime to $\gf_{\pi}$.
Suppose that $\eta_{v}(-1)=1$ for any $v \in \Sigma_{\infty}$.
Then, $L(1/2, \pi)L(1/2, \pi \otimes \eta) = 0$ unless $\tilde{\eta}(\gf_{\pi})=1$.
\end{lem}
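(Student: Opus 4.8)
The statement is a functional-equation/root-number argument. The completed $L$-function $L(s,\pi)$ satisfies a functional equation $L(s,\pi) = \varepsilon(\pi)\,L(1-s,\pi)$ with $\varepsilon(\pi) = \pm 1$ (since the central character is trivial), and similarly $L(s,\pi\otimes\eta) = \varepsilon(\pi\otimes\eta)\,L(1-s,\pi\otimes\eta)$. The key point is that $\eta^2 = \mathbf 1$, so $\pi\otimes\eta$ has trivial central character as well, and $\widetilde{(\pi\otimes\eta)} \cong \pi\otimes\eta$, $\widetilde\pi\cong\pi$. If either root number is $-1$, then the corresponding completed $L$-value at $s=1/2$ vanishes, and hence the product $L(1/2,\pi)L(1/2,\pi\otimes\eta)$ is zero. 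So it suffices to show that $\varepsilon(\pi)\,\varepsilon(\pi\otimes\eta) = -1$ whenever $\tilde\eta(\gf_\pi) = -1$.

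First I would factor both global root numbers into local ones: $\varepsilon(\pi) = \prod_{v\in\Sigma_F}\varepsilon_v(\pi_v,\psi_{F_v})$ and $\varepsilon(\pi\otimes\eta) = \prod_{v}\varepsilon_v(\pi_v\otimes\eta_v,\psi_{F_v})$, so that
\[
\varepsilon(\pi)\,\varepsilon(\pi\otimes\eta) = \prod_{v\in\Sigma_F}\frac{\varepsilon_v(\pi_v\otimes\eta_v,\psi_{F_v})}{\varepsilon_v(\pi_v,\psi_{F_v})}.
\]
Now one analyzes the local quotients. At the archimedean places, $\eta_v = \mathbf 1_v$ by hypothesis (and $\pi_v$ is spherical), so the quotient is $1$. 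At finite places $v \notin S(\gf_\pi)\cup S(\gf_\eta)$, both $\pi_v$ and $\eta_v$ are unramified, hence the local epsilon factors are monomials in $q_v^{\,-s}$ whose value at $s=1/2$ contributes nothing to the sign; more precisely the quotient $\varepsilon_v(\pi_v\otimes\eta_v)/\varepsilon_v(\pi_v)$ equals $1$ (unramified twist of an unramified representation). At $v\in S(\gf_\eta)$: since $\gf_\eta$ is coprime to $\gf_\pi$, $\pi_v$ is unramified there, and the quotient becomes $\varepsilon_v(\eta_v\boxplus\eta_v)/\varepsilon_v(\mathbf 1_v\boxplus\mathbf 1_v)$-type expression — using $\varepsilon_v$ of an unramified principal series twisted by the ramified character $\eta_v$; one computes this to be $\eta_v(\varpi_v)^{\,?}\cdot(\text{root number of }\eta_v)^2$ and, because $\eta_v^2 = \mathbf 1_v$, the Gauss-sum squares cancel and the only surviving factor is a power of $\eta_v(\varpi_v)$ that is absorbed into $\tilde\eta$. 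Finally at $v\in S(\gf_\pi)$: since $\gf_\eta$ is coprime to $\gf_\pi$, $\eta_v$ is \emph{unramified} there, so twisting $\pi_v$ by the unramified character $\eta_v$ multiplies the epsilon factor by $\eta_v(\varpi_v)^{\,c(\pi_v)}$ (this is the standard formula $\varepsilon(\pi_v\otimes\chi_v,\psi)=\chi_v(\varpi_v)^{c(\pi_v)+2\cdot 0}\varepsilon(\pi_v,\psi)$ for $\chi_v$ unramified, using $a(\pi_v) = c(\pi_v)$ and conductor of $\psi_{F_v}$ normalized). Hence the quotient at such $v$ is exactly $\eta_v(\varpi_v)^{\operatorname{ord}_v(\gf_\pi)}$.

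Collecting the contributions, all places outside $S(\gf_\pi)$ contribute $+1$ (the $S(\gf_\eta)$ contributions being trivial after the Gauss-sum cancellation, and using $\tilde\eta$ is a character), and we get
\[
\varepsilon(\pi)\,\varepsilon(\pi\otimes\eta) = \prod_{v\in S(\gf_\pi)}\eta_v(\varpi_v)^{\operatorname{ord}_v(\gf_\pi)} = \tilde\eta(\gf_\pi).
\]
Therefore if $\tilde\eta(\gf_\pi) = -1$, then $\varepsilon(\pi) = -\varepsilon(\pi\otimes\eta)$, so at least one of $\varepsilon(\pi)$, $\varepsilon(\pi\otimes\eta)$ equals $-1$; the corresponding completed central value vanishes by the functional equation, and hence $L(1/2,\pi)L(1/2,\pi\otimes\eta) = 0$. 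Since $\tilde\eta(\gf_\pi) \in\{\pm1\}$ (because $\eta^2=\mathbf 1$), this is exactly the contrapositive of the claim, and we are done.

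\textbf{Main obstacle.} The only delicate bookkeeping is the exact evaluation of the local epsilon-factor quotients — in particular making sure the archimedean normalization of $\psi_{F_v}$ and the different $\gD_{F/\QQ}$ do not introduce stray signs, and that the contributions at $v\in S(\gf_\eta)$ really cancel (this uses $\varepsilon_v(\eta_v,\psi_{F_v})\,\varepsilon_v(\eta_v^{-1},\psi_{F_v}) = \eta_v(-1)\,q_v^{\,-\text{(something)}}$ together with $\eta_v = \eta_v^{-1}$ and a compensating factor from the Langlands constant of the $\mathrm{GL}_1\times\mathrm{GL}_1$ induced representation). An alternative, cleaner route that sidesteps most of this is to invoke the known formula for the root number of a $\mathrm{GL}_2$ automorphic representation twisted by an idele-class character coprime to the conductor: $\varepsilon(\pi\otimes\eta) = \eta(\gf_\pi)\,\varepsilon(\eta)^2\,\varepsilon(\pi) = \tilde\eta(\gf_\pi)\,\varepsilon(\pi)$ when $\eta^2=\mathbf 1$ and $\eta_\infty$ is trivial. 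I would state this, cite a standard reference (e.g. the $\varepsilon$-factor formalism of Tate/Deligne or the Jacquet–Langlands local functional equations), and conclude immediately.
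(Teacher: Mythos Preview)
Your approach is correct and is essentially the same as the paper's: both argue via the functional equation that $\varepsilon(1/2,\pi)\varepsilon(1/2,\pi\otimes\eta)=\tilde\eta(\gf_\pi)$, reducing to the local identity $\varepsilon(1/2,\pi_v,\psi_{F_v})\varepsilon(1/2,\pi_v\otimes\eta_v,\psi_{F_v})=\eta_v(\varpi_v)^{c(\pi_v)}$ at $v\in S(\gf_\pi)$ (where $\eta_v$ is unramified), which follows from the standard behaviour of $\varepsilon$-factors under unramified twist. The paper is terser because it cites \cite[Lemma~2.3]{TsuzukiSpec} for the reduction and the square-free places, adding only the verification at places with $c(\pi_v)\ge 2$ via \cite{Schmidt}; you have written out the whole argument from scratch.

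One small remark on your handling of $v\in S(\gf_\eta)$: the ``Gauss-sum squares cancel'' is not quite the mechanism. Since $\pi_v$ is unramified there with trivial central character, write $\pi_v=\pi(\mu,\mu^{-1})$; then $\varepsilon(1/2,\pi_v\otimes\eta_v,\psi_{F_v})=\varepsilon(1/2,\mu\eta_v,\psi_{F_v})\varepsilon(1/2,\mu^{-1}\eta_v,\psi_{F_v})$, and using $\eta_v=\eta_v^{-1}$ together with $\varepsilon(1/2,\chi,\psi)\varepsilon(1/2,\chi^{-1},\psi)=\chi(-1)$ gives $(\mu\eta_v)(-1)=\eta_v(-1)$ (as $\mu$ is unramified). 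Globally $\prod_{v\in S(\gf_\eta)}\eta_v(-1)=1$ since $\eta_v(-1)=1$ for all $v\notin S(\gf_\eta)$ by hypothesis and unramifiedness, and $\eta(-1)=1$. This makes your ``main obstacle'' paragraph precise.
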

\begin{proof}
By the argument in the proof of \cite[Lemma 2.3]{TsuzukiSpec}, it is enough to show
$\e(1/2, \pi_{v}, \psi_{F_{v}})\e(1/2, \pi_{v} \otimes \eta_{v}, \psi_{F_{v}})= \eta_{v}(\varpi_{v}^{c(\pi_{v})})$
for any $v \in \bigcup_{k\ge 2} S_{k}(\gf_{\pi})$.
It follows immediately from fundamental properties of $\e$-factors (cf. \cite[1.1]{Schmidt}).
We note that $\eta_{v}$ is unramified if $v \in S(\gf_{\pi})$.
\end{proof}
\subsection{Adjoint $L$-functions}
Let $\pi$ be a cuspidal automorphic representation of $G_{\AA}$ contained in $\Pi_{\rm cus}(\gn)$.
We examine an explicit description of $||\varphi_{\pi}^{\rm new}||^{2}$ in terms of the adjoint $L$-function of $\pi$
by computing the Rankin-Selberg integral.
For any $v \in \Sigma_{F}$, we denote by $Z_{v}(s)$ the local Rankin-Selberg integral
$$\int_{\bfK_{v}} \int_{F_{v}^{\times}} \phi_{0, v}\left(\left(\begin{matrix} t&0 \\ 0&1 \end{matrix}\right)k_{v}\right)
\overline{\phi_{0, v}\left(\left(\begin{matrix} t&0 \\ 0&1 \end{matrix}\right)k_{v}\right)} |t_{v}|_{v}^{s-1} d^{\times}t_{v}dk_{v}.
$$
Let $\rho_{0}$ denote a unique element of $\Lambda_{\bf1}(\gn)$.
\begin{lem}\label{Adjoint L}
Set $S_{\pi}=\{ v \in \Sigma_{\fin} \ | \ \ord_{v}(\gf_{\pi})\ge2 \}$. We have
{\allowdisplaybreaks\begin{align*}
& \int_{Z_{\AA}G_{F}\backslash G_{\AA}}\varphi_{\pi}^{\rm new}(g)\overline{\varphi_{\pi}^{\rm new}(g)}E_{{\bf 1}, \rho_{0}}(2 s-1, g) dg \\
= & [\bfK_{\fin}: \bfK_{0}(\gf_{\pi})]^{-1}{\rm N}(\gf_{\pi})^{s}D_{F}^{s-3/2}\zeta_{F}(2s)^{-1}\zeta_{F}(s)L(s, \pi, {\rm Ad}) \prod_{v \in S_{\pi}} \frac{q_{v}^{d_{v}(3/2-s)}q_{v}^{c(\pi_{v})(1-s)}Z_{v}(s)}{L(s, \pi_{v}, {\rm Ad})}\frac{1+q_{v}^{-1}}{1+q_{v}^{-s}}
\end{align*}
}for $\Re(s) \gg 0$.
Moreover, we have $||\varphi_{\pi}^{\rm new}||^{2} = 2 {\rm N}(\gf_{\pi})[\bfK_{\fin}: \bfK_{0}(\gf_{\pi})]^{-1}L^{S_{\pi}}(1, \pi; {\rm Ad}).$
\end{lem}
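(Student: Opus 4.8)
The plan is to evaluate the left-hand side by Rankin--Selberg unfolding and then to read off $\|\varphi_{\pi}^{\rm new}\|^{2}$ from its residue at $s=1$. For $\Re(s)\gg 0$ the series defining $E_{{\bf 1},\rho_{0}}(2s-1,-)$ converges absolutely, so I would unfold it against $\varphi_{\pi}^{\rm new}\overline{\varphi_{\pi}^{\rm new}}$, turning the integral over $Z_{\AA}G_{F}\backslash G_{\AA}$ into $\int_{Z_{\AA}B_{F}\backslash G_{\AA}}|\varphi_{\pi}^{\rm new}(g)|^{2}\,f_{{\bf 1},\rho_{0}}^{(2s-1)}(g)\,dg$; inserting the $\psi_{F}$-Fourier expansion of one factor $\varphi_{\pi}^{\rm new}$ (its constant term vanishing by cuspidality) then reduces this to $\int_{Z_{\AA}N_{\AA}\backslash G_{\AA}}|W(g)|^{2}\,f_{{\bf 1},\rho_{0}}^{(2s-1)}(g)\,dg$, where $W=W_{\varphi_{\pi}^{\rm new}}$ is the global Whittaker function of $\varphi_{\pi}^{\rm new}$ with respect to $\psi_{F}$. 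By meromorphic continuation the resulting identity holds in a neighbourhood of $s=1$.

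Because $\varphi_{\pi}^{\rm new}=\varphi_{\pi,\rho_{\pi}}$ corresponds, under $V_{\pi}\cong\bigotimes_{v}V_{\pi_{v}}$, to the pure tensor $\bigotimes_{v}\phi_{0,v}$ (the level being $\gf_{\pi}$, so that $\gn\gf_{\pi}^{-1}=\go_{F}$), and $f_{{\bf 1},\rho_{0}}^{(\nu)}$ is likewise the pure tensor $\bigotimes_{v}\tilde f_{0,{\bf 1}_{v}}^{(\nu)}$ of spherical sections, the Whittaker function factors, $W=\prod_{v}W_{v}$, and the integral becomes an Euler product $\prod_{v\in\Sigma_{F}}Z_{v}^{\sharp}(s)$ of local Rankin--Selberg integrals. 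Using Iwasawa coordinates on $Z_{v}N_{v}\backslash G_{v}$ (with Jacobian $|t|_{v}^{-1}$) and the transformation $\tilde f_{0,{\bf 1}_{v}}^{(2s-1)}(\mathrm{diag}(t,1)k)=|t|_{v}^{s}$ for $k\in\bfK_{v}$, one identifies $Z_{v}^{\sharp}(s)=\vol(\bfK_{v},dg_{v})\,Z_{v}(s)=q_{v}^{-3d_{v}/2}Z_{v}(s)$ at each finite place, with $Z_{v}(s)$ the integral in the statement. One then evaluates the local factors: at $v\in\Sigma_{\infty}$ the Rankin--Selberg integral of the spherical vectors produces the archimedean Euler factor of $\zeta_{F}(2s)^{-1}\zeta_{F}(s)L(s,\pi,{\rm Ad})$; at $v\in\Sigma_{\fin}\setminus S(\gf_{\pi})$, where $\pi_{v}$ is unramified, the classical Casselman--Shalika computation gives $Z_{v}^{\sharp}(s)=\zeta_{F_{v}}(2s)^{-1}\zeta_{F_{v}}(s)L(s,\pi_{v},{\rm Ad})$; at $v\in S_{1}(\gf_{\pi})$, where $\pi_{v}\cong\sigma(\chi_{v}|\cdot|_{v}^{1/2},\chi_{v}|\cdot|_{v}^{-1/2})$ with $\chi_{v}$ unramified, the explicit local newvector Whittaker function and a direct summation give the same shape up to the conductor power $q_{v}^{c(\pi_{v})(1-s)}$ and a factor matched by the local index $[\bfK_{v}:\bfK_{0}(\gf_{\pi}\go_{v})]$; and at $v\in S_{\pi}$, where no closed form is available, one keeps $Z_{v}(s)$ and records only the normalising Euler factors $q_{v}^{d_{v}(3/2-s)}q_{v}^{c(\pi_{v})(1-s)}\tfrac{1+q_{v}^{-1}}{1+q_{v}^{-s}}L(s,\pi_{v},{\rm Ad})^{-1}$, which is precisely the product over $S_{\pi}$ in the statement.

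Multiplying the local factors over all $v$ and repackaging the archimedean gamma factors and the powers of $q_{v}$ by means of $D_{F}=\prod_{v\in\Sigma_{\fin}}q_{v}^{d_{v}}$, ${\rm N}(\gf_{\pi})=\prod_{v\in S(\gf_{\pi})}q_{v}^{c(\pi_{v})}$ and $[\bfK_{\fin}:\bfK_{0}(\gf_{\pi})]={\rm N}(\gf_{\pi})\prod_{v\in S(\gf_{\pi})}(1+q_{v}^{-1})$ yields the first displayed formula. For the norm, take $\Res_{s=1}$ of both sides. On the left, the only pole at $s=1$ comes from $E_{{\bf 1},\rho_{0}}(2s-1,g)$, and Proposition \ref{prop:e-1 = chi det} (applied with $\chi={\bf 1}$, so $\gf_{\chi}=\go_{F}$ and $S(\rho_{0})=\emptyset$) gives $\Res_{\nu=1}E_{{\bf 1},\rho_{0}}(\nu,g)=D_{F}^{-1/2}R_{F}\,\zeta_{F}(2)^{-1}$, a constant, whence $\Res_{s=1}$ of the left-hand side equals $\tfrac12 D_{F}^{-1/2}R_{F}\zeta_{F}(2)^{-1}\|\varphi_{\pi}^{\rm new}\|^{2}$. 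On the right, the pole comes from $\zeta_{F}(s)$ with residue $R_{F}$; at $s=1$ one has $q_{v}^{c(\pi_{v})(1-s)}=1$ and $(1+q_{v}^{-1})/(1+q_{v}^{-s})=1$, while $G_{v}$-invariance of the local inner product forces the $\bfK_{v}$-integral in $Z_{v}(1)$ to collapse to the constant value $\|\phi_{0,v}\|_{v}^{2}$, so that (with the normalisation of $\phi_{0,v}$ recalled from \cite{Sugiyama}) $q_{v}^{d_{v}/2}Z_{v}(1)=L(1,\pi_{v},{\rm Ad})$ and every factor of $\prod_{v\in S_{\pi}}$ specialises to $1$; hence $L(s,\pi,{\rm Ad})$ and $\prod_{v\in S_{\pi}}L(1,\pi_{v},{\rm Ad})^{-1}$ combine into $L^{S_{\pi}}(1,\pi;{\rm Ad})$. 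Equating residues and cancelling the common factor $D_{F}^{-1/2}R_{F}\zeta_{F}(2)^{-1}$ gives $\|\varphi_{\pi}^{\rm new}\|^{2}=2\,{\rm N}(\gf_{\pi})[\bfK_{\fin}:\bfK_{0}(\gf_{\pi})]^{-1}L^{S_{\pi}}(1,\pi;{\rm Ad})$. The main obstacle is the ramified local analysis --- the explicit Rankin--Selberg computation for the twisted Steinberg representations at $v\in S_{1}(\gf_{\pi})$ and the identity $q_{v}^{d_{v}/2}Z_{v}(1)=L(1,\pi_{v},{\rm Ad})$ at $v\in S_{\pi}$ --- together with the careful bookkeeping of discriminant, conductor and volume factors needed to check that all the $q_{v}$-powers assemble into exactly $D_{F}^{s-3/2}$, $[\bfK_{\fin}:\bfK_{0}(\gf_{\pi})]^{-1}{\rm N}(\gf_{\pi})^{s}$ and the stated product over $S_{\pi}$; the interchange of summation and integration in the unfolding and the factorization of $W$ are routine once $\Re(s)$ is large.
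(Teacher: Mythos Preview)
Your overall architecture---Rankin--Selberg unfolding, Euler factorization via $W=\prod_v W_v$, and extraction of $\|\varphi_{\pi}^{\rm new}\|^{2}$ from $\Res_{s=1}$---is exactly the paper's approach; the paper simply cites \cite[Lemma 2.14]{TsuzukiSpec} for the local factors at $v\notin S_{\pi}$ and then handles $v\in S_{\pi}$ separately.

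There is, however, a genuine error in your treatment of $Z_{v}(1)$ for $v\in S_{\pi}$. You assert that $q_{v}^{d_{v}/2}Z_{v}(1)=L(1,\pi_{v},{\rm Ad})$, so that each factor of the $S_{\pi}$-product specialises to~$1$; but then in the very next clause you say that $L(1,\pi,{\rm Ad})$ combines with $\prod_{v\in S_{\pi}}L(1,\pi_{v},{\rm Ad})^{-1}$ to produce $L^{S_{\pi}}(1,\pi;{\rm Ad})$. These two statements are incompatible: if the $S_{\pi}$-product were~$1$, the residue would involve the full $L(1,\pi,{\rm Ad})$, not the partial one. The paper's computation gives instead $Z_{v}(1)=q_{v}^{-d_{v}/2}$ for $v\in S_{\pi}$, so that the $v$-th factor of the product at $s=1$ equals $L(1,\pi_{v},{\rm Ad})^{-1}$, and \emph{that} is what combines with $L(1,\pi,{\rm Ad})$ to yield $L^{S_{\pi}}(1,\pi;{\rm Ad})$.

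The source of the slip is the identification of $Z_{v}(1)$ with ``$\|\phi_{0,v}\|_{v}^{2}$''. Your use of $G_{v}$-invariance is correct and does collapse the $\bfK_{v}$-integral, but what remains is the \emph{natural} Whittaker pairing $\int_{F_{v}^{\times}}|\phi_{0,v}(\mathrm{diag}(t,1))|^{2}\,d^{\times}t$, not the normalised inner product for which $\|\phi_{0,v}\|_{v}=1$. With the explicit newvector from \cite{Sugiyama} (for $c(\pi_{v})\ge 2$ one has $L(s,\pi_{v})=1$ and $\phi_{0,v}(\mathrm{diag}(t,1))$ supported on a single $\go_{v}^{\times}$-coset, of $d^{\times}t$-volume $q_{v}^{-d_{v}/2}$), one obtains $Z_{v}(1)=q_{v}^{-d_{v}/2}$ directly; there is no adjoint $L$-factor here. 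Once you correct this, your residue computation goes through and gives the stated formula for $\|\varphi_{\pi}^{\rm new}\|^{2}$.
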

\Proof
If $v \in \Sigma_{F} - S_{\pi}$, then $Z_{v}(s)$ is computed in \cite[Lemma 2.14]{TsuzukiSpec}. Hence, it suffices to examine $Z_{v}(s)$ when $v \in S_{\pi}$. By
$[\bfK_{v}: \bfK_{0}(\gp_{v}^{c(\pi_{v})})] = q_{v}^{c(\pi_{v})}(1+q_{v}^{-1})$,
we obtain the first assertion.

We note
$Z_{v}(1) = q_{v}^{-d_{v}/2}$ for $v\in S_{\pi}$.
Then we obtain the second assertion by taking the residue at $s=1$ since
$\Res_{s=1}E_{\bf 1, \rho_{0}}(s, g) = D_{F}^{-1/2}R_{F}\zeta_{F}(2)^{-1}$ holds by Proposition \ref{prop:e-1 = chi det}.
\QED

\section{Adelic Green functions}
\label{Adelic Green functions}
We define the adelic Green function on $G_{\AA}$ associated to an ideal $\gn$ of $\go_{F}$.
This was introduced in \cite{TsuzukiSpec} in the case where $\gn$ is square free. We define the function in the case where $\gn$ is an arbitrary ideal of $\go_{F}$.

For $v \in \Sigma_{\infty}$, the Green function on $GL(2, F_{v})$, denoted by $\Psi_{v}^{(z)}(s, -)$, is a function with the following property.

\begin{prop}
\cite[\S 4]{TsuzukiSpec}
Suppose $v \in \Sigma_{\infty}$.
We fix $s \in \CC$ and $z \in \CC$.
Let $f: G_{v} \rightarrow \CC$ be a smooth function such that
$f\left(\left(\begin{matrix} t_{1}&0\\ 0&t_{2} \end{matrix}\right)gk\right) = |t_{1}/t_{2}|_{v}^{-z}f(g)$ for any
$\left(\begin{matrix} t_{1}&0\\0&t_{2} \end{matrix}\right) \in H_{v}$, $k \in \bfK_{v}$ and $g \in G_{v}$.
Suppose $$\sum_{m=0}^{2}\left|\frac{d^{m}}{dr^{m}}f
\left( \begin{matrix} 
\cosh r & \sinh r \\ \sinh r & \cosh r
\end{matrix} \right)
\right| \ll (\cosh 2r)^{|\Re(z)|}, \hspace{5mm} r \in \RR.$$
If $\Re(s) > 2|\Re(z)|+1$,  then the equality
$$\int_{H_{v}\backslash G_{v}}\Psi_{v}^{(z)}(s, g)[R(\Omega_{v} - (s^{2}-1)/2)f](g)dg=f(e)$$
holds with the integral being convergent absolutely.
Here $\Omega_{v}$ denotes the Casimir operator of $G_{v} \cong GL(2, \RR)$.
\end{prop}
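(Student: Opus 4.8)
The plan is to construct the Green function $\Psi_v^{(z)}(s,-)$ explicitly as an integral kernel inverting the operator $\Omega_v - (s^2-1)/2$ on the space of functions transforming by the character $|t_1/t_2|_v^{-z}$ under $H_v$ and right $\bfK_v$-invariant, then verify the reproducing property by a direct computation using the $KAK$ (Cartan) decomposition of $G_v \cong GL(2,\RR)$. The archimedean place being either real (so $F_v = \RR$), one works on $GL(2,\RR)$; modulo the center, everything reduces to harmonic analysis on $\mathfrak{H} = SL(2,\RR)/SO(2)$, and the function $g \mapsto \Psi_v^{(z)}(s,g)$ is essentially a Legendre/hypergeometric function of the hyperbolic distance variable $r$ appearing in $\left(\begin{matrix}\cosh r & \sinh r \\ \sinh r & \cosh r\end{matrix}\right)$, twisted by the character in $z$. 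Concretely, I would set $\Psi_v^{(z)}(s,g)$ to be the unique such function whose restriction to the Cartan piece is the bounded-at-infinity solution of the relevant second-order ODE (the radial part of $\Omega_v - (s^2-1)/2$), normalized by its behavior near $r=0$ so that the delta-distributional term at the identity has the right mass.

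**First I would** record the radial part of the Casimir operator on the relevant $H_v$-isotypic, right-$\bfK_v$-invariant functions: writing $a_r = \left(\begin{matrix}\cosh r & \sinh r \\ \sinh r & \cosh r\end{matrix}\right)$, the operator $R(\Omega_v)$ acts on $f(a_r)$ as a Sturm--Liouville operator $L_z$ in $r$ with an explicit weight $w(r)\,dr$ coming from the Jacobian of the decomposition $H_v \backslash G_v \ni H_v a_r \bfK_v$. Then $\Psi_v^{(z)}(s,-)$ restricted to this slice is the Green's kernel of $L_z - (s^2-1)/2$ on $(0,\infty)$ with the boundary condition of decay at $+\infty$; the convergence hypothesis $\Re(s) > 2|\Re(z)|+1$ is exactly the condition ensuring this kernel is integrable against the weight $w(r)\,dr$ when paired with a test function $f$ satisfying the stated growth bound $(\cosh 2r)^{|\Re(z)|}$. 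The identity $\int_{H_v\backslash G_v}\Psi_v^{(z)}(s,g)[R(\Omega_v - (s^2-1)/2)f](g)\,dg = f(e)$ is then integration by parts (Green's identity) in the single radial variable: the bulk terms cancel because $\Psi_v^{(z)}$ solves the homogeneous equation away from $r=0$, the boundary term at $r=+\infty$ vanishes by the decay of $\Psi_v^{(z)}$ against the controlled growth of $f$, and the boundary term at $r=0$ produces $f(e)$ by the normalization of the jump in the derivative of the Green kernel.

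**The hard part will be** pinning down the precise normalization and the regularity of $\Psi_v^{(z)}$ near $r=0$ so that the boundary contribution is exactly $f(e)$ and not a nonzero multiple of it, and checking that for a function $f$ that is merely transforming by a character on the left and right-$\bfK_v$-invariant — but \emph{not} necessarily $\bfK_v$-invariant on the left — the reduction to a one-variable problem is legitimate; here one uses that $\Psi_v^{(z)}(s,-)$ is by construction bi-$\bfK_v \cap H_v$ behaved and that the pairing only sees the right-$\bfK_v$-average of $f$, which lies in the one-dimensional radial space. A secondary technical point is justifying the absolute convergence claim: one must bound $\Psi_v^{(z)}(s,a_r)$ as $r\to\infty$ (it decays like $e^{-(\Re(s)+1)r}$ up to polynomial factors) and as $r\to 0$ (a mild logarithmic or power singularity, integrable against $w(r)\,dr$), and combine with the hypotheses on $f$ to get a finite integral. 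Since all of this is exactly the content cited from \cite[\S 4]{TsuzukiSpec}, the cleanest route is to state that the construction and verification there go through verbatim for each $v\in\Sigma_\infty$ (the base field being totally real, all archimedean places are real and identical to the $F_v=\RR$ case treated by Tsuzuki), and to reproduce only the definition of $\Psi_v^{(z)}(s,-)$ and the ODE computation, referring to \cite{TsuzukiSpec} for the analytic estimates.
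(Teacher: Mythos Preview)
The paper does not give its own proof of this proposition: it is stated with the citation \cite[\S 4]{TsuzukiSpec} and used as a black box. So there is nothing in the present paper to compare your argument against beyond the bare reference.

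That said, your sketch is a correct and standard outline of how the result is established in the archimedean case, and it matches the structure of Tsuzuki's construction: one reduces via the decomposition $G_v = H_v \{a_r\} \bfK_v$ to a one-variable Sturm--Liouville problem for the radial part of $\Omega_v - (s^2-1)/2$, takes $\Psi_v^{(z)}(s,-)$ to be the solution with the decay condition at $r\to+\infty$ and the correct singular normalization at $r=0$, and recovers $f(e)$ from the boundary term in Green's identity. Your identification of the convergence condition $\Re(s) > 2|\Re(z)|+1$ with integrability of the kernel against $(\cosh 2r)^{|\Re(z)|}$ is also on target. The only point to be careful about, which you flag yourself, is the precise normalization of the jump at $r=0$; but since you (like the paper) ultimately defer to \cite{TsuzukiSpec} for the analytic details, your proposal is in line with what the paper does.
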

For $v \in \Sigma_{\fin}$, let $\TT_{v}$ denote the characteristic function of $\bfK_{v}
\left(\begin{matrix}\varpi_{v}&0 \\ 0&1 \end{matrix}\right)
\bfK_{v}$ on $G_{v}$ divided by $\vol(\bfK_{v} ,dg_{v})$.
The function $\TT_{v}$ is an element of the spherical Hecke algebra $\Hcal(G_{v}, \bfK_{v})$
and is called the $v$-th Hecke operator. 
The Green function on $GL(2, F_{v})$, also denoted by $\Psi_{v}^{(z)}(s, -)$, is a function with the following property.
\begin{prop}
\cite[\S 5]{TsuzukiSpec}
Suppose $v \in \Sigma_{\fin}$.
We fix $s \in \CC$ and $z\in \CC$ such that $\Re(s) > 2|\Re(z)|$.
Let $f: G_{v} \rightarrow \CC$ be a smooth function such that
$f\left(\left(\begin{matrix} t_{1}&0\\ 0&t_{2} \end{matrix}\right)gk\right) = |t_{1}/t_{2}|_{v}^{-z}f(g)$ for any
$\left(\begin{matrix} t_{1}&0\\0&t_{2} \end{matrix}\right) \in H_{v}$, $k \in \bfK_{v}$ and $g \in G_{v}$.
Then, the equality
$$\int_{H_{v}\backslash G_{v}}\Psi_{v}^{(z)}(s, g)[R(\TT_{v} - (q_{v}^{(1-s)/2} +q_{v}^{(1+s)/2})1_{\bfK_{v}}) f](g)dg = \vol(H_{v} \backslash H_{v}\bfK_{v})f(e)$$
holds as long as the integral on the left hand side converges absolutely.
Here $1_{\bfK_{v}}$ is the characteristic function of $\bfK_{v}$ on $G_{v}$ divided by $\vol(\bfK_{v}, dg_{v})$.
\end{prop}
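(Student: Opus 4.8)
The plan is to realize $\Psi_v^{(z)}(s,-)$ as the fundamental solution of the second-order difference operator attached to $R(\TT_v-\lambda\,1_{\bfK_v})$, with $\lambda=q_v^{(1-s)/2}+q_v^{(1+s)/2}$, on the space of functions with the prescribed equivariance, and then to deduce the identity by transposing that operator.

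First I would set up a combinatorial model. By the Iwasawa decomposition $G_v=H_vN_v\bfK_v$ together with the action of $H_v\cap\bfK_v$ on $(N_v\cap\bfK_v)\backslash N_v\cong F_v/\go_v$, the double coset space $H_v\backslash G_v/\bfK_v$ is in bijection with $\NN_0$: for $n\ge 1$ the class $n$ is represented by $u_n:=\bigl(\begin{smallmatrix}1&\varpi_v^{-n}\\0&1\end{smallmatrix}\bigr)$, and the class $0$ by $u_0:=e$. Hence any $f$ as in the statement is determined by the sequence $F(n):=f(u_n)$, and so is every left $H_v$-equivariant, right $\bfK_v$-invariant function on $G_v$. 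For the integrand $\Psi_v^{(z)}(s,g)\,[R(\TT_v-\lambda\,1_{\bfK_v})f](g)$ to descend to $H_v\backslash G_v$, I take $\Psi_v^{(z)}(s,-)$ to transform under $\bigl(\begin{smallmatrix}t_1&0\\0&t_2\end{smallmatrix}\bigr)\in H_v$ by the character $|t_1/t_2|_v^{z}$ and to be right $\bfK_v$-invariant, and I write $\psi(n)$ for its value at $u_n$. Parametrising the $G_v$-invariant measure on $H_v\backslash G_v$ by the $H_v\bfK_v$-orbits, the left-hand side of the asserted identity becomes $\sum_{n\ge 0}w(n)\,\psi(n)\,[(\mathcal T-\lambda)F](n)$, where $w(n)$ is the measure of the $n$-th orbit --- so $w(0)=\vol(H_v\backslash H_v\bfK_v)$ --- and $\mathcal T$ is the operator on sequences induced by $R(\TT_v)$.

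The second step is the explicit computation of $\mathcal T$ and of the weights $w(n)$. Decomposing $\bfK_v\bigl(\begin{smallmatrix}\varpi_v&0\\0&1\end{smallmatrix}\bigr)\bfK_v$ into its $q_v+1$ left $\bfK_v$-cosets, applying each representative to $u_n$ on the right and returning to Iwasawa form, one tracks how the orbit index moves and picks up a factor $q_v^{\pm z}$ each time the diagonal part has nontrivial absolute value; the outcome is that $\mathcal T$ is a three-term (Jacobi) recursion with coefficients independent of $n$ for $n\ge 2$ and modified at $n=0,1$, where the normalisations $\vol(\bfK_v,dg_v)=q_v^{-3d_v/2}$ and the different exponent $d_v$ enter. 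The shift $\lambda=q_v^{(1-s)/2}+q_v^{(1+s)/2}$ is exactly the $\TT_v$-eigenvalue of the $\bfK_v$-spherical vector of the principal series $\pi(|\cdot|_v^{s/2},|\cdot|_v^{-s/2})$, so the two homogeneous solutions of $(\mathcal T-\lambda)u=0$ are the corresponding spherical-type functions of parameter $s$; when $\Re(s)>2|\Re(z)|$ one of them decays, and then there is a well-defined decaying solution $\psi$ of $(\mathcal T-\lambda)\psi=\delta_{0}$ (the sequence $(1,0,0,\dots)$). I take this $\psi$ as the definition of $\Psi_v^{(z)}(s,-)$; it is the explicit Green function of \cite[\S 5]{TsuzukiSpec}.

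Finally, the identity follows by transposing the difference operator. Because $R(\TT_v)$ commutes with left translations and $\TT_v$ is self-adjoint for the chosen measures on right $\bfK_v$-invariant functions of trivial central character --- equivalently, $\mathcal T$ is self-adjoint for the pairing $\langle a,b\rangle=\sum_n w(n)\,a(n)\,b(n)$, which comes down to a compatibility between the off-diagonal coefficients of $\mathcal T$ and the weights $w(n)$ that one reads off from the explicit formulas --- one moves $\mathcal T-\lambda$ from $F$ onto $\psi$ in $\sum_n w(n)\,\psi(n)\,[(\mathcal T-\lambda)F](n)$. The boundary term at $n\to\infty$ vanishes thanks to the decay of $\psi$ and the assumed absolute convergence of the left-hand side, and there is no boundary contribution at $n=0$ since the $n=0$ row of $\mathcal T$ involves only indices $0$ and $1$. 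What remains is $\sum_n w(n)\,[(\mathcal T-\lambda)\psi](n)\,F(n)=w(0)\,F(0)=\vol(H_v\backslash H_v\bfK_v)\,f(e)$, which is the assertion. (One may instead avoid self-adjointness, substituting the explicit $\psi$ and telescoping the sum to its $n=0$ term via the recursion $\psi$ satisfies.) The step that demands the most care is the explicit, correctly normalised computation of $\mathcal T$ and $w(n)$ near $n=0$: the value of $\psi$ there, and hence the precise constant in front of $f(e)$, is pinned down by those boundary coefficients, whereas everything downstream is bookkeeping.
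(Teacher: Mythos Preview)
The paper does not supply a proof of this proposition: it is stated as a defining property of the Green function $\Psi_v^{(z)}(s,-)$ and attributed to \cite[\S 5]{TsuzukiSpec}, with no argument given here. There is therefore nothing in the present paper to compare your attempt against.

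That said, your sketch is the expected route and is in substance what one finds in Tsuzuki's construction: parametrise $H_v\backslash G_v/\bfK_v$ by $\NN_0$ via the unipotent representatives $u_n$, reduce $R(\TT_v)$ to a tridiagonal difference operator $\mathcal T$ on sequences, take $\Psi_v^{(z)}(s,-)$ to be the decaying solution of $(\mathcal T-\lambda)\psi=\delta_0$ with $\lambda=q_v^{(1-s)/2}+q_v^{(1+s)/2}$, and then transpose the operator (equivalently, sum by parts) to collapse the integral to the $n=0$ term. Your closing remark is on point: the only place requiring genuine care is the computation of $\mathcal T$ and of the orbit weights $w(n)$ at $n=0,1$, since these boundary coefficients fix the constant $\vol(H_v\backslash H_v\bfK_v)$ in front of $f(e)$ and absorb the measure normalisation $\vol(\bfK_v,dg_v)=q_v^{-3d_v/2}$. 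Everything else is, as you say, bookkeeping.
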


For any $v \in S(\gn)$, we set
$$\Phi_{\gn, v}^{(z)} \left(\left(\begin{matrix}t_{1} & 0 \\ 0 & t_{2} \end{matrix}\right)
\left(\begin{matrix} 1 & x \\ 0 & 1 \end{matrix}\right) k
\right) = \left|\frac{t_{1}}{t_{2}}\right|_{v}^{z}\d(x \in \go_{v}) \d(k \in \bfK_{0}(\gn \go_{v}))$$
for any $t_{1}, t_{2} \in F_{v}^{\times}$, $x \in F_{v}$ and $k \in \bfK_{v}$
and
put $\Phi_{0, v}^{(z)} = \Phi_{\gn, v}^{(z)}$ for $v \in \Sigma_{\fin} - S(\gn)$.

Set $\mathfrak{X}_{S} = \prod_{v \in \Sigma_{\infty}}\CC \times \prod_{v \in S_{\fin}}(\CC/4 \pi i (\log q_{v})^{-1}\ZZ)$ and
$q(\bfs)= \inf_{v\in S}(\Re(s_{v})+1)/4$ for any $\bfs \in \mathfrak{X}_{S}$.
Fix a finite subset $S$ of $\Sigma_{F}$ such that $\Sigma_{\infty} \subset S$.
For any $\bfs \in \mathfrak{X}_{S}$ and $z \in \CC$ such that $q(\bfs) > |\Re(z)| + 1$,
{\it the adelic Green function} is defined by
$$\Psi^{(z)}(\gn|{\bf s}, g) = \prod_{v \in \Sigma_{\infty}} \Psi_{v}^{(z)}(s_{v}, g_{v}) \prod_{v \in S_{\fin}} \Psi_{v}^{(z)}(s_{v}, g_{v})
\prod_{v \in S(\gn)} \Phi_{\gn, v}^{(z)}(g_{v}) \prod_{v \notin S\cup S(\gn)} \Phi_{0, v}^{(z)}(g_{v})$$
for any $g = (g_{v})_{v\in \Sigma_{F}} \in G_{\AA}$.
Note that the function $\Psi^{(z)}(\gn|\bfs; -)$ on $G_{\AA}$ is right $\bfK_{\infty}\bfK_{0}(\gn)$-invariant and continuous on $G_{\AA}$.
Moreover, we have $\Psi^{(z)}\left(\gn \bigg|\bfs; \left(\begin{matrix} t_{1}&0\\ 0&t_{2} \end{matrix}\right)g\right) = |t_{1}/t_{2}|_{\AA}^{z} \Psi^{(z)}(\gn|\bfs; g)$
for any $\left(\begin{matrix} t_{1}&0\\ 0&t_{2} \end{matrix}\right) \in H_{\AA}$ and $g \in G_{\AA}.$

To state the property of adelic Green functions, for any $\varphi \in C_{c}^{\infty}(\gA G_{F} \backslash G_{\AA})$ we consider the integral
$$\varphi^{H, (z)}(g) = \int_{\gA H_{F} \backslash H_{\AA}} \varphi(hg)\chi_{z}(h)dh,
$$
where $\chi_{z}:H_{F}\backslash H_{\AA} \rightarrow \CC^{\times}$ is defined by
$$
\chi_{z}\left(\begin{matrix}t_{1} & 0 \\ 0 & t_{2}\end{matrix}\right) = |t_{1}/t_{2}|_{\AA}^{z}
$$
for any $t_{1}, t_{2} \in \AA^{\times}$.
The integral $\varphi^{H, (z)}(g)$ converges absolutely and $\varphi^{H, (z)}(hg) = \chi_{z}(h)^{-1}\varphi^{H, (z)}(g)$ holds for any $h \in H_{\AA}$
(cf. \cite[\S 6.2]{TsuzukiSpec}).

Let $\gZ(\mathfrak{g}_{\infty})$ be the center of the universal enveloping algebra
of the complexification of the Lie algebra of $GL(2, F \otimes_{\QQ} \RR)$.
For $\bfs \in \mathfrak{X}_{S}$, the element ${\bf \Omega}_{S}(\bfs)$ of
the algebra $\gZ(\mathfrak{g}_{\infty})\otimes \{\bigotimes_{v \in S_{\fin}} \Hcal(G_{v}, \bfK_{v})\}$
is defined as
$${\bf \Omega}_{S}(\bfs) = \bigotimes_{v \in \Sigma_{\infty}} \left(\Omega_{v} - \frac{s_{v}^{2}-1}{2}\right) \bigotimes_{v \in S_{\fin}}\left(\TT_{v} - (q_{v}^{(1-s_{v})/2} + q_{v}^{(1+s_{v})/2}) 1_{\bfK_{v}}\right).$$
The following proposition is proved in a similar way to \cite[Lemma 6.3]{TsuzukiSpec}.
\begin{prop}
\label{prop:property of adelic Green}
Suppose
$q({\bf s}) > 2|\Re(z)|+1$. Then,
for any $\varphi \in C_{c}^{\infty}(\gA G_{F} \backslash G_{\AA})^{\bfK_{\infty}\bfK_{0}(\gn)}$,
the function $g \mapsto \Psi^{(z)}(\gn | \bfs; g)\varphi^{H, (z)}(g)$ is integrable on $H_{\AA} \backslash G_{\AA}$ and we have
$$
\int_{H_{\AA} \backslash G_{\AA}} \Psi^{(z)}(\gn|{\bf s}, g) [R(\Omega_{S}(\bfs))\varphi^{H, (z)}](g)dg =
\vol(H_{\fin} \backslash H_{\fin}\bfK_{0}(\gn)) \varphi^{H, (z)}(e).
$$
\end{prop}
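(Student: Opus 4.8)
The plan is to factorize the identity over places and reduce to the local statements already recorded in the two propositions of \cite{TsuzukiSpec} quoted above (the archimedean Green function property and the non-archimedean Green function property), exactly as in the proof of \cite[Lemma 6.3]{TsuzukiSpec}. First I would record the decomposition of the ambient space: via the Iwasawa-type coordinates on $H_{\AA}\backslash G_{\AA}$ and the product structure $\Psi^{(z)}(\gn|\bfs;g)=\prod_{v\in\Sigma_\infty}\Psi_v^{(z)}(s_v,g_v)\prod_{v\in S_\fin}\Psi_v^{(z)}(s_v,g_v)\prod_{v\in S(\gn)}\Phi_{\gn,v}^{(z)}(g_v)\prod_{v\notin S\cup S(\gn)}\Phi_{0,v}^{(z)}(g_v)$, the integral over $H_{\AA}\backslash G_{\AA}$ breaks into a product of local integrals over the $H_v\backslash G_v$. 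The key point is that, since $\varphi$ is $\bfK_\infty\bfK_0(\gn)$-invariant and each $\Psi_v^{(z)}$, $\Phi_{\gn,v}^{(z)}$ transforms under $H_v$ by $|t_1/t_2|_v^{z}$ while $\varphi^{H,(z)}$ transforms by $|t_1/t_2|_{\AA}^{-z}$, the products pair off and the global integrand descends to a genuinely integrable function on $H_{\AA}\backslash G_{\AA}$; this is where one needs $q(\bfs)>2|\Re(z)|+1$, so that the archimedean and $S_\fin$-local growth estimates (the hypotheses in the two propositions) are met simultaneously.

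Next I would apply the operator $R(\Omega_S(\bfs))$. Because $\Omega_S(\bfs)=\bigotimes_{v\in\Sigma_\infty}(\Omega_v-(s_v^2-1)/2)\bigotimes_{v\in S_\fin}(\TT_v-(q_v^{(1-s_v)/2}+q_v^{(1+s_v)/2})1_{\bfK_v})$ acts one tensor factor at a time, the global integral
$\int_{H_{\AA}\backslash G_{\AA}}\Psi^{(z)}(\gn|\bfs;g)[R(\Omega_S(\bfs))\varphi^{H,(z)}](g)\,dg$
unwinds as an iterated integral in which, at each archimedean place, one invokes the identity $\int_{H_v\backslash G_v}\Psi_v^{(z)}(s_v,g)[R(\Omega_v-(s_v^2-1)/2)f](g)\,dg=f(e)$, and at each $v\in S_\fin$ one invokes $\int_{H_v\backslash G_v}\Psi_v^{(z)}(s_v,g)[R(\TT_v-(q_v^{(1-s_v)/2}+q_v^{(1+s_v)/2})1_{\bfK_v})f](g)\,dg=\vol(H_v\backslash H_v\bfK_v)f(e)$. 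For the places $v\in S(\gn)$ and $v\notin S\cup S(\gn)$ there is no differential/Hecke operator, and the factor $\Phi_{\gn,v}^{(z)}$ (resp.\ $\Phi_{0,v}^{(z)}$) simply integrates against the $H_v$-component to give $\vol(H_v\backslash H_v\bfK_0(\gn\go_v))$ times evaluation at the identity coset, using that $\Phi_{\gn,v}^{(z)}$ is precisely the normalized characteristic function of $B_v\bfK_0(\gn\go_v)$ with the right $H_v$-equivariance. Collecting the local contributions, the archimedean factors contribute $1$ each, the $S_\fin$ and $S(\gn)\cup(\Sigma_\fin\setminus(S\cup S(\gn)))$ factors contribute $\prod_v\vol(H_v\backslash H_v\bfK_0(\gn\go_v))=\vol(H_\fin\backslash H_\fin\bfK_0(\gn))$, and the remaining evaluation at $e$ assembles to $\varphi^{H,(z)}(e)$, giving the claimed formula.

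The main obstacle is justifying the interchange of the operator $R(\Omega_S(\bfs))$ with the (iterated) integration and, hand in hand with this, the absolute convergence that lets one factor the global integral over $H_{\AA}\backslash G_{\AA}$ into the product of local ones. For the archimedean places the integrand has controlled exponential growth of order $|\Re(z)|$ in the Cartan parameter $r$, and the condition $q(\bfs)>2|\Re(z)|+1$ is chosen exactly so that $\Psi_v^{(z)}(s_v,-)$ decays fast enough to dominate it; but $\varphi^{H,(z)}$ is only bounded (it is a period of a compactly supported function, hence need not be compactly supported on $H_{\AA}\backslash G_{\AA}$), so one has to check the growth hypotheses in the archimedean proposition hold for $f=\varphi^{H,(z)}$ restricted to a slice, uniformly in the other variables. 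This is handled exactly as in \cite[\S6.2, Lemma 6.3]{TsuzukiSpec}: one first establishes the estimate $|\varphi^{H,(z)}(g)|$ and its derivatives are $\ll(\cosh 2r)^{|\Re(z)|}$ along the relevant one-parameter subgroups using compact support of $\varphi$ and left $H_\fin$-invariance properties, then applies Fubini place by place. Once these convergence bookkeeping points are in place the rest is the formal local-to-global unwinding sketched above, and no new ideas beyond those of \cite{TsuzukiSpec} are needed — only the observation that replacing a square-free $\gn$ by an arbitrary $\gn$ changes nothing in this argument, since $\Phi_{\gn,v}^{(z)}$ and $\bfK_0(\gn\go_v)$ are defined for all powers of $\gp_v$ and the relevant local integrals at $v\in S(\gn)$ still collapse to $\vol(H_v\backslash H_v\bfK_0(\gn\go_v))\cdot(\text{value at }e)$.
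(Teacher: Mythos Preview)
Your proposal is correct and follows exactly the approach the paper intends: the paper does not give its own proof but simply refers to \cite[Lemma~6.3]{TsuzukiSpec}, and your factorization-over-places argument, invoking the two local Green function identities at $v\in S$ and reducing the places $v\notin S$ to the volume factor via the explicit shape of $\Phi_{\gn,v}^{(z)}$, is precisely that argument. Your closing remark that replacing square-free $\gn$ by arbitrary $\gn$ changes nothing (since only the open compact $\bfK_0(\gn\go_v)$ enters at $v\in S(\gn)$) is the one genuinely new observation needed here, and it is correct.
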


\section{Spectral expansions of renormalized Green functions}
\label{Spectral expansions of renormalized Green functions}
The set $\mathfrak{X}_{S} = \prod_{v \in \Sigma_{\infty}}\CC \times \prod_{v \in S_{\fin}}(\CC/4 \pi i (\log q_{v})^{-1}\ZZ)$
is considered as a complex manifold with respect to a usual complex structure.
Let $\Acal_{S}$ be the space of holomorphic functions $\a(\bfs)$ on $\mathfrak{X}_{S}$ such that for any $v \in S$ and $\bfs' \in \mathfrak{X}_{S-\{v\}}$, the function $s_{v} \mapsto \a(\bfs', s_{v})$ is contained in $\Bcal$.

For $\bfc\in \RR^{S}$, we put $\LL_{S}(\bfc) = \{ \bfs \in \mathfrak{X}_{S} \ | \ \Re(\bfs)=\bfc \}$.
A multidimensional contour integral of a holomorphic function $f(\bfs)$ on $\mathfrak{X}_{S}$
along $\LL_{S}(\bfc)$ is defined as in \cite[\S6.1]{TsuzukiSpec}.
Its contour integral is defined inductively as
$$\int_{\LL_{S}(\bfc)}f(\bfs)d\mu_{S}(\bfs) = \int_{L_{v}(c_{v})}\left\{ \int_{\LL_{S-\{v\}}(\bfc')}
f(\bfs', s_{v})d\mu_{S-\{v\}}(\bfs') \right\}d\mu_{v}(s_{v})$$
for $\bfc =(\bfc', c_{v}) \in \RR^{S}$,
where
$$
d\mu_{v}(s) = \begin{cases}
sds & \text{(if $v \in \Sigma_{\infty}$)} \\
\displaystyle \frac{1}{2} (\log q_{v}) (q_{v}^{(1+s)/2} - q_{v}^{(1-s)/2}) ds & 
\text{(if $v \in \Sigma_{\fin}$)}
\end{cases}
$$
and $L(c_{v})$ stands for $c_{v} + i \RR$ and $c_{v} +\CC/ 4 \pi(\log q_{v})^{-1} \ZZ$ for $v \in \Sigma_{\infty}$ and
$v \in \Sigma_{\fin}$, respectively.
Then, for $\bfc \in \RR^{S}$ and $z \in \CC$ such that $q(\bfc) > |\Re(z)| + 1$, the integral
$$\hat{\Psi}^{(z)}(\gn|\a; g) = \left(\frac{1}{2 \pi i}\right)^{\#S} \int_{\LL_{S}({\bfc})} \Psi^{(z)}(\gn|{\bf s}, g) \alpha({\bf s}) d\mu_{S}({\bf s})$$
is absolutely convergent and is independent of the choice of $\bfc$, and
the function $z\mapsto \hat{\Psi}^{(z)}(\gn|\a; g)$ is entire.
Furthermore, for $\b \in \Bcal$, $\l \in \CC$ and $g \in G_{\AA}$,
we consider the integral
$$\hat{\Psi}_{\b, \l}(\gn|\a; g) = \frac{1}{2 \pi i} \int_{L_{\s}}\frac{\b(z)}{z+\l} \{ \hat{\Psi}^{(z)}(\gn|\a; g) + \hat{\Psi}^{(-z)}(\gn|\a; g) \}dz$$
for $\s \in \RR$ such that $-\inf(q(\bfs)-1, \Re(\l)) < \s < q(\bfs)-1$.
The integral of the right hand side is absolutely convergent and is independent of the choice of $\s$.
Moreover, for $\a \in \Acal_{S}$, $\b \in \Bcal$ and $\l \in \CC$ with $\Re(\l)>0$,
the Poincar$\rm \acute{e}$ series of $\hat{\Psi}_{\b, \l}(\gn|\a; g)$ is defined to be
$$\hat{\bf \Psi}_{\b, \l}(\gn|\a; g) = \sum_{\gamma \in H_{F} \backslash G_{F}}\hat{\Psi}_{\b, \l}(\gn|\a; \gamma g)$$
for $g \in G_{\AA}$.
We have the following in the same way as \cite[Proposition 9.1]{TsuzukiSpec}:
\begin{enumerate}
\item The series $\hat{{\bf \Psi}}_{\b, \l}(\gn|\a; g)$ is absolutely convergent locally uniformly in $\{\Re(\l)>0\} \times G_{\AA}$.
Moreover, the function $\l \mapsto \hat{{\bf \Psi}}_{\b, \l}(\gn|\a; g)$ on $\Re(\l)>0$ is holomorphic and the function $g \mapsto \hat{{\bf \Psi}}_{\b, \l}(\gn|\a; g)$ on $G_{\AA}$ is continuous, left $G_{F}$-invariant and right $\bfK_{\infty}\bfK_{0}(\gn)$-invariant.

\item For $\Re(\l)>0$, we have $\hat{{\bf \Psi}}_{\b, \l}(\gn|\a; -) \in L^{l}(\gA G_{F} \backslash G_{\AA})$ for any $l>0$ such that $l(1-\Re(\l))<1$.
\end{enumerate}

Let us compute the spectral expansion of $\hat{{\bf \Psi}}_{\b, \l}(\gn|\a; -)$
explicitly.
Recall spectral parameters at $S$ of automorphic forms (cf. \cite[9.1.3]{TsuzukiSpec}).
If an automorphic form $\varphi$ on $G_{\AA}$ satisfies
that 
there exists $\nu_{\varphi, S}  = (\nu_{\varphi, v})_{v \in S} \in \mathfrak{X}_{S}$ such that
$$R(\Omega_{v})\varphi = \frac{\nu_{\varphi, v}^{2}-1}{2} \varphi$$
and
$$R(\TT_{v})\varphi = (q_{v}^{(1-\nu_{\varphi, v})/2} + q_{v}^{(1+\nu_{\varphi, v})/2})\varphi$$
hold for any $v \in \Sigma_{\infty}$ and any $v \in S_{\fin}$, respectively, then we call $\nu_{\varphi, S}$
the spectral parameter at $S$ of $\varphi$.
Set
$$C(\gn, S) = (-1)^{\#S}\vol(H_{\fin} \backslash H_{\fin}\bfK_{0}(\gn)).$$
We remark $\vol(H_{\fin} \backslash H_{\fin}\bfK_{0}(\gn))= D_{F}^{-1/2}[\bfK_{\fin}: \bfK_{0}(\gn)]^{-1}$.
By using Proposition \ref{prop:property of adelic Green} and a similar argument of \cite[Lemma 9.4]{TsuzukiSpec}, we have the following.
\begin{lem}\label{Green = period}
Assume $\Re(\l) > 1$. Then, for any automorphic form $\varphi$ on $G_{\AA}$ with spectral parameter $\nu_{\varphi, S}$, we have
$$\langle \hat{{\bf \Psi}}_{\b, \l}(\gn|\a; -) | \varphi \rangle_{L^{2}} = C(\gn, S)\alpha(\nu_{\varphi, S})P_{\b, \l}^{\bf 1}(\overline{\varphi}),$$
where $\langle \cdot | \cdot \rangle_{L^{2}}$ is the $L^{2}$-inner product on $L^{2}(Z_{\AA}G_{F} \backslash G_{\AA})$.
\end{lem}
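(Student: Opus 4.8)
The plan is to compute $\langle\hat{{\bf \Psi}}_{\b,\l}(\gn|\a;-)\,|\,\varphi\rangle_{L^{2}}$ by unfolding the Poincar\'e series against $\varphi$, collapsing the resulting integral along the diagonal torus $H_{\AA}$, and then applying the reproducing property of the adelic Green function (Proposition~\ref{prop:property of adelic Green}); the spectral parameter $\nu_{\varphi,S}$ will be produced by a residue evaluation of the $\bfs$-contour integral defining $\hat{\Psi}^{(z)}(\gn|\a;-)$. We fix $\s$ and $\bfc$ in admissible ranges; by the integrability assertion above, for $\Re(\l)>1$ the function $\hat{{\bf \Psi}}_{\b,\l}(\gn|\a;-)$ lies in every $L^{l}(\gA G_{F}\backslash G_{\AA})$, so (using the moderate growth of $\varphi$ and H\"older) the $L^{2}$-pairing and all the unfoldings below converge absolutely.

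First I would unfold: using the left $G_{F}$-invariance of $\varphi$ and the $Z_{\AA}$-invariance of both functions, the sum over $\gamma\in H_{F}\backslash G_{F}$ folds into the domain, giving
\[
\langle\hat{{\bf \Psi}}_{\b,\l}(\gn|\a;-)\,|\,\varphi\rangle_{L^{2}}=\int_{Z_{\AA}H_{F}\backslash G_{\AA}}\hat{\Psi}_{\b,\l}(\gn|\a;g)\,\overline{\varphi(g)}\,dg=\int_{H_{\AA}\backslash G_{\AA}}\left(\int_{Z_{\AA}H_{F}\backslash H_{\AA}}\hat{\Psi}_{\b,\l}(\gn|\a;hg)\,\overline{\varphi(hg)}\,dh\right)d\dot g .
\]
Since $\Psi^{(z)}(\gn|\bfs;hg)=\chi_{z}(h)\,\Psi^{(z)}(\gn|\bfs;g)$, and identifying $Z_{\AA}H_{F}\backslash H_{\AA}$ with $F^{\times}\backslash\AA^{\times}$ via $t\mapsto\left(\begin{smallmatrix}t&0\\0&1\end{smallmatrix}\right)$, the inner integral is a toral integral of $\overline{\varphi}$ whose weight in $|t|_{\AA}$ is built from $\hat{\b}_{\l}$. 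Carrying the contour integrals outside, the $d\dot g$-integral becomes $\int_{H_{\AA}\backslash G_{\AA}}\Psi^{(z)}(\gn|\bfs;g)\,\overline{\varphi}^{H,(z)}(g)\,d\dot g$, where $\overline{\varphi}^{H,(z)}$ is the toral integral $g\mapsto\int\chi_{z}(h)\overline{\varphi(hg)}\,dh$ of $\overline{\varphi}$, read — when $\varphi$ is not cuspidal — in the regularized sense that keeps the $\hat{\b}_{\l}$-factor attached.

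Now $\overline{\varphi}^{H,(z)}$ is right $\bfK_{\infty}\bfK_{0}(\gn)$-invariant, satisfies $\overline{\varphi}^{H,(z)}(hg)=\chi_{z}(h)^{-1}\overline{\varphi}^{H,(z)}(g)$, and inherits the eigenproperties of $\varphi$ under $R(\Omega_{v})$ ($v\in\Sigma_{\infty}$) and $R(\TT_{v})$ ($v\in S_{\fin}$), so
\[
R({\bf \Omega}_{S}(\bfs))\,\overline{\varphi}^{H,(z)}=E_{\varphi}(\bfs)\,\overline{\varphi}^{H,(z)},\qquad E_{\varphi}(\bfs)=\prod_{v\in\Sigma_{\infty}}\frac{\nu_{\varphi,v}^{2}-s_{v}^{2}}{2}\prod_{v\in S_{\fin}}\left(q_{v}^{(1-\nu_{\varphi,v})/2}+q_{v}^{(1+\nu_{\varphi,v})/2}-q_{v}^{(1-s_{v})/2}-q_{v}^{(1+s_{v})/2}\right),
\]
a holomorphic function vanishing exactly when $s_{v}\in\{\pm\nu_{\varphi,v}\}$ for every $v\in S$. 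Writing $\overline{\varphi}^{H,(z)}=E_{\varphi}(\bfs)^{-1}R({\bf \Omega}_{S}(\bfs))\overline{\varphi}^{H,(z)}$ off this zero set and applying Proposition~\ref{prop:property of adelic Green} (which applies to $\overline{\varphi}^{H,(z)}$, since its proof uses only the equivariance, right-invariance and growth that $\overline{\varphi}^{H,(z)}$ shares with the functions $\phi^{H,(z)}$, $\phi\in C_{c}^{\infty}$), one gets
\[
\int_{H_{\AA}\backslash G_{\AA}}\Psi^{(z)}(\gn|\bfs;g)\,\overline{\varphi}^{H,(z)}(g)\,d\dot g=\frac{\vol(H_{\fin}\backslash H_{\fin}\bfK_{0}(\gn))}{E_{\varphi}(\bfs)}\,\overline{\varphi}^{H,(z)}(e).
\]
Substituting into $(2\pi i)^{-\#S}\int_{\LL_{S}(\bfc)}\a(\bfs)(\cdots)\,d\mu_{S}(\bfs)$ and shifting the contour (valid by the decay of $\a\in\Acal_{S}$), only the pole at $\bfs=\nu_{\varphi,S}$ survives; a direct residue computation — in which, for each $v$, the simple zero of $E_{\varphi}$ cancels the corresponding zero of $d\mu_{v}$ up to the factor $-1$, while $\a$ being even in each variable makes the contributions of $s_{v}=\nu_{\varphi,v}$ and $s_{v}=-\nu_{\varphi,v}$ agree — gives the factor $(-1)^{\#S}\a(\nu_{\varphi,S})$. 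Combined with $\vol(H_{\fin}\backslash H_{\fin}\bfK_{0}(\gn))$ this is precisely $C(\gn,S)\a(\nu_{\varphi,S})$, and the remaining $z$-integral, using $\overline{\varphi}^{H,(z)}(e)=\int_{F^{\times}\backslash\AA^{\times}}|t|_{\AA}^{z}\overline{\varphi}\left(\begin{smallmatrix}t&0\\0&1\end{smallmatrix}\right)d^{\times}t$ and $(2\pi i)^{-1}\int_{L_{\s}}\frac{\b(z)}{z+\l}|t|_{\AA}^{\pm z}dz=\hat{\b}_{\l}(|t|_{\AA}^{\pm1})$, collapses to $P_{\b,\l}^{{\bf 1}}(\overline{\varphi})$ (the factors $\eta$, $\eta_{\fin}(x_{\eta,\fin})$ and the translate by $\left(\begin{smallmatrix}1&x_{\eta}\\0&1\end{smallmatrix}\right)$ being trivial for $\eta={\bf 1}$ on a right $\bfK_{\infty}\bfK_{0}(\gn)$-invariant function). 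This yields the asserted identity.

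The main obstacle is the analytic bookkeeping rather than the algebra: one must justify the unfolding and every interchange among the $z$-integral, the $\bfs$-integral, the toral integral and the integral over $H_{\AA}\backslash G_{\AA}$, and the contour shift in $\bfs$. The delicate point is that the naive Fubini pulling the toral integral inside past the $z$-contour is \emph{not} valid when $\varphi$ is not cuspidal, since $\int\chi_{z}(h)\overline{\varphi(hg)}\,dh$ need not converge for any fixed $z$; as in the proof of \cite[Lemma 9.4]{TsuzukiSpec}, one handles cuspidal and residual $\varphi$ directly (where the toral integral converges absolutely) and, for Eisenstein $\varphi$, keeps the regularizing weight $\hat{\b}_{\l}$ attached throughout, performing all manipulations on the resulting meromorphic-in-$\l$ quantity and invoking the regularized-period formalism of Section~\ref{Regularized periods of automorphic forms} (in particular that $P_{\b,\l}^{{\bf 1}}$ is well defined for $E_{\chi,\rho}(\nu,-)$, cf. Proposition~\ref{prop:Eisen P=L}).
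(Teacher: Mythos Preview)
Your proof is correct and follows essentially the same route that the paper indicates: unfold the Poincar\'e series, collapse along $H_{\AA}$ to produce $\overline{\varphi}^{H,(z)}$, invoke the Green-function identity of Proposition~\ref{prop:property of adelic Green} together with the eigenvalue relation $R({\bf \Omega}_{S}(\bfs))\overline{\varphi}^{H,(z)}=E_{\varphi}(\bfs)\overline{\varphi}^{H,(z)}$, and then evaluate the $\bfs$-contour integral by residues; this is exactly the argument of \cite[Lemma~9.4]{TsuzukiSpec} to which the paper defers. Your residue bookkeeping (each $v$ contributing a factor $-1$, the evenness of $\a$ in each $s_v$ making the $\pm\nu_{\varphi,v}$ contributions agree) and your handling of the non-cuspidal case by keeping $\hat{\b}_{\l}$ attached are precisely what that reference does, so nothing further is needed.
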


For any character $\chi$ of $F^{\times} \backslash \AA^{\times}$ and $\a \in \Acal_{S}$, we define the function $\tilde{\a}_{\chi}$ on $\CC$ by
$$\tilde{\a}_{\chi}(\nu) = \a(({\nu}+ 2 i b(\chi_{v}))_{v \in S})$$
and write $\tilde{\a}(\nu)$ for $\tilde{\a}_{\bf 1}(\nu)$.

Fix an orthonormal basis $\Bcal_{\rm cus}(\gn)$ of $\sum_{\pi \in \Pi_{\rm cus}(\gn)} V_{\pi}^{\bfK_{\infty}\bfK_{0}(\gn)}$
and let
$\Bcal_{\rm res}(\gn)$ be the orthonormal system consisting of all functions $\vol(Z_{\AA}G_{F} \backslash G_{\AA})^{-1/2}\chi \circ \det$ on $G_{\AA}$ for any $\chi \in \Xi_{0}(\go_{F})$ such that $\chi^{2} = {\bf 1}$.
We write $\Lambda(\gn)$ for $\Lambda_{\bf 1}(\gn)$.
By Lemma \ref{Green = period},
the following spectral expansion of $\hat{{\bf \Psi}}_{\b, \l}(\gn|\a; g)$ is given in the same way
as \cite[Lemma 9.6]{TsuzukiSpec}.
\begin{lem}\label{spectral exp of Green}
Assume $\Re(\l)>1$. Then we have the expression
{\allowdisplaybreaks\begin{align*}
\hat{{\bf \Psi}}_{\b, \l}(\gn|\a; g) = & C(\gn, S) \bigg\{\sum_{\varphi \in \Bcal_{\rm cus}(\gn)}\alpha(\nu_{\varphi, S})P_{\b, \l}^{\bf 1}(\overline{\varphi})\varphi(g)
+ \sum_{\varphi \in \Bcal_{\rm res}(\gn)}\alpha(\nu_{\varphi, S})P_{\b, \l}^{\bf 1}(\overline{\varphi})\varphi(g) \\
& + \sum_{\chi \in \Xi(\gn)} \sum_{\rho \in \Lambda_{\chi}(\gn)}
\frac{R_{F}^{-1}}{8 \pi i} \int_{i\RR} \tilde{\alpha}_{\chi}(\nu)P_{\b, \l}^{\bf 1}(\overline{
E_{\chi, \rho}(\nu, -)})E_{\chi, \rho}(\nu, g)d\nu
\bigg\}.
\end{align*}
}The series and integrals in the right-hand side converge absolutely and locally uniformly on $Z_{\AA}G_{F} \backslash G_{\AA}$.
\end{lem}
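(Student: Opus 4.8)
The plan is to derive the expansion from the abstract spectral decomposition of $L^{2}(Z_{\AA}G_{F}\backslash G_{\AA})$, evaluating each spectral coefficient by means of Lemma~\ref{Green = period}. First I would note that, under the hypothesis $\Re(\l)>1$, the $L^{l}$-integrability recorded above (applied with $l=2$), together with the evident $Z_{\AA}$-invariance of $\hat{{\bf \Psi}}_{\b, \l}(\gn|\a; -)$, places this function in $L^{2}(Z_{\AA}G_{F}\backslash G_{\AA})$; it is moreover right $\bfK_{\infty}\bfK_{0}(\gn)$-invariant. Decomposing $L^{2}$ into its cuspidal, residual and continuous parts, only the $\bfK_{\infty}\bfK_{0}(\gn)$-fixed vectors contribute, and the relevant orthonormal data are $\Bcal_{\rm cus}(\gn)$ for the cuspidal part, $\Bcal_{\rm res}(\gn)$ for the residual part, and, for each $\chi\in\Xi(\gn)$ and each $\nu\in i\RR$, the basis $\{f^{(\nu)}_{\chi, \rho}\}_{\rho\in\Lambda_{\chi}(\gn)}$ of $I(\chi|\cdot|_{\AA}^{\nu/2})^{\bfK_{\infty}\bfK_{0}(\gn)}$ described in \S\ref{Zeta integrals of Eisenstein series}. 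Since $\gn$ is prime to $S$, the operators $R(\Omega_{v})$ $(v\in\Sigma_{\infty})$ and $R(\TT_{v})$ $(v\in S_{\fin})$ act by scalars on the $\bfK_{\infty}\bfK_{0}(\gn)$-fixed part of every irreducible constituent, so every $\varphi\in\Bcal_{\rm cus}(\gn)\cup\Bcal_{\rm res}(\gn)$ and every Eisenstein series $E_{\chi, \rho}(\nu, -)$ possesses a spectral parameter at $S$ and Lemma~\ref{Green = period} applies to it.

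Then I would compute the coefficients. For $\varphi$ in the discrete part the coefficient of $\varphi$ is $\langle \hat{{\bf \Psi}}_{\b, \l}(\gn|\a; -)\,|\,\varphi\rangle_{L^{2}}$, which Lemma~\ref{Green = period} evaluates as $C(\gn, S)\,\a(\nu_{\varphi, S})\,P_{\b, \l}^{\bf 1}(\overline{\varphi})$; this yields the first two sums. For the continuous part, the Plancherel formula contributes, for each $\chi\in\Xi(\gn)$ and each $\rho\in\Lambda_{\chi}(\gn)$, the integral over $\nu\in i\RR$ of $\langle \hat{{\bf \Psi}}_{\b, \l}(\gn|\a; -)\,|\,E_{\chi, \rho}(\nu, -)\rangle_{L^{2}}\,E_{\chi, \rho}(\nu, g)$ against the Plancherel density; for the present normalization of Haar measures, and after absorbing the $\nu\mapsto-\nu$ symmetry (which pairs $E_{\chi, \rho}(\nu, -)$ with $E_{\chi^{-1}, \rho}(-\nu, -)$), this density equals $R_{F}^{-1}/(8\pi i)$. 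Since the inducing character of $E_{\chi, \rho}(\nu, -)$ exhibits its spectral parameter at $S$ as $(\nu + 2ib(\chi_{v}))_{v\in S}$, a further application of Lemma~\ref{Green = period} replaces the pairing by $C(\gn, S)\,\tilde{\a}_{\chi}(\nu)\,P_{\b, \l}^{\bf 1}(\overline{E_{\chi, \rho}(\nu, -)})$. Collecting the three contributions produces the asserted formula as an identity in $L^{2}(Z_{\AA}G_{F}\backslash G_{\AA})$.

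The step I expect to be the main obstacle is the final clause, namely upgrading this $L^{2}$-identity to a genuine pointwise identity whose right-hand side converges absolutely and locally uniformly on $Z_{\AA}G_{F}\backslash G_{\AA}$. The plan here is to bound the tails of the three sums and integrals by playing the super-polynomial decay of $\a\in\Acal_{S}$ along the contours $\LL_{S}(\bfc)$ against the at-most-polynomial growth, in the spectral parameter, of the periods $P_{\b, \l}^{\bf 1}(\overline{\varphi})$ and $P_{\b, \l}^{\bf 1}(\overline{E_{\chi, \rho}(\nu, -)})$ (via their expression through zeta integrals at $s=1/2$), of the sup-norms on compacta of $\varphi$ and of $E_{\chi, \rho}(\nu, -)$, and of the number of spectral parameters lying in a bounded region (Weyl law). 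This is exactly the estimation carried out in the proof of \cite[Lemma 9.6]{TsuzukiSpec}; its ingredients — the orthonormal bases of the $\bfK_{\infty}\bfK_{0}(\gn)$-fixed subspaces and polynomial bounds on the associated periods — are available for an arbitrary ideal $\gn$ by the results recalled in \S\ref{Regularized periods of automorphic forms}, so the squarefree hypothesis made there is not used and the argument carries over without change.
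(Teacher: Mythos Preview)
Your proposal is correct and follows essentially the same approach as the paper, which simply invokes Lemma~\ref{Green = period} and refers to the proof of \cite[Lemma~9.6]{TsuzukiSpec} for the spectral decomposition and the absolute, locally uniform convergence. You have merely spelled out in more detail what that reference entails.
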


\begin{lem}\label{lem:Green merom}
For any $g \in G_{\AA}$, the function $\l \mapsto \hat{{\bf \Psi}}_{\b, \l}(\gn|\a; g)$ on $\Re(\l) > 1$ is continued to a meromorphic function on $\Re(\l) > -1/2$. 
\end{lem}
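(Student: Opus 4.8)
The plan is to start from the spectral expansion of Lemma~\ref{spectral exp of Green}, valid on $\Re(\l)>1$, which writes $\hat{{\bf \Psi}}_{\b,\l}(\gn|\a;g)$ as $C(\gn,S)$ times the sum of a cuspidal part $\sum_{\varphi\in\Bcal_{\rm cus}(\gn)}\a(\nu_{\varphi,S})P_{\b,\l}^{\bf 1}(\overline{\varphi})\varphi(g)$, a residual part over $\Bcal_{\rm res}(\gn)$, and a continuous part $\sum_{\chi,\rho}\frac{R_{F}^{-1}}{8\pi i}\int_{i\RR}\tilde{\a}_{\chi}(\nu)P_{\b,\l}^{\bf 1}(\overline{E_{\chi,\rho}(\nu,-)})E_{\chi,\rho}(\nu,g)\,d\nu$, and then to continue each of the three pieces separately in $\l$, checking that the sums, integrals and contour deformations involved converge absolutely and locally uniformly on $\Re(\l)>-1/2$. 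Throughout, the essential soft input is that $\a\in\Acal_{S}$ decays rapidly along vertical directions on every strip, which together with Weyl's law and standard polynomial (convexity) bounds for the periods in the spectral parameter controls the cuspidal sum and all shifted contour integrals.

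For the cuspidal part, each $P_{\b,\l}^{\bf 1}(\overline{\varphi})$ is holomorphic on $\Re(\l)>-1/2$ (in fact entire in $\l$): for a cusp form $\varphi$ the restriction of $\overline{\varphi}$ to the diagonal torus is rapidly decreasing, so the defining integral converges for every $\l\in\CC$, the $\l$-dependence entering only through $\hat{\b}_{\l}$, which is entire in $\l$ and of at most polynomial growth in $|t|_{\AA}^{\pm 1}$ against which rapid decay wins; this is the case $\eta={\bf 1}$ of the holomorphy underlying Proposition~\ref{prop:period = L(1/2)}. Combined with the rapid decay of $\a(\nu_{\varphi,S})$, the cuspidal sum then converges locally uniformly on $\{\l\in\CC\}\times G_{\AA}$ and is holomorphic in $\l$. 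The residual part is a finite sum of terms proportional to $P_{\b,\l}^{\bf 1}(\overline{\chi\circ\det})$ with $\chi^{2}={\bf 1}$; since averaging $\chi$ over $\AA^{1}$ annihilates the integral unless $\chi={\bf 1}$, an elementary computation — equivalently Proposition~\ref{prop:e-1 = chi det} — shows this equals $2R_{F}\b(0)/\l$ times $\d_{\chi,{\bf 1}}$, a meromorphic function of $\l$ on $\CC$ with at most a simple pole at $\l=0$.

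The continuous part is where the real work is. For $\Re(\l)>1$ each $\nu$-integral converges absolutely by Lemma~\ref{spectral exp of Green}. Using the decomposition $E_{\chi,\rho}(\nu,-)=E_{\chi,\rho}^{\circ}(\nu,-)+E_{\chi,\rho}^{\natural}(\nu,-)$ of \S\ref{Regularized periods of Eisenstein series}, one shows that $P_{\b,\l}^{\bf 1}(\overline{E_{\chi,\rho}(\nu,-)})$ continues to a meromorphic function of the pair $(\l,\nu)$ whose polar divisor in $\l$, for $\nu$ near $i\RR$, consists of $\l=0$ together with finitely many affine-linear loci $\l=\pm\nu/2+c$ arising from the two constant-term pieces, the residue along each such locus being an explicit expression built from $Z^{*}(1/2,{\bf 1},E_{\chi,\rho}^{\natural}(\nu,-))$ and the intertwining and constant-term factors entering Propositions~\ref{prop:Z* of Eisen}, \ref{prop:Eisen P=L}, \ref{prop:e-1 = chi det} and \ref{prop:P(e_{chi, rho, 0})}. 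To continue in $\l$, I would deform the $\nu$-contour slightly off $i\RR$ so that, as $\Re(\l)$ decreases past the critical values $c$, the moving $\l$-poles cross the contour, each crossing being compensated by $\Res_{\nu=2(\l-c)}$ of the integrand; such a residue has the shape $\tilde{\a}_{\chi}(2\l+\cdots)\cdot(\text{explicit meromorphic factor})\cdot E_{\chi,\rho}(2\l+\cdots,g)$, while the residue of $E_{\chi,\rho}$ at $\nu=1$ (when $\chi^{2}={\bf 1}$) similarly produces the terms built from $\mathfrak{e}_{\chi,\rho,-1}$ and $\mathfrak{e}_{\chi,\rho,0}$ of \S\ref{Regularized periods of Eisenstein series}. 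Since $E_{\chi,\rho}(\nu,g)$ continues meromorphically to $\Re(\nu)>-1$ by the functional equation of \S\ref{Preliminaries for regularized periods of Eisenstein series} and $\tilde{\a}_{\chi}$ is entire, each such residue term is meromorphic in $\l$ on $\Re(\l)>-1/2$, while the remaining shifted $\nu$-integral is holomorphic there, its absolute convergence following from the vertical decay of $\tilde{\a}_{\chi}$ against the polynomial growth of $E_{\chi,\rho}$ and of the period. Assembling the three parts gives the meromorphic continuation of $\hat{{\bf \Psi}}_{\b,\l}(\gn|\a;g)$ to $\Re(\l)>-1/2$.

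The main obstacle I expect is the continuous part: one must pin down precisely the poles of $P_{\b,\l}^{\bf 1}(\overline{E_{\chi,\rho}(\nu,-)})$ jointly in $(\l,\nu)$, and organize the $\nu$-contour shifts so that every pole that crosses the contour is absorbed by a residue which is itself meromorphic in the target region — this is exactly where the bound $\Re(\l)>-1/2$, i.e. $\Re(2\l)>-1$, enters, through the meromorphy of the Eisenstein series — while simultaneously justifying the interchanges of summation, integration and contour deformation, for which the absolute convergence statement in Lemma~\ref{spectral exp of Green} and the rapid decay of $\a$ and $\b$ are the essential tools.
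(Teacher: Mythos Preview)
Your plan is essentially the paper's own proof: start from the spectral expansion of Lemma~\ref{spectral exp of Green}, note that the cuspidal sum is entire in $\l$ and the residual part equals a constant times $\b(0)/\l$, and for the Eisenstein part decompose $P_{\b,\l}^{\bf 1}(\overline{E_{\chi,\rho}(\nu,-)})$ into the constant-term pieces $P_{\chi^{\pm1}}$ and the $Q^{\pm}$-integrals, then shift the $\nu$-contour (the paper shifts to a fixed line $L_c$ with $c>1$, picking up the Eisenstein residue at $\nu=1$, which is equivalent to your moving-pole residues $E_{\chi,\rho}(1\pm 2\l,g)$). One small correction: the restriction $\Re(\l)>-1/2$ does not arise from the region of meromorphy of the Eisenstein series (which is meromorphic on all of $\CC$) but from the unshifted pieces containing factors $\big(\l+({\pm}\nu+1)/2\big)^{-1}$ with $\nu\in i\RR$, which the paper simply leaves as they are, holomorphic on $\Re(\l)>-1/2$.
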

\Proof
We give a proof in the same way as \cite[Lemma 9.8]{TsuzukiSpec}.
Let $\Psi_{\rm cus}(\l) = \Psi_{\rm cus}(\l, \a, g)$,
$\Psi_{\rm res}(\l) = \Psi_{\rm res}(\l, \a, g)$ and
$\Psi_{\rm ct}(\l) = \Psi_{\rm ct}(\l, \a, g)$ be the cuspidal part, the residual part and the Eisenstein part divided by $C(\gn, S)$ in the spectral expansion of $\hat{{\bf \Psi}}_{\b, \l}(\gn|\a; g)$ given in Lemma \ref{spectral exp of Green},
respectively.

First we examine $\Psi_{\rm res}(\l)$. For $\Re(\l) > 0$,
by applying Proposition \ref{prop:e-1 = chi det},
the function $\Psi_{\rm res}(\l)$ is written as
$$\Psi_{\rm res}(\l) = \sum_{\chi \in \Xi_{0}(\go_{F}), \chi^{2} = {\bf 1}}
\a(\nu_{\varphi_{\chi}, S})P_{\b, \l}^{\bf 1}(\overline{\varphi_{\chi}})\varphi_{\chi}(g)
= 2\tilde{\a}(1) \frac{R_{F}}{\vol(Z_{\AA}G_{F} \backslash G_{\AA})} \frac{\b(0)}{\l}$$
and has a meromorphic continuation to $\CC$. From this, ${\rm CT}_{\l =0}\Psi_{\rm res}(\l) =0$ holds.

Next we examine $\Psi_{\rm cus}(\l)$.
By the same computation as the proof of \cite[Lemma 9.8]{TsuzukiSpec},
the series $\Psi_{\rm cus}(\l)$ converges absolutely and the estimate
$$|\Psi_{\rm cus}(\l, \a, g)| \ll y(g)^{-m}, \ \ \ g \in \gS \cap G_{\AA}^{1}$$
holds, where $\gS$ denotes a Siegel set of $G_{\AA}$ such that $G_{\AA}=G_{F}\gS$
and $y$ denotes the height function on $G_{\AA}$.
Moreover, $\Psi_{\rm cus}(\l)$ is analytically continued to an entire function and we have
$${\rm CT}_{\l = 0}\Psi_{\rm cus}(\l) = \sum_{\varphi \in \Bcal_{\rm cus}(\gn)}\alpha(\nu_{\varphi, S})\overline{P_{\rm reg}^{\bf 1}(\varphi)}\varphi(g).$$

Therefore, it is enough to examine $\Psi_{\rm ct}(\l)$.
The argument is more complicated than that of \cite[Lemma 9.8]{TsuzukiSpec}.
Assume $\Re(\l) > 1$ and $\nu \in i \RR$. By the proof of \cite[Theorem 37]{Sugiyama},
the integral $P_{\l, \b}^{\bf 1}(E_{\chi^{-1}, \rho}(-\nu, -))$ can be expressed as
{\allowdisplaybreaks\begin{align*}
P_{\l, \b}^{\bf 1}(E_{\chi^{-1}, \rho}(-\nu, -)) = & P_{\chi^{-1}}({\bf 1}, \l, -\nu) + D_{F}^{-1/2}A_{\chi^{-1}, \rho}(-\nu)\frac{L(-\nu, \chi^{-2})}{L(1 - \nu, \chi^{-2})}P_{\chi}({\bf 1}, \l, \nu) \\
& + Q_{\chi^{-1}, \rho}^{+}({\bf 1}, \l, -\nu) + Q_{\chi^{-1}, \rho}^{-}({\bf 1}, \l, -\nu),
\end{align*}
}where
$$
P_{\chi^{\pm 1}}(\eta, \l, \pm \nu) 
=f_{\chi^{\pm 1}, \rho}^{(\pm \nu)}(e)\d_{\chi, \eta}R_{F}\left\{
\frac{\b((\mp \nu-1)/2)}{\l - (\pm \nu +1)/2} + \frac{\b((\pm \nu+1)/2)}{\l + (\pm \nu +1)/2} \right\}$$
and
$$Q_{\chi^{-1}, \rho}^{\pm}(\eta, \l, -\nu) =
\frac{1}{2 \pi i}\int_{L_{\pm \s}} Z^{*}(\pm z+1/2, \eta, E_{\chi^{-1}, \rho}^{\natural}(-\nu,-)) \frac{\b(z)}{\l + z}dz.$$
We remark $\overline{E_{\chi, \rho}(\nu, -)} = E_{\chi^{-1}, \rho}(-\nu, -)$. Furthermore, by the residue theorem, we have
{\allowdisplaybreaks\begin{align*}
& P_{\l, \b}^{\bf 1}(E_{\chi^{-1}, \rho}(-\nu, -)) \\
= & P_{\chi^{-1}}({\bf 1}, \l, -\nu) + D_{F}^{-1/2}A_{\chi^{-1}, \rho}(-\nu)\frac{\zeta_{F}(-\nu)}{\zeta_{F}(1 - \nu)}P_{\chi}({\bf 1},\l, \nu) + Q_{\chi^{-1}, \rho}^{0}({\bf 1}, \l, -\nu) \\
& -\bigg\{ \frac{\b((-\nu+1)/2)}{\l + (-\nu +1)/2}\Res_{z = (-\nu +1)/2} +
\frac{\b((\nu+1)/2)}{\l + (\nu +1)/2}\Res_{z = (\nu +1)/2} \\
& + \frac{\b((-\nu-1)/2)}{\l + (-\nu -1)/2}\Res_{z = (-\nu - 1)/2}
+\frac{\b((\nu-1)/2)}{\l + (\nu - 1)/2}\Res_{z = (\nu - 1)/2}\bigg\}f_{\chi^{-1}, \rho}^{\bf 1}(-z, -\nu),
\end{align*}
}where we put
$$f_{\chi}^{\eta}(z, \nu) = Z^{*}(z+1/2, \eta, E_{\chi, \rho}^{\natural}(\nu, -))$$
and
$$Q_{\chi, \rho}^{0}(\eta, \l, \nu) =\frac{1}{2 \pi i}\int_{L_{\s}}\{f_{\chi, \rho}^{\eta}(z, \nu) + f_{\chi, \rho}^{\eta}(-z, \nu) \} \frac{\b(z)}{\l+z}dz$$
for $\Re(\l) > -\s$.
Thus we express $\Psi_{\rm ct}(\l)$ as the sum of the following four terms:
{\allowdisplaybreaks\begin{align*}
\Phi_{1}(\l) = & \frac{1}{8 \pi i}\sum_{\rho \in \Lambda(\gn)}f_{{\bf1}, \rho}^{(-\nu)}(e)\int_{i \RR}\tilde{\a}(\nu)
\b((\nu-1)/2) \left\{\frac{1}{\l - (-\nu + 1)/2} + \frac{1}{\l + (-\nu + 1)/2}\right\} E_{{\bf 1}, \rho}(\nu; g)d\nu, \\
\Phi_{2}(\l) = & \frac{1}{8 \pi i}\sum_{\rho \in \Lambda(\gn)}f_{{\bf1}, \rho}^{(\nu)}(e)\int_{i \RR}\tilde{\a}(\nu)
D_{F}^{-1/2}A_{{\bf 1}, \rho}(-\nu)\frac{\zeta_{F}(-\nu)}{\zeta_{F}(1-\nu)}
\b((\nu+1)/2) \\
& \times \left\{\frac{1}{\l - (\nu + 1)/2} + \frac{1}{\l + (\nu + 1)/2}\right\} E_{{\bf 1}, \rho}(\nu; g)d\nu,\\
\Phi_{3}(\l) = & \frac{1}{8 \pi i}\sum_{\chi \in \Xi(\gn)}\sum_{\rho \in \Lambda_{\chi}(\gn)}\int_{i \RR}\tilde{\a}_{\chi}(\nu)Q_{\chi^{-1}, \rho}^{0}({\bf 1}, \l, -\nu)E_{\chi, \rho}(\nu, g) d\nu,\\
\Phi_{4}(\l) = & - \sum_{\chi \in \Xi(\gn)}\sum_{\rho \in \Lambda_{\chi}(\gn)}\frac{R_{F}^{-1}}{8 \pi i}\int_{i \RR} \bigg\{ \frac{\b((\nu+1)/2)}{\l + (\nu +1)/2}\Res_{z = (\nu +1)/2} \\
& + \frac{\b((-\nu+1)/2)}{\l + (-\nu +1)/2}\Res_{z = (-\nu +1)/2}
+ \frac{\b((\nu-1)/2)}{\l + (\nu -1)/2}\Res_{z = (\nu - 1)/2} \\
& +\frac{\b((-\nu-1)/2)}{\l + (-\nu - 1)/2}\Res_{z = (-\nu - 1)/2}\bigg\}\{f_{\chi^{-1}, \rho}^{\bf 1}(-z, -\nu) \} \tilde{\a}_{\chi}(\nu)E_{\chi, \rho}(\nu, g) d\nu.
\end{align*}
}
By the functional equation
$$D_{F}^{-1/2}A_{{\bf 1}, \rho}(-\nu)\frac{\zeta_{F}(-\nu)}{\zeta_{F}(1-\nu)} E_{{\bf 1}, \rho}(\nu, g) = E_{{\bf 1}, \rho}(-\nu, g)$$
of the Eisenstein series, the following equalities hold:
{\allowdisplaybreaks\begin{align*}
\Phi_{2}(\l) = & \frac{1}{8 \pi i}\sum_{\rho \in \Lambda(\gn)}f_{{\bf 1}, \rho}^{(\nu)}(e) \int_{i \RR}\tilde{\a}(\nu)
D_{F}^{-1/2}A_{{\bf 1}, \rho}(-\nu)\frac{\zeta_{F}(-\nu)}{\zeta_{F}(1-\nu)} E_{{\bf 1}, \rho}(\nu, g)
\b((\nu+1)/2) \\
& \times \left\{\frac{1}{\l - (\nu + 1)/2} + \frac{1}{\l + (\nu + 1)/2}\right\}d\nu \\
= & \frac{1}{8 \pi i}\sum_{\rho \in \Lambda(\gn)}f_{{\bf 1}, \rho}^{(\nu)}(e) \int_{i \RR}\tilde{\a}(\nu)
E_{{\bf 1}, \rho}(\nu, g)
\b((-\nu+1)/2) \left\{\frac{1}{\l - (-\nu + 1)/2} + \frac{1}{\l + (-\nu + 1)/2}\right\}d\nu \\
= & \Phi_{1}(\l).
\end{align*}
}Thus we have to consider only $\Phi_{1}(\l)$, $\Phi_{3}(\l)$ and $\Phi_{4}(\l)$.

We take $c > 1$. Then $\Phi_{1}(\l)$ is expressed as
{\allowdisplaybreaks\begin{align*}
\Phi_{1}(\l) = & \frac{1}{8 \pi i}\sum_{\rho \in \Lambda(\gn)}f_{{\bf1}, \rho}^{(-\nu)}(e)\int_{i \RR}\tilde{\a}(\nu)
\b((\nu-1)/2) \left\{\frac{1}{\l - (-\nu + 1)/2} + \frac{1}{\l + (-\nu + 1)/2} \right\}
E_{{\bf 1}, \rho}(\nu; g)d\nu \\
= & \frac{1}{8 \pi i}\sum_{\rho \in \Lambda(\gn)}f_{{\bf1}, \rho}^{(-\nu)}(e)\int_{i \RR}\tilde{\a}(\nu)
\b((\nu-1)/2) \frac{1}{\l + (-\nu + 1)/2} E_{{\bf 1}, \rho}(\nu; g)d\nu \\
& + \frac{1}{8 \pi i}\sum_{\rho \in \Lambda(\gn)}f_{{\bf1}, \rho}^{(-\nu)}(e)
\bigg\{ \int_{L_{c}}\tilde{\a}(\nu)
\b((\nu-1)/2) \frac{1}{\l - (-\nu + 1)/2}
E_{{\bf 1}, \rho}(\nu; g)d\nu \\
& - 2\pi i \tilde{\a}(1)\b(0) \frac{2}{\l} \mathfrak{e}_{{\bf 1}, \rho, -1}(g)\bigg\}.
\end{align*}
}The first term is holomorphic on $\Re(\l) > -1/2$,
the second term is holomorphic on $\Re(\l) > (-c+1)/2$
and the third term is holomorphic on $\CC-\{0\}$.
Hence $\Phi_{1}(\l) = \Phi_{2}(\l)$ has a meromorphic continuation to $\Re(\l) > -1/2$.
Since $\Phi_{3}(\l)$ is described as an absolutely convergent double integral, 
$\Phi_{3}(\l)$ has an analytic continuation to $\CC$.
We note that the integral $Q_{\chi^{-1}, \rho}^{0}({\bf 1}, \l, -\nu)$ is absolutely convergent and is entire as a function in $\l$.
In order to examine $\Phi_{4}(\l)$, we consider the following residues:
{\allowdisplaybreaks\begin{align*}
\Res_{z = (\nu + 1)/2}f_{\chi^{-1}, \rho}^{\bf 1}(-z, -\nu)
= D_{F}^{-1/2+\nu/2}{\rm N}(\gf_{\chi})^{1/2+\nu}B_{\chi^{-1}, \rho}^{\bf1}(-\nu/2, -\nu)
\frac{L(-\nu, \chi^{-1})}{L(1 - \nu, \chi^{-2})}\d_{\chi, {\bf1}}D_{F}^{1/2}R_{F},
\end{align*}
}
{\allowdisplaybreaks\begin{align*}
\Res_{z = (-\nu + 1)/2}f_{\chi^{-1}, \rho}^{\bf 1}(-z, -\nu)
= D_{F}^{-1/2+\nu/2}{\rm N}(\gf_{\chi})^{1/2+\nu}B_{\chi^{-1}, \rho}^{\bf1}
(\nu/2, -\nu)\frac{L(\nu, \chi^{-1})}{L(1 - \nu, \chi^{-2})}\d_{\chi, {\bf1}}D_{F}^{1/2}R_{F},
\end{align*}
}
{\allowdisplaybreaks\begin{align*}
\Res_{z = (\nu - 1)/2}f_{\chi^{-1}, \rho}^{\bf 1}(-z, -\nu)
= D_{F}^{-1/2+\nu/2}{\rm N}(\gf_{\chi})^{1/2+\nu}B_{\chi^{-1}, \rho}^{\bf1}(1-\nu/2, -\nu)
\frac{L(1-\nu, \chi^{-1})}{L(1 - \nu, \chi^{-2})} (-\d_{\chi, {\bf1}}R_{F}),
\end{align*}
}
{\allowdisplaybreaks\begin{align*}
\Res_{z = (-\nu - 1)/2}f_{\chi^{-1}, \rho}^{\bf 1}(-z, -\nu)
= D_{F}^{-1/2+\nu/2}{\rm N}(\gf_{\chi})^{1/2+\nu}B_{\chi^{-1}, \rho}^{\bf1}(1+\nu/2, -\nu)
\frac{L(1+\nu, \chi)}{L(1 - \nu, \chi^{-2})} (-\d_{\chi, {\bf1}}R_{F}).
\end{align*}
}The functions
$\Res_{z = (\pm \nu \mp 1)/2}f_{\chi^{-1}, \rho}^{\bf 1}(-z, -\nu)$
are holomorphic on $i \RR$ as functions in $\nu$ and
vanish unless $\chi = {\bf 1}$.
Therefore, the integral
{\allowdisplaybreaks\begin{align*}
\int_{i \RR} \left\{ \frac{\b((\nu+1)/2)}{\l + (\nu +1)/2}\Res_{z = (\nu +1)/2} +
\frac{\b((-\nu+1)/2)}{\l + (-\nu +1)/2}\Res_{z = (-\nu +1)/2} \right\} f_{\chi^{-1}, \rho}^{\bf 1}(-z, -\nu) \tilde{\a}_{\chi}(\nu)E_{\chi, \rho}(\nu, g) d\nu
\end{align*}
}is holomorphic on $\Re(\l) > -1/2$.

Consider the integral
$$\int_{i \RR}\bigg\{\frac{\b((\nu-1)/2)}{\l + (\nu -1)/2}\Res_{z = (\nu - 1)/2}
+ \frac{\b((-\nu-1)/2)}{\l + (-\nu - 1)/2}\Res_{z = (-\nu - 1)/2}\bigg\}f_{\chi^{-1}, \rho}^{\bf 1}(-z, -\nu)
 \tilde{\a}_{\chi}(\nu)E_{\chi, \rho}(\nu, g) d\nu.$$
Set $F_{\rho}^{+}(\nu) = \Res_{z = (\nu - 1)/2}
f_{{\bf 1}, \rho}^{\bf 1}(-z, -\nu).$
We note that $F_{\rho}^{+}(\nu)$ is entire.
By taking $c > 1$, we obtain
{\allowdisplaybreaks\begin{align*}
& \int_{i \RR}\frac{\b((\nu-1)/2)}{\l + (\nu -1)/2}\Res_{z = (\nu - 1)/2}
f_{{\bf 1}, \rho}^{\bf 1}(-z, -\nu) \tilde{\a}(\nu)E_{{\bf 1}, \rho}(\nu, g) d\nu \\
=& \int_{L_{c}}\frac{\b((\nu - 1)/2)}{\l + (\nu -1)/2}F_{\rho}^{+}(\nu) \tilde{\a}(\nu)E_{{\bf 1}, \rho}(\nu, g) d\nu
 - 2\pi i \frac{\b(0)}{\l}F_{\rho}^{+}(1)\tilde{\a}(1)\mathfrak{e}_{{\bf 1}, \rho, -1}(g).
\end{align*}
}The first term of the right hand side is holomorphic on
$\Re(\l) > (-c + 1)/2$ and the second term is meromorphic on $\CC$.
Set $F_{\rho}^{-}(\nu) = \Res_{z = (-\nu - 1)/2}
f_{{\bf 1}, \rho}^{\bf 1}(-z, -\nu).$
By the relation $B_{\bf 1, \rho}^{\bf 1}(1 - \nu/2, -\nu) = B_{\bf 1, \rho}^{\bf 1}(1 -\nu/2, \nu)A_{\bf 1, \rho}(-\nu)$,
we have
{\allowdisplaybreaks\begin{align*}
F_{\rho}^{-}(-\nu) D_{F}^{-1/2}A_{{\bf 1}, \rho}(-\nu)\frac{\zeta_{F}(- \nu)}{\zeta_{F}(1 - \nu)} 
= &
B_{{\bf 1}, \rho} ^{\bf 1}(1-\nu/2, \nu)D_{F}^{\nu/2}(-R_{F})D_{F}^{-1/2}A_{{\bf 1}, \rho}(-\nu)
=  F_{\rho}^{+}(\nu),
\end{align*}
}
and hence, we obtain
{\allowdisplaybreaks\begin{align*}
& \int_{i \RR}
\frac{\b((-\nu-1)/2)}{\l + (-\nu - 1)/2} F_{\rho}^{-}(\nu) \tilde{\a}(\nu)E_{{\bf 1}, \rho}(\nu, g) d\nu
= \int_{i \RR}
\frac{\b((\nu-1)/2)}{\l + (\nu - 1)/2}
F_{\rho}^{-}(-\nu)
\tilde{\a}(\nu)E_{{\bf 1}, \rho}(-\nu, g) d\nu \\
= & \int_{i \RR}
\frac{\b((\nu-1)/2)}{\l + (\nu - 1)/2}
F_{\rho}^{-}(-\nu)
D_{F}^{-1/2}A_{{\bf 1}, \rho}(-\nu)\frac{\zeta_{F}(- \nu)}{\zeta_{F}(1 - \nu)}
\tilde{\a}(\nu)E_{{\bf 1}, \rho}(\nu, g) d\nu \\
= & \int_{i \RR}
\frac{\b((\nu-1)/2)}{\l + (\nu - 1)/2}
F_{\rho}^{+}(\nu)
\tilde{\a}(\nu)E_{{\bf 1}, \rho}(\nu, g) d\nu \\
= & \int_{L_{c}}
\frac{\b((\nu-1)/2)}{\l + (\nu - 1)/2}
F_{\rho}^{+}(\nu)
\tilde{\a}(\nu)E_{{\bf 1}, \rho}(\nu, g) d\nu
- 2\pi i\frac{\b(0)}{\l}F_{\rho}^{+}(1)
\tilde{\a}(1)\mathfrak{e}_{{\bf 1}, \rho, -1}(g).
\end{align*}
}
Then, in the last line of the equalities above,
the first term is holomorphic on $\Re(\l) > (-c+1)/2$ and
the second term is meromorphic on $\CC$.
Hence we can prove that $\Phi_{4}(\l)$ has a meromorphic continuation to $\Re(\l)>-1/2$.
This gives us a meromorphic continuation of $\Psi_{\rm ct}(\l)$ to $\Re(\l) > -1/2$.
\QED

\begin{lem}
We have
{\allowdisplaybreaks\begin{align*}
{\rm CT}_{\l = 0}\hat{{\bf \Psi}}_{\b, \l}(\gn|\a; g) = & C(\gn, S)
\bigg\{\sum_{\varphi \in \Bcal_{\rm cus}(\gn)}\alpha(\nu_{\varphi, S})\overline{P_{\rm reg}^{\bf 1}(\varphi)}\varphi(g) \\
& + \sum_{\chi \in \Xi(\gn)} \sum_{\rho \in \Lambda_{\chi}(\gn)}
\frac{R_{F}^{-1}}{8 \pi i} \int_{i\RR} \tilde{\alpha}_{\chi}(\nu)P_{\rm reg}^{\bf 1}(
E_{\chi^{-1}, \rho}(-\nu, -))E_{\chi, \rho}(\nu, g)d\nu \\
& + \sum_{\rho \in \Lambda(\gn)} \{f_{{\bf 1}, \rho}^{(0)}(e)+ \d(S(\rho) = \emptyset))\}
\big\{\tilde{\alpha}'(1){\mathfrak e}_{{\bf 1}, \rho, -1}(g)
+ \tilde{\alpha}(1){\mathfrak e}_{{\bf 1}, \rho, 0}(g)\}
\bigg\}\b(0).
\end{align*}
}
\end{lem}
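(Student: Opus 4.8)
The plan is to start from the spectral expansion of $\hat{\bf\Psi}_{\b,\l}(\gn|\a;g)$ valid on $\Re(\l)>1$ (Lemma~\ref{spectral exp of Green}), continue it to a neighbourhood of $\l=0$ via Lemma~\ref{lem:Green merom}, and extract the constant term at $\l=0$ summand by summand. Writing $\hat{\bf\Psi}_{\b,\l}(\gn|\a;g)=C(\gn,S)\{\Psi_{\rm cus}(\l)+\Psi_{\rm res}(\l)+\Psi_{\rm ct}(\l)\}$ as there, two of the three pieces are already settled in the proof of Lemma~\ref{lem:Green merom}: $\Psi_{\rm res}(\l)$ is the single term $2\tilde\a(1)R_F\vol(Z_\AA G_F\backslash G_\AA)^{-1}\b(0)/\l$, so ${\rm CT}_{\l=0}\Psi_{\rm res}(\l)=0$; and $\Psi_{\rm cus}(\l)$ is entire with ${\rm CT}_{\l=0}\Psi_{\rm cus}(\l)=\b(0)\sum_{\varphi\in\Bcal_{\rm cus}(\gn)}\a(\nu_{\varphi,S})\overline{P_{\rm reg}^{\bf1}(\varphi)}\varphi(g)$, the cuspidal term of the asserted formula (each constant term naturally carries the factor $\b(0)$ coming from ${\rm CT}_{\l=0}P_{\b,\l}^{\bf1}$). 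Thus the whole task is to compute ${\rm CT}_{\l=0}\Psi_{\rm ct}(\l)$, for which I use the decomposition $\Psi_{\rm ct}=2\Phi_1+\Phi_3+\Phi_4$ (recall $\Phi_1=\Phi_2$) and the term-by-term analysis of the $\Phi_j$ already carried out there.

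The first step is to recover the ``regular'' continuous term. For each fixed $\nu\in i\RR$, $P_{\b,\l}^{\bf1}(E_{\chi^{-1},\rho}(-\nu,-))$ is meromorphic in $\l$ with poles only on the lines $\Re(\l)=\pm1/2$, hence holomorphic near $\l=0$ with value $\b(0)P_{\rm reg}^{\bf1}(E_{\chi^{-1},\rho}(-\nu,-))$ there by Proposition~\ref{prop:Eisen P=L}; since $\overline{E_{\chi,\rho}(\nu,-)}=E_{\chi^{-1},\rho}(-\nu,-)$, a naive interchange of ${\rm CT}_{\l=0}$ and $\int_{i\RR}$ would already produce $\b(0)\sum_{\chi\in\Xi(\gn)}\sum_{\rho\in\Lambda_\chi(\gn)}\frac{R_F^{-1}}{8\pi i}\int_{i\RR}\tilde\a_\chi(\nu)P_{\rm reg}^{\bf1}(E_{\chi^{-1},\rho}(-\nu,-))E_{\chi,\rho}(\nu,g)\,d\nu$. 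I would recover this by isolating inside $2\Phi_1+\Phi_3+\Phi_4$ the parts holomorphic at $\l=0$: $\Phi_3$ is entire, the $i\RR$-summand of $\Phi_1$ and the $L_c$-integrals split off in $\Phi_1$ and in the $\Phi_4$-residue computation are holomorphic there, and, using the decomposition $P_{\b,\l}^{\bf1}(E_{\chi^{-1},\rho}(-\nu,-))=P_{\chi^{-1}}({\bf1},\l,-\nu)+D_F^{-1/2}A_{\chi^{-1},\rho}(-\nu)\frac{L(-\nu,\chi^{-2})}{L(1-\nu,\chi^{-2})}P_\chi({\bf1},\l,\nu)+Q^+_{\chi^{-1},\rho}({\bf1},\l,-\nu)+Q^-_{\chi^{-1},\rho}({\bf1},\l,-\nu)$ recalled in Lemma~\ref{lem:Green merom}, their values at $\l=0$ recombine---after folding $L_c$ back to $i\RR$ and evaluating $Q^0_{\chi^{-1},\rho}({\bf1},0,-\nu)$---into $\b(0)P_{\rm reg}^{\bf1}(E_{\chi^{-1},\rho}(-\nu,-))$ under the $\nu$-integral.

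The remaining, delicate part is the ``residual correction''. In $\Phi_1$ and in the four $\Phi_4$-residues the factors $1/(\l\pm(\pm\nu\pm1)/2)$ have poles in $\nu$ tending to $\pm1$ as $\l\to0$, precisely where $E_{{\bf1},\rho}(\nu,g)$ has simple poles and where the residues $\Res_{z=(\pm\nu\pm1)/2}f^{\bf1}_{\chi^{-1},\rho}(-z,-\nu)$ are supported (they vanish unless $\chi={\bf1}$); hence the contour shifts of Lemma~\ref{lem:Green merom} create, near $\nu=1$, a second-order pole in the $\l\to0$ limit. I would compute its contribution by inserting the Laurent expansion $E_{{\bf1},\rho}(\nu,g)=\frac{{\mathfrak e}_{{\bf1},\rho,-1}(g)}{\nu-1}+{\mathfrak e}_{{\bf1},\rho,0}(g)+\Ocal(\nu-1)$, by using the flatness identity $f^{(-\nu)}_{{\bf1},\rho}(e)=f^{(0)}_{{\bf1},\rho}(e)$, by matching the $\nu=-1$ residues to the $\nu=1$ ones through the functional equation $D_F^{-1/2}A_{{\bf1},\rho}(-\nu)\frac{\zeta_F(-\nu)}{\zeta_F(1-\nu)}E_{{\bf1},\rho}(\nu,g)=E_{{\bf1},\rho}(-\nu,g)$ together with the relation $F^-_\rho(-\nu)D_F^{-1/2}A_{{\bf1},\rho}(-\nu)\frac{\zeta_F(-\nu)}{\zeta_F(1-\nu)}=F^+_\rho(\nu)$, and by evaluating $F^\pm_\rho$, $A_{{\bf1},\rho}$ and $B^{\bf1}_{{\bf1},\rho}$ at the relevant points through Propositions~\ref{prop:e-1 = chi det} and~\ref{prop:P(e_{chi, rho, 0})}, the latter being where the indicator $\d(S(\rho)=\emptyset)$ enters. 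Differentiating the second-order pole in $\nu$ then yields, by the Leibniz rule, one term with coefficient $\tilde\a'(1)$ multiplying ${\mathfrak e}_{{\bf1},\rho,-1}(g)$ and one with $\tilde\a(1)$ multiplying ${\mathfrak e}_{{\bf1},\rho,0}(g)$, each carrying the constant $f^{(0)}_{{\bf1},\rho}(e)+\d(S(\rho)=\emptyset)$ (the first summand from $\Phi_1$, the second from the $\Phi_4$-residues), while the leftover pieces are simple poles in $\l$ with vanishing constant term. Assembling the cuspidal, (vanishing) residual, regular and residual-correction contributions and multiplying by $C(\gn,S)$ yields the claim. I expect the main obstacle to be exactly this confluent-residue bookkeeping: verifying that the simple poles at $\l=0$ produced by the various contour shifts cancel, that the surviving second-order pole carries the correct numerical normalisation once all $D_F$, $R_F$, $\zeta_F$ and Gauss-sum factors are collected, and that the functional equation really folds the $\nu=-1$ contributions onto $\nu=1$, so that only ${\mathfrak e}_{{\bf1},\rho,-1}$ and ${\mathfrak e}_{{\bf1},\rho,0}$ survive.
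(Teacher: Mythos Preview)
Your proposal is correct and follows essentially the same route as the paper: the decomposition $\Psi_{\rm ct}=2\Phi_1+\Phi_3+\Phi_4$, the recombination of the holomorphic pieces into the regularized-period integral over $i\RR$, and the residual-correction argument via the functional equation and the relation $F^-_\rho(-\nu)D_F^{-1/2}A_{{\bf1},\rho}(-\nu)\zeta_F(-\nu)/\zeta_F(1-\nu)=F^+_\rho(\nu)$ are exactly what the paper does. One small sharpening: the factor $\d(S(\rho)=\emptyset)$ arises not from $\Phi_4$ alone but from the combination of the residue terms in ${\rm CT}_{\l=0}\Phi_3(\l)$ with those in ${\rm CT}_{\l=0}\Phi_4(\l)$, via the identity $D_F^{(\nu-1)/2}B^{\bf1}_{{\bf1},\rho}(1-\nu/2,-\nu)=\d(S(\rho)=\emptyset)$ evaluated inside $\Res_{z=(-\nu-1)/2}f^{\bf1}_{{\bf1},\rho}(-z,-\nu)$.
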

\Proof
In the proof of Lemma \ref{lem:Green merom}, we gave the constant terms of the cuspidal
and residual parts at $\l=0$.
Therefore, it is enough to evaluate the constant term of the Eisenstein part $\Psi_{\rm ct}(\l) = 2\Phi_{1}(\l) + \Phi_{3}(\l) + \Phi_{4}(\l)$.
By the residue theorem, we have
{\allowdisplaybreaks\begin{align*}
{\rm CT}_{\l = 0}\Phi_{1}(\l) 
= & \frac{1}{8 \pi i}\sum_{\rho \in \Lambda(\gn)}f_{{\bf1}, \rho}^{(-\nu)}(e)\int_{i \RR}\tilde{\a}(\nu)
\b((\nu-1)/2) \frac{-1}{ (\nu - 1)/2} E_{{\bf 1}, \rho}(\nu; g)d\nu \\
& + \frac{1}{8 \pi i}\sum_{\rho \in \Lambda(\gn)}f_{{\bf1}, \rho}^{(-\nu)}(e)
 \int_{L_{c}}\tilde{\a}(\nu)
\b((\nu-1)/2) \frac{1}{(\nu - 1)/2}
E_{{\bf 1}, \rho}(\nu; g)d\nu \\
= & \frac{1}{8 \pi i}\sum_{\rho \in \Lambda(\gn)}f_{{\bf1}, \rho}^{(-\nu)}(e)2\pi i \Res_{\nu=1}\{\tilde{\a}(\nu)\b((\nu-1)/2)\frac{1}{(\nu -1)/2}E_{{\bf 1}, \rho}(\nu; g)\} \\
= & \frac{1}{2}\sum_{\rho \in \Lambda(\gn)}f_{{\bf1}, \rho}^{(-\nu)}(e)
\{ \tilde{\a}'(1) \mathfrak{e}_{{\bf 1}, \rho, -1}(g) + \tilde{\a}(1) \mathfrak{e}_{{\bf 1}, \rho, 0}(g)\} \b(0)
\end{align*}
}and the integral $Q_{\chi^{-1}, \rho}^{0}({\bf 1}, 0, -\nu)$ is written as
{\allowdisplaybreaks\begin{align*}
Q_{\chi^{-1}, \rho}^{0}({\bf 1}, 0, -\nu) = & \frac{1}{2 \pi i}\int_{L_{\s}} \{f_{\chi^{-1}, \rho}^{\bf 1}(z, - \nu) + f_{\chi^{-1}, \rho}^{\bf 1}(-z, - \nu) \} \frac{\b(z)}{z}dz \\
= &\ f_{\chi^{-1}, \rho}^{\bf 1}(0, - \nu)\b(0)  + \{ \Res_{z = (1+\nu)/2} + \Res_{z = (1-\nu)/2}
+ \Res_{z = (-1+\nu)/2}\\
& + \Res_{z = (-1-\nu)/2} \} \bigg\{ f_{\chi^{-1}, \rho}^{\bf 1}(z, -\nu) \frac{\b(z)}{z} \bigg\}.
\end{align*}
}Thus the constant term of $\Phi_{3}(\l)$ is evaluated
as
{\allowdisplaybreaks\begin{align*}
& {\rm CT}_{\l = 0}\Phi_{3}(\l) \\
= & \frac{R_{F}^{-1}}{8 \pi i}\sum_{\chi \in \Xi(\gn)}\sum_{\rho \in \Lambda_{\chi}(\gn)}\int_{i \RR}\tilde{\a}_{\chi}(\nu)Q_{\chi^{-1}, \rho}^{0}({\bf 1}, 0, -\nu)E_{\chi, \rho}(\nu, g) d\nu \\
= & \frac{R_{F}^{-1}}{8 \pi i} \sum_{\chi \in \Xi(\gn)}\sum_{\rho \in \Lambda_{\chi}(\gn)}  \bigg\{ \int_{i \RR}\tilde{\a}_{\chi}(\nu)
f_{\chi^{-1}, \rho}^{\bf 1}(0, - \nu) E_{\chi, \rho}(\nu, g) d\nu \b(0)
 + \int_{i \RR}\{ \Res_{z = (1+\nu)/2} \\
&+ \Res_{z = (1-\nu)/2} + \Res_{z = (-1+\nu)/2} + \Res_{z = (-1-\nu)/2} \} \bigg\{ f_{\chi^{-1}, \rho}^{\bf 1}(z, -\nu) \frac{\b(z)}{z} \bigg\} \tilde{\a}_{\chi}(\nu)E_{\chi, \rho}(\nu, g) d\nu \bigg\}.
\end{align*}
}We examine the constant term of $\Phi_{4}(\l)$. By the expression of $\Phi_{4}(\l)$ given in the proof of Lemma \ref{lem:Green merom}, we have
{\allowdisplaybreaks\begin{align*}
& {\rm CT}_{\l = 0}\Phi_{4}(\l) \\
= & - \frac{R_{F}^{-1}}{8 \pi i} \sum_{\chi \in \Xi(\gn)}\sum_{\rho \in \Lambda_{\chi}(\gn)} \int_{i \RR} \bigg \{ \frac{\b((\nu+1)/2)}{(\nu +1)/2}\Res_{z = (\nu +1)/2} +
\frac{\b((-\nu+1)/2)}{(-\nu +1)/2}\Res_{z = (-\nu +1)/2} \bigg\} f_{\chi^{-1}, \rho}^{\bf 1}(-z, -\nu) \\
& \times \tilde{\a}_{\chi}(\nu)E_{\chi, \rho}(\nu, g) d\nu \\
& - 2 \times \frac{R_{F}^{-1}}{8 \pi i} \sum_{\rho \in \Lambda(\gn)} \bigg\{ \int_{L_{c}} \frac{\b((\nu - 1)/2)}{ (\nu -1)/2} F_{\rho}^{+}(\nu) \tilde{\a}(\nu)E_{{\bf 1}, \rho}(\nu, g) d\nu
\bigg\}.
\end{align*}
}Therefore we obtain
{\allowdisplaybreaks\begin{align*}
& {\rm CT}_{\l = 0}\{ \Phi_{3}(\l) + \Phi_{4}(\l) \} \\
= & \frac{R_{F}^{-1}}{8 \pi i} \sum_{\chi \in \Xi(\gn)}\sum_{\rho \in \Lambda_{\chi}(\gn)}  \bigg\{ \int_{i \RR}\tilde{\a}_{\chi}(\nu)
f_{\chi^{-1}, \rho}^{\bf 1}(0, - \nu) E_{\chi, \rho}(\nu, g) d\nu \b(0) \\
& + \int_{i \RR}
\{ \Res_{z = (-1+\nu)/2} + \Res_{z = (-1-\nu)/2} \} \bigg\{ f_{\chi^{-1}, \rho}^{\bf 1}(z, -\nu) \frac{\b(z)}{z} \bigg\} \tilde{\a}_{\chi}(\nu)E_{\chi, \rho}(\nu, g) d\nu \bigg\} \\
& - 2 \times \frac{R_{F}^{-1}}{8 \pi i} \sum_{\rho  \in \Lambda(\gn)} \bigg\{ \int_{L_{c}} \frac{\b((\nu - 1)/2)}{ (\nu -1)/2} F_{\rho}^{+}(\nu) \tilde{\a}(\nu)E_{{\bf 1}, \rho}(\nu, g) d\nu
\bigg\}.
\end{align*}
}By noting the relation
{\allowdisplaybreaks\begin{align*}
& \int_{i \RR}
F_{\rho}^{+}(\nu)\frac{\b((\nu - 1)/2)}{ (\nu -1)/2} \tilde{\a}(\nu)E_{\bf 1, \rho}(\nu, g) d\nu
 = \int_{i \RR}
F_{\rho}^{-}(\nu)\frac{\b((-\nu - 1)/2)}{ (-\nu -1)/2} \tilde{\a}(\nu)E_{\bf 1, \rho}(\nu, g) d\nu,
\end{align*}
}we have
{\allowdisplaybreaks\begin{align*}
& \frac{R_{F}^{-1}}{8 \pi i} \sum_{\rho \in \Lambda(\gn)}
\int_{i \RR}
\{ \Res_{z = (-1+\nu)/2} + \Res_{z = (-1-\nu)/2} \} \bigg\{ f_{{\bf 1}, \rho}^{\bf 1}(z, -\nu) \frac{\b(z)}{z} \bigg\} \tilde{\a}(\nu)E_{{\bf1}, \rho}(\nu, g) d\nu \\
& - 2 \times \frac{R_{F}^{-1}}{8 \pi i} \sum_{\rho \in \Lambda(\gn)} \int_{L_{c}} \frac{\b((\nu - 1)/2)}{ (\nu -1)/2} F_{\rho}^{+}(\nu) \tilde{\a}(\nu)E_{{\bf 1}, \rho}(\nu, g) d\nu \\
 = & \frac{R_{F}^{-1}}{8 \pi i} \sum_{\rho \in \Lambda(\gn)} \int_{i \RR}
\bigg\{F_{\rho}^{+}(\nu)\frac{\b((\nu - 1)/2)}{ (\nu -1)/2} + F_{\rho}^{-}(\nu)\frac{\b((-\nu - 1)/2)}{ (-\nu -1)/2} \bigg\} \tilde{\a}(\nu)E_{\bf 1, \rho}(\nu, g) d\nu \\
& - 2 \times \frac{R_{F}^{-1}}{8 \pi i} \sum_{\rho \in \Lambda(\gn)} \int_{L_{c}} \frac{\b((\nu - 1)/2)}{ (\nu -1)/2} F_{\rho}^{+}(\nu) \tilde{\a}(\nu)E_{{\bf 1}, \rho}(\nu, g) d\nu \\
 = & 2 \times \frac{R_{F}^{-1}}{8 \pi i} \sum_{\rho \in \Lambda(\gn)} \int_{i \RR}
F_{\rho}^{+}(\nu)\frac{\b((\nu - 1)/2)}{ (\nu -1)/2} \tilde{\a}(\nu)E_{\bf 1, \rho}(\nu, g) d\nu \\
& - 2 \times \frac{R_{F}^{-1}}{8 \pi i} \sum_{\rho \in \Lambda(\gn)} \int_{L_{c}} \frac{\b((\nu - 1)/2)}{ (\nu -1)/2} F_{\rho}^{+}(\nu) \tilde{\a}(\nu)E_{{\bf 1}, \rho}(\nu, g) d\nu
\\
= & \frac{R_{F}^{-1}}{4 \pi i} \sum_{\rho \in \Lambda(\gn)} (- 2 \pi i)\Res_{\nu = 1}
\bigg\{\frac{\b((\nu - 1)/2)}{ (\nu -1)/2} F_{\rho}^{+}(\nu) \tilde{\a}(\nu)E_{{\bf 1}, \rho}(\nu, g)\bigg\}.
\end{align*}
}Here the residue is expressed as
{\allowdisplaybreaks\begin{align*}
& \Res_{\nu = 1}
\left\{ \frac{\b((\nu - 1)/2)}{ (\nu -1)/2} F_{\rho}^{+}(\nu) \tilde{\a}(\nu)E_{{\bf 1}, \rho}(\nu, g) \right\} \\
= &\Res_{\nu = 1} \left\{ \frac{\b((\nu - 1)/2)}{ (\nu -1)/2}
\tilde{\a}(\nu)E_{{\bf 1}, \rho}(\nu, g)
D_{F}^{-1/2+\nu/2}B_{\chi^{-1}, \rho}^{\bf1}(1-\nu/2, -\nu) (-R_{F}) \right\} \\
= &
\{ 2\tilde{\a}'(1) \mathfrak{e}_{{\bf 1}, \rho, -1}(g) + 2\tilde{\a}(1) \mathfrak{e}_{{\bf 1}, \rho, 0}(g)\}
 \d(S(\rho)=\emptyset)
(-R_{F}) \b(0).
\end{align*}
}We note $D_{F}^{(\nu-1)/2}B_{\eta, \rho}^{\eta}(1 - \nu/2, -\nu) = \tilde{\eta}(\gD_{F/\QQ})\d(S(\rho)=\emptyset)
$ for any $\eta \in \Xi_{0}(\go_{F})$ satisfying $\eta^{2}={\bf 1}$.
Therefore we obtain
{\allowdisplaybreaks\begin{align*}
& \frac{R_{F}^{-1}}{8 \pi i} \sum_{\rho \in \Lambda(\gn)}
\int_{i \RR}
\{ \Res_{z = (-1+\nu)/2} + \Res_{z = (-1-\nu)/2} \} \bigg\{ f_{{\bf 1}, \rho}^{\bf 1}(z, -\nu) \frac{\b(z)}{z} \bigg\} \tilde{\a}(\nu)E_{\bf 1, \rho}(\nu, g) d\nu \\
= & \frac{R_{F}^{-1}}{4 \pi i} \sum_{\rho \in \Lambda(\gn)}  2 \pi i
\{ 2\tilde{\a}'(1) \mathfrak{e}_{{\bf 1}, \rho, -1}(g) + 2\tilde{\a}(1) \mathfrak{e}_{{\bf 1}, \rho, 0}(g)\}
 \d(S(\rho)=\emptyset) R_{F} \b(0) \\
= & \sum_{\rho \in \Lambda(\gn)} \d(S(\rho)=\emptyset)
\{ \tilde{\a}'(1) \mathfrak{e}_{{\bf 1}, \rho, -1}(g) + \tilde{\a}(1) \mathfrak{e}_{{\bf 1}, \rho, 0}(g)\} \b(0),
\end{align*}
}and hence
{\allowdisplaybreaks\begin{align*}
{\rm CT}_{\l = 0}\Psi_{\rm ct}(\l)
= & \frac{R_{F}^{-1}}{8 \pi i} \sum_{\chi \in \Xi(\gn)}\sum_{\rho \in \Lambda_{\chi}(\gn)}
\int_{i \RR}\tilde{\a}_{\chi}(\nu)
f_{\chi^{-1}, \rho}^{\bf 1}(0, - \nu) E_{\chi, \rho}(\nu, g) d\nu \b(0) \\
& + \sum_{\rho \in \Lambda(\gn)}
\{ f_{{\bf1}, \rho}^{(0)}(e)+\d(S(\rho)=\emptyset) \}
\{ \tilde{\a}'(1) \mathfrak{e}_{{\bf 1}, \rho, -1}(g) + \tilde{\a}(1) \mathfrak{e}_{{\bf 1}, \rho, 0}(g)\} \b(0).
\end{align*}
}This gives the expression of ${\rm CT}_{\l = 0}\hat{\bf\Psi}_{\b, \l}(\gn|\a;g)$.
\QED

Then, we define the regularized smoothed kernel $\hat{{\bf \Psi}}_{\rm reg}(\gn|\a; g)$
by the relation
$${\rm CT}_{\l = 0}\hat{\bf\Psi}_{\b, \l}(\gn|\a;g) = \hat{{\bf \Psi}}_{\rm reg}(\gn|\a; g)\b(0), \hspace{4mm} \b \in \Bcal.$$

Let $\gS$ be a Siegel set of $G_{\AA}$ such that $G_{\AA}= G_{F}\gS$.
We have the following estimate of $\hat{{\bf \Psi}}_{\rm reg}(\gn|\a; g)$.
\begin{lem}\label{lem: estimation of spectral decomposition}
There exists $N \in \NN$ such that
for any $m \in \NN$,
the following estimates hold for any $g \in G_{\AA}^{1}\cap \gS$ uniformly.
\begin{enumerate}\label{esti of Green}
\item $\sum_{\varphi \in \Bcal_{\rm cus}(\gn)}|\alpha(\nu_{\varphi, S})\overline{P_{\rm reg}^{\bf 1}(\varphi)}\varphi(g)| \ll y(g)^{-m}$,

\item $$\sum_{\chi \in \Xi(\gn)} \sum_{\rho \in \Lambda_{\chi}(\gn)}
\frac{R_{F}^{-1}}{8 \pi i} \int_{i\RR} |\tilde{\alpha}_{\chi}(\nu)P_{\rm reg}^{\bf 1}(
E_{\chi^{-1}, \rho}(-\nu, -))E_{\chi, \rho}(\nu, g)||d\nu| \ll y(g)^{N},$$

\item $|{\mathfrak e}_{{\bf 1}, \rho, 0}(g)| + |{\mathfrak e}_{{\bf 1}, \rho, -1}(g)| \ll y(g)$,

\item $|\hat{{\bf \Psi}}_{\rm reg}(\gn|\a; g)| \ll y(g)^{N}$.
\end{enumerate}
\end{lem}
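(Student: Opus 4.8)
The plan is to use the explicit spectral decomposition of $\hat{\bf\Psi}_{\rm reg}(\gn|\a;g)$ established in the preceding lemma: up to the constant $C(\gn,S)$ this function is the sum of a cuspidal part $\sum_{\varphi\in\Bcal_{\rm cus}(\gn)}\alpha(\nu_{\varphi,S})\overline{P_{\rm reg}^{\bf 1}(\varphi)}\varphi(g)$, an Eisenstein part $\sum_{\chi\in\Xi(\gn)}\sum_{\rho\in\Lambda_{\chi}(\gn)}\frac{R_F^{-1}}{8\pi i}\int_{i\RR}\tilde\alpha_\chi(\nu)P_{\rm reg}^{\bf 1}(E_{\chi^{-1},\rho}(-\nu,-))E_{\chi,\rho}(\nu,g)\,d\nu$, and a finite residual part $\sum_{\rho\in\Lambda(\gn)}\{f_{{\bf 1},\rho}^{(0)}(e)+\delta(S(\rho)=\emptyset)\}\{\tilde\alpha'(1)\mathfrak{e}_{{\bf 1},\rho,-1}(g)+\tilde\alpha(1)\mathfrak{e}_{{\bf 1},\rho,0}(g)\}$. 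Since $y(g)\gg 1$ on $G_\AA^1\cap\gS$ one has $y(g)^{-m}\ll 1\ll y(g)^N$ and $y(g)\ll y(g)^N$ for $N\ge 1$, so assertions (1), (2), (3) together give (4) with $N$ the exponent produced in (2). For (1) I would reproduce the estimate of $\Psi_{\rm cus}(\l)$ from the proof of Lemma \ref{lem:Green merom}, which follows \cite[Lemma 9.8]{TsuzukiSpec}: elements of $\Bcal_{\rm cus}(\gn)$ are cusp forms, hence decay on the Siegel set faster than any power of $y(g)$ with implied constant bounded by a fixed Sobolev norm; the regularized periods $\overline{P_{\rm reg}^{\bf 1}(\varphi)}$ grow at most polynomially in $\nu_{\varphi,S}$ while $\alpha\in\Acal_S$ makes $\alpha(\nu_{\varphi,S})$ decay faster than any polynomial in $\nu_{\varphi,S}$; combined with a Weyl-type bound for $\#\{\varphi\in\Bcal_{\rm cus}(\gn):\|\nu_{\varphi,S}\|\le T\}$ this yields absolute convergence and the bound $\ll y(g)^{-m}$ for every $m$.

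The substantive point is (2). The set $\Xi(\gn)=\bigcup_{\mathfrak{c}^2\mid\gn}\Xi_0(\mathfrak{c})$ is infinite, but modulo the finite group $\Xi_{\ker}(\gn)$ of Lemma \ref{esti of X(n)} it is parametrised by the lattice $L_0$ through $\chi\mapsto b(\chi)$, and $\gq(\chi)\ll {\rm N}(\gn)(1+\|b(\chi)\|)^{d_F}$ while $\#\Lambda_\chi(\gn)$ is bounded in terms of $\gn$ alone. By Proposition \ref{prop:Eisen P=L}, for $\nu\in i\RR$ the quantity $P_{\rm reg}^{\bf 1}(E_{\chi^{-1},\rho}(-\nu,-))$ equals, up to Gauss sums and elementary powers of the $q_v$ that are polynomially bounded in $\gq(\chi)$, the ratio $L((1-\nu)/2,\chi^{-1})L((1+\nu)/2,\chi)/L(1-\nu,\chi^{-2})$; after the archimedean factors cancel between numerator and denominator, the convexity bound for Hecke $L$-functions together with the non-vanishing of $L(1-\nu,\chi^{-2})$ on the line $\Re(\nu)=0$ bounds this by $(1+|\nu|)^{A}\gq(\chi)^{A}$ for a fixed $A$, uniformly in $\chi$ and $\rho$. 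The standard moderate-growth bound for Eisenstein series on a Siegel set, made uniform in the inducing data, gives $|E_{\chi,\rho}(\nu,g)|\ll y(g)^{1/2}(1+|\nu|)^{B}\gq(\chi)^{B}$ on $G_\AA^1\cap\gS$ for a fixed $B$. Finally, since $\alpha$ lies in $\Bcal$ in each imaginary variable, $|\tilde\alpha_\chi(\nu)|=|\alpha((\nu+2ib(\chi_v))_{v\in S})|$ decays faster than any power of $1+|\nu|+\|b(\chi)\|$. Inserting these bounds, integrating the resulting rapidly decreasing majorant over $\nu\in i\RR$, and summing over $\rho$, over $b(\chi)\in L_0$ and over the finitely many cosets of $\Xi_{\ker}(\gn)$, one obtains absolute convergence and the bound $\ll y(g)^{1/2}$, which proves (2).

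For (3): by Proposition \ref{prop:e-1 = chi det}, $\mathfrak{e}_{{\bf 1},\rho,-1}$ is either identically zero or a constant multiple of $\chi\circ\det$ with $\chi$ unitary, hence bounded, so $|\mathfrak{e}_{{\bf 1},\rho,-1}(g)|\ll 1\ll y(g)$. For $\mathfrak{e}_{{\bf 1},\rho,0}$ I would use that on $G_\AA^1\cap\gS$ the truncation $E_{{\bf 1},\rho}(\nu,g)-E_{{\bf 1},\rho}^{\circ}(\nu,g)$ decays faster than any power of $y(g)$, locally uniformly near $\nu=1$, while the constant term $E_{{\bf 1},\rho}^{\circ}(\nu,g)=f_{{\bf 1},\rho}^{(\nu)}(g)+D_F^{-1/2}A_{{\bf 1},\rho}(\nu)\frac{L(\nu,{\bf 1})}{L(1+\nu,{\bf 1})}f_{{\bf 1},\rho}^{(-\nu)}(g)$ has, along the Iwasawa torus, a Laurent expansion at $\nu=1$ whose constant coefficient is of the form $c(g)\,y(g)+O(\log y(g))$ with $c(g)$ bounded; since $\log y(g)\ll y(g)$ on the Siegel set, the usual Sobolev-norm passage from the torus to all of $\gS$ gives $|\mathfrak{e}_{{\bf 1},\rho,0}(g)|\ll y(g)$. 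Combining (1), (2), (3) then yields (4).

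The main obstacle is the uniformity needed in (2): one has to dominate both the regularized Eisenstein periods and the Eisenstein series on the Siegel set by a fixed power of the analytic conductor $\gq(\chi)$ — which requires the convexity bound and a zero-free statement on $\Re(\nu)=0$ for $L(1-\nu,\chi^{-2})$, valid uniformly over all $\chi\in\Xi(\gn)$ — and then check that the rapid decay built into $\alpha\in\Acal_S$ genuinely beats, after the lattice sum over $b(\chi)\in L_0$, the product of all these polynomial factors together with the growth of $\#\Lambda_\chi(\gn)$ and the $\nu$-integration.
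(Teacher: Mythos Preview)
Your outline coincides with the paper's approach; (1), (3), (4) are handled exactly as you describe, and the paper simply refers to \cite[Lemma 9.9]{TsuzukiSpec} for those parts. For (2) the structure is also right, but there is a real gap at the step where you go from ``the non-vanishing of $L(1-\nu,\chi^{-2})$ on the line $\Re(\nu)=0$'' to a bound $(1+|\nu|)^{A}\gq(\chi)^{A}$ uniform in $\chi$: mere non-vanishing on the $1$-line is pointwise and does not by itself give a polynomial lower bound uniform over the infinite family $\Xi(\gn)$. (The archimedean Gamma factors do not literally cancel either, but Stirling makes them harmless; the serious issue is the finite-part $L$-value in the denominator.)

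This is precisely the ingredient the paper isolates. When the character $\chi^{2}|\cdot|_{\AA}^{\nu}$ is not quadratic, the standard zero-free region \cite[Theorem 5.10]{Iwaniec-Kowalski} together with the argument of \cite[Theorem 3.11]{Titchmarsh} gives $|L_{\fin}(1+\nu,\chi^{2})|^{-1}\ll\log\gq(\chi^{2}|\cdot|_{\AA}^{\nu})$. When it is quadratic, a possible Siegel zero obstructs that route, and the paper invokes the generalized Siegel theorem \cite[Theorem 2.3.1]{Molteni} to get $|L_{\fin}(1,\chi)|\gg\gq(\chi)^{-\e}$ (ineffective). The two cases combine to $|L_{\fin}(1+\nu,\chi^{2})|^{-1}\ll\gq(\chi^{2}|\cdot|_{\AA}^{\nu})^{\e}$ for $\nu\in i\RR$, uniformly in $\chi\in\Xi(\gn)$; from there your summation over $b(\chi)\in L_{0}$ and $\Xi_{\ker}(\gn)$ and the $\nu$-integration go through. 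You correctly flagged this as the main obstacle, but what is needed is not a ``zero-free statement'' --- it is a quantitative lower bound at the edge of the critical strip, and for quadratic characters that is Siegel's theorem.
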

\Proof
There exists a positive constant $C$ such that $L_{\fin}(s, \chi)$ does not vanish for any
non-quadratic character $\chi $ of $F^{\times} \backslash \AA^{\times}$
if $\Re(s) \ge 1-C/\log \{ \gq(\chi) (3+|\Im(s)|)\}$ (cf. \cite[Theorem 5.10]{Iwaniec-Kowalski}).
Hence, by virtue of the proof of \cite[Theorem 3.11]{Titchmarsh}, the estimate
$$\frac{1}{|L_{\fin}(1, \chi)|}\ll \log \mathfrak{q}(\chi)$$
holds uniformly for non-quadratic characters $\chi$ of $F^{\times} \backslash \AA^{\times}$. 
Next we give a generalized Siegel's theorem for quadratic characters of $F^{\times} \backslash \AA^{\times}$. By \cite[Theorem 2.3.1]{Molteni},
for any $\e >0$, the estimate
$$|L_{\fin}(1, \chi)| \gg \mathfrak{q}(\chi)^{-\e}$$
holds uniformly for quadratic characters $\chi$ of $F^{\times} \backslash \AA^{\times}$. Indeed, \cite[Theorem 2.3.1]{Molteni} works for general $L$-functions over $F$
in the sense of \cite{Perelli}.

As a consequence, we have the estimate
$$\frac{1}{|L_{\fin}(1+\nu, \chi^{2})|} \ll \mathfrak{q}(\chi^{2}|\cdot|_{\AA}^{\nu})^{\e}, \ \nu \in i \RR$$
with the implied constant independent of $\chi \in \Xi(\gn)$ and $\gn$.
Combining this with the argument of the proof of \cite[Lemma 9.9]{TsuzukiSpec}, we have the assertions.
\QED

\section{Periods of regularized automorphic smoothed kernels: spectral side}
\label{Periods of regularized automorphic smoothed kernels: spectral side}
By $(4)$ in Lemma \ref{esti of Green},
the integral $P_{\b, \l}^{\eta}(\hat{{\bf \Psi}}_{\rm reg}(\gn|\a; g))$ converges absolutely for $\Re(\l) > N$ and is holomorphic on $\Re(\l) > N$.
We have the following expression in the same way as \cite[Lemma 10.1]{TsuzukiSpec}.
\begin{lem}\label{spectral exp of reg Green}
For $\Re(\l) > N$, we have the expression
{\allowdisplaybreaks\begin{align*}
P_{\b, \l}^{\eta}(\hat{{\bf \Psi}}_{\rm reg}(\gn|\a; -))
= C(\gn, S)\{\PP_{\rm cus}^{\eta}(\b, \l, \a) + \PP_{\rm eis}^{\eta}(\b, \l, \a) + \PP_{\rm res}^{\eta}(\b, \l, \a)\},
\end{align*}
}where
$$\PP_{\rm cus}^{\eta}(\b, \l, \a) = \sum_{\varphi \in \Bcal_{\rm cus}(\gn)}\alpha(\nu_{\varphi, S})\overline{P_{\rm reg}^{\bf 1}(\varphi)}P_{\b, \l}^{\eta}(\varphi),$$
$$\PP_{\rm eis}^{\eta}(\b, \l, \a) = \sum_{\chi \in \Xi(\gn)} \sum_{\rho \in \Lambda_{\chi}(\gn)}
\frac{R_{F}^{-1}}{8 \pi i} \int_{i\RR} \tilde{\alpha}_{\chi}(\nu)P_{\rm reg}^{\bf 1}(
E_{\chi^{-1}, \rho}(-\nu, -))P_{\b, \l}^{\eta}(E_{\chi, \rho}(\nu, -))d\nu$$
and
$$\PP_{\rm res}^{\eta}(\b, \l, \a) = \sum_{\rho \in \Lambda(\gn)} \{f_{{\bf 1}, \rho}^{(0)}(e) + \d(S(\rho)=\emptyset)\}(\tilde{\alpha}'(1)P_{\b, \l}^{\eta}({\mathfrak e}_{{\bf 1}, \rho, -1}) + \tilde{\alpha}(1)P_{\b, \l}^{\eta}({\mathfrak e}_{{\bf 1}, \rho, 0})).$$
Here the series converge absolutely and locally uniformly on $\Re(\l) > N$.
\end{lem}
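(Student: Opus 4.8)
The plan is to obtain the identity by applying the functional $P_{\b,\l}^{\eta}(\cdot)$ term by term to the spectral expansion of $\hat{{\bf \Psi}}_{\rm reg}(\gn|\a;g)$ already established (the lemma immediately preceding), namely the decomposition of $\hat{{\bf \Psi}}_{\rm reg}(\gn|\a;g)$ into $C(\gn,S)$ times the sum of a cuspidal series $\sum_{\varphi \in \Bcal_{\rm cus}(\gn)}\alpha(\nu_{\varphi,S})\overline{P_{\rm reg}^{\bf1}(\varphi)}\varphi(g)$, an Eisenstein part $\sum_{\chi}\sum_{\rho}\frac{R_{F}^{-1}}{8\pi i}\int_{i\RR}\tilde\alpha_{\chi}(\nu)P_{\rm reg}^{\bf1}(E_{\chi^{-1},\rho}(-\nu,-))E_{\chi,\rho}(\nu,g)\,d\nu$, and a residual part $\sum_{\rho}\{f_{{\bf1},\rho}^{(0)}(e)+\d(S(\rho)=\emptyset)\}(\tilde\alpha'(1)\mathfrak{e}_{{\bf1},\rho,-1}(g)+\tilde\alpha(1)\mathfrak{e}_{{\bf1},\rho,0}(g))$. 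Each individual summand, once hit by $P_{\b,\l}^{\eta}$, is a quantity already controlled: $P_{\b,\l}^{\eta}(\varphi)$ for cusp forms and $P_{\b,\l}^{\eta}(E_{\chi,\rho}(\nu,-))$ are understood via \S\ref{Regularized periods of automorphic forms} and converge absolutely for $\Re(\l)>1$ (Propositions \ref{prop:period = L(1/2)}, \ref{prop:Z* of Eisen}, \ref{prop:Eisen P=L}), while $P_{\b,\l}^{\eta}(\mathfrak{e}_{{\bf1},\rho,-1})$ and $P_{\b,\l}^{\eta}(\mathfrak{e}_{{\bf1},\rho,0})$ were computed in Propositions \ref{prop:e-1 = chi det} and \ref{prop:P(e_{chi, rho, 0})}. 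Thus the whole content of the lemma is the legitimacy of interchanging $P_{\b,\l}^{\eta}$ with the infinite sums over $\Bcal_{\rm cus}(\gn)$, over $\chi\in\Xi(\gn)$ and $\rho$, and with the contour integral over $i\RR$.

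First I would fix $\Re(\l)>N$, with $N$ as in Lemma \ref{lem: estimation of spectral decomposition}, and recall that $P_{\b,\l}^{\eta}(\hat{{\bf \Psi}}_{\rm reg}(\gn|\a;-))$ is the integral over $F^{\times}\backslash\AA^{\times}$ of $\{\hat\b_{\l}(|t|_{\AA})+\hat\b_{\l}(|t|_{\AA}^{-1})\}$ against $\hat{{\bf \Psi}}_{\rm reg}(\gn|\a;-)$ evaluated at $\bigl(\begin{smallmatrix}t&0\\0&1\end{smallmatrix}\bigr)\bigl(\begin{smallmatrix}1&x_{\eta}\\0&1\end{smallmatrix}\bigr)$; writing $\AA^{\times}=\AA^{1}\times\RR_{>0}$ and using that $F^{\times}\backslash\AA^{1}$ is compact, this reduces to an integral over $\RR_{>0}$ in the variable $y=|t|_{\AA}$. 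By shifting the defining contour of $\hat\b_{\l}$ one has $|\hat\b_{\l}(y)|\ll y^{\sigma}$ for every $\sigma$ as $y\to 0$ and $|\hat\b_{\l}(y)|\ll y^{-\Re(\l)}$ as $y\to\infty$ (and symmetrically for $\hat\b_{\l}(y^{-1})$), whereas the height of $\bigl(\begin{smallmatrix}\underline{y}u&0\\0&1\end{smallmatrix}\bigr)$ is comparable to $\sup(y,y^{-1})$; combined with part $(4)$ of Lemma \ref{lem: estimation of spectral decomposition}, the condition $\Re(\l)>N$ ensures absolute convergence of this integral. To push $P_{\b,\l}^{\eta}$ inside, I would split $\hat{{\bf \Psi}}_{\rm reg}$ into its three pieces and dominate each integrand by the bounds $(1)$, $(2)$, $(3)$ of Lemma \ref{lem: estimation of spectral decomposition}, which are uniform on $G_{\AA}^{1}\cap\gS$ in $g$ and independent of the summation/integration variables; by Fubini--Tonelli the term-by-term interchange is then valid, yielding exactly $\PP_{\rm cus}^{\eta}$, $\PP_{\rm eis}^{\eta}$ and $\PP_{\rm res}^{\eta}$, and the same domination gives the asserted absolute and locally uniform convergence on $\Re(\l)>N$. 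This is the argument of \cite[Lemma 10.1]{TsuzukiSpec}, with its cuspidal and residual spectral input replaced by the bounds of Lemma \ref{lem: estimation of spectral decomposition}; the residual sum causes no difficulty since $\Lambda(\gn)$ is finite.

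The main obstacle is the uniform estimate underlying part $(2)$ and hence the domination of the Eisenstein contribution: one must bound $\tilde\alpha_{\chi}(\nu)P_{\rm reg}^{\bf1}(E_{\chi^{-1},\rho}(-\nu,-))P_{\b,\l}^{\eta}(E_{\chi,\rho}(\nu,-))$ by an $L^{1}$ function of $\nu\in i\RR$, locally uniformly in $\l$, and simultaneously sum over the infinitely many $\chi\in\Xi(\gn)$. The ingredients are: rapid decay of $\tilde\alpha_{\chi}(\nu)$ in $\nu$ from $\alpha\in\Acal_{S}$; the explicit formulas of Propositions \ref{prop:Z* of Eisen} and \ref{prop:Eisen P=L} for the periods of Eisenstein series, together with the lower bound $|L_{\fin}(1+\nu,\chi^{2})|^{-1}\ll\gq(\chi^{2}|\cdot|_{\AA}^{\nu})^{\e}$ recalled in the proof of Lemma \ref{lem: estimation of spectral decomposition}, which turns those formulas into bounds polynomial in the analytic conductor; and Lemma \ref{esti of X(n)}, giving $\#\Xi_{\ker}(\gn)\ll\N(\gn)^{1/2+\e}$, so that the $\chi$-sum converges once weighted against negative powers of $\gq(\chi)$. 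Assembling these bounds is exactly the place where the arbitrary-level situation is heavier than the square-free case of \cite{TsuzukiSpec}; once they are in hand, the interchange is routine and the displayed identity follows.
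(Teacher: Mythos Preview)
Your proposal is correct and follows essentially the same approach as the paper: the paper simply invokes part (4) of Lemma \ref{lem: estimation of spectral decomposition} for the absolute convergence of $P_{\b,\l}^{\eta}(\hat{{\bf \Psi}}_{\rm reg}(\gn|\a;-))$ when $\Re(\l)>N$ and then refers to \cite[Lemma 10.1]{TsuzukiSpec} for the term-by-term interchange, which is exactly the argument you spell out using parts (1)--(3) of that lemma together with Fubini--Tonelli. Your discussion of the Eisenstein domination is more detailed than the paper's, but it is precisely the content packaged into Lemma \ref{lem: estimation of spectral decomposition} and its proof.
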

The value $\PP_{\rm res}^{\eta}(\b, \l, \a)$ is described by Propositions \ref{prop:e-1 = chi det} and \ref{prop:P(e_{chi, rho, 0})}. Then we have the following.
\begin{lem}\label{lem: merom of P res}
The function $\l \mapsto \PP_{\rm res}^{\eta}(\b, \l, \a)$ on $\Re(\l) > N$ is analytically continued to a meromorphic function on $\CC$. Its constant term at $\l = 0$ is given by
{\allowdisplaybreaks\begin{align*}
{\rm CT}_{\l=0}\PP_{\rm res}^{\eta}(\b, \l, \a)
= & \sum_{\rho \in \Lambda(\gn)}\{f_{{\bf 1}, \rho}^{(0)}(e) + \d(S(\rho)=\emptyset)\} \tilde{\alpha}(1)
\frac{\Gcal(\eta)D_{F}^{-1/2}}{\zeta_{F}(2)} \\
& \times \bigg\{
- \frac{1}{2}\d_{\eta, {\bf 1}}\tilde{B}_{{\bf 1}, \rho}^{\bf 1}(0)R_{F}^{2}\b^{\prime\prime}(0) 
+ a_{{\bf 1}, \rho}^{\eta}(0)\b(0)\bigg\}.
\end{align*}
}Here
$\tilde{B}^{\eta}_{\chi, \rho}(z) = \e(-z, \chi^{-1}\eta)B_{\chi, \rho}^{\eta}(-z+1/2, 1)$ and
{\allowdisplaybreaks\begin{align*}
a_{{\bf 1}, \rho}^{\eta}(0) = &
 - \frac{1}{2}(\tilde{B}_{{\bf 1}, \rho}^{\bf 1})^{\prime\prime}(0)\d_{\eta, {\bf 1}}R_{F}^{2}
- 2\tilde{B}_{{\bf 1}, \rho}^{\bf 1}(0)R_{F}C_{1}({\bf1})\d_{\eta, {\bf 1}}
+ \tilde{B}_{{\bf 1}, \rho}^{\eta}(0)C_{0}(\eta)^{2}.
\end{align*}
}
\end{lem}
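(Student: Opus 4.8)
The plan is to use that, by its very definition in Lemma~\ref{spectral exp of reg Green}, the quantity $\PP_{\rm res}^{\eta}(\b, \l, \a)$ is a \emph{finite} linear combination, indexed by $\rho \in \Lambda(\gn)$, of the two periods $P_{\b, \l}^{\eta}({\mathfrak e}_{{\bf 1}, \rho, -1})$ and $P_{\b, \l}^{\eta}({\mathfrak e}_{{\bf 1}, \rho, 0})$, with coefficients $\{f_{{\bf 1}, \rho}^{(0)}(e) + \d(S(\rho)=\emptyset)\}\tilde{\a}'(1)$ and $\{f_{{\bf 1}, \rho}^{(0)}(e) + \d(S(\rho)=\emptyset)\}\tilde{\a}(1)$ that do not depend on $\l$. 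Hence the whole assertion reduces to the $\l$-dependence of these two periods, which is exactly what Propositions~\ref{prop:e-1 = chi det} and \ref{prop:P(e_{chi, rho, 0})} record once $\chi = {\bf 1}$ is substituted there.

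First I would establish the meromorphic continuation. Putting $\chi = {\bf 1}$ (so $\gf_{\chi} = \go_{F}$ and $\chi^{2} = {\bf 1}$) in Proposition~\ref{prop:e-1 = chi det} gives, for $\Re(\l) > 0$,
\[
P_{\b, \l}^{\eta}({\mathfrak e}_{{\bf 1}, \rho, -1}) = \d\left(\eta = {\bf 1},\ S(\rho) = \emptyset\right)\frac{2 D_{F}^{-1/2} R_{F}^{2}}{\zeta_{F}(2)}\frac{\b(0)}{\l},
\]
which is visibly meromorphic on $\CC$ with at most a simple pole at $\l = 0$. Putting $\chi = {\bf 1}$ in Proposition~\ref{prop:P(e_{chi, rho, 0})} exhibits $P_{\b, \l}^{\eta}({\mathfrak e}_{{\bf 1}, \rho, 0})$, for $\Re(\l) > 1$, as the sum of an entire function $f(\l)$ and finitely many explicit terms whose only poles lie at $\l \in \{0, \pm 1\}$, so it too continues meromorphically to $\CC$. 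Since a finite $\l$-independent linear combination of meromorphic functions is meromorphic, the function $\l \mapsto \PP_{\rm res}^{\eta}(\b, \l, \a)$ extends meromorphically to $\CC$.

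Next I would read off the constant term at $\l = 0$. Because $P_{\b, \l}^{\eta}({\mathfrak e}_{{\bf 1}, \rho, -1})$ is a pure simple-pole term in $\l$, its constant term at $\l = 0$ vanishes, so the whole $\tilde{\a}'(1)$-part of $\PP_{\rm res}^{\eta}$ drops out, leaving
\[
{\rm CT}_{\l = 0}\PP_{\rm res}^{\eta}(\b, \l, \a) = \sum_{\rho \in \Lambda(\gn)}\{f_{{\bf 1}, \rho}^{(0)}(e) + \d(S(\rho) = \emptyset)\}\,\tilde{\a}(1)\,{\rm CT}_{\l = 0}P_{\b, \l}^{\eta}({\mathfrak e}_{{\bf 1}, \rho, 0}).
\]
Substituting the value of ${\rm CT}_{\l = 0}P_{\b, \l}^{\eta}({\mathfrak e}_{\chi, \rho, 0})$ from Proposition~\ref{prop:P(e_{chi, rho, 0})} at $\chi = {\bf 1}$, using $\gf_{\chi} = \go_{F}$ so ${\rm N}(\gf_{\chi}) = 1$ and $L(2, {\bf 1}) = \zeta_{F}(2)$ (the completed Dedekind zeta value, by the normalization $R({\bf 1}) = R_{F}$), and noting that $\tilde{B}_{{\bf 1}, \rho}^{\eta}$ equals $\tilde{B}_{{\bf 1}, \rho}^{\bf 1}$ on every summand carrying a factor $\d_{\eta, {\bf 1}}$, yields the displayed formula together with the stated expression for $a_{{\bf 1}, \rho}^{\eta}(0)$.

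The only genuine work is this last piece of bookkeeping: aligning the global normalizations ($L(2, {\bf 1}) = \zeta_{F}(2)$, ${\rm N}(\go_{F}) = 1$) and the placement of the $\d_{\eta, {\bf 1}}$ factors between the general-$\chi$ statement of Proposition~\ref{prop:P(e_{chi, rho, 0})} and the $\eta$-specialized formula asserted here. There is no analytic obstacle, since the meromorphic continuation of each constituent period is already available and $\Lambda(\gn)$ is finite, so both the continuation and the constant-term evaluation pass through the finite sum term by term.
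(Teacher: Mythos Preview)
Your proposal is correct and follows exactly the approach indicated in the paper, which simply states that $\PP_{\rm res}^{\eta}(\b, \l, \a)$ is described by Propositions~\ref{prop:e-1 = chi det} and~\ref{prop:P(e_{chi, rho, 0})} and then records the lemma without further argument. You have spelled out the specialization $\chi = {\bf 1}$ and the bookkeeping (in particular, that the $\d_{\eta,{\bf 1}}$ factor allows $\tilde B_{{\bf 1},\rho}^{\eta}$ to be rewritten as $\tilde B_{{\bf 1},\rho}^{\bf 1}$ on the relevant terms) in more detail than the paper does.
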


\begin{lem}\label{lem: merom of P eisen}
The function $\l \mapsto \PP_{\rm eis}^{\eta}(\b, \l, \a)$ on $\Re(\l)>N$ is analytically continued to a meromorphic function on $\Re(\l) > -1/2$.
\end{lem}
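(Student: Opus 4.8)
The plan is to reduce the claim to the contour manipulations already carried out for the Eisenstein part $\Psi_{\rm ct}(\l)$ in the proof of Lemma \ref{lem:Green merom}, now with the period $P_{\b,\l}^{\bf 1}$ replaced by $P_{\b,\l}^{\eta}$. First I would insert into the definition of $\PP_{\rm eis}^{\eta}(\b,\l,\a)$ the closed form of $P_{\rm reg}^{\bf 1}(E_{\chi^{-1},\rho}(-\nu,-))$ given by Proposition \ref{prop:Eisen P=L}, which on the line $\nu\in i\RR$ is holomorphic in $\nu$ and of at most polynomial growth, together with the decomposition of $P_{\b,\l}^{\eta}(E_{\chi,\rho}(\nu,-))$ coming from the proof of \cite[Theorem 37]{Sugiyama} (the same decomposition used in the proof of Lemma \ref{lem:Green merom}, now for the pair $(\chi,\nu)$ and the character $\eta$): for $\Re(\l)$ large,
\begin{align*}
P_{\b,\l}^{\eta}(E_{\chi,\rho}(\nu,-)) = {}& P_{\chi}(\eta,\l,\nu) + D_{F}^{-1/2}A_{\chi,\rho}(\nu)\frac{L(\nu,\chi^{2})}{L(1+\nu,\chi^{2})}P_{\chi^{-1}}(\eta,\l,-\nu) \\
& {} + Q_{\chi,\rho}^{+}(\eta,\l,\nu) + Q_{\chi,\rho}^{-}(\eta,\l,\nu),
\end{align*}
where $P_{\chi^{\pm1}}(\eta,\l,\pm\nu)$ carries the Kronecker factor $\d_{\chi,\eta}$ together with simple poles in $\l$ at $\pm(\pm\nu+1)/2$, and $Q_{\chi,\rho}^{\pm}(\eta,\l,\nu)$ is a contour integral of $Z^{*}(\pm z+1/2,\eta,E_{\chi,\rho}^{\natural}(\nu,-))$ against $\b(z)/(\l+z)$, the $z$-integrand being given explicitly by Proposition \ref{prop:Z* of Eisen}.

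Next I would split $\PP_{\rm eis}^{\eta}(\b,\l,\a)$ along this decomposition into finitely many sums of $\nu$-integrals and continue each in $\l$ exactly as $\Phi_{1},\Phi_{2},\Phi_{3},\Phi_{4}$ were handled in the proof of Lemma \ref{lem:Green merom}. In the $Q^{\pm}$-pieces, shifting the $z$-contour past the pole of $\b(z)/(\l+z)$ at $z=-\l$ and past the poles of $Z^{*}(\cdot,\eta,E_{\chi,\rho}^{\natural}(\nu,-))$ in $z$ --- which by Proposition \ref{prop:Z* of Eisen} lie on $\Re(z)=1/2$ and occur only when $\chi\eta={\bf 1}$ or $\chi^{-1}\eta={\bf 1}$ --- yields an absolutely convergent double integral, holomorphic on all of $\CC$, plus finitely many boundary terms holomorphic on $\Re(\l)>-1/2$. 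In the $P_{\chi^{\pm1}}$-pieces, performing the $\nu$-integration and then lowering $\Re(\l)$ through $1/2$ forces a residue at $\nu=\pm(2\l-1)$, contributing a term proportional to $\tilde{\a}_{\chi}(2\l-1)$ times a product of completed $L$-values and an Eisenstein series at $\nu=2\l-1$, hence meromorphic on $\Re(\l)>-1/2$ with at most finitely many poles; as in Lemma \ref{lem:Green merom} I would use the functional equation $D_{F}^{-1/2}A_{{\bf 1},\rho}(-\nu)\zeta_{F}(-\nu)\zeta_{F}(1-\nu)^{-1}E_{{\bf 1},\rho}(\nu,-)=E_{{\bf 1},\rho}(-\nu,-)$ to fold the $P_{\chi^{-1}}(\eta,\l,-\nu)$ contribution onto the $P_{\chi}(\eta,\l,\nu)$ one. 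Because of the factor $\d_{\chi,\eta}$, at most the single character $\chi=\eta$ ever contributes the polar pieces, and only when $\eta\in\Xi(\gn)$; when $\eta\notin\Xi(\gn)$ --- which under hypothesis ($\star$) happens whenever $\gf_{\eta}\neq\go_{F}$ --- they disappear and $\PP_{\rm eis}^{\eta}(\b,\l,\a)$ is in fact holomorphic on $\Re(\l)>-1/2$.

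The main obstacle will be justifying, uniformly in $\chi$, the interchange of the infinite sum over $\chi\in\Xi(\gn)$ with the (finite) sum over $\rho\in\Lambda_{\chi}(\gn)$ and with the $\nu$- and $z$-contour integrals, and keeping the shifted contours bounded away from all poles. This is exactly where one must combine the decay of $\b\in\Bcal$ and of $\tilde{\a}_{\chi}$ coming from $\a\in\Acal_{S}$ with the growth bounds on $Z^{*}(\cdot,\eta,E_{\chi,\rho}^{\natural}(\nu,-))$ and $E_{\chi,\rho}(\nu,-)$ and, crucially, with the uniform estimate $|L_{\fin}(1+\nu,\chi^{2})|^{-1}\ll\gq(\chi^{2}|\cdot|_{\AA}^{\nu})^{\e}$ established in the proof of Lemma \ref{lem: estimation of spectral decomposition}; together these show that each of the finitely many pieces is a series of holomorphic functions converging locally uniformly on $\Re(\l)>-1/2$, so that $\PP_{\rm eis}^{\eta}(\b,\l,\a)$ is meromorphic there, with poles confined to a finite subset of $\{\tfrac12,1,\dots\}\cup\{-\tfrac12\}$. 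I would also record the precise residues at these poles along the way, since they are needed for the subsequent computation of ${\rm CT}_{\l=0}\PP_{\rm eis}^{\eta}(\b,\l,\a)$.
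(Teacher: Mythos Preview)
Your proposal is correct and follows essentially the same route as the paper: insert the explicit form of $P_{\rm reg}^{\bf 1}(E_{\chi^{-1},\rho}(-\nu,-))$, decompose $P_{\b,\l}^{\eta}(E_{\chi,\rho}(\nu,-))$ into the $P_{\chi^{\pm1}}$- and $Q^{\pm}$-pieces, observe that the polar $P$-terms survive only for $\chi=\eta$ (hence only when $\gf_{\eta}=\go_{F}$), fold the two $P$-pieces together via the functional equation, and continue each of the resulting terms by shifting contours. One minor slip: in the $P_{\chi^{\pm1}}$-pieces the residue you pick up involves no Eisenstein series --- the variable $g$ has already been integrated out, so what remains is $\tilde{\a}_{\eta}(\cdot)$ times the scalar $\mathfrak{L}_{\eta,\rho}^{\bf 1}(\cdot)$ (a ratio of completed $L$-values) and $f_{\eta,\rho}^{(0)}(e)$; the paper handles this piece by shifting the $\nu$-contour to a fixed line $L_{-\s}$ with $\s>1$ and picking up the (possibly double) pole of $\mathfrak{L}_{\eta,\rho}^{\bf 1}$ at $\nu=-1$, which is more convenient than tracking the moving pole at $\nu=2\l-1$ when you later compute ${\rm CT}_{\l=0}$.
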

\Proof
By Proposition \ref{prop:Z* of Eisen}, we have
{\allowdisplaybreaks\begin{align*}
Z^{*}(s, \eta, E_{\chi, \rho}^{\natural}(\nu,-)) = &
\Gcal(\eta)D_{F}^{-\nu/2}
{\rm N}(\gf_{\chi})^{1/2-\nu}
B_{\chi, \rho}^{\eta}(s, \nu)\frac{L(s + \nu/2, \chi \eta)L(s - \nu/2, \chi^{-1} \eta)}{L(1+\nu, \chi^{2})}.
\end{align*}
}Set
$$\mathfrak{L}_{\chi, \rho}^{\eta}(\nu) = D_{F}^{\nu/2}
{\rm N}(\gf_{\chi})^{1/2+\nu}B_{\chi^{-1}, \rho}^{\eta}(1/2, -\nu)\frac{L((1 + \nu)/2, \chi \eta)L((1 - \nu)/2, \chi^{-1} \eta)}{L(1-\nu, \chi^{-2})}
$$
and recall the expression
{\allowdisplaybreaks\begin{align*}
P_{\b,\l}^{\eta}(E_{\chi, \rho}(\nu, -)) = & P_{\chi}(\eta, \l, \nu) + D_{F}^{-1/2}A_{\chi, \rho}(\nu)\frac{L(\nu, \chi^{2})}{L(1+ \nu, \chi^{2})}P_{\chi^{-1}}(\eta, \l, -\nu)
 + Q_{\chi, \rho}^{+}(\eta, \l, \nu) + Q_{\chi, \rho}^{-}(\eta, \l, \nu).
\end{align*}
}We remark
{\allowdisplaybreaks\begin{align*}
\PP_{\rm eis}^{\eta}(\b, \l, \a) = & \sum_{\chi \in \Xi(\gn)} \sum_{\rho \in \Lambda_{\chi}(\gn)}
\frac{R_{F}^{-1}}{8 \pi i} \int_{i\RR} \tilde{\alpha}_{\chi}(\nu)
\Gcal({\bf 1}) \mathfrak{L}_{\chi, \rho}^{\bf 1}(\nu)
\bigg\{P_{\chi}(\eta, \l, \nu) \\
& + D_{F}^{-1/2}A_{\chi, \rho}(\nu)\frac{L(\nu, \chi^{2})}{L(1 + \nu, \chi^{2})}
P_{\chi^{-1}}(\eta, \l, -\nu)
+ Q_{\chi, \rho}^{0}(\eta, \l, \nu) \\
& - \sum_{a = (\pm \nu \pm 1)/2}\frac{\b(a)}{\l + a} \Res_{z = a}\{ f_{\chi, \rho}^{\eta}(-z, \nu) \} \bigg\} d\nu.
\end{align*}
}In order to examine $\PP_{\rm eis}^{\eta}(\b, \l, \a)$, we decompose this into the following four terms:
{\allowdisplaybreaks\begin{align*}
\Phi_{1}^{+}(\l) = &
\sum_{\chi \in \Xi(\gn)} \sum_{\rho \in \Lambda_{\chi}(\gn)}
\frac{R_{F}^{-1}}{8 \pi i} \int_{i\RR} \tilde{\alpha}_{\chi}(\nu)
D_{F}^{-1/2} \mathfrak{L}_{\chi, \rho}^{\bf 1}(\nu)
f_{\chi, \rho}^{(0)}(e) \\
& \times \d_{\chi, \eta}R_{F}\bigg\{
\frac{1}{\l - (\nu + 1)/2} +
\frac{1}{\l + (\nu + 1)/2}
\bigg\} \b((\nu + 1)/2) d \nu,
\end{align*}
}
{\allowdisplaybreaks\begin{align*}
\Phi_{1}^{-}(\l) = &
\sum_{\chi \in \Xi(\gn)} \sum_{\rho \in \Lambda_{\chi}(\gn)}
\frac{R_{F}^{-1}}{8 \pi i} \int_{i\RR} \tilde{\alpha}_{\chi}(\nu)
D_{F}^{-1/2} \mathfrak{L}_{\chi, \rho}^{\bf 1}(\nu)
D_{F}^{-1/2}A_{\chi, \rho}(\nu)\frac{L(\nu, \chi^{2})}{L(1 + \nu, \chi^{2})}
f_{\chi^{-1}, \rho}^{(0)}(e) \\
& \times \d_{\chi, \eta}R_{F} \bigg\{
\frac{1}{\l - (-\nu + 1)/2} +
\frac{1}{\l + (-\nu + 1)/2}
\bigg\} \b((-\nu + 1)/2) d \nu,
\end{align*}
}
{\allowdisplaybreaks\begin{align*}\Phi_{2}(\l) =
\sum_{\chi \in \Xi(\gn)} \sum_{\rho \in \Lambda_{\chi}(\gn)}
\frac{R_{F}^{-1}}{8 \pi i} \int_{i\RR} \tilde{\alpha}_{\chi}(\nu)
D_{F}^{-1/2} \mathfrak{L}_{\chi, \rho}^{\bf 1}(\nu)
Q_{\chi, \rho}^{0}(\eta, \l, \nu)d \nu,
\end{align*}
}
{\allowdisplaybreaks\begin{align*}
\Phi_{3}(\l) = &
- \sum_{\chi \in \Xi(\gn)} \sum_{\rho \in \Lambda_{\chi}(\gn)}
\frac{R_{F}^{-1}}{8 \pi i} \int_{i\RR} \tilde{\alpha}_{\chi}(\nu)
D_{F}^{-1/2} \mathfrak{L}_{\chi, \rho}^{\bf 1}(\nu)
\sum_{a = (\pm \nu \pm 1)/2}\frac{\b(a)}{\l + a} \Res_{z = a}\{ f_{\chi, \rho}^{\eta}(-z, \nu)\}
d \nu.
\end{align*}
}When $\chi = \eta$, by using the functional equations
$$
\mathfrak{L}_{\eta, \rho}^{\bf 1}(\nu)
D_{F}^{-1/2}A_{\eta, \rho}(\nu)\frac{\zeta_{F}(\nu)}{\zeta_{F}(1 + \nu)}
=
\mathfrak{L}_{\eta, \rho}^{\bf 1}(-\nu)
$$
and $B_{\eta, \rho}^{\bf 1}(1/2, \nu) = B_{\eta, \rho}^{\bf 1}(1/2, -\nu)A_{\eta, \rho}(\nu)$,
we obtain
$\Phi_{1}^{+}(\l) = \Phi_{1}^{-}(\l).$
The term $\Phi_{1}^{+}(\l)$ is expressed as
{\allowdisplaybreaks\begin{align*}
\Phi_{1}^{+}(\l)= &
\sum_{\chi \in \Xi(\gn)} \sum_{\rho \in \Lambda_{\chi}(\gn)}
\bigg\{ \frac{R_{F}^{-1}}{8 \pi i} \int_{i\RR} \tilde{\alpha}_{\chi}(\nu)
D_{F}^{-1/2} \mathfrak{L}_{\chi, \rho}^{\bf 1}(\nu)
f_{\chi, \rho}^{(0)}(e)\d_{\chi, \eta}R_{F}
\frac{1}{\l + (\nu + 1)/2}
\b((\nu + 1)/2) d \nu \\
& +
\frac{R_{F}^{-1}}{8 \pi i} \int_{i\RR} \tilde{\alpha}_{\chi}(\nu)
D_{F}^{-1/2} \mathfrak{L}_{\chi, \rho}^{\bf 1}(\nu)
f_{\chi, \rho}^{(0)}(e)\d_{\chi, \eta}R_{F}
\frac{1}{\l - (\nu + 1)/2}
\b((\nu + 1)/2) d \nu \bigg\}.
\end{align*}
}Then the first term in the summation is holomorphic on $\Re(\l) > -1/2$.
For any fixed $\s > 1$, the second term in the summation is transformed into
{\allowdisplaybreaks\begin{align*}
& \frac{R_{F}^{-1}}{8 \pi i}D_{F}^{-1/2} \bigg\{ \int_{L_{-\s}} \tilde{\alpha}_{\chi}(\nu)
\mathfrak{L}_{\chi, \rho}^{\bf 1}(\nu)
f_{\chi, \rho}^{(0)}(e)\d_{\chi, \eta}R_{F}
\frac{1}{\l - (\nu + 1)/2}
\b((\nu + 1)/2) d \nu \\
& + \d_{\chi, \eta}\d(\gf_{\eta} = \go_{F}) 2\pi i \Res_{\nu = -1}\left(\frac{\b((\nu+1)/2)}{\l -(\nu+1)/2}\tilde{\alpha}_{\eta}(\nu)\mathfrak{L}_{\eta, \rho}^{\bf 1}(\nu)\right) f_{\chi, \rho}^{(0)}(e)R_{F}
\bigg\}.
\end{align*}
}The first term in the expression above is meromorphic on $\Re(\l) > (-\s +1)/2$. In order to prove the meromorphicity of the second term in the expression above,
we put
$$D_{F}^{\nu/2}\frac{L((1+\nu)/2, \eta)L((1-\nu)/2, \eta)}{\zeta_{F}(1-\nu)}
= \frac{D_{-2}^{\eta}}{(\nu+1)^{2}} + \frac{D_{-1}^{\eta}}{\nu+1} + D_{0}^{\eta} + \Ocal((\nu+1)), \ (\nu \rightarrow -1),$$
$$B_{\eta, \rho}^{\bf 1}(1/2, -\nu) = p_{0}^{\eta}(\rho) + p_{1}^{\eta}(\rho)(\nu + 1) +
p_{2}^{\eta}(\rho)(\nu+1)^{2} + \Ocal((\nu+1)^{3}), \ (\nu \rightarrow -1)$$
and
$$\frac{\b((\nu+1)/2)}{\l -(\nu+1)/2}\tilde{\alpha}_{\eta}(\nu) = q_{0}^{\eta}(\l) + q_{1}^{\eta}(\l)(\nu + 1) + \Ocal((\nu+1)^{2}), \ (\nu \rightarrow -1).$$
Then these give the following expressions:
$$\Res_{\nu = -1} \bigg\{ \frac{\b((\nu+1)/2)}{\l -(\nu+1)/2}\tilde{\alpha}_{\eta}(\nu)\mathfrak{L}_{\eta, \rho}^{\bf 1}(\nu) \bigg\} = p_{0}^{\eta}(\rho)q_{1}^{\eta}(\l)D_{-2}^{\eta}
+ p_{0}^{\eta}(\rho)q_{0}^{\eta}(\l)D_{-1}^{\eta}
+ p_{1}^{\eta}(\rho)q_{0}^{\eta}(\l)D_{-2}^{\eta},
$$
$$q_{0}^{\eta}(\l) = \frac{\tilde{\a}_{\eta}(1)\b(0)}{\l}, \ \ q_{1}^{\eta}(\l) = \left(\frac{\tilde{\a}_{\eta}'(1)}{\l} + \frac{\tilde{\a}_{\eta}(1)}{2\l^{2}}\right) \b(0).$$
Therefore $\Phi_{1}^{+}(\l) = \Phi_{1}^{-}(\l)$ has a meromorphic continuation to $\Re(\l)> -1/2$.
Since $\Phi_{2}(\l)$ is described as an absolutely convergent double integral, $\Phi_{2}(\l)$ is entire.

We examine $\Phi_{3}(\l)$. This is written as
{\allowdisplaybreaks\begin{align*}
\Phi_{3}(\l) = & -\sum_{\chi \in \Xi(\gn)} \sum_{\rho \in \Lambda_{\chi}(\gn)}\frac{R_{F}^{-1}}{8\pi i} \bigg\{ \int_{i \RR}\tilde{\a}_{\chi}(\nu)
D_{F}^{-1/2}\mathfrak{L}_{\chi, \rho}^{\bf 1}(\nu)
\sum_{a = (\pm \nu + 1)/2}\frac{\b(a)}{\l+a}\Res_{z=a}f_{\chi, \rho}^{\eta}(-z, \nu) d\nu \\
& + \int_{i \RR}\tilde{\a}_{\chi}(\nu)
D_{F}^{-1/2}\mathfrak{L}_{\chi, \rho}^{\bf 1}(\nu)
\sum_{a = (\pm \nu - 1)/2}\frac{\b(a)}{\l+a}\Res_{z=a}f_{\chi, \rho}^{\eta}(-z, \nu) d\nu \bigg\}.
\end{align*}
}In the bracket of the right hand side, the first term is holomorphic on $\Re(\l) > -1/2$
and the part of $a = (-\nu-1)/2$ in the second term is transposed into
{\allowdisplaybreaks\begin{align*}
& \int_{i \RR}\tilde{\a}_{\chi}(\nu)
D_{F}^{-1/2}\mathfrak{L}_{\chi, \rho}^{\bf 1}(\nu)
\frac{\b((-\nu-1)/2)}{\l+(-\nu-1)/2}\Res_{z=(-\nu-1)/2}f_{\chi, \rho}^{\eta}(-z, \nu) d\nu \\
= &
\int_{L_{-\s}}\tilde{\a}_{\chi}(\nu)
D_{F}^{-1/2}\mathfrak{L}_{\chi, \rho}^{\bf 1}(\nu)
\frac{\b((-\nu-1)/2)}{\l + (-\nu-1)/2}\Res_{z=(-\nu-1)/2}f_{\chi, \rho}^{\eta}(-z, \nu) d\nu \\
& +2\pi i \ \delta_{\chi, \eta} \Res_{\nu=-1}(\mathfrak{L}_{\eta, \rho}^{\bf 1}(\nu))\tilde{\a}_{\eta}(1)
D_{F}^{-1/2}\frac{\b(0)}{\l}\Gcal(\eta)D_{F}^{1/2}B_{\eta, \rho}^{\eta}(1/2, -1)(-R_{F})
\end{align*}
}for any fixed $\s > 1$.
We note
$\Res_{\nu=-1}\mathfrak{L}_{\eta, \rho}^{\bf 1}(\nu)
=p_{0}^{\eta}(\rho)D_{-1}^{\eta} + p_{1}^{\eta}(\rho)D_{-2}^{\eta}.
$
Thus the part of $a=(-\nu-1)/2$ is meromorphic on $\Re(\l) > -1/2$.
Noting that the part of
$a=(\nu-1)/2$ equals that of $a=(-\nu-1)/2$, 
the function $\Phi_{3}(\l)$ has a meromorphic continuation to $\Re(\l) > -1/2$. This completes the proof.
\QED
\begin{lem}\label{CT of P eis + P res}
We have
{\allowdisplaybreaks\begin{align*}
& {\rm CT}_{\l=0}\PP_{\rm eis}^{\eta}(\b, \l, \a) +  {\rm CT}_{\l=0}\PP_{\rm res}^{\eta}(\b, \l, \a) \\
=
&\bigg\{ \Gcal(\eta) D_{F}^{-1/2}R_{F}^{-1}\sum_{\chi \in \Xi(\gn)}\sum_{\rho \in \Lambda_{\chi}(\gn)}
\frac{1}{8 \pi i}\int_{i \RR} \tilde{\a}_{\chi}(\nu)\mathfrak{L}_{\chi, \rho}^{\bf 1}(\nu)\mathfrak{L}_{\chi^{-1}, \rho}^{\eta}(-\nu)d \nu \\
& + \d(\gf_{\eta} = \go_{F}) \{ Y_{2}^{\eta}(\gn) \tilde{\a}_{\eta}''(1) + Y_{1}^{\eta}(\gn) \tilde{\a}_{\eta}'(1) + Y_{0}^{\eta}(\gn) \tilde{\a}_{\eta}(1)\}
+ Y_{-1}^{\eta}(\gn) \tilde{\a}(1)
\bigg\} \b(0),
\end{align*}
}where we put
$$Y_{2}^{\eta}(\gn) = \sum_{\rho \in \Lambda(\gn)}
D_{F}^{-1/2}\{f_{\eta, \rho}^{(0)}(e) + \d(S(\rho)=\emptyset)\}\frac{1}{2}p_{0}^{\eta}(\rho)D_{-2}^{\eta},
$$
$$Y_{1}^{\eta}(\gn) = \sum_{\rho \in \Lambda(\gn)}
D_{F}^{-1/2}\{f_{\eta, \rho}^{(0)}(e) + \d(S(\rho)=\emptyset)\}
\{D_{-1}^{\eta}p_{0}^{\eta}(\rho) + D_{-2}^{\eta}p_{1}^{\eta}(\rho)\},
$$
$$Y_{0}^{\eta}(\gn) = \sum_{\rho \in \Lambda(\gn)}D_{F}^{-1/2}
\{f_{\eta, \rho}^{(0)}(e) + \d(S(\rho)=\emptyset)\}
\{D_{-2}^{\eta}p_{2}^{\eta}(\rho) + D_{-1}^{\eta}p_{1}^{\eta}(\rho) + D_{0}^{\eta}p_{0}^{\eta}(\rho)\}
$$
and
$$Y_{-1}^{\eta}(\gn) = \sum_{\rho \in \Lambda(\gn)} \frac{\Gcal(\eta)D_{F}^{-1/2}}{\zeta_{F}(2)} \{f_{{\bf 1}, \rho}^{(0)}(e) + \d(S(\rho)=\emptyset)\}
a_{{\bf 1}, \rho}^{\eta}(0).
$$
\end{lem}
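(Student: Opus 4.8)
The plan is to add the constant term ${\rm CT}_{\l=0}\PP_{\rm res}^{\eta}(\b,\l,\a)$, already computed in Lemma~\ref{lem: merom of P res}, to a term-by-term evaluation of ${\rm CT}_{\l=0}\PP_{\rm eis}^{\eta}(\b,\l,\a)$. For the latter I would reuse the decomposition $\PP_{\rm eis}^{\eta}=\Phi_{1}^{+}+\Phi_{1}^{-}+\Phi_{2}+\Phi_{3}$ set up in the proof of Lemma~\ref{lem: merom of P eisen} together with the identity $\Phi_{1}^{+}(\l)=\Phi_{1}^{-}(\l)$, so that it suffices to understand ${\rm CT}_{\l=0}$ of $2\Phi_{1}^{+}$, of $\Phi_{2}$ and of $\Phi_{3}$.

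First I would treat $\Phi_{2}(\l)$, which is entire, so ${\rm CT}_{\l=0}\Phi_{2}=\Phi_{2}(0)$. In the integral defining $Q_{\chi,\rho}^{0}(\eta,0,\nu)$ I would move the $z$-contour across the poles in the strip $|\Re(z)|<\s$: the pole at $z=0$ yields $f_{\chi,\rho}^{\eta}(0,\nu)\b(0)$, and by Proposition~\ref{prop:Z* of Eisen} one has $f_{\chi,\rho}^{\eta}(0,\nu)=Z^{*}(1/2,\eta,E_{\chi,\rho}^{\natural}(\nu,-))=\Gcal(\eta)\mathfrak{L}_{\chi^{-1},\rho}^{\eta}(-\nu)$, which together with $\Gcal({\bf 1})=D_{F}^{-1/2}$ builds the main term $\Gcal(\eta)D_{F}^{-1/2}R_{F}^{-1}\sum_{\chi}\sum_{\rho}\frac{1}{8\pi i}\int_{i\RR}\tilde{\a}_{\chi}(\nu)\mathfrak{L}_{\chi,\rho}^{\bf 1}(\nu)\mathfrak{L}_{\chi^{-1},\rho}^{\eta}(-\nu)\,d\nu\cdot\b(0)$. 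The remaining poles of $f_{\chi,\rho}^{\eta}(\pm z,\nu)$ sit at $z=\pm(1\pm\nu)/2$ and occur only when $\chi=\eta$; since $\chi\in\Xi(\gn)$ forces $\gf_{\chi}^{2}\mid\gn$ while $\gf_{\eta}$ is prime to $\gn$, this happens only for $\gf_{\eta}=\go_{F}$, which is the origin of the factor $\d(\gf_{\eta}=\go_{F})$ in the statement.

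For $2\Phi_{1}^{+}(\l)$ and $\Phi_{3}(\l)$ I would repeat the meromorphic-continuation argument of Lemma~\ref{lem: merom of P eisen}: add the value at $\l=0$ of the summand already holomorphic on $\Re(\l)>-1/2$ to the $\int_{L_{-\s}}$-part of the continued partner, so that by the residue theorem the sum becomes $2\pi i$ times $\Res_{\nu=-1}$ of the relevant integrand (the unique pole crossed), while the explicit residue term extracted there has, through $q_{0}^{\eta}(\l)$ and $q_{1}^{\eta}(\l)$, only $\l^{-1},\l^{-2}$ behaviour and hence no constant term. Each surviving $\Res_{\nu=-1}$ I would compute via the Taylor coefficients $D_{j}^{\eta},p_{j}^{\eta}(\rho)$ of $\mathfrak{L}_{\eta,\rho}^{\bf 1}(\nu)$ and $B_{\eta,\rho}^{\bf 1}(1/2,-\nu)$ near $\nu=-1$ and the Taylor expansions of $\b((\nu+1)/2)$ and of the even function $\tilde{\a}_{\eta}(\nu)$ (even since $\eta^{2}={\bf 1}$), obtaining linear combinations of $\tilde{\a}_{\eta}(1),\tilde{\a}_{\eta}'(1),\tilde{\a}_{\eta}''(1)$ times $\b(0)$ plus a $\b''(0)$-term; the functional equations $\mathfrak{L}_{\eta,\rho}^{\bf 1}(\nu)D_{F}^{-1/2}A_{\eta,\rho}(\nu)\frac{\zeta_{F}(\nu)}{\zeta_{F}(1+\nu)}=\mathfrak{L}_{\eta,\rho}^{\bf 1}(-\nu)$, $B_{\eta,\rho}^{\bf 1}(1/2,\nu)=B_{\eta,\rho}^{\bf 1}(1/2,-\nu)A_{\eta,\rho}(\nu)$ and $D_{F}^{(\nu-1)/2}B_{\eta,\rho}^{\eta}(1-\nu/2,-\nu)=\tilde{\eta}(\gD_{F/\QQ})\d(S(\rho)=\emptyset)$ are what organize these residues and explain the coefficient $f_{\eta,\rho}^{(0)}(e)+\d(S(\rho)=\emptyset)$ in the $Y_{i}^{\eta}(\gn)$. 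I expect substantial cancellation between the residual parts of $\Phi_{2}$ and of $\Phi_{3}$ before only these boundary residues remain.

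Finally I would add ${\rm CT}_{\l=0}(2\Phi_{1}^{+}+\Phi_{2}+\Phi_{3})$ to the formula for ${\rm CT}_{\l=0}\PP_{\rm res}^{\eta}$ in Lemma~\ref{lem: merom of P res}. The main term is produced by $\Phi_{2}$; the $a_{{\bf 1},\rho}^{\eta}(0)\b(0)$-part of ${\rm CT}_{\l=0}\PP_{\rm res}^{\eta}$ reassembles as $Y_{-1}^{\eta}(\gn)\tilde{\a}(1)\b(0)$; and the $\Res_{\nu=-1}$-contributions of $\Phi_{1}^{+},\Phi_{2},\Phi_{3}$, all carrying $\d(\gf_{\eta}=\go_{F})$, reassemble as $\d(\gf_{\eta}=\go_{F})\{Y_{2}^{\eta}(\gn)\tilde{\a}_{\eta}''(1)+Y_{1}^{\eta}(\gn)\tilde{\a}_{\eta}'(1)+Y_{0}^{\eta}(\gn)\tilde{\a}_{\eta}(1)\}\b(0)$. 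The decisive point, which I expect to be the main obstacle, is that every $\b''(0)$-contribution cancels: such terms arise only when $\eta={\bf 1}$ (the double pole of $\mathfrak{L}_{{\bf 1},\rho}^{\bf 1}$ at $\nu=-1$, reinforced by the $(\nu+1)^{-1}$ factors coming from $\l\mp(\nu+1)/2$ at $\l=0$, produces them) and must exactly annihilate the term $-\tfrac12\d_{\eta,{\bf 1}}\tilde{B}_{{\bf 1},\rho}^{\bf 1}(0)R_{F}^{2}\b''(0)$ in ${\rm CT}_{\l=0}\PP_{\rm res}^{\eta}$. Verifying this cancellation, together with the careful tracking of signs and of the coefficients $D_{j}^{\eta},p_{j}^{\eta}(\rho)$, is the bulk of the computation.
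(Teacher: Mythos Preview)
Your proposal is correct and follows essentially the same approach as the paper: the decomposition $\PP_{\rm eis}^{\eta}=2\Phi_{1}^{+}+\Phi_{2}+\Phi_{3}$, the extraction of the main integral from the $f_{\chi,\rho}^{\eta}(0,\nu)$ part of $Q_{\chi,\rho}^{0}(\eta,0,\nu)$, the reduction of the remaining pieces to $\Res_{\nu=-1}$ computed via the coefficients $D_{j}^{\eta}$ and $p_{j}^{\eta}(\rho)$, and the final verification that all $\b''(0)$-terms cancel against those in ${\rm CT}_{\l=0}\PP_{\rm res}^{\eta}$ are exactly the steps carried out in the paper. Your observation that the extracted residue in the continuation of $\Phi_{1}^{+}$ contributes only through $q_{0}^{\eta}(\l),q_{1}^{\eta}(\l)\in\l^{-1}\CC+\l^{-2}\CC$ and hence has vanishing constant term, and your identification of the cancellation between the residual parts of $\Phi_{2}$ and $\Phi_{3}$, also match the paper's argument.
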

\Proof
Let $\Phi_{1}^{+}$, $\Phi_{2}$ and $\Phi_{3}$ be the functions defined in the proof of
Lemma \ref{lem: merom of P eisen}.
Then, we obtain
${\rm CT}_{\l=0}\PP_{\rm eis}^{\eta}(\b, \l, \a) = {\rm CT}_{\l=0}(2\Phi_{1}^{+}(\l) + \Phi_{2}(\l) + \Phi_{3}(\l))$.
A direct computation gives us
{\allowdisplaybreaks\begin{align*}
{\rm CT}_{\l=0}\Phi_{1}^{+}(\l) = &\delta(\gf_{\eta}= \go_{F}) \sum_{\rho \in \Lambda(\gn)}\frac{1}{8 \pi i}\int_{i \RR}
\tilde{\a}_{\eta}(\nu)D_{F}^{-1/2} \mathfrak{L}_{\eta, \rho}^{\bf 1}(\nu)f_{\eta, \rho}^{(0)}(e)\frac{\b(0)}{(\nu+1)/2} d\nu \\
& + \delta(\gf_{\eta}=\go_{F})\sum_{\rho \in \Lambda(\gn)}\frac{1}{8 \pi i}\int_{L_{-\s}}
\tilde{\a}_{\eta}(\nu)D_{F}^{-1/2} \mathfrak{L}_{\eta, \rho}^{\bf 1}(\nu)f_{\eta, \rho}^{(0)}(e)\frac{\b(0)}{-(\nu+1)/2}d\nu
\end{align*}
}and
{\allowdisplaybreaks\begin{align*}
{\rm CT}_{\l=0}\Phi_{2}(\l) = 
\sum_{\chi \in \Xi(\gn)} \sum_{\rho \in \Lambda_{\chi}(\gn)}
\frac{R_{F}^{-1}}{8 \pi i} \int_{i\RR} \tilde{\alpha}_{\chi}(\nu)
D_{F}^{-1/2} \mathfrak{L}_{\chi, \rho}^{\bf 1}(\nu)
Q_{\chi, \rho}^{0}(\eta, 0, \nu)d \nu,
\end{align*}
}where
$$Q_{\chi, \rho}^{0}(\eta, 0, \nu) = f_{\chi, \rho}^{\eta}(0, \nu)\b(0) + \sum_{a=(\pm \nu \pm1)/2}\Res_{z=a}\bigg\{f_{\chi, \rho}^{\eta}(-z, \nu)\frac{\b(z)}{z}\bigg\}.$$
The constant term of $\Phi_{3}(\l)$ at $\l=0$ is evaluated as
{\allowdisplaybreaks\begin{align*}
& {\rm CT}_{\l=0}\Phi_{3}(\l) \\
= & -\sum_{\chi \in \Xi(\gn)} \sum_{\rho \in \Lambda_{\chi}(\gn)}\frac{R_{F}^{-1}}{8\pi i} \bigg\{ \int_{i \RR}\tilde{\a}_{\chi}(\nu)
D_{F}^{-1/2}\mathfrak{L}_{\chi, \rho}^{\bf 1}(\nu)
\sum_{a = (\pm \nu + 1)/2}\frac{\b(a)}{a}\Res_{z=a}f_{\chi, \rho}^{\eta}(-z, \nu) d\nu\bigg\} \\
& -2 \d(\gf_{\eta}=\go_{F})\sum_{\rho \in \Lambda(\gn)} \frac{R_{F}^{-1}}{8\pi i}\bigg\{\int_{L_{-\s}}\tilde{\a}_{\eta}(\nu)
D_{F}^{-1/2}\mathfrak{L}_{\eta, \rho}^{\bf 1}(\nu)
\frac{\b((-\nu-1)/2)}{ (-\nu-1)/2}\Res_{z=(-\nu-1)/2}f_{\eta, \rho}^{\eta}(-z, \nu) d\nu \bigg\}.
\end{align*}
}Hence, we obtain
{\allowdisplaybreaks\begin{align*}
& {\rm CT}_{\l=0}\PP_{\rm eis}^{\eta}(\b, \l, \a) \\
= & 
\ 2\d(\gf_{\eta}=\go_{F})\sum_{\rho \in \Lambda(\gn)}\frac{1}{8 \pi i}
D_{F}^{-1/2}
f_{\eta, \rho}^{(0)}(e)
\left(\int_{i \RR}-\int_{L_{-\s}}\right)
\tilde{\a}_{\eta}(\nu)\mathfrak{L}_{\eta, \rho}^{\bf 1}(\nu)\frac{\b((\nu+1)/2)}{(\nu+1)/2} d\nu \\
& + \sum_{\chi \in \Xi(\gn)} \sum_{\rho \in \Lambda_{\chi}(\gn)}
\frac{R_{F}^{-1}}{8 \pi i} \int_{i\RR} \tilde{\alpha}_{\chi}(\nu)
D_{F}^{-1/2} \mathfrak{L}_{\chi, \rho}^{\bf 1}(\nu)
f_{\chi, \rho}^{\eta}(0, \nu)\b(0)d \nu \\
&
+ 2\d(\gf_{\eta}=\go_{F})\sum_{\rho \in \Lambda(\gn)}\frac{R_{F}^{-1}}{8\pi i} \bigg\{ \left(\int_{i \RR} - \int_{L_{-\s}}\right)
\tilde{\a}_{\eta}(\nu)
D_{F}^{-1/2}\mathfrak{L}_{\eta, \rho}^{\bf 1}(\nu)
\frac{\b((-\nu-1)/2)}{(-\nu -1)/2} \\
& \times \Res_{z=(-\nu-1)/2}f_{\eta, \rho}^{\eta}(-z, \nu) d\nu \bigg\} \\
= &\ \d(\gf_{\eta}=\go_{F})\sum_{\rho \in \Lambda(\gn)}\frac{1}{2}D_{F}^{-1/2}
f_{\eta, \rho}^{(0)}(e)
\Res_{\nu = -1}\bigg\{
\tilde{\a}_{\eta}(\nu)\mathfrak{L}_{\eta, \rho}^{\bf 1}(\nu)\frac{\b((\nu+1)/2)}{(\nu+1)/2}\bigg\} \\
& + \sum_{\chi \in \Xi(\gn)} \sum_{\rho \in \Lambda_{\chi}(\gn)}
\frac{R_{F}^{-1}}{8 \pi i} \int_{i\RR} \tilde{\alpha}_{\chi}(\nu)
D_{F}^{-1/2} \mathfrak{L}_{\chi, \rho}^{\bf 1}(\nu)
\Gcal(\eta) \mathfrak{L}_{\chi^{-1}, \rho}^{\eta}(-\nu) \b(0)d \nu \\
&
+ \d(\gf_{\eta}=\go_{F})\sum_{\rho \in \Lambda(\gn)}\frac{R_{F}^{-1}}{2} \Res_{\nu = -1} \bigg\{
\tilde{\a}_{\eta}(\nu)
D_{F}^{-1/2}\mathfrak{L}_{\eta, \rho}^{\bf 1}(\nu)
\frac{\b((-\nu-1)/2)}{(-\nu -1)/2}\Res_{z=(-\nu-1)/2}f_{\eta, \rho}^{\eta}(-z, \nu)
\bigg\}.
\end{align*}
}We remark
$$
\Res_{z=(-\nu-1)/2}f_{\eta, \rho}^{\eta}(-z, \nu) = \Gcal(\eta)(-R_{F}) D_{F}^{-\nu/2}B_{\eta, \rho}^{\eta}(\nu/2+1, \nu)= -R_{F}\d(S(\rho)=\emptyset)$$
and compute the residues as follows:
{\allowdisplaybreaks\begin{align*}
&\Res_{\nu = -1} \bigg\{
\tilde{\a}_{\eta}(\nu)
\mathfrak{L}_{\eta, \rho}^{\bf 1}(\nu)
\frac{\b((\nu+1)/2)}{(\nu +1)/2}\bigg\}= -\Res_{\nu = -1} \bigg\{
\tilde{\a}_{\eta}(\nu)
\mathfrak{L}_{\eta, \rho}^{\bf 1}(\nu)
\frac{\b((-\nu-1)/2)}{(-\nu -1)/2}\bigg\}\\
= & \ \tilde{\a}_{\eta}''(1)p_{0}^{\eta}(\rho)D_{-2}^{\eta}\b(0)+
 2 \tilde{\a}_{\eta}'(1)\{p_{0}^{\eta}(\rho)D_{-1}^{\eta}+ p_{1}^{\eta}(\rho)D_{-2}^{\eta}\}\b(0) \\
& +\tilde{\a}_{\eta}(1) \bigg\{ D_{-2}^{\eta}\left(2p_{2}^{\eta}(\rho)\b(0) +\frac{1}{4}p_{0}^{\eta}(\rho)\b''(0)\right)
+2 D_{-1}^{\eta}p_{1}^{\eta}(\rho)\b(0) +2D_{0}^{\eta}p_{0}^{\eta}(\rho) \b(0) \bigg\}.
\end{align*}
}One can check that
the sum of all terms containing $\b''(0)$ in ${\rm CT}_{\l=0}\PP_{\rm eis}^{\eta}(\b, \l, \a) +{\rm CT}_{\l=0}\PP_{\rm res}^{\eta}(\b, \l, \a)$
vanishes with the aid of Lemma \ref{lem: merom of P res}.
As a consequence, we obtain the assertion.
\QED

\begin{lem}\label{lem:estimations of Y and Z}
For any $\e>0$, we have the following estimates
$$|Y_{j}^{\eta}(\gn)| \ll {\rm N}(\gn)^{\e}, \hspace{3mm} j \in \{-1, 0, 1, 2\},$$
where the implied constant is independent of $\gn$.
\end{lem}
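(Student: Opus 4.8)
The plan is to bound $Y_j^{\eta}(\gn)$ termwise and then count the terms. By Lemma \ref{CT of P eis + P res}, each $Y_j^{\eta}(\gn)$ is a finite sum over $\rho\in\Lambda(\gn)$ whose summand is a fixed polynomial combination of the constants $D_F$, $R_F$, $\zeta_F(2)$, $C_0(\eta)$, $C_1({\bf 1})$, $D_{-2}^{\eta}$, $D_{-1}^{\eta}$, $D_0^{\eta}$, $\Gcal(\eta)$, $\e(0,\eta)$ --- all of which depend only on $F$ and $\eta$, hence are $O(1)$ --- together with the $\rho$-dependent numbers $f_{\eta,\rho}^{(0)}(e)$, $f_{{\bf 1},\rho}^{(0)}(e)$, $p_0^{\eta}(\rho)$, $p_1^{\eta}(\rho)$, $p_2^{\eta}(\rho)$, $\tilde{B}_{{\bf 1},\rho}^{{\bf 1}}(0)$, $(\tilde{B}_{{\bf 1},\rho}^{{\bf 1}})'(0)$, $(\tilde{B}_{{\bf 1},\rho}^{{\bf 1}})''(0)$ and $\tilde{B}_{{\bf 1},\rho}^{\eta}(0)$ (the relevant case being $\gf_{\eta}=\go_F$ when $j\in\{0,1,2\}$, so that $\Lambda_{\eta}(\gn)=\Lambda(\gn)$). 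Thus it suffices to prove: (a) $\#\Lambda(\gn)\ll{\rm N}(\gn)^{\e}$, and (b) each of the $\rho$-dependent numbers above is $\ll{\rm N}(\gn)^{\e}$ uniformly in $\rho\in\Lambda(\gn)$ and in $\gn$. Granting (a) and (b), we get $|Y_j^{\eta}(\gn)|\ll{\rm N}(\gn)^{\e}\cdot{\rm N}(\gn)^{\e}$, and renaming $\e$ yields the claim.

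For (a), because $\gf_{{\bf 1}}=\go_F$ we have $\#\Lambda(\gn)=\prod_{k\ge1}(k+1)^{\#S_k(\gn)}=\prod_{v\in S(\gn)}(\ord_v(\gn)+1)$, i.e.\ the number of integral divisors of $\gn$, which is $\ll_{\e}{\rm N}(\gn)^{\e}$ by the usual divisor bound over $F$ (the same elementary argument as in the proof of Lemma \ref{esti of X(n)}).

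For (b), I would use the explicit product formulas from \cite{Sugiyama}, recalled in Propositions \ref{prop:Z* of Eisen} and \ref{prop:period = L(1/2)}. Since $\gf_{\chi}=\go_F$ for $\chi\in\{{\bf 1},\eta\}$ the sets $R_k(\rho)$ are empty, so $B_{\chi,\rho}^{\eta}(s,\nu)$ is a finite product over $v\in S(\rho)\subseteq S(\gn)$ of local factors built from $Q_{k,\chi_v}^{(\nu)}(\eta_v,q_v^{1/2-s})$, the unramified Euler factors $L(1+\nu,\chi_v^{2})$, and the bounded ratios $(1+q_v^{-1})$, $((q_v+1)/(q_v-1))^{1/2}$, $q_v^{-\nu/2}$, $q_v^{-k\nu/2}$, times the globally fixed factor $\tilde{\chi}(\gD_{F/\QQ})$; note also that $\eta_v$ is unramified for $v\in S(\gn)$ since $\gf_{\eta}$ is prime to $\gn$. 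At the evaluation points producing $p_j^{\eta}(\rho)$ (that is, $s=1/2$ and $\nu$ near $1$) and $\tilde{B}_{{\bf 1},\rho}^{{\bf 1}}(0)$, $\tilde{B}_{{\bf 1},\rho}^{\eta}(0)$ (that is, $s$ near $1/2$ and $\nu=1$) every explicit $q_v$-power has non-positive exponent, while the polynomials $Q_{k,\chi_v}^{(\nu)}(\eta_v,\cdot)$ evaluated there are $O(1)$ uniformly in $v$: they are given by formulas analogous to those for $Q_{k,v}^{\pi_v}$ in Proposition \ref{prop:period = L(1/2)}, where for $k\ge2$ a prefactor $q_v^{-1}$ compensates the size $q_v$ contribution of the non-unitary parameter $\chi_v(\varpi_v)q_v^{\pm\nu/2}$. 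Hence every local factor is $1+O(q_v^{-1/2})$, so $|B_{\chi,\rho}^{\eta}|\ll\prod_{v\mid\gn}(1+O(q_v^{-1/2}))\ll\exp\big(O(\textstyle\sum_{v\mid\gn}q_v^{-1/2})\big)\ll{\rm N}(\gn)^{o(1)}$, and each $s$- or $\nu$-derivative multiplies a local factor by at most $O(\log q_v)$, so $p_1^{\eta}(\rho)$, $p_2^{\eta}(\rho)$, $(\tilde{B}_{{\bf 1},\rho}^{{\bf 1}})'(0)$, $(\tilde{B}_{{\bf 1},\rho}^{{\bf 1}})''(0)$ are $\ll(\log{\rm N}(\gn))^{2}\,{\rm N}(\gn)^{o(1)}\ll{\rm N}(\gn)^{\e}$. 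Finally $f_{\chi,\rho}^{(0)}(e)=f_{\chi,\rho}^{(\nu)}(e)$ is independent of $\nu$ (the flat section is fixed on $\bfK\ni e$) and equals a product of local values: $1$ at the archimedean places and, by the explicit description of $\tilde{f}_{k,\chi_v}$ in \cite[\S 7 and \S 8]{Sugiyama}, $O(1)$ at each finite place, so $|f_{\chi,\rho}^{(0)}(e)|\ll C^{\omega(\gn)}\ll{\rm N}(\gn)^{\e}$.

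Combining (a) and (b) gives $|Y_j^{\eta}(\gn)|\le\sum_{\rho\in\Lambda(\gn)}O\!\big({\rm N}(\gn)^{\e}\big)\ll{\rm N}(\gn)^{2\e}$, which proves the lemma after replacing $\e$ by $\e/2$. I expect the main obstacle to be step (b): one must check directly from the formulas in \cite[\S\S 7--9]{Sugiyama} that none of the ingredients $Q_{k,\chi_v}^{(\nu)}(\eta_v,\cdot)$, $\tilde{f}_{k,\chi_v}(e)$, $L(1+\nu,\chi_v^{2})$ actually grows with $q_v$ at the relevant evaluation points; once this uniform local boundedness is established, the combinatorial count (a) and the divisor bound make the estimate routine.
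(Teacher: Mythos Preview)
Your overall plan --- bound each summand and then use $\#\Lambda(\gn)\ll\N(\gn)^{\e}$ --- is the same as the paper's, and step (a) is fine. The gap is in step (b), specifically your claim that $f_{\chi,\rho}^{(0)}(e)$ is $O(1)$ at each finite place. This is false. The vectors $\tilde f_{k,\chi_v}^{(\nu)}$ are $L^{2}$-normalized on $\bfK_v$, and a $\bfK_0(\gp_v^{k})$-invariant unit vector concentrated on a coset of volume $\asymp q_v^{-k}$ has point values of size $\asymp q_v^{k/2}$. Indeed the paper's proof records the explicit formula
\[
f_{\eta,\rho}^{(0)}(e)=\prod_{v\in S_1(\rho)}\eta_v(\varpi_v)\,q_v^{1/2}\ \prod_{k\ge 2}\ \prod_{v\in S_k(\rho)}(1-q_v^{-1})\,\eta_v(\varpi_v)^{k}\Big(\tfrac{q_v+1}{q_v-1}\Big)^{1/2}q_v^{k/2},
\]
so $|f_{\eta,\rho}^{(0)}(e)|$ can be as large as $\N(\gn)^{1/2}$. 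Bounding $f_{\eta,\rho}^{(0)}(e)$ and $p_j^{\eta}(\rho)$ (or $\tilde B_{{\bf 1},\rho}^{\eta}(0)$) \emph{separately} therefore only gives $Y_j^{\eta}(\gn)\ll\N(\gn)^{1/2+\e}$, which is not enough.

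What saves the estimate is that $p_j^{\eta}(\rho)=B_{\eta,\rho}^{\bf 1}(1/2,-\nu)$ near $\nu=-1$ and $\tilde B_{{\bf 1},\rho}^{\eta}(0)=\e(0,\eta)B_{{\bf 1},\rho}^{\eta}(1/2,1)$ carry exactly compensating factors $q_v^{-k/2}$ at each $v\in S_k(\rho)$ (coming from the $q_v^{-k\nu/2}$ in the definition of $B_{\chi,\rho}^{\eta}$). Only the \emph{product} $f_{\eta,\rho}^{(0)}(e)\,p_j^{\eta}(\rho)$ (and likewise $f_{{\bf 1},\rho}^{(0)}(e)\,\tilde B_{{\bf 1},\rho}^{\bullet}(0)$, etc.) is a product over $S(\rho)$ of uniformly bounded local terms; for the derivatives $p_1^{\eta},p_2^{\eta},(\tilde B)'',\dots$ one also picks up harmless factors $O((\log q_v)^2)$ and a sum over at most $\#S(\rho)\ll\log(1+\N(\gn))$ places. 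The paper carries this out by writing both sides explicitly and observing the cancellation. Your argument will go through once you abandon the attempt to bound $f_{\chi,\rho}^{(0)}(e)$ on its own and instead bound these products.
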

\Proof
The proof is given by describing $Y_{j}^{\eta}(\gn)$ for $j \in \{-1, 0, 1, 2\}$ explicitly.
Since $\eta$ is unramified, we have
$$f_{\eta, \rho}^{(0)}(e)=\prod_{v \in S_{1}(\rho)}\eta_{v}(\varpi_{v})q_{v}^{1/2} \prod_{k=2}^{n}\prod_{v \in S_{k}(\rho)} (1-q_{v}^{-1})\eta_{v}(\varpi_{v})^{k} \left(\frac{q_{v}+1}{q_{v}-1}\right)^{1/2}q_{v}^{k/2}
$$
and
{\allowdisplaybreaks\begin{align*}
p_{0}^{\eta}(\rho) =& \tilde{\eta}(\gD_{F/\QQ})\prod_{v \in S_{1}(\rho)}(1-\eta_{v}(\varpi_{v}))\frac{q_{v}}{q_{v}-1}q_{v}^{-1/2} \\
& \times \prod_{k=2}^{n}\prod_{v \in S_{k}(\rho)}
\bigg\{ \frac{(\eta_{v}(\varpi_{v})-1)(\eta_{v}(\varpi_{v}) q_{v} -1)}{q_{v}-q_{v}^{-1}} \left(\frac{q_{v}+1}{q_{v}-1}\right)^{1/2}q_{v}^{-k/2} \bigg\}.
\end{align*}
}Moreover, we obtain expressions
$$p_{1}^{\eta}(\rho) =\tilde{\eta}(\gD_{F/\QQ}) \sum_{w \in S(\rho)}
\big\{ \prod_{v \in S(\rho)-\{w\}}Y_{v}^{\eta}(-1) \big\} (Y_{w}^{\eta})'(-1)
$$
and
{\allowdisplaybreaks\begin{align*}
p_{2}^{\eta}(\rho) = & \frac{\tilde{\eta}(\gD_{F/\QQ})}{2}\sum_{w \in S(\rho)}
\bigg[ \sum_{x \in S(\rho)-\{w\}} \bigg\{ \prod_{v \in S(\rho)-\{w, x\}}Y_{v}^{\eta}(-1)\bigg\}
(Y_{w}^{\eta})'(-1) (Y_{x}^{\eta})'(-1) \\
& + \bigg\{ \prod_{v \in S(\rho)-\{w\}}Y_{v}^{\eta}(-1) \bigg\} (Y_{w}^{\eta})''(-1) \bigg],
\end{align*}
}where we set
$$C_{v} = \d(v \in S_{1}(\rho)) + \d \left(v \in \coprod_{k=2}^{n}S_{k}(\rho)\right) \left(\frac{q_{v}+1}{q_{v}-1}\right)^{1/2}
$$
and
$$Y_{v}^{\eta}(\nu) = C_{v} \{q_{v}+1+\eta_{v}(\varpi_{v})(q_{v}^{(1+\nu)/2}+q_{v}^{(1-\nu)/2})\}
\frac{q_{v}^{k\nu/2}}{q_{v}-q_{v}^{\nu}}.
$$
Further we have
$$(Y_{v}^{\eta})'(-1)=C_{v}(\log q_{v}^{k})q_{v}^{-k/2}
\frac{-\eta_{v}(\varpi_{v})q_{v}(q_{v}-1)^{2}+ k (1+\eta_{v}(\varpi_{v}))q_{v}(q_{v}^{2}-1) + 2(1+\eta_{v}(\varpi_{v}))q_{v}} {2k(q_{v}^{2}-1)(q_{v}-1)}$$
and
{\allowdisplaybreaks\begin{align*}
&(Y_{v}^{\eta})''(-1)\\
 = & C_{v}\bigg[ \eta_{v}(\varpi_{v})(\log q_{v}^{k})^{2}q_{v}^{-k/2}
\frac{(1+q_{v})(q_{v}-q_{v}^{-1}) + (1-q_{v})\{k(q_{v}-q_{v}^{-1})+2q_{v}^{-1}\}}
{4k^{2}(q_{v}-q_{v}^{-1})^{2}} \\
&+
(\log q_{v}^{k})^{2}q_{v}^{-k/2}\frac{\eta_{v}(\varpi_{v})\{k(q_{v}^{3}-q_{v})+2q_{v}\}} {4k^{2}(1+q_{v})(1-q_{v}^{2})}\\
&+ (\log q_{v}^{k})^{2}q_{v}^{-k/2}\frac{(1+\eta_{v}(\varpi_{v}))q_{v}}{k^{2}(q_{v}^{2}-1)^{3}(q_{v}-1)}
\bigg\{\left(\frac{k^{2}}{4}(q_{v}^{2}-1)+1\right)(q_{v}^{2} -1)^{2} + (k(q_{v}^{2}-1)+2)(q_{v}^{2}-1)
\bigg\}\bigg].
\end{align*}
}Thus, by noting $\# S(\rho) \ll \log(1+{\rm N}(\gn))$, we obtain the estimates of $Y_{j}^{\eta}(\gn)$ for $j \in \{0, 1, 2\}$.

Next let us examine $Y_{-1}^{\eta}(\gn)$.
We have the following expressions:
$$\tilde{B}_{{\bf 1}, \rho}^{\eta}(0) = \e(0, \eta)B_{{\bf 1}, \rho}^{\eta}(1/2, 1),
$$
{\allowdisplaybreaks\begin{align*}
B_{{\bf1}, \rho}^{\eta}(1/2, 1) = &\prod_{v \in S_{1}(\rho)}\frac{(\eta_{v}(\varpi_{v})-1)q_{v}^{-1/2}}{(1-q_{v}^{-1})} \\
& \times \prod_{k=2}^{n}\prod_{v \in S_{k}(\rho)}\bigg\{ \frac{\eta_{v}(\varpi_{v})^{k}
(\eta_{v}(\varpi_{v})-1) (\eta_{v}(\varpi_{v})-q_{v}^{-1})}{1 -q_{v}^{-2}} \left(\frac{q_{v}+1}{q_{v}-1}\right)^{1/2}q_{v}^{-k/2}\bigg\},
\end{align*}
}
$$(\tilde{B}_{{\bf 1}, \rho}^{\bf 1})''(0) = \e''(0, {\bf 1})B_{\rho}(0)+2 \e'(0, {\bf 1})B'_{\rho}(0)+
\e(0, {\bf 1})B''_{\rho}(0).$$
Here we set
$B_{\rho}(z) =B_{{\bf 1}, \rho}^{\bf 1}(-z+1/2, 1) = D_{F}^{-z}\prod_{v \in S(\rho)}B_{v}(z)$
and
{\allowdisplaybreaks\begin{align*}
B_{v}(z)= & \d(v \in S_{1}(\rho))(q_{v}^{z}-1)\frac{q_{v}^{-1/2}}{1-q_{v}^{-1}} \\
& + \sum_{k=2}^{n}\d(v \in S_{k}(\rho))
(q_{v}^{kz}-q_{v}^{(k-1)z-1}-q_{v}^{(k-1)z}+q_{v}^{(k-2)z-1}) \left(\frac{q_{v}+1}{q_{v}-1}\right)^{1/2}\frac{q_{v}^{-k/2}}{1-q_{v}^{-2}}.
\end{align*}
}A direct computation gives us
$$B'_{\rho}(0)=(\log D_{F}^{-1}) \prod_{v \in S(\rho)}B_{v}(0)
+ \sum_{w \in S(\rho)} \big\{ \prod_{v \in S(\rho)-\{w\}}B_{v}(0) \big\}B'_{v}(0),
$$
{\allowdisplaybreaks\begin{align*}
B''_{\rho}(0) = & (\log D_{F}^{-1})^{2}\prod_{v \in S(\rho)}B_{v}(0)
+2(\log D_{F}^{-1}) \sum_{w \in S(\rho)}\big\{ \prod_{v \in S(\rho)-\{w\}}B_{v}(0) \big\} B'_{v}(0)\\
& + \sum_{w \in S(\rho)} \left\{ \sum_{x \in S(\rho) - \{ w\}} \big\{ \prod_{v \in S(\rho)-\{w, x\}}B_{v}(0) \big\} B'_{w}(0)B'_{x}(0)+
\big\{ \prod_{v \in S(\rho)-\{w\}}B_{v}(0) \big\} B_{w}''(0)\right\},
\end{align*}
}
$$B'_{v}(0) = \d(v \in S_{1}(\rho))(\log q_{v})\frac{q_{v}^{-1/2}}{1-q_{v}^{-1}}
+ \sum_{k=2}^{n}\d(v \in S_{k}(\rho))(\log q_{v})\left(\frac{q_{v}+1}{q_{v}-1}\right)^{1/2}\frac{q_{v}^{-k/2}}{1 + q_{v}^{-1}}$$
and
{\allowdisplaybreaks\begin{align*}
B''_{v}(0) = & \d(v \in S_{1}(\rho))(\log q_{v})^{2}\frac{q_{v}^{-1/2}}{1-q_{v}^{-1}} \\
& + \sum_{k=2}^{n}\d(v \in S_{k}(\rho))
(\log q_{v}^{k})^{2}\frac{2k-1-(2k-3)q_{v}^{-1}}{k^{2}}\left(\frac{q_{v}+1}{q_{v}-1}\right)^{1/2}\frac{q_{v}^{-k/2}}{1 - q_{v}^{-2}}.
\end{align*}
}This completes the proof of the estimate of $Y_{-1}^{\eta}(\gn)$.
\QED
By the aid of Lemmas \ref{spectral exp of reg Green} and
\ref{CT of P eis + P res}, we obtain the expression of the spectral side
of $P_{\rm reg}^{\eta}(\hat{{\bf \Psi}}_{\rm reg}(\gn|\a; -))$.
\begin{thm}
\label{spectral side}
The value $P_{\rm reg}^{\eta}(\hat{{\bf \Psi}}_{\rm reg}(\gn|\a; -))$ can be defined and we have
$$
P_{\rm reg}^{\eta}(\hat{{\bf \Psi}}_{\rm reg}(\gn|\a; -)) = C(\gn, S)\{ \II_{\rm cus}^{\eta}(\gn|\a) + \II_{\rm eis}^{\eta}(\gn|\a) + \DD^{\eta}(\gn|\a) \}.
$$
Here we put
$$\II_{\rm cus}^{\eta}(\gn|\a) = \sum_{\varphi \in \Bcal_{\rm cus}(\gn)}\alpha(\nu_{\varphi, S})\overline{P_{\rm reg}^{\bf 1}(\varphi)}P_{\rm reg}^{\eta}(\varphi),
$$
$$\II_{\rm eis}^{\eta}(\gn|\a) = \sum_{\chi \in \Xi(\gn)} \sum_{\rho \in \Lambda_{\chi}(\gn)}
\frac{R_{F}^{-1}}{8 \pi i} \int_{i\RR} \tilde{\alpha}_{\chi}(\nu)P_{\rm reg}^{\bf 1}(
E_{\chi^{-1}, \rho}(-\nu, -))P_{\rm reg}^{\eta}(E_{\chi, \rho}(\nu, -))d\nu
$$
and
$$\DD^{\eta}(\gn|\a) = \d(\gf_{\eta} = \go_{F}) \{ Y_{2}^{\eta}(\gn) \tilde{\a}_{\eta}''(1) + Y_{1}^{\eta}(\gn) \tilde{\a}_{\eta}'(1) + Y_{0}^{\eta}(\gn) \tilde{\a}_{\eta}(1)\}
+ Y_{-1}^{\eta}(\gn) \tilde{\a}(1).
$$
\end{thm}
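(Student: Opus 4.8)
The plan is to obtain the statement as an assembly of the spectral computations already carried out, the only genuinely new step being the passage to the constant term at $\l=0$ in each of the three spectral pieces. First I would recall, from Lemma~\ref{spectral exp of reg Green}, that on the half-plane $\Re(\l)>N$ one has $P_{\b,\l}^{\eta}(\hat{{\bf \Psi}}_{\rm reg}(\gn|\a;-)) = C(\gn,S)\{\PP_{\rm cus}^{\eta}(\b,\l,\a)+\PP_{\rm eis}^{\eta}(\b,\l,\a)+\PP_{\rm res}^{\eta}(\b,\l,\a)\}$. By the definition of the regularized period in \S\ref{Regularized periods of cusp forms}, it then suffices to continue $\l\mapsto P_{\b,\l}^{\eta}(\hat{{\bf \Psi}}_{\rm reg}(\gn|\a;-))$ meromorphically past $\l=0$ and to check that its constant term there is of the form (a constant)$\cdot\b(0)$ for every $\b\in\Bcal$; that constant is then $P_{\rm reg}^{\eta}(\hat{{\bf \Psi}}_{\rm reg}(\gn|\a;-))$, and the task reduces to identifying it.

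For the Eisenstein and residual parts, the meromorphic continuation to $\Re(\l)>-1/2$ is provided by Lemmas~\ref{lem: merom of P eisen} and~\ref{lem: merom of P res}, and their combined constant term at $\l=0$ is exactly the content of Lemma~\ref{CT of P eis + P res}; in particular the $\b''(0)$-contributions of the two pieces cancel, leaving a multiple of $\b(0)$. What remains here is to recognize the leading double-integral term of Lemma~\ref{CT of P eis + P res} as $\II_{\rm eis}^{\eta}(\gn|\a)\b(0)$: comparing the definition of $\mathfrak{L}_{\chi,\rho}^{\eta}$ introduced in the proof of Lemma~\ref{lem: merom of P eisen} with the formula of Proposition~\ref{prop:Eisen P=L} gives $P_{\rm reg}^{\eta}(E_{\chi,\rho}(\nu,-))=\Gcal(\eta)\mathfrak{L}_{\chi^{-1},\rho}^{\eta}(-\nu)$ and, taking $\eta={\bf 1}$, $P_{\rm reg}^{\bf 1}(E_{\chi^{-1},\rho}(-\nu,-))=\Gcal({\bf 1})\mathfrak{L}_{\chi,\rho}^{\bf 1}(\nu)$ with $\Gcal({\bf 1})=D_F^{-1/2}$, so substituting these into the definition of $\II_{\rm eis}^{\eta}(\gn|\a)$ reproduces the displayed integral; the remaining terms of Lemma~\ref{CT of P eis + P res} are $\DD^{\eta}(\gn|\a)\b(0)$ by the very definition of $\DD^{\eta}(\gn|\a)$.

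For the cuspidal part I would argue as follows. For a cusp form $\varphi$, the rapid decay of $\varphi$ on $\gA G_F\backslash G_{\AA}$ together with the elementary bounds for $\hat{\b}_{\l}$ show that $\l\mapsto P_{\b,\l}^{\eta}(\varphi)$ is entire, and by the defining property of the regularized period ${\rm CT}_{\l=0}P_{\b,\l}^{\eta}(\varphi)=P_{\rm reg}^{\eta}(\varphi)\b(0)$, with $P_{\rm reg}^{\eta}(\varphi)=Z^{*}(1/2,\eta,\varphi)$ by Proposition~\ref{prop:period = L(1/2)}. To pass the constant term through the series $\PP_{\rm cus}^{\eta}(\b,\l,\a)=\sum_{\varphi\in\Bcal_{\rm cus}(\gn)}\a(\nu_{\varphi,S})\overline{P_{\rm reg}^{\bf 1}(\varphi)}P_{\b,\l}^{\eta}(\varphi)$, I would use the uniform spectral estimate of Lemma~\ref{lem: estimation of spectral decomposition}(1): applying the toral period along $\gA H_F\backslash H_{\AA}$ to that bound, as in the definition of $P_{\b,\l}^{\eta}$, shows the series converges absolutely and locally uniformly on a neighbourhood of $\l=0$, hence $\PP_{\rm cus}^{\eta}$ is holomorphic there and ${\rm CT}_{\l=0}\PP_{\rm cus}^{\eta}(\b,\l,\a)=\sum_{\varphi}\a(\nu_{\varphi,S})\overline{P_{\rm reg}^{\bf 1}(\varphi)}P_{\rm reg}^{\eta}(\varphi)\b(0)=\II_{\rm cus}^{\eta}(\gn|\a)\b(0)$, term by term.

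Combining the three contributions yields ${\rm CT}_{\l=0}P_{\b,\l}^{\eta}(\hat{{\bf \Psi}}_{\rm reg}(\gn|\a;-))=C(\gn,S)\{\II_{\rm cus}^{\eta}(\gn|\a)+\II_{\rm eis}^{\eta}(\gn|\a)+\DD^{\eta}(\gn|\a)\}\b(0)$ for every $\b\in\Bcal$, which is the assertion. I expect the main obstacle to be the cuspidal part: specifically, establishing enough uniformity to interchange the infinite sum over $\Bcal_{\rm cus}(\gn)$ with the extraction of the constant term and to rule out spurious poles in $\l$ created by the summation. This is precisely where Lemma~\ref{lem: estimation of spectral decomposition}(1) does the real work; once that is in place the remaining steps are a direct collation of Lemmas~\ref{lem: merom of P res}, \ref{lem: merom of P eisen}, \ref{CT of P eis + P res} and Propositions~\ref{prop:period = L(1/2)} and~\ref{prop:Eisen P=L}.
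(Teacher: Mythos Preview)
Your proposal is correct and follows the same route as the paper, which simply invokes Lemmas~\ref{spectral exp of reg Green} and~\ref{CT of P eis + P res} without further elaboration. You have essentially unpacked that invocation, correctly handling the cuspidal part via the uniform bound of Lemma~\ref{lem: estimation of spectral decomposition}(1) and correctly identifying the leading integral in Lemma~\ref{CT of P eis + P res} with $\II_{\rm eis}^{\eta}(\gn|\a)$ through the relations $P_{\rm reg}^{\eta}(E_{\chi,\rho}(\nu,-))=\Gcal(\eta)\mathfrak{L}_{\chi^{-1},\rho}^{\eta}(-\nu)$ and $\Gcal({\bf 1})=D_F^{-1/2}$.
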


\section{Periods of regularized automorphic smoothed kernels: geometric side}
\label{Periods of regularized automorphic smoothed kernels: geometric side}
In this section, we describe the geometric expression of $\hat{{\bf \Psi}}_{\rm reg}(\gn|\a; -)$ and its regularized $\eta$-period $P_{\rm reg}^{\eta}(\hat{{\bf \Psi}}_{\rm reg}(\gn|\a; -))$.
For $\d \in G_{F}$, we put
${\rm St(\d)} = H_{F} \cap \d^{-1}H_{F}\d$.
By 
\cite[Lemma 11.1]{TsuzukiSpec},
the following elements of $G_{F}$ form a complete system of representatives of the double coset space $H_{F} \backslash G_{F}/H_{F}$:
\begin{center}
$e = \left(\begin{matrix}1 & 0 \\ 0 & 1\end{matrix}\right)$,
$w_{0} = \left(\begin{matrix} 0 & -1 \\ 1 & 0\end{matrix}\right)$,

$u = \left(\begin{matrix}1 & 1 \\ 0 & 1\end{matrix}\right)$,
$\overline{u} = \left(\begin{matrix}1 & 0 \\ 1 & 1\end{matrix}\right)$,
$uw_{0} = \left(\begin{matrix}1 & -1 \\ 1 & 0\end{matrix}\right)$,
$\overline{u}w_{0}= \left(\begin{matrix}0 & -1 \\ 1 & -1\end{matrix}\right)$,

$\d_{b} = \left(\begin{matrix} 1+b^{-1} & 1 \\ 1 & 1\end{matrix}\right), \hspace{2mm}{b\in F^{\times} - \{-1\}}$.
\end{center}
Moreover, we have ${\rm St}(e) ={\rm St}(w_{0}) = H_{F}$ and ${\rm St}(\d) = Z_{F}$ for any $\d \in \{u, \overline{u},
uw_{0}, \overline{u}w_{0}\} \cup \{\d_{b} | b \in F^{\times} - \{-1\} \}$.
We note
$$H_{F} \backslash G_{F} = \coprod_{\d \in H_{F} \backslash G_{F}/H_{F}} H_{F}\backslash (H_{F}\d H_{F}) \cong \coprod_{\d \in H_{F} \backslash G_{F}/H_{F}} {\rm St}(\d) \backslash H_{F}.$$
Thus we obtain the following expression for $\Re(\l) > 0$:
$$
\hat{{\bf \Psi}}_{\b, \l} \left(\gn \bigg| \a; \left(\begin{matrix} t & 0 \\ 0 & 1\end{matrix}\right)
\left(\begin{matrix}1 & x_{\eta} \\ 0 & 1\end{matrix}\right)\right)
= \sum_{\d \in H_{F} \backslash G_{F}/ H_{F}}
\sum_{\gamma \in {\rm St}(\d) \backslash H_{F}} \hat{\Psi}_{\b, \l} \left(\gn \bigg| \a; \d \gamma \left(\begin{matrix} t & 0 \\ 0 & 1\end{matrix}\right)
\left(\begin{matrix}1 & x_{\eta} \\ 0 & 1\end{matrix}\right)\right).
$$
Set 
$$J_{\d}(\b, \l, \a; t) =
\sum_{\gamma \in {\rm St}(\d) \backslash H_{F}} \hat{\Psi}_{\b, \l} \left(\gn \bigg| \a; 
\d \gamma
\left(\begin{matrix} t & 0 \\ 0 & 1\end{matrix}\right)
\left(\begin{matrix}1 & x_{\eta} \\ 0 & 1\end{matrix}\right)\right)
$$
for any $\d \in H_{F} \backslash G_{F}/ H_{F}$.
We examine $J_{\d}(\b, \l, \a; t)$.
With a minor modification, we obtain Lemmas \ref{lem: J-id} and \ref{lem: J-u} by the same computation as \cite[Lemma 11.2]{TsuzukiSpec} and \cite[Lemma 11.3]{TsuzukiSpec}, respectively.
\begin{lem}\label{lem: J-id}
Both functions $\l \mapsto J_{e}(\b, \l, \a; t)$ and
$\l \mapsto J_{w_{0}}(\b, \l, \a; t)$ are analytically continued to entire functions.
The values of these functions at $\l = 0$
are equal to
$J_{\rm id}(\a, t)\b(0)$ and
$\d(\gn = \go_{F})J_{\rm id}(\a, t)\b(0)$, respectively,
where
$$
J_{\rm id}(\a, t)= \d(\gf_{\eta} = \go_{F})\left( \frac{1}{2 \pi i} \right)^{\#S}
\int_{\LL_{S}(\bf c)} \Upsilon_{S}^{\bf 1}(\bfs) \a(\bfs) d\mu_{S}(\bfs)
$$
with
$$
\Upsilon_{S}^{\bf 1}(\bfs) = \left\{ \prod_{v\in \Sigma_{\infty}} \frac{-1}{8}\frac{\Gamma((s_{v} + 1)/4)^{2}}{\Gamma((s_{v} + 3)/4)^{2}} \right\} \left\{ \prod_{v \in S_{\fin}} (1-q_{v}^{-(s_{v}+1)/2})^{-1}(1-q_{v}^{(s_{v} + 1)/2})^{-1} \right\}.
$$
\end{lem}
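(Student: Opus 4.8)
The plan is to follow the proof of \cite[Lemma 11.2]{TsuzukiSpec} almost verbatim; the only new point is the behaviour at places dividing $\gn$ of the degenerate sections $\Phi_{\gn,v}^{(z)}$ when $\gn$ is no longer square free, and this behaviour turns out to be the same as in the square free case. Since ${\rm St}(e)={\rm St}(w_0)=H_F$, the quotients ${\rm St}(e)\backslash H_F$ and ${\rm St}(w_0)\backslash H_F$ are singletons, so $J_e(\b,\l,\a;t)=\hat\Psi_{\b,\l}(\gn\,|\,\a;\bigl(\begin{smallmatrix}t&0\\0&1\end{smallmatrix}\bigr)\bigl(\begin{smallmatrix}1&x_\eta\\0&1\end{smallmatrix}\bigr))$ and $J_{w_0}(\b,\l,\a;t)=\hat\Psi_{\b,\l}(\gn\,|\,\a;w_0\bigl(\begin{smallmatrix}t&0\\0&1\end{smallmatrix}\bigr)\bigl(\begin{smallmatrix}1&x_\eta\\0&1\end{smallmatrix}\bigr))$. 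First I would carry out the place-by-place analysis of $\Psi^{(z)}(\gn|\bfs;-)$ at these two points. At every $v\in\Sigma_\infty$ one has $x_{\eta,v}=0$; at every $v\in S_\fin$ and every $v\in S(\gn)$ one has $v\notin S(\gf_\eta)$, hence $f(\eta_v)=0$ and $x_{\eta,v}=1\in\go_v$, so the local factor is $\Psi_v^{(z)}(s_v,e)$ for $v\in S$ and equals $1$ for $v\in S(\gn)$, since $\bigl(\begin{smallmatrix}1&1\\0&1\end{smallmatrix}\bigr)\in\bfK_0(\gn\go_v)$. At $v\in S(\gf_\eta)$, which lies outside $S\cup S(\gn)$, the section $\Phi_{0,v}^{(z)}$ is evaluated at $\bigl(\begin{smallmatrix}1&\varpi_v^{-f(\eta_v)}\\0&1\end{smallmatrix}\bigr)$ with $f(\eta_v)\ge1$, and an elementary Iwasawa argument shows it vanishes because $\varpi_v^{-f(\eta_v)}\notin\go_v$; hence $J_e\equiv0$ unless $\gf_\eta=\go_F$. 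For $J_{w_0}$, at $v\in S(\gn)$ one computes $w_0\bigl(\begin{smallmatrix}1&1\\0&1\end{smallmatrix}\bigr)=\bigl(\begin{smallmatrix}0&-1\\1&1\end{smallmatrix}\bigr)\in\bfK_v$, whose lower-left entry is a unit, so it is not in $\bfK_0(\gn\go_v)$ and $\Phi_{\gn,v}^{(z)}$ vanishes there; at $v\in S(\gf_\eta)$ a similar Iwasawa computation (the relevant unipotent coordinate has negative valuation) again kills $\Phi_{0,v}^{(z)}$. Thus $J_{w_0}\equiv0$ unless both $\gn=\go_F$ and $\gf_\eta=\go_F$, which is the source of the extra factor $\d(\gn=\go_F)$.

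It then remains to evaluate $J_e$ and, when $\gn=\go_F$, $J_{w_0}$ under the hypothesis $\gf_\eta=\go_F$, where all finite places contribute the factor $1$. Using the $H_\AA$-equivariance $\Psi^{(z)}(\gn|\bfs;\bigl(\begin{smallmatrix}t_1&0\\0&t_2\end{smallmatrix}\bigr)g)=|t_1/t_2|_\AA^{z}\Psi^{(z)}(\gn|\bfs;g)$ — and, for $J_{w_0}$, in addition $w_0\bigl(\begin{smallmatrix}t&0\\0&1\end{smallmatrix}\bigr)=\bigl(\begin{smallmatrix}1&0\\0&t\end{smallmatrix}\bigr)w_0$ together with $w_0\in\bfK$ — one obtains
\[
J_e(\b,\l,\a;t)=\frac{1}{2\pi i}\int_{L_\sigma}\frac{\b(z)}{z+\l}\bigl\{|t|_\AA^{z}\Theta(z)+|t|_\AA^{-z}\Theta(-z)\bigr\}\,dz
\]
and the analogous formula for $J_{w_0}$ with $|t|_\AA^{z}$ and $|t|_\AA^{-z}$ interchanged, where $\Theta(z)=\bigl(\tfrac{1}{2\pi i}\bigr)^{\#S}\int_{\LL_S(\bfc)}\bigl(\prod_{v\in\Sigma_\infty}\Psi_v^{(z)}(s_v,e)\bigr)\bigl(\prod_{v\in S_\fin}\Psi_v^{(z)}(s_v,e)\bigr)\a(\bfs)\,d\mu_S(\bfs)$. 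Because $\hat\Psi^{(z)}$ does not depend on $\bfc$, one may enlarge $q(\bfc)$ at will; since $\b\in\Bcal$ decays super-polynomially on vertical lines while $\Psi_v^{(z)}(s_v,e)$ grows at most polynomially in $s_v$ and $z$, shifting $L_\sigma$ across the simple pole $z=-\l$ exhibits $\l\mapsto J_e(\b,\l,\a;t)$ and $\l\mapsto J_{w_0}(\b,\l,\a;t)$ as entire functions, exactly as in \cite[Lemma 11.2]{TsuzukiSpec}. To extract the value at $\l=0$, take $\sigma>0$ small so $L_\sigma$ misses $z=0$; the integrand $\frac{\b(z)}{z}\{|t|_\AA^{z}\Theta(z)+|t|_\AA^{-z}\Theta(-z)\}$ is odd in $z$ with residue $2\b(0)\Theta(0)$ at the origin (and likewise for $J_{w_0}$), so reflecting $L_\sigma$ to $L_{-\sigma}$ and applying the residue theorem gives $J_e(\b,0,\a;t)=\b(0)\Theta(0)$, which is independent of $t$. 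Finally, the explicit formulas $\Psi_v^{(0)}(s_v,e)=-\tfrac{1}{8}\,\Gamma((s_v+1)/4)^{2}/\Gamma((s_v+3)/4)^{2}$ for $v\in\Sigma_\infty$ and $\Psi_v^{(0)}(s_v,e)=(1-q_v^{-(s_v+1)/2})^{-1}(1-q_v^{(s_v+1)/2})^{-1}$ for $v\in S_\fin$, both read off from \cite[\S4, \S5]{TsuzukiSpec}, identify the integrand of $\Theta(0)$ with $\Upsilon_S^{\bf 1}(\bfs)\a(\bfs)$, so that $J_e(\b,0,\a;t)=\b(0)J_{\rm id}(\a,t)$ (both sides being zero when $\gf_\eta\ne\go_F$) and $J_{w_0}(\b,0,\a;t)=\d(\gn=\go_F)\b(0)J_{\rm id}(\a,t)$, which is the assertion.

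The main technical care goes into the entire continuation in $\l$: one must interchange the contour shift in $z$ with the $\bfs$-integral defining $\Theta$, control $\Theta(z)$ uniformly on vertical strips, and keep track of the residue picked up when the pole $z=-\l$ is crossed, which is where the hypotheses $\a\in\Acal_S$, $\b\in\Bcal$ and the explicit growth of the archimedean Green function in the spectral variable enter. By contrast the place-by-place vanishing analysis, the identification with $\Upsilon_S^{\bf 1}$, and the residue computation at $z=0$ are routine; and the reduction of $J_{w_0}$ (when $\gn=\go_F$) to the computation of $J_e$ — only the roles of $|t|_\AA^{\pm z}$ being swapped, which is invisible both in the $\l$-continuation and at $z=0$ — is immediate, so the whole argument is a straightforward adaptation of \cite[Lemma 11.2]{TsuzukiSpec}.
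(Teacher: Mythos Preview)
Your proposal is correct and follows essentially the same approach as the paper, which simply refers to \cite[Lemma~11.2]{TsuzukiSpec} ``with a minor modification.'' You have correctly identified what that modification is: at $v\in S(\gn)$ the matrix $w_0\bigl(\begin{smallmatrix}1&1\\0&1\end{smallmatrix}\bigr)=\bigl(\begin{smallmatrix}0&-1\\1&1\end{smallmatrix}\bigr)$ lies in $\bfK_v$ but not in $\bfK_0(\gn\go_v)$, so $\Phi_{\gn,v}^{(z)}$ vanishes there regardless of $\ord_v(\gn)$; this is what yields the factor $\d(\gn=\go_F)$ and is insensitive to whether $\gn$ is square free. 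One small remark on phrasing: for the entire continuation in $\l$ you do not actually need to ``shift $L_\sigma$ across the pole $z=-\l$''; rather, since $\Theta(z)$ is entire and $\b$ decays rapidly, for each $\sigma>0$ the integral along $L_\sigma$ already defines a holomorphic function on $\{\Re(\l)>-\sigma\}$, and these agree on overlaps, giving an entire function directly.
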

We put
$$J_{{\rm u}}(\b, \l, \a; t) = J_{u}(\b, \l, \a, t) + J_{\overline{u}w_{0}}(\b, \l, \a, t)$$
and
$$J_{\bar{\rm u}}(\b, \l, \a; t) = J_{uw_{0}}(\b, \l, \a, t) + J_{\bar{u}}(\b, \l, \a, t).$$
\begin{lem}\label{lem: J-u}
For $ *\in \{ {\rm u}, \bar{\rm u}\}$, the function $\l \mapsto J_{*}(\b, \l, \a, t)$
is analytically continued to an entire function and the value at $\l = 0$
is equal to $J_{*}(\a, t)\b(0)$, where
{\allowdisplaybreaks\begin{align*}
J_{{\rm u}}(\a; t)
= & \left(\frac{1}{2\pi i}\right)^{\#S} \sum_{a \in F^{\times}}
\int_{\LL_{S}(\bf c)} \bigg\{ \hat{\Psi}^{(0)} \left(\gn \bigg| \bfs;
\left(\begin{matrix} 1 & at^{-1} \\ 0 & 1\end{matrix}\right)
\left(\begin{matrix}1 & x_{\eta} \\ 0 & 1\end{matrix}\right)\right) \\
& + \d(\gn = \go_{F})
\hat{\Psi}^{(0)} \left(\gn \bigg| \bfs;
\left(\begin{matrix} 1 & 0 \\ at^{-1} & 1\end{matrix}\right)
\left(\begin{matrix}1 & 0 \\ -x_{\eta} & 1\end{matrix}\right)w_{0}\right)
\bigg\}\a(\bfs) d\mu_{S}(\bfs)
\end{align*}
}and
{\allowdisplaybreaks\begin{align*}
J_{\bar{\rm u}}(\a; t)
= & \left(\frac{1}{2\pi i}\right)^{\#S} \sum_{a \in F^{\times}}
\int_{\LL_{S}(\bf c)} \bigg\{ \hat{\Psi}^{(0)} \left(\gn \bigg| \bfs;
\left(\begin{matrix} 1 & 0 \\ at & 1\end{matrix}\right)
\left(\begin{matrix}1 & x_{\eta} \\ 0 & 1\end{matrix}\right)\right) \\
& + \d(\gn = \go_{F})
\hat{\Psi}^{(0)} \left(\gn \bigg| \bfs;
\left(\begin{matrix} 1 & at \\ 0 & 1\end{matrix}\right)
\left(\begin{matrix}1 & 0 \\ -x_{\eta} & 1\end{matrix}\right)w_{0}\right)
\bigg\}\a(\bfs) d\mu_{S}(\bfs).
\end{align*}
}These series-integrals are absolutely convergent.
\end{lem}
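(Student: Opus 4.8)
The plan is to adapt the unfolding argument of \cite[Lemma~11.3]{TsuzukiSpec} to the present setting, the only new input being the finite components $\Phi_{\gn,v}^{(z)}$ at the places $v\mid\gn$. Since $J_{{\rm u}}=J_u+J_{\bar u w_0}$ and $J_{\bar{\rm u}}=J_{uw_0}+J_{\bar u}$, it suffices to treat each $J_{\d}(\b,\l,\a;t)=\sum_{\gamma\in{\rm St}(\d)\backslash H_F}\hat{\Psi}_{\b,\l}(\gn|\a;\d\gamma\,{\rm diag}(t,1)\,n(x_\eta))$ for $\d\in\{u,\bar u,uw_0,\bar u w_0\}$ separately; for all four representatives ${\rm St}(\d)=Z_F$, so $\gamma$ runs over $a\in F^\times$ through ${\rm diag}(a,1)$. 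First I would unfold: conjugating ${\rm diag}(a,1)$ past $\d$ and using the left $H_\AA$-equivariance $\Psi^{(z)}(\gn|\bfs;{\rm diag}(t_1,t_2)g)=|t_1/t_2|_\AA^{z}\Psi^{(z)}(\gn|\bfs;g)$ together with $|a|_\AA=1$ for $a\in F^\times$, one rewrites $\d\gamma\,{\rm diag}(t,1)n(x_\eta)$, up to a left factor in $H_\AA$, as an explicit element $g_a$ (after replacing $a$ by $a^{-1}$ where convenient): the upper unipotent translate $\left(\begin{smallmatrix}1&at^{-1}\\0&1\end{smallmatrix}\right)\left(\begin{smallmatrix}1&x_\eta\\0&1\end{smallmatrix}\right)$ for $\d=u$, a lower unipotent translate for $\d=\bar u$, and the corresponding $w_0$-twisted matrices for $\d\in\{uw_0,\bar u w_0\}$. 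A direct check at the places $v\mid\gn$, using that each $\Phi_{\gn,v}^{(z)}$ is right $\bfK_0(\gn\go_v)$-invariant and supported on $B_v\bfK_0(\gn\go_v)$, shows that the $w_0$-twisted contributions vanish unless $\gn=\go_F$, which accounts for the prefactors $\d(\gn=\go_F)$ in the statement.

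The main obstacle is the next step: establishing absolute, locally uniform convergence of the resulting series-integral and justifying the interchange of $\sum_{a\in F^\times}$ with the $z$- and $\bfs$-contour integrals. Here I would invoke the decay of the Green functions $\Psi_v^{(z)}(s_v,-)$ for $v\in S$ --- at the archimedean places a fixed negative power of $\cosh$ of the hyperbolic distance, which along $g_a$ grows like $|a|_v$ --- together with the compact support in the $a$-direction of the characteristic functions $\Phi_{\bullet,v}^{(z)}$ at the remaining finite places, and the rapid decay of $\a(\bfs)$ in the imaginary directions. These bound $\sum_{a\in F^\times}$ uniformly on compacta and supply enough decay of the summed function in $|\Im z|$ and $\|\Im\bfs\|$ to legitimise the interchange, exactly as in \cite[Lemma~11.3]{TsuzukiSpec}; the new factors $\Phi_{\gn,v}^{(z)}$ with $v\mid\gn$ only help, by shrinking the support further.

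Granting this, write $J_{\d}(\b,\l,\a;t)=\frac{1}{2\pi i}\int_{L_{\s}}\frac{\b(z)}{z+\l}G_{\d}(z)\,dz$ with $G_{\d}(z)=\left(\frac{1}{2\pi i}\right)^{\#S}\sum_{\gamma\in{\rm St}(\d)\backslash H_F}\int_{\LL_S(\bfc)}\{\Psi^{(z)}+\Psi^{(-z)}\}(\gn|\bfs;\d\gamma\,{\rm diag}(t,1)n(x_\eta))\,\a(\bfs)\,d\mu_S(\bfs)$. By construction $G_{\d}$ is even in $z$; it is entire in $z$ (since $z\mapsto\hat{\Psi}^{(z)}(\gn|\a;g)$ is, and the sum over $\gamma$ converges locally uniformly), rapidly decaying in $|\Im z|$, and independent of $\bfc$ by Cauchy's theorem as in \S\ref{Spectral expansions of renormalized Green functions}. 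The integrand's only singularity is the simple pole $z=-\l$, so for fixed $\s$ the right-hand side is holomorphic on $\{\Re(\l)>-\s\}$; letting $\s\to\infty$ yields the asserted entire continuation of $\l\mapsto J_{\d}(\b,\l,\a;t)$. To read off the value at $\l=0$ take $\s>0$, so that $J_{\d}(\b,0,\a;t)=\frac{1}{2\pi i}\int_{L_{\s}}\frac{\b(z)}{z}G_{\d}(z)\,dz$; the evenness of $z\mapsto\b(z)G_{\d}(z)$ and the residue at $z=0$ force this to equal $\tfrac12\b(0)G_{\d}(0)$, and since $\Psi^{(0)}$ has trivial $H_\AA$-equivariance the left torus factor disappears at $z=0$, giving $J_{\d}(\b,0,\a;t)=\b(0)\left(\frac{1}{2\pi i}\right)^{\#S}\sum_{a\in F^\times}\int_{\LL_S(\bfc)}\Psi^{(0)}(\gn|\bfs;g_a)\,\a(\bfs)\,d\mu_S(\bfs)$, consistent with ${\rm CT}_{\l=0}\hat{\Psi}_{\b,\l}(\gn|\a;g)=\hat{\Psi}^{(0)}(\gn|\a;g)\b(0)$. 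Adding the two $\d$'s that make up $J_{{\rm u}}$, respectively $J_{\bar{\rm u}}$, and reinserting the explicit $g_a$'s then reproduces the displayed formulas for $J_{{\rm u}}(\a;t)$ and $J_{\bar{\rm u}}(\a;t)$; absolute convergence of the final series-integrals is the estimate of the second paragraph.
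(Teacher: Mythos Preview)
Your proposal is correct and follows essentially the same approach as the paper, which simply refers to \cite[Lemma~11.3]{TsuzukiSpec} with a minor modification; you have carefully spelled out that modification (the support check for $\Phi_{\gn,v}^{(z)}$ at places $v\mid\gn$ that produces the factor $\d(\gn=\go_F)$) and the contour-shifting/evenness computation of ${\rm CT}_{\l=0}$, both of which are exactly what is needed. One cosmetic point: the support of $\Phi_{\gn,v}^{(z)}$ is actually $H_v\bfK_0(\gn\go_v)$ rather than $B_v\bfK_0(\gn\go_v)$, but since the $\bfK_v$-component in the Iwasawa decomposition is well defined modulo $B_v\cap\bfK_v\subset\bfK_0(\gn\go_v)$, your vanishing argument for the $w_0$-twisted terms goes through unchanged.
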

We put
{\allowdisplaybreaks\begin{align*}
J_{\rm hyp}(\b, \l, \a; t) = \sum_{b \in F^{\times}-\{-1\}}J_{\d_{b}}(\b, \l, \a; t)
= \sum_{b \in F^{\times}-\{-1\}} \sum_{a \in F^{\times}}\hat{\Psi}_{\b, \l}
\left(\gn \bigg| \a; \delta_{b}
\left(\begin{matrix} at & 0 \\ 0 & 1\end{matrix}\right)
\left(\begin{matrix} 1 & x_{\eta} \\ 0 & 1\end{matrix}\right)\right).
\end{align*}
}We obtain Lemma \ref{lem: J-hyp} by the same proof as \cite[Lemma 11.21]{TsuzukiSpec}.
In \cite[Lemma 11.21]{TsuzukiSpec}, it is assumed that $\gn$ is square free. However, the argument in \cite[Lemma 11.21]{TsuzukiSpec} works with a minor modification.

\begin{lem}\label{lem: J-hyp}
The function $J_{\rm hyp}(\b, \l, \a; t)$ on $\Re(\l)>1$ is analytically continued to an entire function and the value at $\l = 0$ is $J_{\rm hyp}(\a, t)\b(0)$, where
$$
J_{\rm hyp}(\a, t) = \sum_{b \in F^{\times}-\{-1\}} \sum_{a \in F^{\times}}
{\Psi}^{(0)}
\left(\gn \bigg| \a; \d_{b}
\left(\begin{matrix} at & 0 \\ 0 & 1\end{matrix}\right)
\left(\begin{matrix} 1 & x_{\eta} \\ 0 & 1\end{matrix}\right)\right).
$$
The series converges absolutely and locally uniformly in $t \in \AA^{\times}$.
\end{lem}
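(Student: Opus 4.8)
I would follow the argument of \cite[Lemma 11.21]{TsuzukiSpec}, reducing the statement to a single absolute-convergence estimate for a double series of adelic Green functions; the only point that genuinely requires a new argument is the shape of the local factor $\Phi_{\gn,v}^{(z)}$ at places $v$ with $\ord_{v}(\gn)\ge2$, which does not arise in the square-free case of \cite{TsuzukiSpec}.

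First I would record two elementary facts about the individual terms. For fixed $g\in G_{\AA}$ the function $\l\mapsto\hat{\Psi}_{\b,\l}(\gn|\a;g)$ is entire: since $z\mapsto\hat{\Psi}^{(z)}(\gn|\a;g)$ is entire and both contours (the $L_{\s}$ in $\hat{\Psi}_{\b,\l}$ and the $\LL_{S}(\bfc)$ inside $\hat{\Psi}^{(z)}$) may be moved freely, one can always keep the pole $z=-\l$ off the contour. Next, for $\s>0$ the $z\mapsto-z$ symmetry of $\hat{\Psi}^{(z)}+\hat{\Psi}^{(-z)}$, combined with shifting the contour of $\frac{1}{2\pi i}\int_{L_{\s}}\frac{\b(z)}{z}\hat{\Psi}^{(z)}(\gn|\a;g)\,dz$ across $z=0$ (the rapid vertical decay of $\b$ making this legitimate), picks up the residue $\b(0)\hat{\Psi}^{(0)}(\gn|\a;g)$ and yields the clean identity $\hat{\Psi}_{\b,0}(\gn|\a;g)=\b(0)\hat{\Psi}^{(0)}(\gn|\a;g)$. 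Granting these, the lemma reduces to proving that
$$\sum_{b\in F^{\times}-\{-1\}}\sum_{a\in F^{\times}}\left|\Psi^{(z)}\left(\gn\bigg|\bfs;\,\d_{b}\left(\begin{matrix}at&0\\0&1\end{matrix}\right)\left(\begin{matrix}1&x_{\eta}\\0&1\end{matrix}\right)\right)\right|<\infty$$
locally uniformly in $(\bfs,z,t)$ on $\LL_{S}(\bfc)\times L_{\s}\times\AA^{\times}$ for suitable $\bfc,\s$: Fubini then lets me interchange the two contour integrals with the double sum, $J_{\rm hyp}(\b,\l,\a;t)$ inherits local uniform convergence in $\l$ and so extends to an entire function, and evaluating it termwise at $\l=0$ and pulling out $\b(0)$ identifies the result with $J_{\rm hyp}(\a,t)\b(0)$; the local uniform convergence in $t\in\AA^{\times}$ is part of the same estimate.

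To prove the estimate I would factor $\Psi^{(z)}(\gn|\bfs,g)$ into its local pieces and take $g=\d_{b}\left(\begin{smallmatrix}at&0\\0&1\end{smallmatrix}\right)\left(\begin{smallmatrix}1&x_{\eta}\\0&1\end{smallmatrix}\right)$. At each finite place $v$ the support condition of $\Phi_{\gn,v}^{(z)}$ (resp. $\Phi_{0,v}^{(z)}$) translates, via the Iwasawa decomposition of $g_{v}$, into integrality/congruence constraints on $a$ and $b$; intersecting these over all finite places reduces the double sum to a sum over a lattice-type set, and on that set the decay in $g_{v}$ of the archimedean and $S_{\fin}$ Green functions $\Psi_{v}^{(z)}(s_{v},g_{v})$ recorded in \cite[\S4,\,\S5]{TsuzukiSpec} guarantees absolute convergence once $q(\bfc)$ is taken large, while the rapid decay of $\b$ in $\Im(z)$ handles the $z$-integral. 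All of this is as in \cite[\S11]{TsuzukiSpec}.

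The hard part will be the finite-place bookkeeping at the ramified places: for $v\in S(\gn)$ with $\ord_{v}(\gn)\ge2$ the support of $\Phi_{\gn,v}^{(z)}$ in Iwasawa coordinates is not the simple set used in \cite{TsuzukiSpec}, so I would have to re-derive which pairs $(a,b)$ survive at such $v$ and verify that, after intersecting with the conditions at the remaining places, the resulting set of $(a,b)$ is still thin enough for the archimedean decay to force absolute convergence. This is exactly the ``minor modification'' alluded to above; once it is carried out, the rest of the proof is word for word as in \cite[Lemma 11.21]{TsuzukiSpec}.
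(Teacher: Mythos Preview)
Your proposal is correct and follows the same approach as the paper, which simply cites \cite[Lemma 11.21]{TsuzukiSpec} and asserts that the argument there ``works with a minor modification'' for arbitrary level. You have in fact supplied more detail than the paper: the reduction to termwise analyticity via the entireness of $\l\mapsto\hat\Psi_{\b,\l}(\gn|\a;g)$, the residue computation giving $\hat\Psi_{\b,0}(\gn|\a;g)=\b(0)\hat\Psi^{(0)}(\gn|\a;g)$, and the identification of absolute convergence of the double series as the only substantive point are all accurate.

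One remark: you are slightly overcautious about the ``hard part'' at places $v$ with $\ord_v(\gn)\ge 2$. From the definition, the support of $\Phi_{\gn,v}^{(z)}$ is $H_vN_v\bfK_0(\gn\go_v)$, and since $\bfK_0(\gp_v^n)\subset\bfK_0(\gp_v)$ for $n\ge 1$, this support is \emph{contained} in the support used in the square-free case. Hence the set of pairs $(a,b)$ surviving the local constraints at $v$ is only smaller than in \cite{TsuzukiSpec}, and the convergence estimate there applies a fortiori. So no new Iwasawa computation is actually required; this is why the modification really is minor.
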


Therefore we obtain the geometric expression of $\hat{{\bf \Psi}}_{\rm reg}\left(\gn \bigg| \a; \left(\begin{matrix} t & 0 \\ 0 & 1\end{matrix}\right)
\left(\begin{matrix} 1 & x_{\eta} \\ 0 & 1\end{matrix}\right)\right)$
by Lemmas \ref{lem: J-id}, \ref{lem: J-u} and \ref{lem: J-hyp}.
\begin{prop}\label{geom exp of reg Green}
Let $\gn$ be an ideal of $\go_{F}$ and
$S$ a finite subset of $\Sigma_{F}$ satisfying $\Sigma_{\infty} \subset S$ and $S \cap S(\gn) = \emptyset$. Let $\eta$ be a character satisfying ($\star$) in \S 2.1.
Then, for any $\a \in \Acal_{S}$, we have
{\allowdisplaybreaks\begin{align*}
& \hat{{\bf \Psi}}_{\rm reg}\left(\gn \bigg| \a; \left(\begin{matrix} t & 0 \\ 0 & 1\end{matrix}\right)
\left(\begin{matrix} 1 & x_{\eta} \\ 0 & 1\end{matrix}\right)\right) \\
= & (1 + \d(\gn = \go_{F}))J_{\rm id}(\a; t) + J_{\rm u}(\a; t) + J_{\bar{\rm u}}(\a; t) + J_{\rm hyp}(\a; t), \ \ t \in \AA^{\times}.
\end{align*}
}
\end{prop}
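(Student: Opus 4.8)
\emph{Proof.} (Plan.) The strategy is to unfold the Poincar\'e series $\hat{{\bf \Psi}}_{\b, \l}(\gn|\a; -)$ along the orbit decomposition $H_{F} \backslash G_{F} = \coprod_{\d \in H_{F} \backslash G_{F}/H_{F}} {\rm St}(\d) \backslash H_{F}$ and then pass to the constant term at $\l = 0$ block by block. Fix $\b \in \Bcal$ and assume $\Re(\l) > 1$; there the series defining $\hat{{\bf \Psi}}_{\b, \l}(\gn|\a; -)$ converges absolutely and locally uniformly. Grouping the summation index $\gamma \in H_{F} \backslash G_{F}$ according to the double coset $H_{F}\d H_{F}$ it lies in, using the complete list of representatives $\{e, w_{0}, u, \overline{u}, uw_{0}, \overline{u}w_{0}\} \cup \{\d_{b} \mid b \in F^{\times} - \{-1\}\}$ together with ${\rm St}(e) = {\rm St}(w_{0}) = H_{F}$ and ${\rm St}(\d) = Z_{F}$ for the remaining representatives, absolute convergence justifies rearranging the sum to obtain
$$\hat{{\bf \Psi}}_{\b, \l}\left(\gn \bigg| \a; \left(\begin{matrix} t & 0 \\ 0 & 1\end{matrix}\right)\left(\begin{matrix}1 & x_{\eta} \\ 0 & 1\end{matrix}\right)\right) = J_{e}(\b, \l, \a; t) + J_{w_{0}}(\b, \l, \a; t) + J_{\rm u}(\b, \l, \a; t) + J_{\bar{\rm u}}(\b, \l, \a; t) + J_{\rm hyp}(\b, \l, \a; t),$$
where $J_{\rm u} = J_{u} + J_{\overline{u}w_{0}}$, $J_{\bar{\rm u}} = J_{uw_{0}} + J_{\overline{u}}$ and $J_{\rm hyp} = \sum_{b \in F^{\times} - \{-1\}} J_{\d_{b}}$ as above.

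Next I would invoke Lemmas \ref{lem: J-id}, \ref{lem: J-u} and \ref{lem: J-hyp}. Each of the five blocks on the right extends, from its half-plane of absolute convergence, to a function holomorphic near $\l = 0$ (entire for $J_{e}, J_{w_{0}}, J_{\rm u}, J_{\bar{\rm u}}$, and analytically continued from $\Re(\l) > 1$ to an entire function for $J_{\rm hyp}$), and the value of each block at $\l = 0$ equals, respectively, $J_{\rm id}(\a; t)\b(0)$, $\d(\gn = \go_{F}) J_{\rm id}(\a; t)\b(0)$, $J_{\rm u}(\a; t)\b(0)$, $J_{\bar{\rm u}}(\a; t)\b(0)$ and $J_{\rm hyp}(\a; t)\b(0)$. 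A finite sum of functions holomorphic near $\l=0$ is holomorphic near $\l=0$ with constant term equal to the sum of the constant terms, so the left-hand side continues holomorphically through $\l = 0$ and
$${\rm CT}_{\l = 0}\hat{{\bf \Psi}}_{\b, \l}\left(\gn \bigg| \a; \left(\begin{matrix} t & 0 \\ 0 & 1\end{matrix}\right)\left(\begin{matrix}1 & x_{\eta} \\ 0 & 1\end{matrix}\right)\right) = \big\{ (1 + \d(\gn = \go_{F}))J_{\rm id}(\a; t) + J_{\rm u}(\a; t) + J_{\bar{\rm u}}(\a; t) + J_{\rm hyp}(\a; t) \big\}\b(0).$$
Comparing with the defining relation ${\rm CT}_{\l = 0}\hat{{\bf \Psi}}_{\b, \l}(\gn|\a; g) = \hat{{\bf \Psi}}_{\rm reg}(\gn|\a; g)\b(0)$, which holds for all $\b \in \Bcal$, and choosing some $\b$ with $\b(0) \neq 0$, gives the claimed identity.

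The one delicate point is the status of the orbit decomposition at the level of analytic continuation: the equality of the Poincar\'e series with $\sum J_{\bullet}$ holds literally only on the common domain of absolute convergence $\Re(\l) > 1$ (the bound is forced by the hyperbolic block), whereas the desired formula is extracted at $\l = 0$; this gap is bridged by uniqueness of analytic continuation, once each $J_{\bullet}$ has been continued past $\l = 0$ by the three lemmas. The hypotheses $\Sigma_{\infty} \subset S$ and $S \cap S(\gn) = \emptyset$ are used only to make the local Green-function factors and the bookkeeping factor $\d(\gn = \go_{F})$ in Lemma \ref{lem: J-id} meaningful, and the condition $(\star)$ on $\eta$ is what makes $J_{\rm id}$ carry the factor $\d(\gf_{\eta} = \go_{F})$ and the period construction applicable; no estimates are needed beyond those already contained in Lemmas \ref{lem: J-id}, \ref{lem: J-u} and \ref{lem: J-hyp}. \QED
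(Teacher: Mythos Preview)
Your proposal is correct and follows the same route as the paper: the paper establishes the orbit decomposition of $\hat{{\bf \Psi}}_{\b,\l}$ for $\Re(\l)>0$ just before the three lemmas, and then simply states that Proposition~\ref{geom exp of reg Green} follows from Lemmas~\ref{lem: J-id}, \ref{lem: J-u} and \ref{lem: J-hyp}, which is exactly what you have spelled out in detail. One small remark: the full Poincar\'e series already converges absolutely for $\Re(\l)>0$, so the block decomposition holds there; the bound $\Re(\l)>1$ in Lemma~\ref{lem: J-hyp} is only the stated starting domain for the analytic continuation of $J_{\rm hyp}$, not a constraint on the decomposition itself.
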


Next let us compute $P_{\b, \l}^{\eta}(\hat{{\bf \Psi}}_{\rm reg}(\gn|\a; -))$ explicitly.
Define
$$\JJ_{*}^{\eta}(\b, \l; \a) = \int_{F^{\times}\backslash \AA^{\times}}J_{*}(\a; t)
\{\hat{\b}_{\l}(|t|_{\AA}) + \hat{\b}_{\l}(|t|_{\AA}^{-1})\}\eta(t)\eta_{\fin}(x_{\eta,\fin})d^{\times}t$$
and
$$\Upsilon_{S}^{\eta}(\bfs) = \left\{ \prod_{v\in \Sigma_{\infty}} \frac{-1}{8}\frac{\Gamma((s_{v} + 1)/4)^{2}}{\Gamma((s_{v} + 3)/4)^{2}} \right\} \left\{\prod_{v \in S_{\fin}} (1-q_{v}^{(s_{v}+1)/2})^{-1}(1-\eta_{v}(\varpi_{v})q_{v}^{-(s_{v} + 1)/2})^{-1} \right\}.
$$
For any ideal $\ga$ of $\go_{F}$, we set
{\allowdisplaybreaks\begin{align*}
\gC_{S, \ga}^{\eta}(\bfs) = & C_{0}(\eta) + R(\eta) \bigg\{\log(D_{F}{\rm N}(\ga)) + \frac{d_{F}}{2}(C_{\rm Euler} + 2 \log2 -\log \pi) \\
& + \sum_{v \in S_{\fin}} \frac{\log q_{v}}{1 - q_{v}^{(s_{v} + 1)/2}}
+ \frac{1}{2}\sum_{v \in \Sigma_{\infty}}\left(\psi \left(\frac{s_{v} + 1}{4}\right) + \psi \left(\frac{s_{v} + 3}{4}\right)\right)\bigg\},
\end{align*}
}where $\psi(z) = \Gamma'(s) / \Gamma(s)$ is the digamma function and $C_{\rm Euler}$ is the Euler constant.
We note that if $\eta \neq {\bf1}$, then $\gC_{S, \ga}^{\eta}(\bfs)$ is independent of the choice of $\ga$, and $\gC_{S, \ga}^{\eta}(\bfs) = C_{0}(\eta) = L(1, \eta)$.
Put
$$\mathfrak{K}_{\eta}(\gn | \bfs) =  \sum_{b \in F^{\times}-\{-1\}} \int_{\AA^{\times}}
{\Psi}^{(0)}
\left(\gn \bigg| \bfs; \d_{b}
\left(\begin{matrix} t & 0 \\ 0 & 1\end{matrix}\right)
\left(\begin{matrix} 1 & x_{\eta} \\ 0 & 1\end{matrix}\right)\right)
\eta(t)\eta_{\fin}(x_{\eta, \fin})d^{\times}t.
$$
The defining series-integral converges absolutely if
we take $c\in \RR$ such that $\Re(\bfs) = \underline{c} =(c)_{v \in S}$ and $(c+1)/4 > 1$.
By the expression of $\hat{{\bf \Psi}}_{\rm reg}(\gn|\a; -)$ in Proposition \ref{geom exp of reg Green} and
a similar computation as in the proof of \cite[Theorem 12.1]{TsuzukiSpec}, we can express the geometric side of $P_{\rm reg}^{\eta}(\hat{{\bf \Psi}}_{\rm reg}(\gn|\a; -))$ as follows.
\begin{thm}
\label{geometric side}
For any $* \in \{{\rm id}, {\rm u}, \bar{\rm u}, {\rm hyp}\}$, the integral
$\JJ_{*}^{\eta}(\b, \l; \a)$ converges absolutely and locally uniformly in $\{ \l \in \CC \ | \Re(\l) > 1\}$. The function $\l \mapsto \JJ_{*}^{\eta}(\b, \l; \a)$ is analytically continued to a meromorphic function on $\{\l \in \CC \ | \Re(\l)>-1 \}$.
Moreover, the constant term ${\rm CT}_{\l = 0}\JJ_{*}^{\eta}(\b, \l; \a)$ is equal to
$\JJ_{*}^{\eta}(\gn | \a)\b(0)$, where
$$\JJ_{\rm id}^{\eta}(\gn| \a) = 0,$$

$$\JJ_{\rm u}^{\eta}(\gn| \a) = (1+ \d (\gn = \go_{F}))D_{F}^{1/2}\Gcal(\eta)
\int_{\LL_{S}(\bfc)}\Upsilon_{S}^{\eta}(\bfs)\gC_{S, \go_{F}}^{\eta}(\bfs) \a(\bfs)d\mu_{S}(\bfs),$$

$$\JJ_{\bar{\rm u}}^{\eta}(\gn| \a) = (1+ \d (\gn = \go_{F}))D_{F}^{1/2}\Gcal(\eta)
\int_{\LL_{S}(\bfc)}\Upsilon_{S}^{\eta}(\bfs)\gC_{S, \gn}^{\eta}(\bfs) \a(\bfs)d\mu_{S}(\bfs)$$
and
$$\JJ_{\rm hyp}^{\eta}(\gn| \a) = \left(\frac{1}{2\pi i} \right)^{\#S}\int_{\LL_{S}(\underline{c})}\mathfrak{K}_{\eta}(\gn | \bfs)\a(\bfs)d\mu_{S}(\bfs).$$
In particular, we have
$$P_{\rm reg}^{\eta}(\hat{{\bf \Psi}}_{\rm reg}(\gn|\a; -)) =
\JJ_{\rm u}^{\eta}(\gn| \a) + \JJ_{\bar{\rm u}}^{\eta}(\gn| \a) + \JJ_{\rm hyp}^{\eta}(\gn| \a).$$
\end{thm}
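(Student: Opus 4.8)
\emph{The plan.} The plan is to substitute the geometric expansion of Proposition \ref{geom exp of reg Green} into the defining integral $P_{\b,\l}^{\eta}$ and to treat the four orbital contributions separately. Along the torus cycle parametrised by $t\in F^{\times}\backslash\AA^{\times}$ the value of $\hat{{\bf \Psi}}_{\rm reg}(\gn|\a;-)$ is $(1+\d(\gn=\go_{F}))J_{\rm id}(\a;t)+J_{\rm u}(\a;t)+J_{\bar{\rm u}}(\a;t)+J_{\rm hyp}(\a;t)$ by Proposition \ref{geom exp of reg Green}, so that for $\Re(\l)$ large one has $P_{\b,\l}^{\eta}(\hat{{\bf \Psi}}_{\rm reg}(\gn|\a;-))=(1+\d(\gn=\go_{F}))\JJ_{\rm id}^{\eta}(\b,\l;\a)+\JJ_{\rm u}^{\eta}(\b,\l;\a)+\JJ_{\bar{\rm u}}^{\eta}(\b,\l;\a)+\JJ_{\rm hyp}^{\eta}(\b,\l;\a)$, the absolute convergence being inherited from the growth bounds on the $J_{*}(\a;t)$ established in Lemmas \ref{lem: J-id}, \ref{lem: J-u} and \ref{lem: J-hyp} together with the rapid decay of $\hat{\b}_{\l}$. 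It then remains to continue each $\JJ_{*}^{\eta}(\b,\l;\a)$ meromorphically past $\l=0$ and to read off the constant term at $\l=0$. This follows the scheme of \cite[Theorem 12.1]{TsuzukiSpec}; the genuinely new features are the non-split level at the places of $S(\gn)$ and the bookkeeping of the $\d(\gn=\go_{F})$ terms.

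\emph{The identity orbit.} First I would dispose of $\JJ_{\rm id}^{\eta}$. By Lemma \ref{lem: J-id} the function $J_{\rm id}(\a;t)$ does not depend on $t$, so $\JJ_{\rm id}^{\eta}(\b,\l;\a)$ is $J_{\rm id}(\a)$ times the torus integral of $\{\hat{\b}_{\l}(|t|_{\AA})+\hat{\b}_{\l}(|t|_{\AA}^{-1})\}\eta(t)\eta_{\fin}(x_{\eta,\fin})$. Splitting $\AA^{\times}=\AA^{1}\times\RR_{>0}$ and using $\eta(\underline{y})=1$, the integral over the compact quotient $F^{\times}\backslash\AA^{1}$ vanishes unless $\eta|_{\AA^{1}}$ is trivial, i.e. unless $\eta={\bf 1}$; in that remaining case the Mellin integral $\int_{0}^{\infty}\{\hat{\b}_{\l}(y)+\hat{\b}_{\l}(y^{-1})\}\,dy/y$ is continued exactly as in the proof of Proposition \ref{prop:e-1 = chi det} (the computation of $P_{\rm reg}^{\eta}$ of a constant function) and contributes only a simple pole at $\l=0$ with vanishing constant term. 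Hence ${\rm CT}_{\l=0}\JJ_{\rm id}^{\eta}(\b,\l;\a)=0$, that is, $\JJ_{\rm id}^{\eta}(\gn|\a)=0$.

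\emph{The unipotent orbits.} For $*\in\{{\rm u},\bar{\rm u}\}$ I would insert the series-integral formulas of Lemma \ref{lem: J-u}, interchange the torus integral with the contour integral over $\LL_{S}(\bfc)$, and then unfold the sum $\sum_{a\in F^{\times}}$ against $\int_{F^{\times}\backslash\AA^{\times}}$ to produce an Eulerian integral over $\AA^{\times}$. At places outside $S\cup S(\gn)\cup\Sigma_{\infty}$ the Green function collapses at $z=0$ to the characteristic function $\Phi_{0,v}^{(0)}$ and the local integral is an elementary Tate factor; at $v\in S_{\fin}$ one obtains the factors $(1-q_{v}^{(s_{v}+1)/2})^{-1}(1-\eta_{v}(\varpi_{v})q_{v}^{-(s_{v}+1)/2})^{-1}$ assembling $\Upsilon_{S}^{\eta}(\bfs)$, and at the archimedean places the corresponding $\Gamma$-quotients together with the digamma terms. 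The resulting global integral over $\AA^{\times}$ fails to converge at the critical value of $\bfs$ and has to be regularised; its analytic continuation has a simple pole whose constant term produces the factor $\gC_{S,\ga}^{\eta}(\bfs)$, with $\ga=\go_{F}$ for the upper-triangular orbit $u$ and $\ga=\gn$ for the lower-triangular orbit $\bar u$ --- the discrepancy $\log\N(\gn)$ coming precisely from the congruence condition in $\Phi_{\gn,v}^{(0)}$ at the places of $S(\gn)$. The $\d(\gn=\go_{F})$ summands of $J_{*}$ are treated by the same computation after translating by $w_{0}$ and reproduce the first contribution, which is the source of the overall factor $1+\d(\gn=\go_{F})$. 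Finally the continuation in $\l$ and the extraction of ${\rm CT}_{\l=0}$ follow from the Mellin structure of $\hat{\b}_{\l}$ just as for the residual terms in Lemma \ref{lem:Green merom}, yielding $\JJ_{\rm u}^{\eta}(\gn|\a)$ and $\JJ_{\bar{\rm u}}^{\eta}(\gn|\a)$ in the stated form.

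\emph{The hyperbolic orbit and conclusion.} For $\JJ_{\rm hyp}^{\eta}$ I would use Lemma \ref{lem: J-hyp}: the torus integral of $J_{\rm hyp}(\a;t)$ unfolds, via $\sum_{a\in F^{\times}}$ against $\int_{F^{\times}\backslash\AA^{\times}}$, to the absolutely convergent series-integral defining $\mathfrak{K}_{\eta}(\gn|\bfs)$, and since $\l\mapsto J_{\rm hyp}(\b,\l,\a;t)$ is already entire with value $J_{\rm hyp}(\a;t)\b(0)$ at $\l=0$, the same is true after integration; thus ${\rm CT}_{\l=0}\JJ_{\rm hyp}^{\eta}(\b,\l;\a)=\JJ_{\rm hyp}^{\eta}(\gn|\a)\b(0)$ with $\JJ_{\rm hyp}^{\eta}(\gn|\a)=(2\pi i)^{-\#S}\int_{\LL_{S}(\underline{c})}\mathfrak{K}_{\eta}(\gn|\bfs)\a(\bfs)\,d\mu_{S}(\bfs)$. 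Assembling the four pieces, taking ${\rm CT}_{\l=0}$ of $P_{\b,\l}^{\eta}(\hat{{\bf \Psi}}_{\rm reg}(\gn|\a;-))$ and dividing by $\b(0)$ gives the asserted identity, the identity-orbit term dropping out. The main obstacle will be the regularisation of the two unipotent orbital integrals: establishing the meromorphic continuation of the associated $\AA^{\times}$-integral jointly in $\bfs$ and $\l$, and isolating from the Laurent expansions of the relevant local and global $L$-factors the precise constant $\gC_{S,\ga}^{\eta}(\bfs)$ --- in particular the level-dependent term $\log\N(\gn)$ distinguishing $\JJ_{\rm u}^{\eta}$ from $\JJ_{\bar{\rm u}}^{\eta}$ --- which is the single point where arbitrary, non-square-free $\gn$ genuinely modifies Tsuzuki's computation.
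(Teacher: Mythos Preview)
Your proposal is correct and follows essentially the same route as the paper, which simply refers to the expression of $\hat{{\bf \Psi}}_{\rm reg}$ from Proposition~\ref{geom exp of reg Green} and to the computation in \cite[Theorem~12.1]{TsuzukiSpec}. Your sketch in fact fleshes out what the paper only indicates: the vanishing of the identity contribution via the regularised period of a constant, the unfolding of the unipotent integrals into Eulerian products producing $\Upsilon_{S}^{\eta}(\bfs)$ and the constant $\gC_{S,\ga}^{\eta}(\bfs)$ (with $\ga=\go_{F}$ versus $\ga=\gn$ reflecting the congruence condition in $\Phi_{\gn,v}^{(0)}$), and the straightforward unfolding of the hyperbolic term into $\mathfrak{K}_{\eta}(\gn|\bfs)$.
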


\section{Proofs of main theorems}
\label{Proofs of main theorems}
Fix a character $\eta$ of $F^{\times} \backslash \AA^{\times}$
so that $\eta^{2}={\bf1}$ and $\eta_{v}(-1)=1$ for any $v \in \Sigma_{\infty}$.
Let $S$ be a finite subset of $\Sigma_{F}$ such that $S \supset \Sigma_{\infty}$ and $S_{\fin} \cap S(\gf_{\eta}) = \emptyset$.
Let $J_{S, \eta}'$ be the set of all ideals $\gn$ of $\go_{F}$ such that
$S(\gn) \cap (S \cup S(\gf_{\eta})) = \emptyset$ and $\tilde{\eta}(\gn)=1$.
By Theorems \ref{spectral side} and
 \ref{geometric side}, we obtain the relative trace formula
$$C(\gn, S)\{ \II_{\rm cus}^{\eta}(\gn|\a) + \II_{\rm eis}^{\eta}(\gn|\a) + \DD^{\eta}(\gn|\a) \} = \JJ_{\rm u}^{\eta}(\gn| \a) + \JJ_{\bar{\rm u}}^{\eta}(\gn| \a) + \JJ_{\rm hyp}^{\eta}(\gn|\a)$$
for any $\a \in \Acal_{S}$ and $\gn \in J_{S, \eta}'$.
The following estimate of $\JJ_{\rm hyp}^{\eta}(\gn|\a)$ is given by the same argument in the proof of \cite[Lemma 12.9]{TsuzukiSpec} with a minor modification.
\begin{lem}\label{lem:estimation of hyp}
For any $\a \in \Acal_{S}$ and $q>0$, we have
$\JJ_{\rm hyp}^{\eta}(\gn|\a) \ll {\rm N}(\gn)^{-q}$
with the implied constant independent of $\gn \in J_{S, \eta}'$.
\end{lem}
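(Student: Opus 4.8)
The plan is to bound the contour integral for $\mathcal{J}_{\rm hyp}^{\eta}(\gn|\a)$ given in Theorem~\ref{geometric side} by controlling the kernel $\mathfrak{K}_{\eta}(\gn|\bfs)$ uniformly on a contour $\LL_{S}(\underline{c})$. Since $\a\in\Acal_{S}$ decays rapidly along $\LL_{S}(\underline{c})$ and the integral $\mathcal{J}_{\rm hyp}^{\eta}(\gn|\a)$ is independent of $c$ in the range of absolute convergence, it suffices to prove a bound $|\mathfrak{K}_{\eta}(\gn|\bfs)|\ll_{c}\N(\gn)^{-q}$ on one such contour, with $c$ chosen as large as we like. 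The starting point is the product factorization $\Psi^{(0)}(\gn|\bfs;g)=\prod_{v}(\cdots)$ of the adelic Green function: substituting $g=\d_{b}\left(\begin{smallmatrix} t & 0 \\ 0 & 1\end{smallmatrix}\right)\left(\begin{smallmatrix} 1 & x_{\eta} \\ 0 & 1\end{smallmatrix}\right)$ into the defining series-integral of $\mathfrak{K}_{\eta}(\gn|\bfs)$ and expanding, one writes it as a sum over $b\in F^{\times}-\{-1\}$ of a product over $v\in\Sigma_{F}$ of local integrals $\int_{F_{v}^{\times}}\Psi_{v}^{(0)}(s_{v},-)\,\eta_{v}(t_{v})\,d^{\times}t_{v}$, where $\Psi_{v}^{(0)}$ is read as $\Phi_{\gn,v}^{(0)}$ for $v\in S(\gn)$ and as $\Phi_{0,v}^{(0)}$ for $v\notin S\cup S(\gn)$.

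The decisive step is the local analysis at $v\in S(\gn)$. Here $\Phi_{\gn,v}^{(0)}$ is (up to the modulus to the power $0$, i.e.\ $1$) the characteristic function of $H_{v}\bfK_{0}(\gn\go_{v})$; since $S(\gn)\cap S(\gf_{\eta})=\emptyset$, $\eta_{v}$ is unramified, $x_{\eta,v}=1$, and $\left(\begin{smallmatrix}1 & x_{\eta,v}\\0 & 1\end{smallmatrix}\right)\in\bfK_{0}(\gn\go_{v})$, so the local factor vanishes unless $\d_{b}\left(\begin{smallmatrix} t_{v} & 0 \\ 0 & 1\end{smallmatrix}\right)\in H_{v}\bfK_{0}(\gp_{v}^{n_{v}})$ with $n_{v}=\ord_{v}(\gn)$. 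An elementary Iwasawa-type computation shows this forces $n_{v}\le\ord_{v}(t_{v})\le\ord_{v}(b)$ (the condition $\ord_{v}(b+1)=0$ being automatic once $\ord_{v}(b)\ge n_{v}\ge 1$); hence the local factor is $0$ unless $\ord_{v}(b)\ge n_{v}$, and on its support it equals $\vol(\go_{v}^{\times})\sum_{k=n_{v}}^{\ord_{v}(b)}\eta_{v}(\varpi_{v})^{k}$, which is $O(1)$ when $\eta_{v}(\varpi_{v})=-1$ and $O(\ord_{v}(b))$ in general. Running the same computation at $v\notin S\cup S(\gn)$ with $n_{v}=0$ forces $b$ to be $v$-integral there. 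Together this confines $b$ to a fixed dilate (by $S\cup S(\gf_{\eta})$-units) of the ideal $\gn$: every admissible nonzero $b$ has $b\go_{F}$ divisible by $\gn$ up to the finite set $S\cup S(\gf_{\eta})$, whence $|\Norm_{F/\QQ}(b)|\gg\N(\gn)$ since any nonzero element of an ideal has norm at least the ideal norm (up to the fixed correction at $S\cup S(\gf_{\eta})$).

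It then remains to sum over these $b$. The archimedean factors $\prod_{v\in\Sigma_{\infty}}\int_{F_{v}^{\times}}\Psi_{v}^{(0)}(s_{v},-)\,d^{\times}t_{v}$, controlled by the absolute convergence bounds for Green functions used in \S\ref{Adelic Green functions} and \S\ref{Spectral expansions of renormalized Green functions}, decay polynomially in $|b|_{\infty}=\prod_{v\in\Sigma_{\infty}}|b|_{v}$ with an exponent that grows linearly in $c=\Re(s_{v})$; combined with $|\Norm_{F/\QQ}(b)|\gg\N(\gn)$ and a standard lattice-point estimate over the ideal $\gn$, this gives $\sum_{b}|(\cdots)|\ll_{c}\N(\gn)^{-c/2+O(1)}$ uniformly in $\gn\in J_{S,\eta}'$ (the harmless factors $O(\ord_{v}(b))$ at places with $\eta_{v}(\varpi_{v})=1$ are absorbed by the archimedean decay via the product formula, and the finite places in $S_{\fin}$ and $S(\gf_{\eta})$ contribute bounded factors independent of $\gn$). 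Taking $c$ large yields $\mathfrak{K}_{\eta}(\gn|\bfs)\ll_{q}\N(\gn)^{-q}$ for any $q>0$, and hence the lemma. This is exactly the argument of \cite[Lemma 12.9]{TsuzukiSpec}; the only modification is that $\ord_{v}(\gn)$ may now exceed $1$, which affects solely the elementary computation of the support of $\Phi_{\gn,v}^{(0)}$ described above — and that is the step I expect to require the most care, together with verifying that the $b$-sum stays uniformly controlled as $\gn$ varies.
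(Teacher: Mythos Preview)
Your proposal is correct and follows exactly the approach the paper indicates: the paper's entire proof is the one-line reference ``by the same argument in the proof of \cite[Lemma 12.9]{TsuzukiSpec} with a minor modification,'' and you have correctly identified and executed that modification, namely the Iwasawa computation of the support of $\Phi_{\gn,v}^{(0)}$ at $v\in S(\gn)$ yielding $n_{v}\le\ord_{v}(t_{v})\le\ord_{v}(b)$ with $n_{v}=\ord_{v}(\gn)$ now allowed to exceed $1$. The remainder of your sketch (integrality of $b$ away from $S_{\fin}\cup S(\gf_{\eta})$, archimedean decay growing with $c$, and the lattice-point sum over $b$) is the content of Tsuzuki's original argument and is unchanged here.
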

\begin{lem}\label{lem:estimation of B}
For any $\e > 0$, we have
$$|B_{\chi, \rho}^{\eta}(1/2, \nu)| \ll {\rm N}(\gf_{\chi})^{-1/2-\e} {\rm N}(\gn)^{\e}, \hspace{5mm}  \nu \in i \RR, \ \rho \in \Lambda_{\chi}(\gn), \ \chi \in \Xi(\gn)$$
with the implied constant independent of $\gn \in J_{S, \eta}'$.
\end{lem}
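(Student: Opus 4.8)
The plan is to start from the explicit product formula for $B_{\chi,\rho}^{\eta}(s,\nu)$ given in Proposition \ref{prop:Z* of Eisen}, specialize it to $s=1/2$, and take absolute values along $\nu\in i\RR$. At $s=1/2$ every factor $q_v^{1/2-s}$ equals $1$ and $D_F^{\,s-1/2}=1$; since $\nu\in i\RR$ and $\chi$ is a unitary character, all factors $q_v^{-\nu/2}$, $q_v^{-k\nu/2}$ and $\chi_v(\varpi_v)^{d_v}$ have modulus one and may be discarded, while $|L(1+\nu,\chi_v^2)|=|1-\chi_v(\varpi_v)^2q_v^{-1-\nu}|^{-1}\le(1-q_v^{-1})^{-1}\le 2$. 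Thus $|B_{\chi,\rho}^{\eta}(1/2,\nu)|$ is bounded by the product of $\prod_{k}\prod_{v\in S_k(\rho)}|Q^{(\nu)}_{k,\chi_v}(\eta_v,1)|$, the product of $(1+q_v^{-1})$ over $U_1(\rho)$ and of $((q_v+1)/(q_v-1))^{1/2}$ over the $U_k(\rho)$ with $k\ge 2$, a bounded power of $2$ coming from the $L$-factors, and the product over $R(\rho)$ of $q_v^{d_v/2}(1-q_v^{-1})^{1/2}|\Gcal(\chi_v)|$.

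The crucial contribution is the last one. First one notes that $R(\rho)=\bigcup_{k}R_k(\rho)=S(\gf_\chi)$: every place dividing $\gf_\chi$ lies in exactly one $R_k(\rho)$, directly from the definitions of $R_0,\dots,R_n$ in \S\ref{Regularized periods of automorphic forms}. For $v\in S(\gf_\chi)$ the character $\eta_v$ is unramified, and with the self-dual measures fixed in \S\ref{Preliminaries} a direct evaluation of the local Gauss sum gives $|\Gcal(\chi_v)|=q_v^{-d_v/2-f(\chi_v)/2}(1-q_v^{-1})^{-1}$. Hence the factor $q_v^{d_v/2}$ cancels and $q_v^{d_v/2}(1-q_v^{-1})^{1/2}|\Gcal(\chi_v)|=q_v^{-f(\chi_v)/2}(1-q_v^{-1})^{-1/2}$, so the product over $R(\rho)$ equals ${\rm N}(\gf_\chi)^{-1/2}\prod_{v\mid\gf_\chi}(1-q_v^{-1})^{-1/2}$.

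It then remains to absorb all residual factors into ${\rm N}(\gn)^{\e}$. Every local factor above — $(1+q_v^{-1})$, $((q_v+1)/(q_v-1))^{1/2}$, $(1-q_v^{-1})^{-1/2}$, $|L(1+\nu,\chi_v^2)|$ — is bounded by an absolute constant, and all the index sets $S_k(\rho),U_k(\rho),R_k(\rho)$ are contained in $S(\gn)$; therefore their combined product is $\ll C^{\#S(\gn)}\le d(\gn)^{O(1)}\ll_{\e}{\rm N}(\gn)^{\e}$, using $2^{\#S(\gn)}\le d(\gn)\ll_\e{\rm N}(\gn)^\e$ for the divisor function $d$. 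For the polynomials $Q^{(\nu)}_{k,\chi_v}$ one invokes their explicit description in \cite[\S 9]{Sugiyama}: since the inducing parameter $\chi_v|\cdot|_v^{\nu/2}$ is unitary for $\nu\in i\RR$, each $|Q^{(\nu)}_{k,\chi_v}(\eta_v,1)|$ is bounded uniformly in $q_v,\chi_v,\nu$ (at worst polynomially in $k=\ord_v(\gn\gf_\chi^{-2})$), so the product over the $\ll\#S(\gn)$ relevant places is again $\ll_\e{\rm N}(\gn)^\e$. Combining, $|B_{\chi,\rho}^{\eta}(1/2,\nu)|\ll_\e{\rm N}(\gf_\chi)^{-1/2}{\rm N}(\gn)^{\e}$ with constant independent of $\gn\in J_{S,\eta}'$; since $\gf_\chi\mid\gn$ we rewrite ${\rm N}(\gf_\chi)^{-1/2}={\rm N}(\gf_\chi)^{-1/2-\e}{\rm N}(\gf_\chi)^{\e}\le{\rm N}(\gf_\chi)^{-1/2-\e}{\rm N}(\gn)^{\e}$ and rename $\e$, which is the asserted bound.

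The main obstacle is verifying the uniform boundedness of the local factors $Q^{(\nu)}_{k,\chi_v}(\eta_v,1)$, for both unramified and ramified $\chi_v$, which requires consulting the formulas of \cite[\S 9]{Sugiyama} to confirm that no hidden growth in $q_v$ occurs — the unitarity of $\chi_v|\cdot|_v^{\nu/2}$ on $i\RR$ is what rules out any complementary-series type blow-up. Once that and the containment of the combinatorial index sets in $S(\gn)$ are in hand, the remaining estimates reduce to routine applications of $2^{\#S(\gn)}\le d(\gn)\ll_\e{\rm N}(\gn)^\e$.
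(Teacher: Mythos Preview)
Your proof is correct and follows essentially the same approach as the paper: specialize the explicit product formula at $s=1/2$, discard unimodular factors, evaluate the local Gauss sums to extract the ${\rm N}(\gf_\chi)^{-1/2}$, and absorb the remaining bounded local factors into ${\rm N}(\gn)^{\e}$ via the divisor bound. The paper differs only in that it records explicit uniform bounds for the $Q^{(\nu)}_{k,\chi_v}(\eta_v,1)$ (namely $\le 1+2(q_v^{1/2}+q_v^{-1/2})^{-1}$ on $U_1(\rho)$ and $\le q_v^{-1}(q_v^{1/2}+1)^2$ on $U_k(\rho)$ for $k\ge 2$, with the ramified places handled analogously), whereas you defer this to \cite[\S 9]{Sugiyama}; and the paper organizes the $\e$-losses as ${\rm N}(\gf_\chi)^{-1/2+\e}{\rm N}(\gn\gf_\chi^{-2})^\e = {\rm N}(\gf_\chi)^{-1/2-\e}{\rm N}(\gn)^\e$, which is exactly your final rewriting.
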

\Proof
Assume $\nu \in i \RR$. Then, the following estimate holds for any $\e >0$:
{\allowdisplaybreaks\begin{align*}
&|B_{\chi, \rho}^{\eta}(1/2, \nu)|\\
= & \ \prod_{k = 0}^{n}\prod_{v \in S_{k}(\rho)} |Q_{k, \chi_{v}}^{(\nu)}(\eta_{v}, 1)||L(1+\nu, \chi_{v}^{2})| \prod_{v \in U_{1}(\rho)}(1+q_{v}^{-1}) \\
& \times \prod_{k = 2}^{n}\prod_{v \in U_{k}(\rho)}\left(\frac{q_{v}+1}{q_{v}-1}\right)^{1/2} \prod_{k = 0}^{n}\prod_{v \in R_{k}(\rho)}q_{v}^{d_{v}/2}(1-q_{v}^{-1})^{1/2}|\overline{\Gcal(\chi_{v})}| \\
\ll & \prod_{v \in U_{1}(\rho)}(1+q_{v}^{-1})\left(1 + \frac{2}{q_{v}^{1/2} + q_{v}^{-1/2}}\right)\frac{1}{1 - q_{v}^{-1}}
\prod_{k = 2}^{n}\prod_{v \in U_{k}(\rho)}\left(\frac{q_{v}+1}{q_{v}-1}\right)^{1/2}
q_{v}^{-1}(q_{v}^{1/2} + 1)^{2}\frac{1}{1-q_{v}^{-1}} \\
& \times
\prod_{k = 0}^{n}\prod_{v \in R_{k}(\rho)}q_{v}^{d_{v}/2}(1-q_{v}^{-1})^{1/2}
\frac{q_{v}^{-f(\chi_{v})/2}q_{v}^{-d_{v}/2}}{1-q_{v}^{-1}} \frac{1}{1-q_{v}^{-1}} \\
\ll & \ {\rm N}(\gf_{\chi})^{-1/2 + \e} {\rm N}(\gn \gf_{\chi}^{-2})^{\e}.
\end{align*}
}This completes the proof.
\QED
Note that $[\bfK_{\fin }:\bfK_{0}(\gn)] = {\rm N}(\gn)\prod_{v \in S(\gn)}(1 + q_{v}^{-1})$ holds by an easy computation.
\begin{lem}\label{lem:estimation of eis}
For any $\a \in \Acal_{S}$, there exists $\d >0$ such that $|C(\gn, S)\II_{\rm eis}^{\eta}(\gn|\a)| \ll {\rm N}(\gn)^{-\d}$
with the implied constant independent of $\gn \in J_{S, \eta}'$.
\end{lem}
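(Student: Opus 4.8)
\emph{Proof plan.} The plan is to substitute the explicit formula of Proposition~\ref{prop:Eisen P=L} into the expression for $\II_{\rm eis}^{\eta}(\gn|\a)$ furnished by Theorem~\ref{spectral side}, and to estimate the resulting absolutely convergent series--integral term by term. The four ingredients will be: the bound for the $B$-factors in Lemma~\ref{lem:estimation of B}, a subconvexity bound for the central $L$-values appearing in the numerator, the lower bounds for $L(1,\,\cdot\,)$ recalled in the proof of Lemma~\ref{lem: estimation of spectral decomposition}, and the counting estimate behind Lemma~\ref{esti of X(n)}; the rapid decay of $\a\in\Acal_{S}$ will be used to carry out the sum over the infinity-types of the Hecke characters. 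This is the counterpart of Tsuzuki's estimate \cite[Lemma~12.9 and the surrounding lemmas]{TsuzukiSpec}: in the square-free case $\gf_{\chi}=\go_{F}$ is forced, so all conductor-dependent factors are bounded and the estimate is immediate; in general $\gf_{\chi}$ can be as large as ${\rm N}(\gn)^{1/2}$, which is what makes the present statement delicate.

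First I would record that $|C(\gn,S)|\le D_{F}^{-1/2}{\rm N}(\gn)^{-1}$, since $\vol(H_{\fin}\backslash H_{\fin}\bfK_{0}(\gn))=D_{F}^{-1/2}[\bfK_{\fin}:\bfK_{0}(\gn)]^{-1}$ and $[\bfK_{\fin}:\bfK_{0}(\gn)]\ge{\rm N}(\gn)$. Next, using $\overline{E_{\chi,\rho}(\nu,-)}=E_{\chi^{-1},\rho}(-\nu,-)$ on $\nu\in i\RR$ together with Proposition~\ref{prop:Eisen P=L}, and noting that $|{\rm N}(\gf_{\chi})^{\pm\nu}|=|D_{F}^{\pm\nu/2}|=1$ while the two numerator factors of each kind are complex conjugates there, I obtain
\begin{align*}
\big|P_{\rm reg}^{{\bf 1}}(E_{\chi^{-1},\rho}(-\nu,-))\,P_{\rm reg}^{\eta}(E_{\chi,\rho}(\nu,-))\big|
\ll{\rm N}(\gf_{\chi})\,\big|B_{\chi^{-1},\rho}^{{\bf 1}}(\tfrac12,-\nu)\,B_{\chi,\rho}^{\eta}(\tfrac12,\nu)\big|\,
\frac{|L(\tfrac{1+\nu}{2},\chi)|^{2}\,|L(\tfrac{1+\nu}{2},\chi\eta)|^{2}}{|L(1+\nu,\chi^{2})|^{2}},
\end{align*}
with implied constant depending only on $F$ and $\eta$. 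By Lemma~\ref{lem:estimation of B} the product of $B$-factors is $\ll{\rm N}(\gf_{\chi})^{-1-\e}{\rm N}(\gn)^{\e}$, so the factor ${\rm N}(\gf_{\chi})$ is absorbed with room to spare. For the numerator I would fix, by Michel--Venkatesh \cite{Michel-Venkatesh}, a subconvexity exponent $\theta<0$ as in the statement of Theorem~\ref{thm:asymptotic of trivial}, so that each central value is $\ll\gq(\,\cdot\,)^{1/4+\theta}$; since $\gf_{\chi\eta}=\gf_{\chi}\gf_{\eta}$, $\gf_{\chi^{2}}\mid\gf_{\chi}$ and the archimedean part of each analytic conductor is $\ll(1+|\Im(\nu)|+||b(\chi)||)^{O(d_{F})}$, and since $1/|L(1+\nu,\chi^{2})|\ll(1+|\Im(\nu)|)^{O(1)}\gq(\chi^{2}|\cdot|_{\AA}^{\nu})^{\e}$ by the bounds recalled in the proof of Lemma~\ref{lem: estimation of spectral decomposition}, the whole expression is $\ll{\rm N}(\gf_{\chi})^{1+4\theta+\e}{\rm N}(\gn)^{\e}\,(1+|\Im(\nu)|+||b(\chi)||)^{A}$ for some $A=A(F)$, uniformly in $\rho\in\Lambda_{\chi}(\gn)$.

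It then remains to integrate over $\nu\in i\RR$ and to sum over $\rho$ and $\chi$. Writing $\tilde{\alpha}_{\chi}(\nu)=\a((\nu+2ib(\chi_{v}))_{v\in S})$ and using that $\a\in\Acal_{S}$ decays faster than any polynomial in each vertical direction, one has $\int_{i\RR}(1+|\Im(\nu)|)^{A}|\tilde{\alpha}_{\chi}(\nu)|\,|d\nu|\ll_{N}(1+||b(\chi)||)^{-N}$ for every $N$ (when $d_{F}=1$ the shifts vanish, and for $d_{F}\ge 2$ one exploits $\sum_{v}b(\chi_{v})=0$). The number of $\rho\in\Lambda_{\chi}(\gn)$ equals $\prod_{v\in S(\gn\gf_{\chi}^{-2})}(\ord_{v}(\gn\gf_{\chi}^{-2})+1)\le d(\gn)\ll{\rm N}(\gn)^{\e}$; by the argument of Lemma~\ref{esti of X(n)} the number of $\chi\in\Xi_{0}(\gf)$ with a prescribed value $b(\chi)=b\in L_{0}$ is $\ll{\rm N}(\gf)$; and $\#\{\gf:\gf^{2}\mid\gn\}\ll{\rm N}(\gn)^{\e}$. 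Summing over $b\in L_{0}$ (the series $\sum_{b}(1+||b||)^{-N}$ converges for $N$ large), then over the conductors $\gf$ with $\gf^{2}\mid\gn$ (each with ${\rm N}(\gf)\le{\rm N}(\gn)^{1/2}$), and over $\rho$, I get
\begin{align*}
|\II_{\rm eis}^{\eta}(\gn|\a)|\ll{\rm N}(\gn)^{\e}\sum_{\gf^{2}\mid\gn}{\rm N}(\gf)^{2+4\theta+\e}\ll{\rm N}(\gn)^{\max(0,\,1+2\theta)+\e},
\end{align*}
hence $|C(\gn,S)\,\II_{\rm eis}^{\eta}(\gn|\a)|\ll{\rm N}(\gn)^{\max(-1,\,2\theta)+\e}$, with implied constant independent of $\gn\in J_{S,\eta}'$. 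Since $\theta<0$, choosing $\e$ small enough gives the assertion, e.g.\ with $\d=\tfrac12\min(1,-2\theta)$. The step I expect to be the main obstacle is precisely this last balancing: the numerator $L$-values carry conductor $\asymp{\rm N}(\gf_{\chi})$, which can grow like ${\rm N}(\gn)^{1/2}$, so a mere convexity bound would only yield ${\rm N}(\gn)^{\e}$ and no decay --- genuine subconvexity is needed --- and the bookkeeping must be arranged so that the ${\rm N}(\gf_{\chi})^{-1}$ saving in Lemma~\ref{lem:estimation of B} together with $C(\gn,S)\ll{\rm N}(\gn)^{-1}$ exactly absorbs both the factor ${\rm N}(\gf_{\chi})$ coming from the period formula and the ${\rm N}(\gf)$-many characters of each conductor counted by Lemma~\ref{esti of X(n)}.
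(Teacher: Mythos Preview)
Your proposal is correct and follows essentially the same route as the paper's proof: both feed Proposition~\ref{prop:Eisen P=L} into $\II_{\rm eis}^{\eta}$, bound the $B$-factors by Lemma~\ref{lem:estimation of B}, the central $L$-values by the Michel--Venkatesh subconvexity exponent $\theta<0$, and $|L(1+\nu,\chi^{2})|^{-1}$ by the estimate from Lemma~\ref{lem: estimation of spectral decomposition}, then count characters via Lemma~\ref{esti of X(n)} and use the rapid decay of $\a$ to control the archimedean integral and the lattice sum over $b(\chi)$; both arrive at $|C(\gn,S)\II_{\rm eis}^{\eta}(\gn|\a)|\ll{\rm N}(\gn)^{2\theta+\e}$. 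The only organizational difference is that the paper immediately replaces ${\rm N}(\gf_{\chi})$ by ${\rm N}(\gn)^{1/2}$ and invokes $X(\gn)\ll{\rm N}(\gn)^{1/2+\e}$ in one stroke, while you keep the conductor explicit and sum $\sum_{\gf^{2}\mid\gn}{\rm N}(\gf)^{2+4\theta+\e}$ at the end; and the paper turns the $L_{0}$-sum together with the line integral into a single $\RR^{d_{F}}$-integral rather than bounding each shift by $(1+\|b\|)^{-N}$ --- but these yield the same final exponent.
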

\Proof
We recall that for any $\e>0$, the estimate $|L_{\fin}(1+\nu, \chi^{2})|^{-1} \ll \mathfrak{q}(\chi^{2}|\cdot|_{\AA}^{\nu})^{\e}, \ \nu \in i \RR$ holds
with the implied constant independent of $\chi \in \Xi(\gn)$ and $\gn$.
This was given in the proof of Lemma \ref{lem: estimation of spectral decomposition}.
Let $\theta$ be a real number such that
$|L_{\fin}(1/2+i t, \chi)| \ll \gq(\chi |\cdot|_{\AA}^{i t} )^{1/4+\theta}, \ t \in \RR$
uniformly for any $\chi \in \Xi(\gn)$ and $\gn$.
We can take such $\theta$ so that $-1/4<\theta<0$ by \cite{Michel-Venkatesh}.
Thus, by the aid of Lemma \ref{lem:estimation of B} and Stirling's formula,
the explicit description of $P_{\rm reg}^{\eta}(E_{\chi, \rho}(\nu, -))$ in Proposition \ref{prop:Eisen P=L} gives us
the estimate
{\allowdisplaybreaks\begin{align*}
|P_{\rm reg}^{\eta}(E_{\chi, \rho}(\nu, -))| 
\ll & \ {\rm N}(\gf_{\chi})^{1/2} {\rm N}(\gf_{\chi})^{-1/2-\e} {\rm N}(\gn)^{\e}
 ({\rm N}(\gf_{\chi})^{1/4+\theta})^{2} {\rm N}(\gf_{\chi})^{\e}\prod_{v\in \Sigma_{\infty}} (1+|\nu+2ib(\chi_{v})|)^{1/2+2\theta +\e}\\
= & \ {\rm N}(\gf_{\chi})^{1/2+2\theta}{\rm N}(\gn)^{\e}\prod_{v\in \Sigma_{\infty}}(1+|\nu + 2ib(\chi_{v})|)^{1/2+2\theta +\e} \\
\ll & \ {\rm N}(\gn)^{1/4+\theta + \e}\prod_{v\in \Sigma_{\infty}}(1+|\nu+ 2ib(\chi_{v})|)^{1/2+2\theta +\e}
\end{align*}
}for any $\e>0$, where the implied constant is independent of $\nu \in i\RR, \chi \in \Xi(\gn)$ and $\gn \in J_{S, \eta}'$.
With the aid of Lemma \ref{esti of X(n)}, we have
{\allowdisplaybreaks\begin{align*}
|C(\gn, S)\II_{\rm eis}^{\eta}(\gn | \a)|
\ll & \ [\bfK_{\fin}: \bfK_{0}(\gn)]^{-1} \sum_{\chi \in \Xi(\gn)}\sum_{\rho \in \Lambda_{\chi}(\gn)}
\int_{i \RR}
|P_{\rm reg}^{\bf 1}(E_{\chi^{-1}, \rho}(-\nu, -))||P_{\rm reg}^{\eta}(E_{\chi, \rho}(\nu, -))|
|\tilde{\a}_{\chi}(\nu )||d \nu| \\
\ll & \ {\rm N}(\gn)^{-1} \hspace{-1mm}
\sum_{\chi \in \Xi(\gn)}
\bigg(\sum_{\ga | \gn}1\bigg)
\int_{y \in \RR}
{\rm N}(\gn)^{1/2+2\theta + 2\e}
\{ \prod_{v\in \Sigma_{\infty}} (1+|y+ 2b(\chi_{v})|)^{1+4\theta +2\e} \}
|\tilde{\a}_{\chi}(i y)|d y \\
\ll & \ {\rm N}(\gn )^{-1/2+2\theta+3\e} \hspace{-3mm}
\sum_{\chi \in \Xi_{\rm ker}(\mathfrak{\gn})}\sum_{b \in L_{0}}
\int_{y \in \RR}\{\prod_{v\in \Sigma_{\infty}}(1+|y+ 2b_{v}|)^{1+4\theta +2\e} \}
|\a((iy + 2ib_{v})_{v \in \Sigma_{\infty}})|d y \\
\ll&\ {\rm N}(\gn )^{2\theta +4\e}
\int_{y \in \RR^{d_{F}}}(1+||y||^{2})^{1+4\theta +2\e} 
|\a(iy)|d y.
\end{align*}
}Note $\sum_{\ga | \gn}1 \ll {\rm N}(\gn)^{\e}$.
Since we can take $\e>0$ so that $2\theta +4\e<0$,
we obtain the assertion.
\QED

\begin{lem}\label{lem:estimation of D}
For any $\e>0$ and $\a \in \Acal_{S}$, we have
$|C(\gn, S)\DD^{\eta}(\gn|\a)| \ll {\rm N}(\gn)^{-1+\e}$
with the implied constant independent of $\gn \in J_{S, \eta}'$.
\end{lem}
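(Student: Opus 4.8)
The plan is to read off the claim directly from the explicit shape of $\DD^{\eta}(\gn|\a)$ together with two inputs already available in the excerpt: the estimates $|Y_{j}^{\eta}(\gn)|\ll \N(\gn)^{\e}$ of Lemma \ref{lem:estimations of Y and Z}, and the elementary lower bound $[\bfK_{\fin}:\bfK_{0}(\gn)]\ge \N(\gn)$. No genuine analysis is needed; this lemma is a bookkeeping consequence of the work done for the $Y_{j}^{\eta}(\gn)$.

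Concretely, I would first recall that $|C(\gn,S)| = D_{F}^{-1/2}[\bfK_{\fin}:\bfK_{0}(\gn)]^{-1}$ and that $[\bfK_{\fin}:\bfK_{0}(\gn)] = \N(\gn)\prod_{v\in S(\gn)}(1+q_{v}^{-1})\ge \N(\gn)$, whence $|C(\gn,S)|\le D_{F}^{-1/2}\N(\gn)^{-1}$. Next, since $S$ and $\eta$ are fixed, the shifts $b(\eta_{v})$ ($v\in S$) are fixed, so the functions $\tilde{\a}_{\eta}$ and $\tilde{\a}$ are fixed elements of (the restriction of) $\Bcal$ and the four quantities $\tilde{\a}_{\eta}(1)$, $\tilde{\a}_{\eta}'(1)$, $\tilde{\a}_{\eta}''(1)$, $\tilde{\a}(1)$ are constants depending only on $\a$. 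Then, applying the triangle inequality to
$$\DD^{\eta}(\gn|\a) = \d(\gf_{\eta} = \go_{F}) \{ Y_{2}^{\eta}(\gn) \tilde{\a}_{\eta}''(1) + Y_{1}^{\eta}(\gn) \tilde{\a}_{\eta}'(1) + Y_{0}^{\eta}(\gn) \tilde{\a}_{\eta}(1)\} + Y_{-1}^{\eta}(\gn) \tilde{\a}(1)$$
and invoking Lemma \ref{lem:estimations of Y and Z} gives $|\DD^{\eta}(\gn|\a)|\ll_{\a,\e}\N(\gn)^{\e}$ uniformly in $\gn\in J_{S,\eta}'$. Multiplying the two bounds yields $|C(\gn,S)\DD^{\eta}(\gn|\a)|\ll \N(\gn)^{-1+\e}$, as asserted.

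The only point deserving a word of care is the uniformity of the implied constant in $\gn$, but this is automatic: the $\Bcal$-data extracted from $\a$ are fixed once $\a$, $S$, $\eta$ are fixed, and the uniformity of $|Y_{j}^{\eta}(\gn)|\ll \N(\gn)^{\e}$ over $\gn$ is already part of the statement of Lemma \ref{lem:estimations of Y and Z}. Thus there is no real obstacle; the proof is essentially a one-line combination of the index bound and the $Y_{j}^{\eta}(\gn)$ estimates.
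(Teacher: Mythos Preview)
Your proposal is correct and is exactly the argument the paper intends: its proof reads in full ``This follows immediately from Lemma \ref{lem:estimations of Y and Z},'' and you have simply unpacked that sentence using $|C(\gn,S)|=D_{F}^{-1/2}[\bfK_{\fin}:\bfK_{0}(\gn)]^{-1}\le D_{F}^{-1/2}\N(\gn)^{-1}$ together with the bounds $|Y_{j}^{\eta}(\gn)|\ll \N(\gn)^{\e}$.
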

\Proof
This follows immediately from Lemma \ref{lem:estimations of Y and Z}.
\QED

For $\gn \in J_{S, \eta}'$, we set
$\langle \l_{S}^{\eta}(\gn), f \rangle = 2D_{F}^{1/2}\Gcal(\eta)^{-1}[\bfK_{\fin}:\bfK_{0}(\gn)]^{-1}
\sum_{\pi \in \Pi_{\rm cus}(\gn)}\PP^{\eta}(\pi, \bfK_{0}(\gn))f(\nu_{\pi, S})$
for any $f \in C_{c}(\mathfrak{X}_{S}^{0+})$.
The measure is extended to a measure on the Schwartz space ${\Scal}(\mathfrak{X}_{S}^{0+})$ (cf. \cite[Lemma 13.16]{TsuzukiSpec}).
Combining Lemmas \ref{lem:positivity of P}, \ref{Adjoint L}, \ref{lem:estimation of hyp},
\ref{lem:estimation of eis} and \ref{lem:estimation of D} with the argument in \cite[Lemma 13.18]{TsuzukiSpec},
we obtain the following theorem.
\begin{thm}\label{submain thm}
For a fixed $\a \in \Acal_{S}$,
there exists $\d>0$ such that
 for any infinite subset $\Lambda \subset J_{S, \eta}'$,
we have
$$\langle \l_{S}^{\eta}(\gn), \a \rangle =
\sum_{ \pi \in \Pi_{\rm cus}(\gn)} \frac{[\bfK_{\fin}: \bfK_{0}(\gf_{\pi})]}{{\rm N}(\gf_{\pi})[\bfK_{\fin}: \bfK_{0}(\gn)]}w_{\gn}^{\eta}(\pi)
\frac{L(1/2, \pi)L(1/2, \pi \otimes \eta)}{ L^{S_{\pi}}(1, \pi; {\rm Ad})}\a(\nu_{\pi, S})
=
\langle \l_{S}^{\eta}, \a \rangle + \Ocal({\rm N}(\gn)^{-\d})$$
as ${\rm N}(\gn) \rightarrow \infty$ in $\gn \in \Lambda$.
\end{thm}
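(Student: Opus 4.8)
The plan is to read off Theorem~\ref{submain thm} from the relative trace formula obtained by equating the spectral expansion of Theorem~\ref{spectral side} with the geometric expansion of Theorem~\ref{geometric side}, namely
\begin{align*}
C(\gn, S)\{ \II_{\rm cus}^{\eta}(\gn|\a) + \II_{\rm eis}^{\eta}(\gn|\a) + \DD^{\eta}(\gn|\a) \} = \JJ_{\rm u}^{\eta}(\gn| \a) + \JJ_{\bar{\rm u}}^{\eta}(\gn| \a) + \JJ_{\rm hyp}^{\eta}(\gn|\a)
\end{align*}
for $\a \in \Acal_{S}$ and $\gn \in J_{S, \eta}'$, after identifying each side explicitly. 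First I would record the bookkeeping behind the leftmost equality of the theorem. Choosing $\Bcal_{\rm cus}(\gn) = \bigsqcup_{\pi \in \Pi_{\rm cus}(\gn)} B_{\pi}$ with $B_{\pi}$ an orthonormal basis of $V_{\pi}^{\bfK_{\infty}\bfK_{0}(\gn)}$, using that $\a(\nu_{\varphi, S})$ depends only on the representation $\pi$ containing $\varphi$, and invoking $P_{\rm reg}^{\eta}(\varphi) = Z^{*}(1/2, \eta, \varphi)$ from Proposition~\ref{prop:period = L(1/2)}, one obtains $\II_{\rm cus}^{\eta}(\gn|\a) = \sum_{\pi} \a(\nu_{\pi, S})\, \PP^{\eta}(\pi; \bfK_{0}(\gn))$. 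Substituting $\PP^{\eta}(\pi; \bfK_{0}(\gn)) = D_{F}^{-1/2}\Gcal(\eta)\, w_{\gn}^{\eta}(\pi)\, L(1/2, \pi)L(1/2, \pi \otimes \eta)/\|\varphi_{\pi}^{\rm new}\|^{2}$ from Lemma~\ref{lem:positivity of P} and $\|\varphi_{\pi}^{\rm new}\|^{2} = 2\,{\rm N}(\gf_{\pi})\,[\bfK_{\fin} : \bfK_{0}(\gf_{\pi})]^{-1} L^{S_{\pi}}(1, \pi; {\rm Ad})$ from Lemma~\ref{Adjoint L}, and recalling $C(\gn, S) = (-1)^{\#S} D_{F}^{-1/2}[\bfK_{\fin} : \bfK_{0}(\gn)]^{-1}$ together with the definition of $\langle \l_{S}^{\eta}(\gn), \,\cdot\, \rangle$, a direct computation yields $\langle \l_{S}^{\eta}(\gn), \a \rangle = \k\, C(\gn, S)\, \II_{\rm cus}^{\eta}(\gn|\a)$ with $\k = (-1)^{\#S} 2 D_{F}\, \Gcal(\eta)^{-1}$, and the right-hand side unwinds exactly to the displayed arithmetic sum over $\pi \in \Pi_{\rm cus}(\gn)$. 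So the first equality of the theorem is pure bookkeeping.

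Next I would solve the trace formula for $C(\gn, S)\II_{\rm cus}^{\eta}(\gn|\a)$ and pin down the main term. Since $\Lambda$ is infinite, all but at most one of its elements satisfy $\gn \neq \go_{F}$, so $1 + \d(\gn = \go_{F}) = 1$; and as $\eta$ is nontrivial (which is forced by the factor $L(1,\eta)$ in the definition of $\l_{S}^{\eta}$), we have $R(\eta) = 0$, hence $\gC_{S, \ga}^{\eta}(\bfs) = C_{0}(\eta) = L(1, \eta)$ for every ideal $\ga$, so that
\begin{align*}
\JJ_{\rm u}^{\eta}(\gn|\a) = \JJ_{\bar{\rm u}}^{\eta}(\gn|\a) = D_{F}^{1/2}\Gcal(\eta)\, L(1, \eta) \int_{\LL_{S}(\bfc)}\Upsilon_{S}^{\eta}(\bfs)\, \a(\bfs)\, d\mu_{S}(\bfs),
\end{align*}
a quantity independent of $\gn$. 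Multiplying their sum by $\k$ gives $(-1)^{\#S} 4 D_{F}^{3/2} L(1, \eta) \int_{\LL_{S}(\bfc)}\Upsilon_{S}^{\eta}(\bfs)\, \a(\bfs)\, d\mu_{S}(\bfs)$, and I claim this equals $\langle \l_{S}^{\eta}, \a \rangle$. To see it, shift the contour $\LL_{S}(\bfc)$ (taken with $q(\bfc) > 1$) down to $\LL_{S}(\bfzero)$: no residues arise, because $\a$ is entire and each Archimedean $\Gamma$-quotient and each finite Euler factor occurring in $\Upsilon_{S}^{\eta}$ is holomorphic and non-vanishing on $\{\Re(s_{v}) \ge 0\}$. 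On $\LL_{S}(\bfzero)$ the measures $d\mu_{v}$ are invariant under $s_{v} \mapsto -s_{v}$, so $\a$ may be symmetrized; then the reflection and duplication formulas for $\Gamma$ together with the explicit Euler-factor expansions identify $(-1)^{\#S}\Upsilon_{S}^{\eta}(\bfs)\, d\mu_{S}(\bfs)$ on this contour with the product measure $\bigotimes_{v \in S} d\l_{v}^{\eta_{v}}(s_{v})$ — this is precisely the computation carried out for square-free level in \cite[\S13]{TsuzukiSpec}, and it is unchanged here since $S$, and hence $\Upsilon_{S}^{\eta}$, is coprime to $\gn$. Multiplying by $4 D_{F}^{3/2}L(1,\eta)$ gives $\langle \l_{S}^{\eta}, \a \rangle$ (with $\a$ restricted to $\mathfrak{X}_{S}^{0+}$, which lies in the Schwartz class on which $\l_{S}^{\eta}$ is defined).

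It remains to bound the error. By Lemma~\ref{lem:estimation of hyp}, $\JJ_{\rm hyp}^{\eta}(\gn|\a) \ll {\rm N}(\gn)^{-q}$ for every $q > 0$; by Lemma~\ref{lem:estimation of eis} there is $\d_{1} > 0$ with $C(\gn, S)\II_{\rm eis}^{\eta}(\gn|\a) \ll {\rm N}(\gn)^{-\d_{1}}$; and by Lemma~\ref{lem:estimation of D}, $C(\gn, S)\DD^{\eta}(\gn|\a) \ll {\rm N}(\gn)^{-1+\e}$ for every $\e > 0$ — all with implied constants uniform over $\gn \in J_{S, \eta}'$. Feeding the main-term identification and these three estimates back into the trace formula, after multiplying through by $\k$, yields
\begin{align*}
\langle \l_{S}^{\eta}(\gn), \a \rangle = \langle \l_{S}^{\eta}, \a \rangle + \k\bigl(\JJ_{\rm hyp}^{\eta}(\gn|\a) - C(\gn, S)\II_{\rm eis}^{\eta}(\gn|\a) - C(\gn, S)\DD^{\eta}(\gn|\a)\bigr) = \langle \l_{S}^{\eta}, \a \rangle + \Ocal({\rm N}(\gn)^{-\d})
\end{align*}
with $\d = \min(\d_{1}, 1/2) > 0$, as ${\rm N}(\gn) \to \infty$ in $\gn \in \Lambda$; the single possible value $\gn = \go_{F}$ does not affect this limit. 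This is the assertion of Theorem~\ref{submain thm}, along the lines of \cite[Lemma 13.18]{TsuzukiSpec}.

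The only genuinely substantive step is the main-term identification of the second paragraph; the rest is assembly of results already in hand. Within that step, the point needing verification beyond Tsuzuki's square-free case is that the geometric main term is still controlled entirely by the places of $S$ (coprime to $\gn$) through $\Upsilon_{S}^{\eta}$ and $\gC_{S,\ga}^{\eta}$, and by the index $[\bfK_{\fin} : \bfK_{0}(\gn)]$, with the local data at the possibly higher-order ramified primes of $\gn$ entering only through the spectral weights $w_{\gn}^{\eta}(\pi)$ supplied by Lemma~\ref{lem:positivity of P}; granting this, the contour shift and the $\Gamma$- and Euler-factor manipulations are verbatim those of \cite{TsuzukiSpec}.
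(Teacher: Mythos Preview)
Your proposal is correct and follows essentially the same approach as the paper, which simply records that the theorem follows by combining Lemmas~\ref{lem:positivity of P}, \ref{Adjoint L}, \ref{lem:estimation of hyp}, \ref{lem:estimation of eis}, \ref{lem:estimation of D} with the argument of \cite[Lemma~13.18]{TsuzukiSpec}. You have accurately reconstructed that argument: the bookkeeping for the first equality, the identification of $\k(\JJ_{\rm u}^{\eta}+\JJ_{\bar{\rm u}}^{\eta})$ with $\langle \l_{S}^{\eta},\a\rangle$ via the contour shift to $\LL_{S}(\bfzero)$ (unchanged from the square-free case since $\Upsilon_{S}^{\eta}$ involves only places in $S$), and the absorption of the remaining spectral and geometric terms into the error using the three cited estimates.
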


We show the proof of Theorem \ref{thm:asymptotic of central values}.
For $\gn \in J_{S, \eta}$, let
$\gn = \prod_{k=1}^{s}\gp_{k}^{a_{k}}\prod_{k=s+1}^{s+l}\gp_{k}$ with $a_{k} \ge 2$
be a prime ideal decomposition of $\gn$.
For $\pi \in \Pi_{\rm cus}(\gn)$ with $\gf_{\pi} = \prod_{k=1}^{s}\gp_{k}^{b_{k}}\prod_{k=s+1}^{s+l}\gp_{k}^{\e_{k}}$,
Lemma \ref{lem:positivity of P} gives us
$$w_{\gn}^{\eta}(\pi) = \d(\ (\e_{s+k})_{k} \in \{ 1 \}^{l},\ (a_{k}-b_{k})_{k}\in (2\NN_{0})^{s} \ )
\prod_{k=1}^{s}\left(\frac{ {\rm N}(\gp_{k})+1 }{ {\rm N}(\gp_{k})-1 }\right)^{\d(b_{k}=0)}.$$
Hence, by setting
$L(\pi)=\frac{L(1/2, \pi)L(1/2, \pi \otimes \eta)}{L^{S_{\pi}}(1, \pi, {\rm Ad})}\a(\nu_{\pi, S})$
for a fixed $\a \in \Acal_{S}$, we obtain
\begin{align*}
& \frac{1}{{\rm N}(\gn)}\sum_{\pi \in \Pi_{\rm cus}^{*}(\gn)}
\frac{L(1/2, \pi)L(1/2, \pi \otimes \eta)}{L^{S_{\pi}}(1, \pi, {\rm Ad})}\a(\nu_{\pi, S}) \\
= & \left(\sum_{\pi \in \Pi_{\rm cus}(\gn)}
+\sum_{j=1}^{s+l}(-1)^{j}\sum_{1 \le i_{1} < \cdots < i_{j} \le s+l}
\sum_{\pi \in \Pi_{\rm cus}(\gn \prod_{k=1}^{j} \gp_{i_{k}}^{-1})} \right)
\frac{[\bfK_{\fin}: \bfK_{0}(\gf_{\pi})]}{{\rm N}(\gf_{\pi})[\bfK_{\fin}: \bfK_{0}(\gn)]}
w_{\gn }^{\eta}(\pi)L(\pi) \\
= & \left(\sum_{\pi \in \Pi_{\rm cus}(\gn)}
+\sum_{j=1}^{s}(-1)^{j}\sum_{1 \le i_{1} < \cdots < i_{j} \le s}
\sum_{\pi \in \Pi_{\rm cus}(\gn \prod_{k=1}^{j} \gp_{i_{k}}^{-2})} \right)
\frac{[\bfK_{\fin}: \bfK_{0}(\gf_{\pi})]}{{\rm N}(\gf_{\pi})[\bfK_{\fin}: \bfK_{0}(\gn)]}
w_{\gn }^{\eta}(\pi)L(\pi) \\
= & \sum_{\pi \in \Pi_{\rm cus}(\gn)}
\frac{[\bfK_{\fin}: \bfK_{0}(\gf_{\pi})]}{{\rm N}(\gf_{\pi})[\bfK_{\fin}: \bfK_{0}(\gn)]}w_{\gn }^{\eta}(\pi)L(\pi) 
+\sum_{j=1}^{s}\sum_{1 \le i_{1} < \cdots < i_{j} \le s}
(-1)^{j}
\frac{[\bfK_{\fin}: \bfK_{0}(\gn \prod_{k=1}^{j} \gp_{i_{k}}^{-2})]}{[\bfK_{\fin}: \bfK_{0}(\gn)]} \\
& \times  \sum_{\pi \in \Pi_{\rm cus}(\gn \prod_{k=1}^{j} \gp_{i_{k}}^{-2})}
\frac{[\bfK_{\fin}: \bfK_{0}(\gf_{\pi})]}{{\rm N}(\gf_{\pi})[\bfK_{\fin}: \bfK_{0}(\gn\prod_{k=1}^{j} \gp_{i_{k}}^{-2})]}
\bigg\{
\prod_{v \in S_{2}(\gn) \cap S(\prod_{k=1}^{j}\gp_{i_{k}})}
\frac{q_{v}+1}{q_{v}-1}
\bigg\}
w_{\gn \prod_{k=1}^{j} \gp_{i_{k}}^{-2} }^{\eta}(\pi)L(\pi) \\
= & \langle \l_{S}^{\eta}, \a \rangle
+\Ocal({\rm N}(\gn)^{-\d})
+ \sum_{j=1}^{s}(-1)^{j}\sum_{1 \le i_{1} < \cdots < i_{j} \le s}
\frac{ {\rm N}(\gn\prod_{k=1}^{j}\gp_{i_{k}}^{-2}) \prod_{v \in S(\gn \prod_{k=1}^{j}\gp_{i_{k}}^{-2})}(1+q_{v}^{-1}) }
{{\rm N}(\gn) \prod_{v \in S(\gn)}(1+q_{v}^{-1}) } \\
& \times \bigg\{ \prod_{v \in S_{2}(\gn) \cap S(\prod_{k=1}^{j}\gp_{i_{k}})}
\frac{q_{v}+1}{q_{v}-1} \bigg\}
\bigg\{ \langle \l_{S}^{\eta}, \a \rangle
+\Ocal({\rm N}(\gn\prod_{k=1}^{j}\gp_{i_{k}}^{-2})^{-\d})\bigg\} \\
= & \left( 1 + \sum_{j=1}^{s}(-1)^{j}\sum_{1 \le i_{1} < \cdots < i_{j} \le s}
\frac{\prod_{v \in S_{2}(\gn) \cap S(\prod_{k=1}^{j}\gp_{i_{k}})}(1-q_{v}^{-1})^{-1}}
{\prod_{k=1}^{j}{\rm N}(\gp_{i_{k}})^{2} } \right)
\langle \l_{S}^{\eta}, \a \rangle \\
& + \Ocal \left(
{\rm N}(\gn)^{-\delta} \left(1 + \sum_{j=1}^{s}\sum_{1 \le i_{1} < \cdots < i_{j} \le s}
\frac{\prod_{v \in S_{2}(\gn) \cap S(\prod_{k=1}^{j}\gp_{i_{k}})}(1-q_{v}^{-1})^{-1}}
{\prod_{k=1}^{j}{\rm N}(\gp_{i_{k}})^{2-2\delta} } \right)\right) \\
= & \left( \prod_{v \in S_{2}(\gn)}\{1-(1-q_{v}^{-1})^{-1}q_{v}^{-2}\}\prod_{v \in S(\gn) - (S_{1}(\gn)\cup S_{2}(\gn))}(1-q_{v}^{-2}) \right) \ \langle \l_{S}^{\eta}, \a \rangle
+\Ocal({\rm N}(\gn)^{-\d}).
\end{align*}
Here we note Theorem \ref{submain thm} and
an explicit formula of $w_{\gn}^{\eta}(\pi)$.
This completes the proof of Theorem \ref{thm:asymptotic of central values}
by extending the assertion for $\a \in \Acal_{S}$ to that for $f \in \Scal(\frak{X}_{S}^{0+})$ with the aid of
the proof of \cite[Theorem 13.17]{TsuzukiSpec}.

We show the proof of Theorem \ref{thm:nonvanishing}
which is same as in \cite[Corollary 1.2]{TsuzukiSpec}.
Let $\JJ$ be the set of all $(\nu_{v})_{v \in S} \in \mathfrak{X}_{S}^{0}$ such that
$(1-\nu_{v}^{2})/4 \in J_{v}$ for any $v \in \Sigma_{\infty}$
and
$q_{v}^{-\nu_{v}/2}+q_{v}^{\nu_{v}/2} \in J_{v}$ for any $v \in S_{\fin}$.
Put
$$I(\gn; \JJ) = \frac{1}{{\rm N}(\gn)}
\sum_{\begin{subarray}{c}
\pi \in \Pi_{\rm cus}^{*}(\gn) \\
\nu_{\pi, S} \in \JJ
\end{subarray}}
\frac{L(1/2, \pi)L(1/2, \pi \otimes \eta)}{L^{S_{\pi}}(1, \pi, {\rm Ad})}.$$
By Theorem \ref{thm:asymptotic of central values} and $\vol(\JJ, \l_{S}^{\eta}) > 0$,
for any $M>0$, there exists $\gn \in \Lambda$ with ${\rm N}(\gn) > M$ such that
$$|I(\gn; \JJ) - C(\gn) \vol(\JJ, \l_{S}^{\eta}) | <
2^{-1} \{ \prod_{v \in \Sigma_{\fin}}\{1-(q_{v}^{2}-q_{v})^{-1}\} \} \ \zeta_{F}(2)^{-1} 
\vol(\JJ, \l_{S}^{\eta}).$$
Therefore,
$I(\gn; \JJ) > 2^{-1} \{ \prod_{v \in \Sigma_{\fin}}\{1-(q_{v}^{2}-q_{v})^{-1}\} \} \ \zeta_{F}(2)^{-1}\vol(\JJ, \l_{S}^{\eta})>0$
holds by virtue of
$0< \{ \prod_{v \in \Sigma_{\fin}}\{1-(q_{v}^{2}-q_{v})^{-1}\} \} \ \zeta_{F}(2)^{-1}
< C(\gn).$
This completes the proof of Theorem \ref{thm:nonvanishing}.

We remark that Theorems \ref{thm:asymptotic of trivial} and \ref{thm:subconvexity} are proved in the same way as
\cite[Theorem 1.3, Corollary 1.4]{TsuzukiSpec} since we can generalize \cite[Theorem 14.1]{TsuzukiSpec} to the case of arbitrary level with a minor modification
by using the relative trace formula explained in \S \ref{Proofs of main theorems}.

\section*{Acknowledgements}
The author would like to thank Professor Takao Watanabe for useful comments.
He would also like to thank Professor Masao Tsuzuki for a lot of fruitful discussions and constructive comments,
and for informing him of the thesis \cite{Molteni}.
The author was supported by Grant-in-Aid for JSPS Fellows (25$\cdot$668).



\begin{thebibliography}{99}



\bibitem{Perelli} E. Carletti, G. Monti Bragadin, A. Perelli, {\it On general $L$-functions}, Acta Arith. {\bf LXVI}, (1994), 147--179.

\bibitem{Feigon-White} B. Feigon and D. Whitehouse, {\it Averages of central L-values of Hilbert modular forms with an application to subconvexity}, Duke. Math. J. {\bf 149}, (2009), 347--410.




\bibitem{Guo} J. Guo, {\it On the positivity of the central critical values of automorphic L-functions}, Duke Math. J. {\bf 83} No. 1, (1996), 157--190.

\bibitem{Iwaniec-Kowalski} H. Iwaniec and E. Kowalski, {\it Analytic number theory}, American Mathematical Society Colloquium Publications vol. 53, Amer. Math. Soc. Providence, RI, 2004.


\bibitem{Michel-Venkatesh} P. Michel and A. Venkatesh, {\it The subconvexity problem for $GL_{2}$}, Publ. I.H.E.S., {\bf 111}, (2010), 171--271.

\bibitem{Molteni} G. Molteni, {\it $L$-functions: Siegel-type theorems and structure theorems},
Ph.D. thesis, University of Milan, Milan, 1999.

\bibitem{Motohashi} Y. Motohashi, {\it Spectral mean values of Maass waveform L-functions}, J. Number Theory {\bf 42}, (1992), 258--284.

\bibitem{Ramakrishnan-Rogawski} D. Ramakrishnan and J. Rogawski, {\it Average values of modular L-series via the relative trace formula}, Pure and Appl. Math. Q. {\bf 1} No.4, (2005), 701--735.


\bibitem{Schmidt} R. Schmidt, {\it Some remarks on local newforms for $GL(2)$}, J. Ramanujan Math. Soc. {\bf 17} (2002), 115--147.


\bibitem{Sugiyama} S. Sugiyama, {\it Regularized periods of automorphic forms on $GL(2)$},
Tohoku Math. J. {\bf 65}, no.3 (2013), 373--409.

\bibitem{Titchmarsh} E.C. Titchmarsh, revised by H. Brown, {\it The theory of Riemann zeta-function
(second edition)}, Oxford science publications, Clarendon Press, Oxford, 1986.

\bibitem{TsuzukiSpec} M. Tsuzuki, {\it Spectral means of central values of automorphic L-functions for $GL(2)$}, to appear in Memoirs of Amer. Math. Soc.

\end{thebibliography}
\end{document}